\newcommand{\ran}{\operatorname{ ran}\,}
\newcommand{\id}{\mathcal{I}}
\newcommand{\sderiv}{\partial_S}
\newcommand{\Q}[3][\empty]{\mathcal{Q}_{#2}(#3)^{#1}}
\newcommand{\Qinv}[3][1]{\mathcal{Q}_{#2}(#3)^{-#1}}
\newcommand{\Qdiff}{\mathfrak{D}}
\newcommand{\lhol}{\mathcal{SH}_L}
\newcommand{\rhol}{\mathcal{SH}_R}
\newcommand{\intrin}{\mathcal{N}}
\newcommand{\dom}{\mathcal{D}}
\newtheorem{Pa}{Paper}[section]
\newtheorem{theorem}[Pa]{{\bf Theorem}}
\newtheorem{lemma}[Pa]{{\bf Lemma}}
\newtheorem{definition}[Pa]{{\bf Definition}}
\newtheorem{corollary}[Pa]{{\bf Corollary}}
\newtheorem{Pn}[Pa]{{\bf Proposition}}
\theoremstyle{definition}
\newtheorem{remark}[Pa]{{\bf Remark}}
\def\C{\mathbb C}
\def\hh{\mathbb{H}}
\def\H{\mathbb H}
\def\R{\mathbb R}
\def\N{\mathbb N}
\def\C{\mathbb C}
\def\Z{\mathbb Z}
\def\(s){\mathscr S(\R\times\R)}
\def\F{\mathcal F}
\newcommand{\rr}{\mathbb{R}}
\renewcommand{\S}{\mathbb{S}}
\newcommand{\boundOP}{\mathcal{B}}
\newcommand{\closOP}{\mathcal{K}}
\newcommand{\U}{\mathcal{U}}
\newcommand{\bcdot}{\,\cdot\,}
\renewcommand{\Re}{\mathrm{Re}}
\newcommand{\dist}{\mathrm{dist}}
\newcommand{\csector}[2]{\overline{\Sigma(#1,#2)}}
\newcommand{\osector}[2]{\Sigma(#1,#2)}
\title[
Fractional  powers of quaternionic operators ]
{Fractional  powers of quaternionic operators and Kato's formula using slice hyperholomorphicity } \oddsidemargin
\author[F. Colombo]{Fabrizio Colombo}
\address{(FC)
Politecnico di Milano\\Dipartimento di Matematica\\Via E. Bonardi, 9\\20133
Milano, Italy}
\email{fabrizio.colombo@polimi.it}
\author[J. Gantner]{Jonathan Gantner}
\address{(JG)
Politecnico di Milano\\Dipartimento di Matematica\\Via E. Bonardi, 9\\20133
Milano, Italy
} \email{jonathan.gantner@polimi.it}
\begin{document}
\maketitle

\begin{abstract}
In this paper we introduce fractional powers of quaternionic operators.
Their definition is based on the theory of slice-hyperholomorphic functions and on the $S$-resolvent operators of the quaternionic functional calculus.
The integral representation formulas of the fractional powers
and the quaternionic version of Kato's formula are  based on the notion of $S$-spectrum of a quaternionic operator.

The proofs of several properties of the fractional powers of quaternionic operators rely on the $S$-resolvent equation. This equation, which is very important and of independent interest, has already been introduced in the case of bounded quaternionic operators, but for the case of unbounded operators some additional considerations have to be taken into account. Moreover, we introduce a new series expansion for the pseudo-resolvent, which is  of independent interest and allows to investigate the behavior of the $S$-resolvents close to the $S$-spectrum.

The paper is addressed to researchers working  in operator theory and in complex analysis.
\end{abstract}
\vskip 1cm
\par\noindent
 AMS Classification: 47A10, 47A60.
\par\noindent
\noindent {\em Key words}: Fractional  powers of quaternionic operators, Kato's formula,  slice-hyperholomorphic functions, S-functional calculus, S-spectrum.
\vskip 1cm
\section{Introduction}

The theory of holomorphic functions has several applications in operator theory. For example it allows to define
groups and semigroups of linear operators that have applications in PDE and in other fields of mathematics and physics, see
\cite{ds,EngelNagel,Hille,Kantorovitz,Lunardi,Pazy}.

Quaternionic linear operators play a crucial role in quaternionic quantum mechanics because Schr\"odinger equation can be formulated just using
complex numbers or quaternions, see the fundamental paper \cite{BvN} on the logic of quantum mechanics by  Birkhoff and  von Neumann and  the subsequent papers  \cite{12,14,21}.
For the quaternionic formulation of quantum mechanics, we refer the reader to the book of Adler \cite{adler}.

In the definition of functions of quaternionic linear operators, the classical theory of holomorphic functions has to be replaced by the
recently developed theory of slice hyperholomorphic functions,
see the books \cite{ACSBOOK,MR2752913,GSSb}.
The notion of $S$-spectrum is the most fundamental concept in quaternionic operator theory: the slice hyperholomorphic functional calculus (called $S$-functional or quaternionic functional calculus), which is the quaternionic analogue of the Riesz-Dunford functional calculus, is based on this notion of spectrum, see \cite{ds,rudin} for the classical case and  the original papers \cite{acgs,JGA,CLOSED} and the book \cite{MR2752913} for the quaternionic case.

Thanks to the quaternionic functional calculus, it has been possible to develop the theory of quaternionic evolution operators, see
\cite{FUCGEN, perturbation,MR2803786,GR}, and the spectral theorem for quaternionic operators using the notion of $S$-spectrum \cite{ack,acks2,spectcomp}.
We point out that there are several attempts to prove the spectral theorem
over the quaternions in the literature, for instance \cite{14,sc,Viswanath}, but the notion of spectrum used is not clearly specified except for the paper \cite{fp}. In this paper the authors use the right spectrum $\sigma_R(M)$ of a normal quaternionic matrix $M$, which however turns out to be equal to the $S$-spectrum $\sigma_S(M)$.
Using the notion of $S$-spectrum, it is possible to define also the continuous functional calculus see \cite{GMP}, where the authors use the notion of slice hyperholomorphicity in the approach of \cite{GP}.

The theory of slice hyperholomorphic functions and the quaternionic functional calculus have allowed to generalize Schur analysis to this setting.
The literature on  classical Schur analysis is very large, we quote the books \cite{MR2002b:47144,adrs}, for an overview of the classical case,
and the first papers \cite{acs3,acs2,acs1} for the slice hyperholomorphic setting together with the book \cite{ACSBOOK} and the references therein for an overview of the existing literature.

Recently, we have proved the Taylor expansion in the operator for the $S$-functional calculus, see \cite{CGTAYLOR}.
In this paper we address the problem of defining  fractional powers of quaternionic operators using the Cauchy formula of slice hyperholomorphic functions.
To explain our results and the main differences with respect to the classical case, we recall some facts on the classical theory, see for example \cite{EngelNagel}.
\\
\\
The theory of fractional powers of linear operators has been developed by several authors. Without claiming completeness, we mention among them, for the early works, the papers \cite{Balakrishnan,Guzman3,Guzman1,Guzman2,Kato,Komatsu1,Komatsu2,Komatsu3,Watanabe,Yosida}.
The literature is now very wide and it has developed in several directions.
\\
\\
Let $A$ be a closed linear operator on a complex Banach space such that $(0,\infty)\subset \rho(A)$, where $\rho(A)$ is the resolvent set of $A$.
We denote by
$$
R(\lambda, A):=(\lambda I-A)^{-1}, \ \ \ \lambda\in \rho(A)
$$
the resolvent operator
and we assume that
$\|R(\lambda, A)\|\leq M/(1+\lambda)$ for some constant $M>0$ and all $\lambda\in (0,\infty)$. If this holds true then
there exists an open sector $\Sigma$ in $\mathbb{C}$ such that $\mathbb{R}^+\subset \Sigma \subset  \rho(A)$ and
$\|R(\lambda, A)\|\leq 2 M/(1+|\lambda|)$ all $\lambda\in \Sigma$.
\\
We consider a branch of the fractional power $\lambda \to \lambda^{-\alpha}$ for $\alpha >0$. This is a holomorphic function and  the resolvent operator  $\lambda\to R(\lambda, A)$ is also a holomorphic operator-valued function. We can therefore define
$$
A^{-\alpha}:=\frac{1}{2\pi i}\int_{\gamma} \lambda^{-\alpha} R(\lambda, A) d\lambda
$$
where $\gamma$ is a piecewise smooth path in $\Sigma\setminus \mathbb{R}^+$ that surrounds the spectrum of $A$, which is in general unbounded.
 Due to Cauchy's integral theorem, the definition of the fractional power $A^{\alpha}$ does not depend on the choice of $\gamma$ if the path does not intersect the spectrum of $A$.
Moreover some properties of the fractional powers depend on the resolvent equation
$$
R(\lambda, A)-R(\mu, A)=-(\lambda-\mu)R(\lambda, A)R(\mu, A),\ \ \ \lambda, \mu\in \rho(A).
$$
The idea of using the Cauchy formula for holomorphic functions to define functions of operators is the basis of the Riesz-Dunford functional calculus.
\\
\\
In the case of quaternionic operators (and also in the case of $n$-tuples of operators)
 slice hyperholomorphicity is among the most useful notions of holomorphicity to define functions of operators.
The skew-field of quaternions $\mathbb{H}$ can be written as $\mathbb{H}=\bigcup_{I\in \mathbb{S}}\mathbb{C}_I$, where
$\mathbb{S}$ consists of all quaternions $I$ such that $I^2=-1$ and $\mathbb{C}_I$ is the complex plane with imaginary unit $I$.
We say that a function $f:U\subset \mathbb{H} \to \mathbb{H}$ is (left) slice hyperholomorphic in $U$ if its restriction $f_I$ to the complex plane $\mathbb{C}_I$ is in the kernel of the Cauchy-Riemann operator
$$
\frac12\left( \frac{\partial}{\partial x_0} f_I(x) + I \frac{\partial}{\partial x_1} f_I(x)\right) = 0 \quad \text{for all }x = x_0 + I x_1\in U\cap\C_I \ \text{and all } I\in \mathbb{S}.
$$
In a similar way, one can define right slice hyperholomorphic functions.

The Cauchy formula for slice hyperholomorphic functions is the tool to define the quaternionic functional calculus or $S$-functional calculus.
There are however differences with respect to the classical case: the notion of spectrum of an operator is  for example not what one would expect by readapting the classical case. Instead, the $S$-spectrum of a bounded right linear operator on $T$ on a two sided quaternionic Banach space $V$ is defined as
$$
\sigma_S(T)=\{ s\in \mathbb{H}\ \ :\ \ T^2-2 \Re(s)T+|s|^2\mathcal{I}\ \ \
{\rm is\ not\  invertible}\},
$$
where $s=s_0+s_1i+s_2j+s_3k$ is a quaternion, $\Re(s)=s_0$ and $|s|^2=s_0^2+s_1^2+s_2^2+s_3^2$.
There are two resolvent operators associated with the quaternionic functional calculus, because the theory of slice hyperholomorphic functions contains different Cauchy kernels for left or right slice hyperholomorphic functions.
The left  and the right $S$-resolvent operators are defined as
\begin{equation}
S_L^{-1}(s,T):=-(T^2-2\Re(s) T+|s|^2\mathcal{I})^{-1}(T-\overline{s}\mathcal{I}),\ \ \ s \in  \mathbb{H}\setminus\sigma_S(T)
\end{equation}
and
\begin{equation}
S_R^{-1}(s,T):=-(T-\overline{s}\mathcal{I})(T^2-2\Re(s) T+|s|^2\mathcal{I})^{-1},\ \ \ s \in  \mathbb{H}\setminus\sigma_S(T),
\end{equation}
respectively.
Let $U\subset \hh$ be a suitable domain that contains the S-spectrum of $T$.
We define   the quaternionic functional calculus for  left slice hyperholomorphic functions $f:U \to \hh$ by
\begin{equation}\label{quatinteg311def}
f(T)={{1}\over{2\pi }} \int_{\partial (U\cap \mathbb{C}_I)} S_L^{-1} (s,T)\  ds_I \ f(s),
\end{equation}
where $ds_I=- ds I$,
and for right slice hyperholomorphic functions $f:U \to \hh$, by
\begin{equation}\label{quatinteg311rightdef}
f(T)={{1}\over{2\pi }} \int_{\partial (U\cap \mathbb{C}_I)} \  f(s)\ ds_I \ S_R^{-1} (s,T).
\end{equation}
As one may observe, the integrals in (\ref{quatinteg311def}) and (\ref{quatinteg311rightdef}) are computed
on the boundary of $U$ in the complex plane $\mathbb{C}_I$ and in principle the integral would depend on the imaginary unit $I$ chosen in $\mathbb{S}$.
Fortunately, this is not the case so that the quaternionic functional calculus turns out to be well defined.
This calculus can be extended to the case of unbounded operators under the condition that the function is slice hyperholomorphic  at infinity.

In the case of the quaternions, the function $s\to  s^{-\alpha}$ with $\alpha >0$ is both left and right slice hyperholomorphic on
$\mathbb{H} \setminus (-\infty,0]$, but not at infinity. So we cannot use the $S$-functional calculus, but  have to proceed directly with the definition
using the Cauchy formula in order to define $T^{-\alpha}$. For a right slice hyperholomorphic function, we have
\begin{equation}\label{talright}
T^{-\alpha}:= \frac{1}{2\pi}\int_{\Gamma} s^{-\alpha}\, ds_I\, S_R^{-1}(s,T),
\end{equation}
where $I\in\S$, and $\Gamma$ is a path in the complex plane $\mathbb{C}_I$
 that surrounds the intersection of the $S$-spectrum of the operator $T$ with the complex plane $\mathbb{C}_I$.
The same operator can be defined using the left S-resolvent operator
\begin{equation}\label{talleft}
T^{-\alpha}:= \frac{1}{2\pi}\int_{\Gamma} S_L^{-1}(s,T)\, ds_I\, s^{-\alpha}.
\end{equation}
Using the above definition we can give some canonical integral representations of the fractional powers depending on the location of the $S$-spectrum.
For example if $\sigma_S(T)\subset\{s \in\hh: \Re(s)>0\}$, under further assumptions, we have
\[
T^{-\alpha} = \frac{1}{\pi}\int_{0}^\infty \tau^{-\alpha} \left(\cos\left(\frac{\alpha\pi}{2}\right)T +\sin\left(\frac{\alpha\pi}{2}\right)\tau\id\right)(T^2 + \tau^2)^{-1}\, d\tau.
\]
This representation can be deduced from both formula (\ref{talright}) and (\ref{talleft}). We point out that  the functions, to which we can apply both versions of the S-functional calculus, are called intrinsic functions and they play an important role in the theory of slice hyperholomorphic functions and quaternionic operators.

The proofs of several properties of the fractional powers, for example the semigroup property, are based on the resolvent equation.
In the case of bounded operators we have shown in  \cite{acgs} that
\[
\begin{split}
S_R^{-1}(s,T)S_L^{-1}(p,T)&=[[S_R^{-1}(s,T)-S_L^{-1}(p,T)]p
\\
&
-\overline{s}[S_R^{-1}(s,T)-S_L^{-1}(p,T)]](p^2-2s_0p+|s|^2)^{-1},
\end{split}
\]
 for  $s$, $p \in  \mathbb{H}\setminus\sigma_S(T)$.

This equation holds also for unbounded operators, but one has to show that it is meaningful on the entire space $V$.
This has been verified in Section 2.
\\
\\
Moreover, in this paper we extend Kato's formula to the quaternionic setting.
 Kato considers in his paper \cite{Kato} linear operators in the Banach space $X$
that are not necessarily infinitesimal generators of semigroups.
 He considers the class of operators of type $(\omega, M)$
that are defined as follows:

\begin{enumerate}[(I)]
\item $A$ is densely defined and closed,
\item the resolvent set of $-A$ contains the open sector $|\arg \lambda|<\pi-\omega$, $\omega\in (0,\pi)$ and $\lambda (\lambda I+A)^{-1}$ is uniformly bounded in each small sector $|\arg \lambda|<\pi-\omega-\varepsilon$, for $\varepsilon>0$ and
$$
\lambda\|(\lambda I+A)^{-1}\|\leq M,\ \ \ \ \lambda>0.
$$
\end{enumerate}
Kato quotes in his references that similar operators are considered also
by M. A. Krasnosel'skii and P. E. Sobolevskii.

If $A$ is an operator of type $(\omega, M)$, then the fractional powers $A^\alpha$ for $\alpha\in (0,1)$ can be defined indirectly via
$$
 (\lambda I+A^\alpha)^{-1}=\frac{\sin(\pi\alpha)}{\pi}\int_0^\infty \frac{\mu^\alpha}{\lambda^2+2\lambda\mu^\alpha\cos(\pi\alpha)+\mu^{2\alpha}}(\mu I+A)^{-1}d\mu.
$$
The proof of Kato's formula is done in several steps.
We point out that the formula is obtained using a Cauchy integral representation: the second hand side of the formula is equal to
$$
J(\lambda):=\frac{1}{2\pi i}\int_{\gamma}(\lambda+z^\alpha)^{-1}(A-z)\,dz,
$$
where $\lambda>0$ and the path $\gamma$ lies in the resolvent set of $A$ and goes from $\infty e^{-i\theta}$ to $\infty e^{i\theta}$ with
$\omega<\theta<\pi$ avoiding the negative real axis and zero.
Operator-valued function $J(\lambda)$ satisfies the resolvent equation. The question is whether it can be expressed in the form
$$
J(\lambda)=(\lambda+A^\alpha)^{-1}.
$$
In other words one has to show that a single-valued function $J$ defined on a subset $E$
of the complex plane with values in the Banach algebra of all bounded linear operators $\mathcal{B}(X)$ that satisfies the equation
\begin{equation}\label{HHJUI}
J(\lambda)-J(\lambda')=-(\lambda-\lambda')J(\lambda)J(\lambda')=-(\lambda-\lambda')J(\lambda')J(\lambda),\ \ \ \ \forall \ \ \lambda,\ \lambda'\in E
\end{equation}
is the resolvent operator of a closed linear operator $A^\alpha$.

In the quaternionic case the $S$-resolvent equation involves both  resolvent operators,
 but despite this fact we are able to prove
Kato's formula in this setting. If $T$ is of type $(M,\omega)$, then we can define for $p$ with $\arg(p)>\phi_0>\max\{\omega, \alpha\pi\}$ the operator
$$
 F_{\alpha}(p,T) := \frac{\sin(\alpha\pi)}{\pi}\int_0^{+\infty}t^{\alpha}(p^2-2pt^{\alpha}\cos(\alpha\pi)+t^{2\alpha})^{-1} S_R^{-1}(-t,T)\,dt.
$$
The operator-valued left slice hyperholomorphic  function $p\to F_{\alpha}(p,T)$ does not satisfy the equation \eqref{HHJUI} in general, but only if $\lambda$ and $\mu$ are real. This is however sufficient in order to show the existence of an operator $B_\alpha$, with $S_R^{-1}(p,B_\alpha)=F_{\alpha}(p,T)$, which we define to be $T^\alpha$.

We point out that in order to show that $F_{\alpha}(p,T)$ satisfies \eqref{HHJUI} on the negative real line, we prove that it equals a Cauchy-type integral if $0\in\rho_S(T)$. In this case, we have
\[ F_{\alpha}(p,T) = \frac{1}{2\pi}\int_{\Gamma} S_R^{-1}(p,s^{\alpha})\,ds_I\,S_R^{-1}(s,T),\]
with
\[ S_R^{-1}(p,s^{\alpha}) = -(s^{\alpha}-\overline{p})(s^{2\alpha} - 2\Re(p)s^{\alpha}  + |p|^2)^{-1} \]
and  $\Gamma$ being any path that goes from $\infty e^{I\theta}$ to $\infty e^{-I\theta}$ with $\theta\in (\phi_0, \pi)$ in an arbitrary plane $\C_I$ avoiding the negative real axis and $0$. If $p$ is real, then $S_R^{-1}(p,s^{\alpha})$ is intrinsic and we can represent $F_{\alpha}(p,T)$ also using the left $S$-resolvent. Once more it is then the $S$-resolvent equation that allows us to show that $F_{\alpha}(p,T)$ satisfies \eqref{HHJUI} in this case.
\\
\\
{\it Outline of the paper}.
In Section \ref{PreRes} we introduce the necessary definitions and results of the theory of slice hyperholomorphic functions and the $S$-functional calculus.
The $S$-resolvent equation for unbounded operators and some results on operator-valued slice hyperholomorphic functions are proved here. Section~\ref{HolSect} contains preliminary results on the $S$-resolvents: we introduce a new series expansion for the pseudo-resolvent and use it to show that the $S$-resolvent is actually slice hyperholomorphic, which has always been used but never shown for unbounded operators. Then we show that the norms of the $S$-resolvents tends, in a certain sense, to infinity as one approaches the $S$-spectrum under suitable assumptions and that therefore there cannot exist any slice hyperholomorphic continuation of the $S$-resolvents, which is a fundamental fact for several proofs in this paper.
Section \ref{NagelSect} contains the definition of the fractional powers of a quaternionic operator and the proof that they are well defined. Moreover, we prove  the semigroup property of the fractional powers and some integral representations.
Section \ref{KatoSect} is dedicated to the extension of  Kato's formula to the quaternionic setting. Finally, the appendices contain several quite technical estimates needed in the proofs of Section~\ref{NagelSect}.

\section{Preliminary results}\label{PreRes}
The skew-field of quaternions consists of the real vector space
$$
\H:=\{\xi_0 + \sum_{i=1}^3\xi_ie_i: \xi_i\in\R\},
$$
 which is endowed with an associative product satisfying
\[e_1^2 = e_2^2 = e_3^2 = e_1e_2e_3 = - 1.\]
The real part of a quaternion $x = \xi_0 + \sum_{i=1}^3\xi_ie_i$ is defined as $\Re(x) := \xi_0$, its imaginary part as $\underline{x} := \sum_{i=1}^3\xi_ie_i$ and its conjugate as $\overline{x} := \Re(x) - \underline{x}$.

Each element of the set
\[\S := \{ x\in\H: \Re(x) = 0, |x| = 1 \}\]
 is a square-root of $-1$ and is therefore called an imaginary unit. For any $I\in\S$, the subspace $\C_I := \{x_0 + I x_1: x_1,x_2\in\R\}$ is an isomorphic copy of the field of complex numbers. If $I,J\in\S$ with $I\perp J$, set $K=IJ = -JI$. Then $1$, $I$, $J$ and $K$ form an
 orthonormal basis of $\H$ as a real vector space and $1$ and $J$ form a basis of $\H$ as a left or right vector space over the complex plane $\C_I$, that is
 \[ \H = \C_I + \C_I J�\quad\text{and}\quad \H = \C_I + J\C_I.\]
  Any quaternion $x$ belongs to such a complex plane: if we set
 \[I_x := \begin{cases}\underline{x}/|\underline{x}|,& \text{if  }\underline{x} \neq 0 \\ \text{any }I\in\S, \quad&\text{if }\underline{x}  = 0,\end{cases}\]
 then $x = x_0 + I_x x_1$ with $x_0 =\Re(x)$ and $x_1 = |\underline{x}|$. The set
 \[
 [x] := \{x_0 + Ix_1: I\in\S\},
 \]
is a 2-sphere, that reduces to a single point if $x$ is real.

\subsection{Slice hyperholomorphic functions} As pointed out in the introduction, quaternionic operator theory is based on the theory of slice hyperholomorphic functions, the most important results of which we introduce now. The proofs of the results stated in this subsection can be found in the book \cite{MR2752913}.
\begin{definition}\label{AAAKKKE}
Let $U\subset\H$ be open and let $f: U \to \H$ be real differentiable. For any $I\in\S$, let $f_I := f|_{U\cap\C_I}$ denote the restriction of $f$ to the plane $\C_I$. The function $f$ is called left slice hyperholomorphic if, for all $I\in\S$,
\begin{equation}\label{LHolEQ}
\frac12\left( \frac{\partial}{\partial x_0} f_I(x) + I \frac{\partial}{\partial x_1} f_I(x)\right) = 0 \quad \text{for all }x = x_0 + I x_1\in U\cap\C_I\end{equation}
and right slice hyperholomorphic if, for all $I\in\S$,
\begin{equation}\label{RHolEQ}\frac12\left( \frac{\partial}{\partial x_0} f_I(x) + \frac{\partial}{\partial x_1} f_I(x)I\right) = 0 \quad \text{for all }x = x_0 + I x_1\in U\cap\C_I.\end{equation}
A left slice hyperholomorphic function that satisfies $f(U\cap\C_I)\subset\C_I$ for all $I\in\S$ is called intrinsic.

We denote the set of all left slice hyperholomorphic functions on $U$ by $\lhol(U)$, the set of all right slice hyperholomorphic functions on $U$ by $\rhol(U)$ and the set of all intrinsic functions by $\intrin(U)$.
\end{definition}
Note that an intrinsic function is both left and right slice hyperholomorphic because $f_I(x)\in\C_I$ commutes with the imaginary unit $I$ in the respective Cauchy-Riemann-operator. The converse is not true: the constant function $x\mapsto b\in\H\setminus\R$ is left and right slice hyperholomorphic, but it is not intrinsic.

The importance of the class of intrinsic functions is due to the fact that the multiplication and composition with intrinsic functions preserve slice hyperholomorphy. This is not true for arbitrary slice hyperholomorphic functions.
\begin{corollary}\label{lkjsWW}
 If $f\in\intrin(U)$ and $g\in\lhol(U)$, then $fg\in\lhol(U)$. If $f\in\rhol(U)$ and $g\in\intrin(U)$, then $fg\in\rhol(U)$.

 If $g\in\intrin(U)$ and $f\in\lhol(g(U))$, then $f\circ g\in \lhol(U)$. If $g\in\intrin(U)$ and $f\in\rhol(g(U))$, then $f\circ g\in \rhol(U)$.
\end{corollary}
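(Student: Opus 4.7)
The strategy is direct verification of the defining Cauchy--Riemann-type equations from Definition~\ref{AAAKKKE} on each plane $\C_I$ separately. I fix $I\in\S$ arbitrary and check the required identity pointwise on $U\cap\C_I$. For the composition statements one exploits that $g$ intrinsic implies $g(U\cap\C_I)\subset\C_I$, so that $(f\circ g)_I = f_I\circ g_I$ depends only on the restriction $f_I$ on the same slice.

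For the product statement with $f$ intrinsic and $g$ left slice hyperholomorphic, I apply the Leibniz rule to $(fg)_I = f_I g_I$ and split $(\partial_{x_0}+I\partial_{x_1})(f_I g_I)$ into an $f$-derivative part and a $g$-derivative part. The former collapses to $((\partial_{x_0}f_I)+I(\partial_{x_1}f_I))g_I = 0$ by left slice hyperholomorphy of $f$ (intrinsic functions are in particular left slice hyperholomorphic). The latter reads $f_I(\partial_{x_0}g_I) + If_I(\partial_{x_1}g_I)$; here the intrinsic property $f_I(x)\in\C_I$ gives $If_I = f_I I$, so after rearrangement this becomes $f_I((\partial_{x_0}g_I)+I(\partial_{x_1}g_I)) = 0$. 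The right slice hyperholomorphic statement is entirely dual, with $I$ moved to the opposite side and the commutation applied to $g$ instead.

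For the composition statements, I write $g_I(x_0+Ix_1) = u(x_0,x_1)+Iv(x_0,x_1)$ with $u,v$ real-valued; slice hyperholomorphy of $g$ is then equivalent to the classical Cauchy--Riemann equations $\partial_{x_0}u=\partial_{x_1}v$ and $\partial_{x_1}u=-\partial_{x_0}v$. Applying the chain rule to $f_I\circ g_I$ and substituting these relations, $(\partial_{x_0}+I\partial_{x_1})(f_I\circ g_I)$ regroups as
\[((\partial_{y_0}f_I)+I(\partial_{y_1}f_I))(g_I(x))\,\partial_{x_0}u + ((\partial_{y_1}f_I)-I(\partial_{y_0}f_I))(g_I(x))\,\partial_{x_0}v.\]
The first bracket is zero by left slice hyperholomorphy of $f$ at $g_I(x)\in g(U)\cap\C_I$; multiplying that same relation on the left by $I$ and using $I^2=-1$ shows the second bracket vanishes as well. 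The right slice hyperholomorphic analogue is obtained by placing $I$ consistently on the right throughout.

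The only real obstacle I anticipate is bookkeeping of non-commutativity: because quaternionic multiplication is non-commutative, the placement of $I$ in each intermediate term has to be tracked carefully. The intrinsic hypothesis in each claim is exactly what resolves this, either by ensuring a relevant factor commutes with $I$ (product case) or by guaranteeing that the inner function remains on a single complex slice so that the chain rule can be applied within $\C_I$ (composition case).
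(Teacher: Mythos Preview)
Your argument is correct. The paper itself does not prove this corollary but refers to the book \cite{MR2752913} for the proofs; your direct slice-by-slice verification via the Leibniz rule (for the product) and the chain rule (for the composition), using that intrinsic functions take values in $\C_I$ and hence commute with $I$, is precisely the standard proof one finds there.
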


Important examples of slice hyperholomorphic functions are power series with quaternionic coefficients: series of the form $\sum_{n=0}^{+\infty}x^na_n$ are left slice hyperholomorphic and series of the form $\sum_{n=0}^{\infty} a_nx^n$ are right slice hyperholomorphic on their domains of convergence. A power series is intrinsic if and only if its coefficients are real.

Any slice hyperholomorphic function on the other hand can be expanded into a power series at any real point.
\begin{definition}
The slice-derivative of a function $f\in\lhol(U)$ is defined as
\[\sderiv  f(x) = \lim_{\C_{I_x}\ni s\to x} (s-x)^{-1}(f(s)-f(x))\qquad\text{for }x = x_0 + I_xx_1\in U, \]
where $\lim_{\C_{I_x}\ni s\to x} g(s)$ denotes the limit as $s$ tends to $x$ in $\C_{I_x}$.
The slice-derivative of a function $f\in\rhol(U)$ is defined as
\[\sderiv  f(x) = \lim_{\C_{I_x}\ni s\to x} (f(s)-f(x))(s-x)^{-1}\qquad\text{for }x = x_0 + I_xx_1\in U. \]
\end{definition}
\begin{corollary}\label{SDProp}
The slice derivative of a left (or right) slice hyperholomorphic function is again left (or right) slice hyperholomorphic. Moreover, it coincides with the derivative with respect to the real part, that is
\[\sderiv  f(x) = \frac{\partial}{\partial x_0} f(x)\quad\text{for } x = x_0 + I x_1. \]
\end{corollary}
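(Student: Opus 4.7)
The plan is to reduce both assertions to classical one-variable complex analysis by restricting $f$ to each slice $\C_I$ and decomposing it into two $\C_I$-valued holomorphic components.

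As a first step I fix $I \in \S$ together with an auxiliary $J \in \S$ orthogonal to $I$, and write $\H = \C_I + \C_I J$ as a right $\C_I$-module. For $f \in \lhol(U)$ I then decompose $f_I = F_1 + F_2 J$ with $F_1, F_2 \colon U \cap \C_I \to \C_I$. Using that elements of $\C_I$ commute with $I$ and that $\C_I J$ is stable under left multiplication by $I$, the slice hyperholomorphy condition \eqref{LHolEQ} decouples into the two scalar equations
\[\frac{\partial F_k}{\partial x_0} + I\, \frac{\partial F_k}{\partial x_1} = 0, \qquad k = 1, 2.\]
Under the $\R$-linear isomorphism $\C_I \cong \C$ sending $x_0 + I x_1$ to $x_0 + i x_1$, these are precisely the classical Cauchy-Riemann equations, so each $F_k$ is holomorphic in the classical sense on $U \cap \C_I$, and therefore of class $C^\infty$.

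Next I compute the slice derivative. For $x = x_0 + I x_1$ with $\underline{x} \neq 0$ one has $I_x = I$, so the defining limit runs over $s \in \C_I$ and splits cleanly through the decomposition:
\[(s-x)^{-1}(f(s) - f(x)) = (s-x)^{-1}(F_1(s)-F_1(x)) + (s-x)^{-1}(F_2(s)-F_2(x))\, J.\]
Since $F_1$ and $F_2$ are classically holomorphic on $U\cap\C_I$, each factor converges to the complex derivative $\partial_{x_0} F_k(x)$, so the limit exists and equals $\partial_{x_0} F_1(x) + \partial_{x_0} F_2(x)\, J = \partial_{x_0} f_I(x)$. At a real point $x_0$ the unit $I_x$ is not canonical, but the same argument applies for any admissible choice, and the classical equality of the complex derivative of each $F_k$ with the limit along the real axis shows the resulting value $\partial_{x_0} f(x_0)$ is independent of $I_{x_0}$. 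This is the one subtle point worth flagging.

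For slice hyperholomorphy of $\sderiv f$ I use that $(\sderiv f)_I = \partial_{x_0} f_I$ together with smoothness of the components $F_k$. Schwarz's theorem lets me interchange the mixed partials, and differentiating \eqref{LHolEQ} in $x_0$ yields
\[\frac{\partial}{\partial x_0}(\sderiv f)_I + I\, \frac{\partial}{\partial x_1}(\sderiv f)_I = \frac{\partial}{\partial x_0}\!\left(\frac{\partial f_I}{\partial x_0} + I\, \frac{\partial f_I}{\partial x_1}\right) = 0,\]
so $\sderiv f \in \lhol(U)$, since $I \in \S$ was arbitrary. The right slice hyperholomorphic case is verbatim after replacing the module decomposition by $\H = \C_I + J\C_I$ as a left $\C_I$-module and shifting $I$ to the right of the partials throughout. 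The only step requiring any real care is the well-definedness of $\sderiv f$ at real points, addressed by the limit-along-$\R$ observation above; every other step is a direct transfer from the classical theory once the slice decomposition is in place.
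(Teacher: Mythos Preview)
Your proof is correct and follows essentially the same approach as the paper. The paper states Corollary~\ref{SDProp} without proof (referring to \cite{MR2752913}), but it does prove the vector-valued analogue, Corollary~\ref{SDPropOP}, by viewing $\H$ (resp.\ $V$) as a complex Banach space over $\C_I$ so that $f_I$ becomes classically holomorphic; your explicit decomposition $\H=\C_I+\C_I J$ into two $\C_I$-valued components is precisely a concrete realization of that idea, and the remaining steps (identifying the slice derivative with the complex derivative $\partial_{x_0}f_I$, and noting that derivatives of holomorphic functions are holomorphic) match.
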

\begin{theorem}\label{Taylor}
If $f$ is left slice hyperholomorphic on the ball $B(r,\alpha)$ with radius $r$ centered at $\alpha\in\R$, then
\[f(x) = \sum_{n=0}^{+\infty} (x-\alpha)^n \frac{1}{n!}\sderiv ^n f(\alpha)\quad\text{for }x\in B(r,\alpha).\]
If $f$ is right slice hyperholomorphic on $B(r,\alpha)$, then
\[f(x) = \sum_{n=0}^{+\infty}\frac{1}{n!}\sderiv ^n f(\alpha) (x-\alpha)^n \quad\text{for }x\in B(r,\alpha).\]
\end{theorem}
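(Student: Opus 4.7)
The plan is to reduce the Taylor expansion to the classical complex Taylor theorem on each complex plane $\C_I$. The key structural feature is that, since $\alpha\in\R$, the ball $B(r,\alpha)=\{x\in\H:|x-\alpha|<r\}$ is axially symmetric, and its intersection with each plane $\C_I$ is exactly the usual open disk of radius $r$ around $\alpha$ in $\C_I$. Consequently every $x\in B(r,\alpha)$ lies in $B(r,\alpha)\cap\C_{I_x}$, so it is enough to expand $f$ on one such slice at a time.

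Fix $I\in\S$ and choose $J\in\S$ with $J\perp I$, so that $\H=\C_I+\C_I J$ as a right $\C_I$-module with orthogonal summands. I would write the restriction as $f_I=g+hJ$ with components $g,h\colon B(r,\alpha)\cap\C_I\to\C_I$. Substituting this splitting into \eqref{LHolEQ} and separating the $\C_I$-part from the $\C_I J$-part shows that both $g$ and $h$ satisfy the classical Cauchy-Riemann equation on the disk $B(r,\alpha)\cap\C_I$; they are therefore holomorphic in the complex variable $z=x_0+Ix_1$, and the classical Taylor theorem gives absolutely convergent series
\[ g(z)=\sum_{n=0}^{+\infty}(z-\alpha)^n g_n, \qquad h(z)=\sum_{n=0}^{+\infty}(z-\alpha)^n h_n, \]
with $g_n,h_n\in\C_I$, valid on the whole disk. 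Combining these yields $f_I(z)=\sum_{n=0}^{+\infty}(z-\alpha)^n a_n$ on $B(r,\alpha)\cap\C_I$ with $a_n:=g_n+h_n J\in\H$.

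It remains to identify $a_n$ with $\tfrac{1}{n!}\sderiv^n f(\alpha)$ and to check that this identification is independent of the slice. By Corollary \ref{SDProp} the slice derivative equals $\partial/\partial x_0$, and iterating this identity and evaluating the series above term by term produces $a_n=\tfrac{1}{n!}\partial_{x_0}^n f(\alpha)$. Since $\alpha$ is real, the real-axis derivatives $\partial_{x_0}^n f(\alpha)$ are intrinsic to $f$ and independent of the auxiliary choices of $I$ and $J$. Hence the single power series $\sum_n(x-\alpha)^n \tfrac{1}{n!}\sderiv^n f(\alpha)$ reproduces $f$ on every slice $B(r,\alpha)\cap\C_I$, that is, on all of $B(r,\alpha)$. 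The right slice hyperholomorphic case follows by the symmetric argument, decomposing $\H=\C_I+J\C_I$ and writing $f_I=g+Jh$ to obtain the series with coefficients on the left of $(x-\alpha)^n$.

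The only point I anticipate being delicate is the slice-independence of the Taylor coefficients extracted from the complex expansion on $\C_I$; Corollary \ref{SDProp} makes this automatic by realizing $\sderiv^n f(\alpha)$ as a genuine one-variable real-axis derivative at the real point $\alpha$, which is precisely the reason the theorem must be stated for a real expansion center.
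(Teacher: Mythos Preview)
Your proof is correct. The paper does not give its own proof of this scalar-valued Taylor theorem, citing instead the monograph \cite{MR2752913}; however, it does prove the vector-valued analogue (Theorem~\ref{TaylorOP}), and your argument is essentially the same as that proof: restrict to a slice $\C_I$, recognise $f_I$ as holomorphic in the classical sense (you make this explicit via the splitting $\H=\C_I+\C_I J$, the paper simply views the target as a complex Banach space over $\C_I$), apply the classical Taylor expansion, and use Corollary~\ref{SDProp} to identify the coefficients as $\tfrac{1}{n!}\sderiv^n f(\alpha)$ independently of $I$.
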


Slice hyperholomorphic functions possess good properties when they are defined on suitable domains.
\pagebreak[2]
\begin{definition}
A set $U\subset\H$ is called
\begin{enumerate}[(i)]
\item axially symmetric if $[x]\subset U$ for any $x\in U$ and
\item a slice domain if $U$ is open, $U\cap\R\neq 0$ and $U\cap\C_I$ is a domain for any $I\in\S$.
\end{enumerate}
\end{definition}

\begin{theorem}[Identity Principle]\label{IDP}
Let $U$ be a  slice domain, let $f$ be left or right slice hyperholomorphic on $U$ and let $\mathcal{Z}$ be  the set of zeros of $f$. If there exists an imaginary unit $I\in\S$ such that $\mathcal{Z}\cap\C_I$ has an accumulation point in $U\cap\C_I$, then $f\equiv 0$.

\end{theorem}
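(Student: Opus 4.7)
The plan is to reduce the problem, via the Splitting Lemma for slice hyperholomorphic functions, to the classical identity principle for holomorphic functions of one complex variable, applied twice: first in the plane $\C_I$ containing the accumulation point, then in every other slice $\C_J$ using that $f$ must vanish on the real axis.

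More concretely, first I would fix an imaginary unit $J\in\S$ with $J\perp I$ and apply the Splitting Lemma to write $f_I = f_1 + f_2 J$ on $U\cap\C_I$, where $f_1,f_2\colon U\cap \C_I\to\C_I$ are holomorphic in the classical sense (this follows directly from the Cauchy-Riemann system \eqref{LHolEQ}, and the analogous splitting $f_I = f_1 + J f_2$ works in the right slice hyperholomorphic case using \eqref{RHolEQ}). Since $\mathcal{Z}\cap\C_I$ has an accumulation point in the domain $U\cap\C_I$ and both $f_1$ and $f_2$ vanish on $\mathcal{Z}\cap\C_I$, the classical identity theorem, which applies because $U\cap\C_I$ is connected by hypothesis, forces $f_1\equiv 0$ and $f_2\equiv 0$ on $U\cap\C_I$. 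Consequently $f_I\equiv 0$ on $U\cap\C_I$, and in particular $f$ vanishes on the nonempty set $U\cap\R$.

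Next I would propagate this vanishing to every other slice. Let $K\in\S$ be arbitrary. Since $U$ is open in $\H$ and $U\cap\R\neq\emptyset$, the set $U\cap\R$ is a nonempty open subset of $\R$, hence contains an interval and thus has accumulation points that lie in $U\cap\C_K$. Splitting $f_K = g_1 + g_2 L$ on $U\cap\C_K$ for some $L\in\S$ with $L\perp K$, we get two classically holomorphic functions $g_1,g_2$ on the domain $U\cap\C_K$ that vanish on this interval; the classical identity principle on $U\cap\C_K$ then yields $f_K\equiv 0$. Since $K\in\S$ was arbitrary and $U=\bigcup_{K\in\S}(U\cap\C_K)$, we conclude $f\equiv 0$ on $U$.

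The main obstacle, and the essential use of the slice domain hypothesis, is exactly in the propagation step: connectedness of each slice $U\cap\C_K$ together with $U\cap\R\neq\emptyset$ is what allows a single real interval of zeros to force vanishing in every plane. Without the slice domain assumption one could have a disconnected piece of $U\cap\C_K$ in which $f$ is not forced to be zero, so any proof must hinge on these two properties. Beyond that, the argument is routine: the Splitting Lemma converts each step into an application of the standard one-variable identity theorem, and no further machinery (such as the representation formula) is required.
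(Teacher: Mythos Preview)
The paper does not actually prove this theorem; it is stated in the preliminary subsection whose results are all referred to the monograph \cite{MR2752913}. Your argument is correct and is precisely the standard proof given there: reduce to the classical identity theorem on the slice $\C_I$ via the Splitting Lemma, obtain vanishing on $U\cap\R$, and then propagate to every other slice using that $U\cap\R$ is a nonempty open subset of $\R$ contained in each connected domain $U\cap\C_K$.
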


As a consequence of the Identity Principle,  the values of a slice hyperholomorphic function on an axially symmetric slice domain are uniquely determined by its values on an arbitrary complex plane $\C_I$. Therefore, any function that is holomorphic on a suitable subset of a complex plane possesses an unique slice hyperholomorphic extension.
\begin{theorem}[Representation Formula]\label{RepFo}
Let $U$ be an axially symmetric slice domain and let $I\in\S$. For any $x = x_0 + I_x x_1\in U$ set $x_I := x_0 + Ix_1$. If $f\in\lhol(U)$, then
\[f(x) = \frac12(1-I_xI)f(x_I) + \frac12(1+I_xI)f(\overline{x_I}) \quad\text{for all }x\in U.\]
If $f\in\rhol(U)$, then
\[f(x) = f(x_I)(1-II_x)\frac12 + f(\overline{x_I})(1+II_x)\frac12 \quad\text{for all }x\in U.\]
\end{theorem}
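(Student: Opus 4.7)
The plan is to introduce the right-hand side as an auxiliary function $\varphi: U \to \H$ and then deduce $\varphi \equiv f$ from the Identity Principle (Theorem~\ref{IDP}). Axial symmetry of $U$ guarantees $x_I, \overline{x_I} \in U$ for every $x \in U$, so $\varphi$ is well defined. The strategy is to prove that (a) $\varphi$ agrees with $f$ on $U \cap \C_I$, and (b) $\varphi$ is itself left slice hyperholomorphic on $U$. Because $U$ is a slice domain, $U \cap \C_I$ is a nonempty open subset of $\C_I$ and therefore accumulates in itself; applying Theorem~\ref{IDP} to $\varphi - f \in \lhol(U)$ will then force $\varphi \equiv f$ on all of $U$.

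For (a), I fix $x = x_0 + I x_1 \in U \cap \C_I$ and split on the sign of $x_1$. If $x_1 \ge 0$, then $I_x = I$, so $x_I = x$, $\overline{x_I} = \overline{x}$, and the coefficients $\tfrac12(1 - I_x I) = 1$ and $\tfrac12(1 + I_x I) = 0$ collapse the formula to $\varphi(x) = f(x)$. If $x_1 < 0$, then $I_x = -I$, so $x_I = \overline{x}$, $\overline{x_I} = x$, the two coefficients swap, and again $\varphi(x) = f(x)$.

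For (b), I fix $J \in \S$ and restrict $\varphi$ to $U \cap \C_J$. A short case analysis on the sign of $v$ (using $I_x = \pm J$ and $x_I = x_0 + I|v|$) shows that for every $x = u + Jv \in U \cap \C_J$, $\varphi$ is given by the single expression
\[
\varphi(u + Jv) = \frac{1}{2}(1 - JI)\, f(u + Iv) + \frac{1}{2}(1 + JI)\, f(u - Iv),
\]
so $\varphi|_{U \cap \C_J}$ is real-differentiable on the whole slice. The Cauchy-Riemann equation for $f$ on $\C_I$ rearranges (using $I^{-1} = -I$) to $\partial_{x_1} f_I = I\, \partial_{x_0} f_I$, which via the chain rule yields $\partial_v[f(u + Iv)] = I\, \partial_u[f(u + Iv)]$ and $\partial_v[f(u - Iv)] = -I\, \partial_u[f(u - Iv)]$. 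Combining these with the algebraic identities $(1 \mp JI)\, I = I \pm J$ and $J(I \pm J) = JI \mp 1$, a direct computation gives $J\, \partial_v \varphi = -\partial_u \varphi$, which is exactly the Cauchy-Riemann equation for $\varphi$ on $\C_J$. As $J \in \S$ was arbitrary, $\varphi \in \lhol(U)$.

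The main technical point is the Cauchy-Riemann verification in (b): because $\H$ is non-commutative, one must track carefully on which side each imaginary unit sits when applying the chain rule and simplifying $J\, \partial_v \varphi$, but the calculation is otherwise purely algebraic. The right slice hyperholomorphic case follows by the mirror argument, with the coefficients $\tfrac12(1 \mp II_x)$ placed on the right of $f$ throughout and the relation $\partial_{x_1} f_I = \partial_{x_0} f_I \cdot I$ replacing its left-sided counterpart.
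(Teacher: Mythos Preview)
The paper does not contain its own proof of this theorem; it is one of the preliminary results for which the paper refers the reader to the monograph \cite{MR2752913}. Your argument is correct and is one of the standard routes to the Representation Formula: define the right-hand side as an auxiliary function, verify by a direct Cauchy--Riemann computation that it is left slice hyperholomorphic on every slice $\C_J$, observe that it agrees with $f$ on $U\cap\C_I$, and invoke the Identity Principle.

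One small technical point deserves a sentence: Definition~\ref{AAAKKKE} requires real differentiability of $\varphi$ as a map $U\subset\R^4\to\H$, not merely real differentiability of each restriction $\varphi|_{U\cap\C_J}$. Away from the real axis this is immediate, since $x\mapsto(x_0,x_1,I_x)$ is smooth on $\H\setminus\R$ and $f_I$ is holomorphic; at real points it follows because your slice formula $\varphi(u+Jv)=\tfrac12(1-JI)f(u+Iv)+\tfrac12(1+JI)f(u-Iv)$ combined with the Taylor expansion of $f_I$ at $u$ (Theorem~\ref{Taylor}) shows that $\varphi$ is given locally by the same convergent power series $\sum_n (x-u)^n\,\tfrac{1}{n!}\sderiv^n f(u)$ as $f$, hence is real analytic there. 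With that remark added, the proof is complete.
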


\begin{corollary}\label{extLem}
Let $I\in\S$ and let $f:O\to\H$ be real differentiable, where $O$ is a domain in $\C_I$ that is symmetric with respect to the real axis.
\begin{enumerate}[(i)]
\item The axially symmetric hull $[O]: = \bigcup_{z\in O}[z]$ of $O$ is an axially symmetric slice domain.
\item If $f$ satisfies \eqref{LHolEQ}, then there exists a unique left slice hyperholomorphic extension of $f$ to $[O]$.
\item If $f$ satisfies \eqref{RHolEQ}, then there exists a unique right slice hyperholomorphic extension of $f$ to $[O]$.
\end{enumerate}
\end{corollary}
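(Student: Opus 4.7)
The plan is to use the Representation Formula (Theorem \ref{RepFo}) as an explicit ansatz for the extension, and to reduce uniqueness to the Identity Principle (Theorem \ref{IDP}). First I would establish (i), then construct $\tilde f$ and verify slice hyperholomorphicity.

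For (i), axial symmetry of $[O]$ is immediate from its definition. Next, since $O$ is open, connected and invariant under conjugation in $\C_I$, it must meet $\R$: otherwise $O$ would decompose as the disjoint union of its intersections with the open upper and lower half-planes, each nonempty by symmetry, contradicting connectedness. Hence $[O]\cap\R=O\cap\R\neq\emptyset$. For openness of $[O]$, given $x=x_0+I_x x_1\in[O]$ I would pick $\epsilon>0$ with $B_{\C_I}(x_0+Ix_1,\epsilon)\subset O$; for any $w=w_0+\underline w\in\H$ with $|w-x|<\epsilon$, the estimate
\[ (w_0-x_0)^2+(|\underline w|-x_1)^2 \leq (w_0-x_0)^2+|\underline w-\underline x|^2 = |w-x|^2 \]
(valid since $x_1=|\underline x|$) places $w_0+I|\underline w|$ inside $B_{\C_I}(x_0+Ix_1,\epsilon)\subset O$, so $w\in[O]$. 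For each $J\in\S$, the $\R$-linear bijection $\phi_J:x_0+Ix_1\mapsto x_0+Jx_1$ sends $O$ homeomorphically onto $[O]\cap\C_J$ by the symmetry of $O$, so the latter is a domain.

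For (ii), with (iii) strictly analogous via the right representation formula, I would define for $x=x_0+Jx_1\in[O]\cap\C_J$ the candidate extension
\[ \tilde f(x):=\tfrac12(1-JI)f(x_0+Ix_1)+\tfrac12(1+JI)f(x_0-Ix_1), \]
which is well-defined because $x_0\pm Ix_1\in O$ by symmetry of $O$ and because the right-hand side is invariant under $(J,x_1)\mapsto(-J,-x_1)$, the alternative parametrization of the same quaternion. Setting $J=I$ recovers $\tilde f=f$ on $O$. Real differentiability is immediate away from $\R$; at real points both summands coincide with $f(x_0)$, and a short Taylor expansion of $f(x_0\pm Ix_1)$ in $x_1$ shows that $\tilde f(x_0+\underline x)=f(x_0)+\underline x\,\partial_0 f(x_0)+O(|\underline x|^3)$, giving real differentiability there. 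To verify \eqref{LHolEQ} on $\C_J$, I would differentiate in $(x_0,x_1)$, use the identities $\partial_{x_1}[f(x_0+Ix_1)]=I\,\partial_0 f(x_0+Ix_1)$ and $\partial_{x_1}[f(x_0-Ix_1)]=-I\,\partial_0 f(x_0-Ix_1)$ obtained by rearranging the CR equation for $f$ as $\partial_1 f=I\,\partial_0 f$, and then invoke the quaternion multiplication identity $(1\mp JI)I=I\pm J$ to conclude $\partial_{x_0}\tilde f+J\,\partial_{x_1}\tilde f=0$. Uniqueness follows by applying the Identity Principle to the difference of two such extensions, which vanishes on the open set $O\subset[O]\cap\C_I$ and hence on the whole slice domain $[O]$.

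The main obstacle is the noncommutative Cauchy--Riemann computation: factor orderings such as $(1\pm JI)I$ versus $I(1\pm JI)$ must be tracked carefully, and one must remember that the CR equation for $f$ rearranges to $\partial_1 f=I\,\partial_0 f$ (because $I^{-1}=-I$) and not to the naively-expected $-I\,\partial_0 f$. Beyond this algebraic bookkeeping the verification is routine once the representation formula has been chosen as the ansatz.
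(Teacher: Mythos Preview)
Your approach is correct and is exactly the standard one that the paper intends: the corollary is stated without proof immediately after the Representation Formula (Theorem~\ref{RepFo}), and the paper refers to \cite{MR2752913} for the details. Using the representation formula as the explicit ansatz for the extension and the Identity Principle for uniqueness is precisely how this result is obtained in that reference.

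One small correction: in your differentiability argument at real points, the Taylor expansion gives $\tilde f(x_0+\underline x)=f(x_0)+\underline x\,\partial_0 f(x_0)+O(|\underline x|^2)$, not $O(|\underline x|^3)$; but since you only need $o(|\underline x|)$ for real differentiability, this does not affect the conclusion.
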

\begin{remark}
If $f$ has a left and a right slice hyperholomorphic extension, they do not necessarily coincide. Consider for instance the function $z\mapsto bz$ on $\C_I$ with a constant $b\in\C_I\setminus\R$. Its left slice hyperholomorphic extension to $\H$ is $x\mapsto xb$, but its right
slice hyperholomorphic extension is $x\mapsto bx$.
\end{remark}
Finally, slice hy\-per\-ho\-lo\-mor\-phic functions satisfy an adapted version of Cauchy's integral theorem and a Cauchy-type integral formula with a modified kernel, which is the starting point for the definition of the $S$-functional calculus.
\begin{definition}
We define the  left slice hyperholomorphic Cauchy kernel as
\[S_L^{-1}(s,x) = -(x^2-2\Re(s)x + |s|^2)^{-1}(x-\overline{s})\quad\text{for }x\notin[s]\]
and the right slice hyperholomorphic Cauchy kernel as
\[S_R^{-1}(s,x) = -(x-\overline{s})(x^2-2\Re(s)x + |s|^2)^{-1}\quad\text{for }x\notin[s].\]
\end{definition}
\begin{corollary}
The left slice hyperholomorphic Cauchy-kernel $S_L^{-1}(s,x)$ is left slice hyperholomorphic in the variable $x$ and right slice hy\-per\-ho\-lo\-mor\-phic in the variable $s$ on its domain of definition. Moreover, we have $S_R^{-1}(s,x) = - S_L^{-1}(x,s)$.
\end{corollary}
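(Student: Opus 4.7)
The plan is to establish the corollary in three steps: first the left slice hyperholomorphy of $S_L^{-1}(s,x)$ in $x$ by a direct factorization, then the algebraic identity $S_R^{-1}(s,x) = -S_L^{-1}(x,s)$, and finally the right slice hyperholomorphy in $s$ by combining the identity with the analogous factorization on the $S_R^{-1}$ side.

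\emph{Left slice hyperholomorphy in $x$.} For fixed $s\in\H$ and $x\notin[s]$, I factor
\[ S_L^{-1}(s,x) = \bigl[-(x^2 - 2\Re(s)\,x + |s|^2)^{-1}\bigr]\,(x - \overline{s}). \]
The polynomial $x\mapsto x^2 - 2\Re(s)\,x + |s|^2$ has real coefficients (since $\Re(s),|s|^2\in\R$), hence is intrinsic in $x$; it vanishes only on the sphere $[s]$, so its reciprocal lies in $\intrin(\H\setminus[s])$. The factor $x-\overline{s}$ is a polynomial in $x$ with coefficients on the right, hence in $\lhol(\H)$. Corollary \ref{lkjsWW} (intrinsic times left slice hyperholomorphic yields left slice hyperholomorphic) then gives $S_L^{-1}(s,\bcdot)\in\lhol(\H\setminus[s])$.

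\emph{The symmetry identity.} After multiplying on the left by $(s^2 - 2\Re(x)\,s + |x|^2)$ and on the right by $(x^2 - 2\Re(s)\,x + |s|^2)$, the claimed equality $S_R^{-1}(s,x) = -S_L^{-1}(x,s)$ reduces to
\[ -(s^2 - 2\Re(x)\,s + |x|^2)(x - \overline{s}) = (s - \overline{x})(x^2 - 2\Re(s)\,x + |s|^2). \]
I would expand both sides using $2\Re(x) = x + \overline{x}$, $2\Re(s) = s + \overline{s}$, $x\overline{x} = \overline{x}x = |x|^2$, $s\overline{s} = \overline{s}s = |s|^2$, and the fact that $|x|^2,|s|^2,\Re(x),\Re(s)$ lie in the center of $\H$. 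The pairs that do not commute cancel against their transposed counterparts, and the residue collapses to $\bigl([x,s] + [\overline{x},s]\bigr)x = 2[\Re(x),s]\,x$, which vanishes because $\Re(x)\in\R$. I expect this noncommutative expansion to be the main technical nuisance of the proof, but it is purely algebraic and mechanical.

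\emph{Right slice hyperholomorphy in $s$.} Interchanging the roles of $s$ and $x$ in the identity just verified yields $S_L^{-1}(s,x) = -S_R^{-1}(x,s)$, so it suffices to show that, for fixed $x$, the map $s\mapsto S_R^{-1}(x,s)$ lies in $\rhol(\H\setminus[x])$. I factor
\[ S_R^{-1}(x,s) = (s-\overline{x})\,\bigl[-(s^2 - 2\Re(x)\,s + |x|^2)^{-1}\bigr]. \]
The factor $s-\overline{x}$ is a polynomial in $s$ with coefficients on the left, hence belongs to $\rhol(\H)$, while the reciprocal of the real-coefficient polynomial $s\mapsto s^2 - 2\Re(x)\,s + |x|^2$ is intrinsic in $s$ on $\H\setminus[x]$. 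The second assertion of Corollary \ref{lkjsWW} (right slice hyperholomorphic times intrinsic is right slice hyperholomorphic) now delivers the claim.
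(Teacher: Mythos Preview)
Your approach is correct and is essentially the standard one. The paper does not give its own proof of this corollary: it is listed among the preliminary facts whose proofs are deferred to the monograph \cite{MR2752913}. Your argument is precisely the type of proof one finds there, namely factoring each kernel as the product of an intrinsic rational function and a one-sided polynomial, then invoking the multiplicativity result (Corollary~\ref{lkjsWW}), together with a direct algebraic verification of the symmetry identity $S_R^{-1}(s,x)=-S_L^{-1}(x,s)$. The only cosmetic point is that your sketch of the expansion in step two is terse; the identity is indeed mechanical, but in a finished write-up you would want to carry it through or cite the explicit computation (it is equivalent to the equality of the two standard forms of the slice Cauchy kernel).
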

\begin{remark}
If $x$ and $s$ belong to the same complex plane, they commute and the slice hyperholomorphic Cauchy-kernels reduce to the classical one:
\[ \frac{1}{s-x} = S_L^{-1}(s,x) = S_R^{-1}(s,x).\]
\end{remark}
\begin{theorem}[Cauchy's integral theorem]\label{CInt}
Let $O\subset\H$ be open, let $I\in\S$ and let $D_I$ be a bounded open subset of $O\cap\C_I$ with $\overline{D_I}\subset O\cap\C_I$ such that its boundary consists of a finite number of continuously differentiable Jordan curves. For any $f\in\rhol(U)$ and $g\in\lhol(U)$, it is
\[\int_{\partial D_I}f(s)\,ds_I\,g(s) = 0,\]
where $ds_I = -I\ ds$.
\end{theorem}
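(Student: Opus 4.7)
My plan is to reduce the quaternionic integral to a collection of classical complex Cauchy integrals on the plane $\C_I$, exploiting the splitting of $\H$ into two copies of $\C_I$.

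First I would fix the imaginary unit $I\in\S$ appearing in the statement and choose a second imaginary unit $J\in\S$ with $IJ=-JI$, so that $\{1,I,J,IJ\}$ is an orthonormal real basis of $\H$ and we have the two decompositions $\H=\C_I\oplus J\C_I=\C_I\oplus \C_I J$. Using the first splitting for $f$ and the second for $g$, I would write the restrictions to $\C_I$ as
\[f_I=f_1+Jf_2,\qquad g_I=g_1+g_2 J,\]
with $\C_I$-valued components $f_1,f_2,g_1,g_2:O\cap\C_I\to\C_I$. Inserting these into the Cauchy--Riemann-type equations \eqref{RHolEQ} for $f$ and \eqref{LHolEQ} for $g$, the $\C_I$- and $J\C_I$-parts (respectively the $\C_I$- and $\C_I J$-parts) must vanish separately; this decouples the four component equations and yields that each of $f_1,f_2,g_1,g_2$ is classically holomorphic on $O\cap\C_I$ after the standard identification $\C_I\cong\C$ via $I\mapsto i$.

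Next, observing that $J$ is a constant quaternion and $ds_I=-I\,ds\in\C_I$, I would expand
\[\int_{\partial D_I}f(s)\,ds_I\,g(s)=\sum_{k,\ell=1}^{2}A_{k\ell},\]
where the four summands $A_{k\ell}$ arise from multiplying out $(f_1+Jf_2)\,ds_I\,(g_1+g_2 J)$. In each $A_{k\ell}$ the factors $f_k(s)$, $g_\ell(s)$ and $ds_I$ all lie in the commutative subfield $\C_I$, while any factor $J$ is constant and can be extracted from the integral on the appropriate side. Thus each $A_{k\ell}$ takes the shape
\[A_{k\ell}=J^{\varepsilon_1}\!\left(\int_{\partial D_I}h_{k\ell}(s)\,ds\right)\!J^{\varepsilon_2},\qquad \varepsilon_1,\varepsilon_2\in\{0,1\},\]
with $h_{k\ell}=f_k g_\ell$ a product of two $\C_I$-valued functions that are holomorphic on an open neighborhood of $\overline{D_I}$ in $\C_I$.

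Finally, by the classical Cauchy integral theorem in $\C\cong\C_I$, each $\int_{\partial D_I}h_{k\ell}(s)\,ds$ vanishes, since $\overline{D_I}\subset O\cap\C_I$ guarantees holomorphy on an open neighborhood of the closure and $\partial D_I$ consists of finitely many continuously differentiable Jordan curves by hypothesis. The sum of the four $A_{k\ell}$ is therefore zero, as claimed. The main bookkeeping obstacle is tracking the orderings and signs when expanding the product under the noncommutative multiplication of $\H$; the key observation that resolves it is that the only noncommuting factor, $J$, is a constant and can always be peeled away from the $\C_I$-valued kernel of the integral, after which the problem becomes purely complex-analytic.
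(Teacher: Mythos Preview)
Your argument is correct and is the standard proof via the Splitting Lemma. The paper does not supply its own proof of this statement: Theorem~\ref{CInt} is listed among the preliminary results in Section~\ref{PreRes}, whose proofs are referred to the monograph~\cite{MR2752913}, and the proof given there proceeds exactly along the lines you describe---decompose $f_I$ and $g_I$ into $\C_I$-valued components using the basis $\{1,J\}$, observe that each component is holomorphic in the classical sense on $O\cap\C_I$, and reduce the quaternionic line integral to a $\C_I$-linear combination of classical Cauchy integrals that vanish.
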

\begin{theorem}[Cauchy's integral formula]\label{Cauchy}
Let $U\subset\H$ be a bounded axially symmetric slice domain such that its boundary $\partial (U\cap\C_I)$ in $\C_I$ consists of a finite number of continuously differentiable Jordan curves. Let $I\in\S$ and set $ds_I = -I\, ds$. If $f$ is left slice hyperholomorphic on an open set that contains $\overline{U}$, then
\[f(x) = \frac{1}{2\pi}\int_{\partial(U\cap\C_I)} S_L^{-1}(s,x)\,ds_I\, f(s)\quad\text{for all }x\in U.\]
If $f$ is right slice hyperholomorphic on an open set that contains $\overline{U}$, then
\[f(x) = \frac{1}{2\pi}\int_{\partial(U\cap\C_I)}f(s)\, ds_I\, S_R^{-1}(s,x)\quad\text{for all }x\in U.\]
\end{theorem}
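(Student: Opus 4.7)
I would prove the left slice hyperholomorphic statement; the right one follows by the symmetric argument using $f(s)\,ds_I\,S_R^{-1}(s,x)$ and the identity $S_R^{-1}(s,x) = -S_L^{-1}(x,s)$. The strategy is to (i) verify the formula on the slice $\C_I$ by reducing to the classical complex Cauchy integral formula, and then (ii) bootstrap to arbitrary $x\in U$ via the Representation Formula.

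\textbf{Step 1: The formula on $U\cap\C_I$.} For $s\in\partial(U\cap\C_I)\subset\C_I$ and $x\in U\cap\C_I$, both points lie in the same complex plane and therefore commute, so the remark after the definition of the Cauchy kernels gives $S_L^{-1}(s,x)=(s-x)^{-1}$. Moreover $ds_I=-I\,ds$ and $(-I)=I^{-1}$ inside $\C_I$, so the prefactor $\tfrac{1}{2\pi}(-I)$ plays the role of $\tfrac{1}{2\pi i}$ in classical complex analysis on $\C_I\cong\C$. To handle the fact that $f$ is $\H$-valued, fix $J\in\S$ with $I\perp J$ and split $f_I=F_1+F_2J$ with $F_1,F_2:U\cap\C_I\to\C_I$; the Cauchy--Riemann condition \eqref{LHolEQ} for $f$ forces $F_1$ and $F_2$ to be holomorphic in the ordinary $\C_I$-sense (the Splitting Lemma). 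The classical Cauchy integral formula applied on $U\cap\C_I$ to each of $F_1,F_2$ then yields
\[ F_k(x) = \frac{1}{2\pi}\int_{\partial(U\cap\C_I)}(s-x)^{-1}\,ds_I\,F_k(s)\qquad(k=1,2),\]
and reassembling $F_1+F_2J$, keeping $J$ on the far right so as not to disturb the ordering inside the integral, gives the desired identity for every $x\in U\cap\C_I$.

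\textbf{Step 2: Extension to $U$ via the Representation Formula.} Set
\[ g(x) := \frac{1}{2\pi}\int_{\partial(U\cap\C_I)}S_L^{-1}(s,x)\,ds_I\,f(s),\qquad x\in U. \]
Since $s\mapsto S_L^{-1}(s,x)$ is continuous on the compact contour and $x\mapsto S_L^{-1}(s,x)$ is left slice hyperholomorphic on $\H\setminus[s]\supset U$, differentiation under the integral (in the $\partial/\partial x_0$ and $\partial/\partial x_1$ sense, in any slice $\C_{I'}$) shows that $g\in\lhol(U)$. By Step~1, $g$ coincides with $f$ on $U\cap\C_I$. For an arbitrary $x=x_0+I_xx_1\in U$, write $x_I:=x_0+Ix_1\in U\cap\C_I$ (using axial symmetry of $U$); applying Theorem~\ref{RepFo} to both $f$ and $g$ yields
\[ g(x) = \tfrac12(1-I_xI)g(x_I)+\tfrac12(1+I_xI)g(\overline{x_I}) = \tfrac12(1-I_xI)f(x_I)+\tfrac12(1+I_xI)f(\overline{x_I}) = f(x).\]

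\textbf{Main obstacle.} The only delicate point is Step~1: one must verify carefully that the noncommutative factor $-I\,ds$ collapses to the complex measure $2\pi i\,ds$ under $\C_I\cong\C$, and that the $J$-component of $f_I$ can be kept on the right of the integrand throughout, so that the classical complex Cauchy formula really applies componentwise. Once this bookkeeping is settled, Step~2 is a soft argument: slice hyperholomorphy of the kernel in $x$ together with the Representation Formula propagate the identity from the slice $\C_I$ to the whole axially symmetric slice domain $U$.
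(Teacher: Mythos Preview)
The paper does not give its own proof of this theorem: it is stated as a known result with the proof delegated to the monograph \cite{MR2752913} (and, for the vector-valued version, to \cite{OPValPaper}). Your proposed argument---splitting $f_I$ on the slice $\C_I$ into $\C_I$-valued holomorphic components via the Splitting Lemma, applying the classical Cauchy formula componentwise, and then propagating to all of $U$ by checking that the integral defines a left slice hyperholomorphic function and invoking the Representation Formula---is precisely the standard proof one finds in that literature, so your approach is correct and matches the cited sources.
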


The pointwise product of two slice hyperholomorphic functions is in general not slice hyper\-holo\-morphic. However, it is possible to define regularized products that preserve left and right slice hyperholomorphicity. The left slice hyperholomorphic Cauchy-kernel $S_L^{-1}(s,x)$ is the inverse of the function $x\mapsto s-x$ with respect to this left slice hyperholomorphic product. Similarly, $S_R^{-1}(s,x)$ the inverse of the function $x\mapsto s-x$ with respect to the right slice hyperholomorphic product. We therefore define $S_L^{-n}(s,x)$ and $S_R^{-n}(s,x)$ as the $n$-th inverse power of the function $x\mapsto s-x$ with respect to the left resp. right slice hyperholomorphic product.

\begin{definition}\label{CauchySlicePow}
Let $s,x\in\H$ with $s\notin [x]$. For $n\in\N_0$, we define
\[S_L^{-n}(s,x) := (x^2-2\Re(s)x + |s|^2)^{-n}\sum_{k=0}^n\binom{n}{k}(-x)^k\overline{s}^{n-k}\]
and
\[S_R^{-n}(s,x) := \sum_{k=0}^{n}\binom{n}{k}\overline{s}^{n-k}(-x)^k(x^2-2\Re(s)x + |s|^2)^{-n}.\]
\end{definition}

A theory of slice hyperholomorphicity can also be developed for functions with values a two-sided Banach space over the quaternions. First results were stated in \cite{OPValPaper}.
\begin{definition}\label{OpValDef}
Let $V$ be a two-sided quaternionic Banach space, let $U\subset\H$ be open and let $f: U \to V$ be real differentiable. For any $I\in\S$, let $f_I := f|_{U\cap\C_I}$ denote the restriction of $f$ to the plane $\C_I$. The function $f$ is called left slice hyperholomorphic if, for any $I\in\S$,
\begin{equation}\label{sslekeio}
\frac12\left( \frac{\partial}{\partial x_0} f_I(x) + I \frac{\partial}{\partial x_1} f_I(x)\right) = 0 \quad \text{for all }x = x_0 + I x_1\in U\cap\C_I\end{equation}
and right slice hyperholomorphic if, for any $I\in\S$,
\begin{equation}\label{sslekeio2}
\frac12\left( \frac{\partial}{\partial x_0} f_I(x) + \frac{\partial}{\partial x_1} f_I(x)I\right) = 0 \quad \text{for all }x = x_0 + I x_1\in U\cap\C_I.\end{equation}
We denote the set of all left slice hyperholomorphic functions on $U$ by $\lhol(U,V)$, the set of all right slice hyperholomorphic functions on $U$ by $\rhol(U,V)$.
\end{definition}
\begin{remark}
The paper \cite{OPValPaper} actually starts from a different definition. Therein, an operator-valued function $f$ is called strongly left slice hyperholomorphic if it admits a left slice derivative, that is
\begin{equation}\label{SSDDJJU}
 \lim_{\C_I\ni s\to x} (s-x)^{-1}(f_I(x)-f_I(s))
 \end{equation}
exists in the topology of $V$ for any $s \in U\cap\C_I$ and any $I\in\S$ and the limits coincide for all $I\in\S$ if $s$ is real .

The function $f$ is called weakly left slice hyperholomorphic  if, for any continuous left linear functional $\varLambda$ in the (left) dual space $V^*$ of $V$, the function $\varLambda f$ admits a left slice derivative that is
\[ \lim_{\C_I\ni s\to x} (s-x)^{-1}(\varLambda f_I(x)-\varLambda f_I(s))\]
exists in $\H$ for any $s \in U\cap\C_I$ and any $I\in\S$ and the limits coincide of all $I\in\S$  if $s$ is real. This is equivalent to $\varLambda f$ being left slice hyperholomorphic in the sense of Definition~\ref{AAAKKKE} by \cite[Proposition~3.2]{OPValPaper} if $\varLambda f$ is real differentiable. By Theorem~3.6 in \cite{OPValPaper} the notions of strong and weak left slice hyperholomorphicity are equivalent and by Proposition~3.9 they are both equivalent to the notion of left slice hyperholomorphicity defined in Definition~\ref{OpValDef} as long as $f$ is assumed to be real differentiable.

Analogue considerations can be done for right slice hyperholomorphic functions. In this case, the right slice derivative defined by
\begin{equation}\label{SSDDJJUr}
 \lim_{\C_I\ni s\to x} ( f_I(x)- f_I(s))(s-x)^{-1}
\end{equation}
replaces the notion of left slice derivative and the notion of weak right slice hyperholomorphicity must  be defined using right linear functionals in the right dual space of $V$ instead of left linear functionals.
\end{remark}
Many results for scalar-valued slice hyperholomorphic functions also hold true in the operator valued case. The proofs of the following results can be found in \cite{OPValPaper}.

\begin{theorem}[Identity Principle, {\cite[Proposition~3.11]{OPValPaper}}]\label{IDPOP}
Let $U$ be a slice domain, let $f$ be left or right slice hyperholomorphic on $U$ with values in a two-sided quaternionic Banach space and let $\mathcal{Z}$ be  the set of zeros of $f$. If there exists an imaginary unit $I\in\S$ such that $\mathcal{Z}\cap\C_I$ has an accumulation point in $U\cap\C_I$, then $f\equiv 0$.
\end{theorem}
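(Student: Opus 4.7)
The strategy is to reduce the operator-valued statement to the scalar Identity Principle (Theorem~\ref{IDP}) by testing with continuous quaternion-linear functionals on $V$.

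Treat first the left slice hyperholomorphic case. Fix an arbitrary continuous left $\H$-linear functional $\Lambda$ on $V$. Since $\Lambda$ is $\R$-linear and continuous, it commutes with the real partial derivatives $\partial/\partial x_0$ and $\partial/\partial x_1$; and being left $\H$-linear, it commutes with left multiplication by any $I \in \S$. Together with the real differentiability of $f$, this shows that the scalar function $\Lambda f : U \to \H$ is real differentiable and satisfies
\[ \frac{1}{2}\bigl(\partial_{x_0}(\Lambda f_I)(x) + I \partial_{x_1}(\Lambda f_I)(x)\bigr) = \Lambda\Bigl(\frac{1}{2}\bigl(\partial_{x_0} f_I(x) + I \partial_{x_1} f_I(x)\bigr)\Bigr) = 0 \]
for every $x = x_0 + I x_1 \in U\cap \C_I$ and every $I \in \S$, so $\Lambda f \in \lhol(U)$ in the sense of Definition~\ref{AAAKKKE}.

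Since $\mathcal{Z}$ is contained in the zero set of $\Lambda f$, that zero set has an accumulation point in $U\cap\C_I$, and the scalar Identity Principle (Theorem~\ref{IDP}) yields $\Lambda f \equiv 0$ on $U$. As $\Lambda$ ranges over all continuous left $\H$-linear functionals on $V$, and such functionals separate points by the Hahn--Banach theorem (valid in the quaternionic setting), we conclude $f \equiv 0$ on $U$. The right slice hyperholomorphic case is treated analogously, testing instead against continuous right $\H$-linear functionals: then $\Lambda f$ satisfies \eqref{RHolEQ} and is therefore scalar right slice hyperholomorphic, so Theorem~\ref{IDP} plus Hahn--Banach again close the argument.

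The only delicate point is the initial verification that $\Lambda f$ is scalar slice hyperholomorphic in the sense of Definition~\ref{AAAKKKE}: one must know that the operator-valued notion from Definition~\ref{OpValDef} is compatible with its weak/scalarized counterpart. This is exactly the content of the equivalences recalled in the remark preceding the statement, in particular \cite[Propositions~3.2 and~3.9]{OPValPaper}. All other ingredients---the scalar Identity Principle and the separation property of the quaternionic dual---are standard.
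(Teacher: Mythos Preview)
Your proof is correct and follows the standard Hahn--Banach reduction to the scalar case. Note that the paper does not actually give its own proof of this statement---it merely cites \cite[Proposition~3.11]{OPValPaper}---but your approach is exactly the technique the paper uses for the other vector-valued results it does prove (see the proof of Theorem~\ref{COP}), and is presumably what the cited reference does as well.
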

\begin{theorem}[Representation Formula, {\cite[Theorem~3.15]{OPValPaper}}]\label{RepFoOP}
Let $V$ be a two-sided quaternionic Banach space, let $U$ be an axially symmetric slice domain and let $I\in\S$. For any $x = x_0 + I_x x_1\in U$ set $x_I := x_0 + Ix_1$. If $f\in\lhol(U,V)$, then
\[f(x) = \frac12(1-I_xI)f(x_I) + \frac12(1+I_xI)f(\overline{x_I}) \quad\text{for all }x\in U.\]
If $f\in\rhol(U,V)$, then
\[f(x) = f(x_I)(1-II_x)\frac12 + f(\overline{x_I})(1+II_x)\frac12 \quad\text{for all }x\in U.\]
\end{theorem}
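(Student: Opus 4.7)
The natural strategy is to reduce the operator-valued Representation Formula to its scalar counterpart (Theorem~\ref{RepFo}) via continuous linear functionals. For the left slice hyperholomorphic case, I would fix an arbitrary continuous left linear functional $\Lambda$ in the left dual space $V^*$ of $V$ and consider the scalar-valued composition $\Lambda f : U \to \H$. By the equivalence of the operator-valued notions of slice hyperholomorphicity summarized in the Remark preceding this statement (i.e.\ \cite[Proposition~3.2 and Proposition~3.9]{OPValPaper}), $\Lambda f$ is left slice hyperholomorphic on $U$ in the scalar sense of Definition~\ref{AAAKKKE}.

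Applying Theorem~\ref{RepFo} to $\Lambda f$ yields
\[ \Lambda f(x) = \tfrac{1}{2}(1-I_xI)\,\Lambda f(x_I) + \tfrac{1}{2}(1+I_xI)\,\Lambda f(\overline{x_I}), \qquad x\in U. \]
Since the scalar coefficients $\tfrac{1}{2}(1\mp I_xI)$ lie in $\H$, left linearity of $\Lambda$ allows us to pull them inside, giving
\[ \Lambda\!\left( f(x) - \tfrac{1}{2}(1-I_xI)f(x_I) - \tfrac{1}{2}(1+I_xI)f(\overline{x_I}) \right) = 0. \]
As $\Lambda\in V^*$ is arbitrary and continuous left linear functionals on a two-sided quaternionic Banach space separate points (by the quaternionic Hahn--Banach theorem), the argument of $\Lambda$ must vanish identically on $U$, which is the desired identity. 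The right slice hyperholomorphic case is handled by the symmetric argument using continuous \emph{right} linear functionals in the right dual space, so that the scalar factors $\tfrac12(1\mp II_x)$ now appear on the right of $f$.

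The main point of care lies entirely in transferring the scalar result through functionals. The two ingredients that have to be cleanly in place are (a)~the equivalence of the various notions of operator-valued slice hyperholomorphicity from \cite{OPValPaper}, which guarantees that $\Lambda f$ actually belongs to $\lhol(U)$ in the scalar sense, and (b)~a quaternionic Hahn--Banach theorem providing enough continuous left (respectively right) linear functionals on $V$ to separate points. Once these are granted, the proof is a one-line reduction to the scalar Representation Formula, with no additional estimates or compactness arguments needed.
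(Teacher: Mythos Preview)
Your approach is correct and is exactly the standard reduction-to-scalar argument. Note, however, that the paper does not actually supply its own proof of this statement: it is quoted with a citation to \cite[Theorem~3.15]{OPValPaper}. That said, your method is precisely the one the paper itself employs a few lines later to prove the operator-valued Cauchy integral theorem (Theorem~\ref{COP}), using Corollary~\ref{HBCor} for the separation-of-points step, so your proposal is fully in the spirit of the surrounding text and would be accepted without modification.
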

An immediate consequence of this Theorem is the following extension result, which can be shown as the one in the scalar case.
\begin{corollary}\label{extLemOP}
Let $V$ be a two-sided quaternionic Banach space, let $I\in\S$ and let $f:O\to V$ be real differentiable, where $O$ is a domain in $\C_I$ that is symmetric with respect to the real axis.
\begin{enumerate}[(i)]
\item If $f$ satisfies \eqref{sslekeio}, then there exists a unique left slice hyperholomorphic extension of $f$ to $[O]$.
\item If $f$ satisfies \eqref{sslekeio2}, then there exists a unique right slice hyperholomorphic extension of $f$ to $[O]$.
\end{enumerate}
\end{corollary}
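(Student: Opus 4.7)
The approach is to mirror the scalar proof of Corollary \ref{extLem}, using the Representation Formula (Theorem \ref{RepFoOP}) as the defining formula for the extension. First I note that Corollary \ref{extLem}(i) is a purely set-theoretic statement, so the axially symmetric hull $[O]$ is in any case an axially symmetric slice domain. For part (i), I would define $\tilde{f}\colon [O]\to V$ by
\begin{equation*}
\tilde{f}(x) := \tfrac{1}{2}(1 - I_x I)\, f(x_I) + \tfrac{1}{2}(1 + I_x I)\, f(\overline{x_I}),
\end{equation*}
where $x = x_0 + I_x x_1$ and $x_I := x_0 + I x_1$. Because $O\subset\C_I$ is symmetric with respect to the real axis and both $x_I$ and $\overline{x_I}$ lie in $[x]\cap\C_I$, they belong to $O$, so the right-hand side is well defined.

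The next step is to check that $\tilde{f}$ is well defined on $[O]$ and extends $f$. At real points $x = x_0\in O\cap\R$ one has $x_I = \overline{x_I} = x_0$, and the two summands combine to $f(x_0)$ independently of the ambiguous choice of $I_x$. For $x\in O$ with $\underline{x}\neq 0$, substituting $I_x = \pm I$ (according to the sign of the $I$-coefficient) shows directly that $\tilde{f}(x) = f(x)$. Real-differentiability of $\tilde{f}$ on $[O]$ is evident off $\R$, since $I_x = \underline{x}/|\underline{x}|$ is smooth there. On $\R$ it is more delicate: I would rewrite
\begin{equation*}
\tilde{f}(x) = \tfrac{1}{2}\bigl[f(x_I) + f(\overline{x_I})\bigr] - \underline{x}\, I \cdot\frac{f(x_I) - f(\overline{x_I})}{2 x_1}
\end{equation*}
with $x_1 = |\underline{x}|$. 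Viewed as functions of the real variable $x_1$ (with $x_I = x_0 + I x_1$), both the symmetric sum and the divided-difference quotient are even in $x_1$, hence functions of $x_1^2 = |\underline{x}|^2$, which is a polynomial in the components of $\underline{x}$. This yields real-differentiability through $\R$.

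It then remains to verify condition \eqref{sslekeio} on each plane $\C_J$, $J\in\S$. Parameterizing $x = u_0 + J u_1 \in \C_J$ with $u_1\in\R$ of either sign, the symmetric form above reduces to
\begin{equation*}
\tilde{f}(x) = \tfrac{1}{2}(1 - J I)\, f(u_0 + I u_1) + \tfrac{1}{2}(1 + J I)\, f(u_0 - I u_1).
\end{equation*}
Applying $\partial_{u_0} + J\partial_{u_1}$ and using $\partial_{v_1} f = I\,\partial_{v_0} f$ on $\C_I$ (equivalent to \eqref{sslekeio}), the elementary identities $J(1 - JI)I = -(1-JI)$ and $J(1+JI)I = (1+JI)$ produce exact cancellation, so $\tilde{f}\in\lhol([O],V)$. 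Uniqueness is immediate from Theorem \ref{RepFoOP}: any left slice hyperholomorphic extension $\tilde{g}$ on the axially symmetric slice domain $[O]$ must satisfy the Representation Formula, and since $\tilde{g}(x_I) = f(x_I)$ and $\tilde{g}(\overline{x_I}) = f(\overline{x_I})$, this forces $\tilde{g} = \tilde{f}$. Part (ii) is proved analogously using the right-slice Representation Formula together with the identity $\partial_{v_1} f = \partial_{v_0} f\, I$. The main technical obstacle is the real-differentiability of $\tilde{f}$ across $\R$, which the parity argument above settles cleanly; everything else is algebra identical to the scalar case.
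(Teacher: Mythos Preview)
Your approach is correct and is exactly what the paper intends: the paper does not give an explicit proof but states that the result ``can be shown as the one in the scalar case,'' i.e., by defining the extension via the Representation Formula (Theorem~\ref{RepFoOP}) and then checking the Cauchy--Riemann condition on each $\C_J$ --- precisely what you do. One small tightening: your parity argument for real differentiability across $\R$ implicitly uses more than $C^1$-regularity of $f$; it is cleanest to note (as in the paper's proof of Corollary~\ref{SDPropOP}) that $f$, satisfying \eqref{sslekeio} on $\C_I$, is holomorphic in the classical sense with values in $V$ regarded as a $\C_I$-Banach space, hence real analytic, which makes the even/odd decomposition into smooth functions of $x_1^2$ rigorous.
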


\begin{theorem}[Cauchy's integral formula, {\cite[Theorem~3.13]{OPValPaper}}]\label{Cauchy}
Let $U\subset\H$ be a bounded axially symmetric slice domain such that its boundary $\partial (U\cap\C_I)$ in $\C_I$ consists of a finite number of continuously differentiable Jordan curves. Let $I\in\S$ and set $ds_I = -I\, ds$. If $f$ is left slice hyperholomorphic on an open set that contains $\overline{U}$, then
\[f(x) = \frac{1}{2\pi}\int_{\partial(U\cap\C_I)} S_L^{-1}(s,x)\,ds_I\, f(s)\quad\text{for all }x\in U.\]
If $f$ is right slice hyperholomorphic on an open set that contains $\overline{U}$, then
\[f(x) = \frac{1}{2\pi}\int_{\partial(U\cap\C_I)}f(s)\, ds_I\, S_R^{-1}(s,x)\quad\text{for all }x\in U.\]
\end{theorem}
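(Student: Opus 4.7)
The plan is to reduce the $V$-valued Cauchy formula to the already-established scalar Cauchy formula by means of the quaternionic Hahn-Banach theorem. Fix $x\in U$. Since $U$ is open and axially symmetric and $x\in U$, the $2$-sphere $[x]$ is contained in $U$ and therefore does not meet $\partial(U\cap\C_I)$. Hence $s\mapsto S_L^{-1}(s,x)$ is continuous on $\partial(U\cap\C_I)$, and combined with the continuity of $f$ and of $ds_I=-I\,ds$ this guarantees that the $V$-valued integral
\[
\Phi(x) := \frac{1}{2\pi}\int_{\partial(U\cap\C_I)} S_L^{-1}(s,x)\,ds_I\, f(s)
\]
exists as a Bochner (or Riemann) integral. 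It remains to show $f(x)=\Phi(x)$.

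To this end, I would take an arbitrary continuous left $\H$-linear functional $\Lambda$ in the left dual $V^{\ast}$ of $V$. The composition $\Lambda f : U \to \H$ is real differentiable, because continuous linear maps preserve Fr\'echet differentiability, and applying $\Lambda$ to \eqref{sslekeio} and using that $\Lambda$ commutes with left multiplication by the scalar $I\in\H$ one checks that $\Lambda f$ satisfies the scalar Cauchy-Riemann equation \eqref{LHolEQ}, i.e.\ $\Lambda f$ is left slice hyperholomorphic in the sense of Definition~\ref{AAAKKKE}. By the scalar Cauchy integral formula already proved for quaternion-valued slice hyperholomorphic functions,
\[
(\Lambda f)(x) = \frac{1}{2\pi}\int_{\partial(U\cap\C_I)} S_L^{-1}(s,x)\,ds_I\,(\Lambda f)(s).
\]
On the other hand, continuity and left $\H$-linearity of $\Lambda$ allow us to pull $\Lambda$ inside the Bochner integral defining $\Phi(x)$ and past the scalar factors $S_L^{-1}(s,x)$ and $ds_I$, producing exactly the same right-hand side for $\Lambda\Phi(x)$. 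Therefore $\Lambda f(x)=\Lambda\Phi(x)$ for every $\Lambda\in V^{\ast}$, and the quaternionic Hahn-Banach theorem, according to which the continuous left linear functionals separate points of $V$, forces $f(x)=\Phi(x)$.

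The right slice hyperholomorphic case is entirely parallel, using continuous right $\H$-linear functionals on $V$ and the scalar Cauchy formula for right slice hyperholomorphic functions. The main obstacle to watch for is the verification that $\Lambda f$ is genuinely left slice hyperholomorphic (Definition~\ref{AAAKKKE}) and not merely weakly so; this boils down to confirming the real differentiability of $\Lambda f$, which is immediate from real differentiability of $f$ and continuity of $\Lambda$, and to the crucial commutation of $\Lambda$ with the left action of the imaginary unit $I$ appearing in \eqref{sslekeio}. An alternative route, if one wishes to avoid Hahn-Banach, is to use the Representation Formula (Theorem~\ref{RepFoOP}) to express $f(x)$ in terms of $f(x_I)$ and $f(\overline{x_I})$, apply the classical Banach-space-valued Cauchy formula on $\C_I$ to each of these, and recombine using the identity
\[
S_L^{-1}(s,x) = \tfrac12(1-I_xI)(s-x_I)^{-1} + \tfrac12(1+I_xI)(s-\overline{x_I})^{-1},
\]
which itself arises from applying the Representation Formula to the left slice hyperholomorphic kernel $x\mapsto S_L^{-1}(s,x)$.
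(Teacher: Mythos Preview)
The paper does not actually prove this theorem: it is stated with the citation \cite[Theorem~3.13]{OPValPaper} and falls under the preceding remark that ``the proofs of the following results can be found in \cite{OPValPaper}.'' So there is no in-paper proof to compare against. Your argument is correct and is precisely the strategy the paper itself employs for the neighbouring result, Cauchy's integral \emph{theorem} (Theorem~\ref{COP}): reduce to the scalar case by composing with continuous $\H$-linear functionals and invoke Corollary~\ref{HBCor}. Your choice of \emph{left} linear functionals for the left slice hyperholomorphic case is the right one, since the scalar factors $S_L^{-1}(s,x)\,ds_I$ act on $f(s)\in V$ from the left; the paper makes the dual choice (right functionals) in its proof of Theorem~\ref{COP} for exactly the same reason. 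The alternative route you sketch via the Representation Formula is also viable and is in fact closer to how several of the original sources establish the scalar Cauchy formula, but the Hahn--Banach reduction is cleaner here and fully in keeping with the paper's own methodology.
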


To the best of the authors' knowledge, the proves of several fundamental results have not yet been given explicitly for the case of vector-valued slice hyperholomorphic functions. Thus, we shall give them for the sake of completeness.
\begin{corollary}\label{SDPropOP} Let $V$ be a two-sided quaternionic Banach space.
The slice derivative of a left (or right) slice hyperholomorphic function with values in $V$ defined by \eqref{SSDDJJU} resp. \eqref{SSDDJJUr} is again left (or right) slice hyperholomorphic. Moreover, it coincides with the derivative with respect to the real part, that is
\[\sderiv  f(x) = \frac{\partial}{\partial x_0} f(x)\quad\text{for } x = x_0 + I x_1. \]
\end{corollary}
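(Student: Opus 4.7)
The plan is to establish the two assertions in order, mirroring the proof of the scalar analogue (Corollary~\ref{SDProp}) but taking into account that $V$ is a two-sided module rather than a field.

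First I would show the identity $\sderiv f(x) = \partial f/\partial x_0(x)$ by a direct Taylor expansion on the slice $\C_{I_x}$. Fix $x = x_0 + I x_1 \in U$ with $I = I_x$. For a nearby $s \in \C_I$, write $s - x = \sigma_0 + I\sigma_1$ with $\sigma_0,\sigma_1 \in \R$ and use the hypothesized real differentiability:
\[ f_I(s) - f_I(x) = \sigma_0\, \frac{\partial f_I}{\partial x_0}(x) + \sigma_1\, \frac{\partial f_I}{\partial x_1}(x) + o(|s-x|). \]
In the left slice hyperholomorphic case, equation \eqref{sslekeio} rearranges to $\partial f_I/\partial x_1 = I\, \partial f_I/\partial x_0$, so the right-hand side collapses to $(\sigma_0 + I \sigma_1)\, \partial f/\partial x_0(x) = (s-x)\, \partial f/\partial x_0(x)$. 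Left-multiplying by $(s-x)^{-1}$ and letting $s \to x$ in $\C_I$ yields both existence of the limit in \eqref{SSDDJJU} and the claimed formula. The right case is symmetric: \eqref{sslekeio2} gives $\partial f_I/\partial x_1 = (\partial f_I/\partial x_0)\, I$, so $f_I(s) - f_I(x) = \partial f/\partial x_0(x)\,(s-x) + o(|s-x|)$, and right multiplication by $(s-x)^{-1}$ in \eqref{SSDDJJUr} yields the identity again.

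Second I would show that $\sderiv f = \partial f/\partial x_0$ is left (resp.\ right) slice hyperholomorphic by differentiating the Cauchy--Riemann condition in $x_0$ and commuting the mixed partials. For this step to be legal one first needs $f$ to be at least $C^2$ on each plane $\C_I$, which is furnished by the vector-valued Cauchy integral formula (Theorem~\ref{Cauchy}): the kernel $S_L^{-1}(s,x)$ is rational and hence smooth in $x$, uniformly for $s$ on a contour staying away from $[x]$, so differentiation under the integral shows $f$ is $C^\infty$ on each slice. Applying $\partial/\partial x_0$ to \eqref{sslekeio} and using equality of mixed partials then gives
\[ \frac{\partial}{\partial x_0}\!\left(\frac{\partial f_I}{\partial x_0}\right) + I\, \frac{\partial}{\partial x_1}\!\left(\frac{\partial f_I}{\partial x_0}\right) = 0, \]
so $\sderiv f$ again satisfies \eqref{sslekeio}; the right case is completely analogous.

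The main obstacle is the smoothness step, since Definition~\ref{OpValDef} only requires real differentiability. The Cauchy formula resolves this in one stroke, but one must justify differentiation under the integral in the operator-norm topology; this is routine after fixing a slightly smaller axially symmetric slice domain $U'$ with $\overline{U'}\subset U$, because on $U'\cap\C_I$ the kernel together with all its $x$-derivatives is bounded and continuous in $(s,x)$ on the compact product, so the standard dominated-convergence argument carries over verbatim to the $V$-valued setting.
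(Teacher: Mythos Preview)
Your argument is correct, but the paper takes a quicker route. Rather than computing the difference quotient by hand and then bootstrapping $C^2$-regularity via the Cauchy formula, the paper simply observes that for each fixed $I\in\S$ the space $V$ becomes a complex Banach space over $\C_I$ by restricting the left scalar multiplication, and that under this identification $f_I$ is a classical Banach-space-valued holomorphic function on $U\cap\C_I$. Both conclusions then come for free from the classical theory: the complex derivative $f_I'$ exists, equals $\partial f_I/\partial x_0$, and is again holomorphic, hence satisfies \eqref{sslekeio}. Your Step~1 is exactly the standard proof that a real-differentiable function satisfying the Cauchy--Riemann equation is complex differentiable, and your Step~2 is the standard smoothness argument for holomorphic functions, so you are really re-deriving these classical facts inside the proof. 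One small caveat: you invoke Theorem~\ref{Cauchy}, which needs an axially symmetric slice domain $U'\subset U$, but Definition~\ref{OpValDef} only assumes $U$ is open, and such a $U'$ need not exist (e.g.\ if $U$ misses the real axis). The fix is precisely the paper's observation: work on $\C_I$ with the \emph{classical} Cauchy formula for $\C_I$-Banach-space-valued holomorphic functions on a disc, which needs no axial symmetry.
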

\begin{proof}Assume that $f\in\lhol(U,V)$, choose $I\in\S$ and consider $f_I = f_{U\cap\C_I}$. The quaternionic Banach space $V$ also carries the structure of a complex Banach space over the complex field $\C_I$, which we obtain by restricting the multiplication with quaternionic scalars on the left to $\C_I$. The function $f_I$ is then a function with values in this complex Banach space that is holomorphic in the classical sense. Its derivative coincides with the slice derivative of $f$ on $\C_I$, i.e.
\[ (\sderiv f )|_{U\cap\C_I} = f_I' = \frac{\partial}{\partial x_0} f_I,\]
where $f_I'$ denotes the usual derivative of $f_I$, when it is considered a holomorphic function with values in a complex Banach space over $\C_I$. In particular this function is again holomorphic in the classical sense and thus satisfies \eqref{sslekeio}. The statement for right slice hyperholomorphic functions follows with analogous arguments.

\end{proof}
\begin{theorem}\label{TaylorOP}Let $V$ be a two-sided quaternionic Banach space and assume that the function $f$ takes values in $V$.
If $f$ is left slice hyperholomorphic on the ball $B(r,\alpha)$ with radius $r$ centered at $\alpha\in\R$, then
\[f(x) = \sum_{n=0}^{+\infty} (x-\alpha)^n \frac{1}{n!}\sderiv ^n f(\alpha)\quad\text{for }x\in B(r,\alpha).\]
If $f$ is right slice hyperholomorphic on $B(r,\alpha)$, then
\[f(x) = \sum_{n=0}^{+\infty}\frac{1}{n!}\sderiv ^n f(\alpha) (x-\alpha)^n \quad\text{for }x\in B(r,\alpha).\]
\end{theorem}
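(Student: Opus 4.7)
The plan is to reduce the operator-valued statement to the classical Taylor expansion of a holomorphic function with values in a complex Banach space, and then to propagate the identity off one slice $\C_I$ to the whole ball $B(r,\alpha)$ via the Identity Principle.

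First I would fix an arbitrary $I\in\S$ and restrict $f$ to $f_I:B(r,\alpha)\cap\C_I\to V$. As in the proof of Corollary~\ref{SDPropOP}, the restriction of left multiplication to $\C_I$ turns $V$ into a complex Banach space over $\C_I$, and in this structure $f_I$ is holomorphic in the classical sense. Since $B(r,\alpha)\cap\C_I$ is a disk of radius $r$ centered at the real point $\alpha$, the classical theory of Banach space-valued holomorphic functions gives a Taylor expansion
\[ f_I(z) = \sum_{n=0}^{+\infty} (z-\alpha)^n c_n,\qquad z\in B(r,\alpha)\cap\C_I,\]
that converges absolutely and uniformly on compact subsets, with coefficients $c_n=\tfrac{1}{n!}f_I^{(n)}(\alpha)$. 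By Corollary~\ref{SDPropOP} the $n$-th derivative of $f_I$ (as a holomorphic map with values in the complex Banach space $(V,\C_I)$) agrees with $\tfrac{\partial^n}{\partial x_0^n}f(\alpha)=\sderiv^n f(\alpha)$, so $c_n=\tfrac{1}{n!}\sderiv^n f(\alpha)$.

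Next I would consider the candidate series
\[ g(x) := \sum_{n=0}^{+\infty} (x-\alpha)^n \frac{1}{n!}\sderiv^n f(\alpha),\qquad x\in B(r,\alpha).\]
Its convergence on all of $B(r,\alpha)$ follows by the usual root/Cauchy-Hadamard argument: for $x = x_0 + I_x x_1$ we have $|x-\alpha|^2 = (x_0-\alpha)^2+x_1^2 = |z-\alpha|^2$ where $z:=x_0+Ix_1\in\C_I$, so the scalar radius of convergence computed on $\C_I$ governs convergence on the whole ball. Moreover $g$ is left slice hyperholomorphic on $B(r,\alpha)$: termwise the monomials $(x-\alpha)^n$ are intrinsic (since $\alpha$ is real), and a uniformly convergent series of left slice hyperholomorphic functions with values in $V$ is again left slice hyperholomorphic (this can be verified directly from Definition~\ref{OpValDef} by differentiating under the series sign on each $\C_J$, the interchange being justified by uniform convergence on compacta).

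By construction $g$ and $f$ coincide on the set $B(r,\alpha)\cap\C_I$, which contains the open real interval $(\alpha-r,\alpha+r)$ and therefore has an accumulation point in $B(r,\alpha)\cap\C_J$ for every $J\in\S$. Since $B(r,\alpha)$ is a slice domain (it is open, meets $\R$, and each $B(r,\alpha)\cap\C_J$ is an open disk, hence a domain), the Identity Principle for operator-valued slice hyperholomorphic functions (Theorem~\ref{IDPOP}) applied to $f-g$ yields $f\equiv g$ on $B(r,\alpha)$, which is the desired expansion. The right slice hyperholomorphic case is handled by an entirely analogous argument with the coefficients written on the left.

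The main technical point I expect to have to be careful about is the step that the formal series $g$ is genuinely left slice hyperholomorphic on the whole ball (not merely on each individual slice): this requires showing that the termwise slice derivatives converge uniformly on compact subsets of $B(r,\alpha)$, which in turn rests on the identity $|x-\alpha|=|z-\alpha|$ and a comparison with the scalar Cauchy-Hadamard bound obtained from the classical expansion on $\C_I$. Everything else is a direct application of results already established in the excerpt.
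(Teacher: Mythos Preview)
Your argument is correct, but it takes a longer route than the paper. The paper proceeds exactly as you do up to the identification $c_n=\tfrac{1}{n!}\sderiv^n f(\alpha)$ on a fixed slice $\C_I$, and then simply observes that these coefficients do not depend on $I$: since every $x\in B(r,\alpha)$ lies in some $\C_{I_x}$, one may apply the slice-wise Taylor expansion with $I=I_x$ chosen for that particular $x$ and read off the desired identity directly. There is no need to assemble a global candidate $g$, check its slice hyperholomorphicity, or invoke the Identity Principle. Your approach trades that one-line observation for a separate verification that the power series defines a left slice hyperholomorphic function on the whole ball; this is true and your sketch of why is sound, but it is extra work that the paper's pointwise argument sidesteps entirely.
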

\begin{proof}
Assume that $f$ is left slice hyperholomorphic on $B(r,\alpha)$ and consider an imaginary unit $I\in\S$. As in the proof of Corollary~\ref{SDPropOP} we may consider $V$ as a Banach space over $\C_{I}$ by restricting the scalar multiplication with quaternions on the left to $\C_{I}$. The restriction $f_{I}$ of $f$ to the complex plane $\C_{I}$ is then a holomorphic function with values in a complex Banach space and thus admits a power series expansion at $\alpha$ that converges on $B(r,\alpha)\cap\C_I$. For $x\in B(r,\alpha)\cap\C_I$, we obtain
\[
f(x) = f_{I}(x) = \sum_{n=0}^{+\infty}(x-\alpha)^n \frac{1}{n!} f_{I}^{(n)}(\alpha) =  \sum_{n=0}^{+\infty}(x-\alpha)^n \frac{1}{n!} \frac{\partial^n}{\partial x_0^n}f_{I}(\alpha).
\]
By Corollary~\ref{SDPropOP}, we have $\frac{\partial^n}{\partial x_0^n}f_{I_x}(\alpha)= \frac{\partial^n}{\partial x_0^n}f(\alpha) = \sderiv^n f(\alpha)$. Thus the coefficients are independent of the plane $\C_{I}$ and the statement holds true. The case of right slice hyperholomorphic functions can be shown by analogous arguments.

\end{proof}

Finally, we also give the proof of Cauchy's integral theorem for vector-valued slice hyperholomorphic functions. It is based on the quaternionic version of the Hahn-Banach-theorem, which was originally proved in \cite{Souc} and a  proof of which can also be found in \cite{MR2752913}. We need the following corollary.

\begin{corollary}[{\cite[Corollary~4.10.2]{MR2752913}}]\label{HBCor}
Let $V$ be a two-sided quaternionic Banach space. If $\varLambda(v) = 0$ for all continuous right linear functionals $\varLambda: V\to\H$, then $v = 0$.

Similarly,  if $\varLambda(v) = 0$ for all continuous left linear functionals $\varLambda: V\to\H$, then also $v = 0$.
\end{corollary}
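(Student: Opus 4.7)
The plan is to prove the contrapositive: if $v \neq 0$, then I will exhibit a continuous right linear functional $\varLambda: V \to \H$ with $\varLambda(v) \neq 0$, and similarly for the left case. The whole argument is a standard consequence of the quaternionic Hahn--Banach theorem, which is precisely what is invoked by the citation of \cite{MR2752913}. So the work reduces to constructing an initial functional on a one-dimensional submodule and then extending it.

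First I would consider the right submodule $M := v\H = \{vq : q \in \H\} \subset V$. Since $v \neq 0$, every element of $M$ has a unique representation of the form $vq$ with $q \in \H$, so I can define $\varLambda_0: M \to \H$ by $\varLambda_0(vq) := q$. This map is right linear by construction, and it is bounded: indeed $|\varLambda_0(vq)| = |q| = \|vq\|/\|v\|$, so $\|\varLambda_0\| = 1/\|v\|$. In particular $\varLambda_0(v) = 1$.

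Next I would apply the quaternionic Hahn--Banach theorem (in the right linear version) to extend $\varLambda_0$ to a continuous right linear functional $\varLambda: V \to \H$ without increasing the norm. Then $\varLambda(v) = \varLambda_0(v) = 1 \neq 0$, which contradicts the hypothesis that $\varLambda(v) = 0$ for all continuous right linear functionals. The left linear case is handled by the symmetric argument: work instead with the left submodule $\H v$, define $\varLambda_0(qv) := q$, and apply the left version of Hahn--Banach.

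The only potentially subtle point is justifying that the right (resp.\ left) Hahn--Banach extension theorem is actually available in the quaternionic setting; but this is exactly the content of \cite{Souc} and the presentation in \cite{MR2752913}, which the paper explicitly invokes. Once that theorem is in hand, the construction above is entirely routine, so I do not expect a genuine obstacle in carrying out the proof.
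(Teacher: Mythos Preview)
The paper does not supply its own proof of this corollary; it is simply quoted from \cite[Corollary~4.10.2]{MR2752913}. Your argument is the standard Hahn--Banach consequence (construct a nonzero functional on the one-dimensional submodule $v\H$, then extend), and it is correct; this is exactly the route taken in the cited reference. The only implicit assumption worth flagging is $\|vq\| = \|v\|\,|q|$, which is part of the axioms of a quaternionic normed space, so there is no gap.
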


\begin{theorem}[Cauchy's integral theorem]\label{COP}
Let $V$ be a two-sided quaternionic Banach space, let $O\subset\H$ be open and let $I\in\S$. Furthermore assume that $D_I$ is a bounded open subset of $O\cap\C_I$ with $\overline{D_I}\subset O\cap\C_I$, whose boundary consists of a finite number of continuously differentiable Jordan curves. If $f\in\rhol(U,V)$ and $g\in\lhol(U,\H)$ or if $f\in\rhol(U,\H)$ and $g\in\lhol(U,V)$, then
\[\int_{\partial D_I}f(s)\,ds_I\,g(s) = 0,\]
where $ds_I = -I\, ds$.
\end{theorem}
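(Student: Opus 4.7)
The natural plan is to reduce to the scalar-valued version of Cauchy's integral theorem already recorded as Theorem~\ref{CInt}, using the Hahn--Banach corollary~\ref{HBCor} to test the value of the integral pointwise through continuous linear functionals.

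Consider first the case $f\in\rhol(U,V)$ and $g\in\lhol(U,\H)$. The integrand $f(s)\,ds_I\,g(s)$ takes values in $V$ via right scalar multiplication by the quaternion $ds_I\,g(s)$, so $\int_{\partial D_I}f(s)\,ds_I\,g(s)$ is a well-defined element of $V$ (the integrand is continuous on the compact piecewise smooth contour). By Corollary~\ref{HBCor} it is enough to show that $\varLambda$ applied to this integral vanishes for every continuous right linear functional $\varLambda:V\to\H$. Continuity of $\varLambda$ allows us to pull it through the Riemann sums defining the integral, and right linearity then produces
\[\varLambda\!\left(\int_{\partial D_I} f(s)\,ds_I\,g(s)\right)= \int_{\partial D_I} \varLambda(f(s))\,ds_I\,g(s).\]

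The next step is to observe that $\varLambda\circ f\in\rhol(U,\H)$. Real differentiability of $\varLambda\circ f_I$ follows from the real differentiability of $f_I$ together with the continuity and $\R$-linearity of $\varLambda$; moreover, right $\H$-linearity lets us commute $\varLambda$ with both $\partial/\partial x_0$ and with right multiplication by $I$. Applying $\varLambda$ to the identity \eqref{sslekeio2} satisfied by $f$ thus gives the corresponding identity for $\varLambda\circ f$, so $\varLambda\circ f$ is indeed $\H$-valued right slice hyperholomorphic. Theorem~\ref{CInt} applied to the pair $\varLambda\circ f$ and $g$ then shows that the displayed scalar integral vanishes, so the original vector-valued integral vanishes by Corollary~\ref{HBCor}.

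The case $f\in\rhol(U,\H)$ and $g\in\lhol(U,V)$ is handled by the mirror argument: now the integrand sits in $V$ by left scalar multiplication of $g(s)$ by the quaternion $f(s)\,ds_I$, and one tests instead against continuous left linear functionals $\varLambda:V\to\H$. Left linearity lets one push $\varLambda$ past $f(s)\,ds_I$ on the left and, using \eqref{sslekeio}, implies that $\varLambda\circ g\in\lhol(U,\H)$; Theorem~\ref{CInt} applied to $f$ and $\varLambda\circ g$ then closes the argument. The only delicate points are the interchange of $\varLambda$ with the integral (immediate from continuity of $\varLambda$ and uniform convergence of Riemann sums on the compact smooth contour) and the preservation of the relevant Cauchy--Riemann identity under $\varLambda$; both are entirely routine, so no genuine obstacle arises beyond being careful about sides.
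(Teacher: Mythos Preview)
Your proof is correct and follows essentially the same approach as the paper: reduce to the scalar-valued Cauchy integral theorem (Theorem~\ref{CInt}) by applying continuous right (resp.\ left) linear functionals, noting that $\varLambda\circ f$ (resp.\ $\varLambda\circ g$) remains right (resp.\ left) slice hyperholomorphic, and then invoking Corollary~\ref{HBCor}. The paper's proof is more terse, but the argument is the same.
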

\begin{proof}
Assume that $f\in\rhol(U,V)$ and $g\in\lhol(U,\H)$ and consider a continuous right linear functional $\varLambda: V\to\H$. The function $s\mapsto\varLambda f(s) = \varLambda(f(s))$ is a quaternion-valued right slice hyperholomorphic function. Thus we deduce from Theorem~\ref{CInt} that
\[ \varLambda\left(\int_{\partial D_I}f(s)\,ds_I\,g(s)\right) = \int_{\partial D_I}\varLambda f(s)\,ds_I\,g(s) = 0\]
and from Corollary~\ref{HBCor} in turn that the statement holds true. The other case follows with analogous arguments.

\end{proof}

\subsection{The S-functional calculus}
The natural extension of the Riesz-Dunford-functional calculus for complex linear operators  to quaternionic linear operators is the so-called $S$-functional calculus. It is based on the theory of slice hyperholomorphic functions and follows the principal idea of the classical case: to formally replace the scalar variable $x$ in the Cauchy formula by an operator. The proofs of the results stated in this subsection can be found in \cite{acgs,MR2752913}.

Let $V$ be a two-sided quaternionic Banach space. We denote the set of all bounded quaternionic right-linear operators on $V$ by $\boundOP(V)$ and the set of all closed and densely defined quaternionic right-linear operators on $V$ by $\closOP(V)$.
\begin{definition}
We define the $S$-resolvent set of an operator $T\in\closOP(V)$ as
\[\rho_S(T):= \{ s\in\H: (T^2 - 2\Re(s)T + |s|^2\id)^{-1}\in\boundOP(V)\}\]
and the $S$-spectrum of $T$ as
\[\sigma_S(T):=\H\setminus\rho_S(T).\]
\end{definition}
For $s\in\H$ and $T\in\closOP(V)$, we set
\[ \Q{s}{T} := T^2 - 2\Re(s)T + |s|^2\id.\]
If $s\in\rho_S(T)$, then the operator
\[
\Qinv{s}{T}=(T^2-2\Re(s)T +|s|^2\id )^{-1}
\]
is called the pseudo-resolvent of $T$ at $s$. We point out that, in contrast to the notation we use in this paper, in the literature it is often the pseudo-resolvent that is denoted by the symbol $\Q{s}{T}$.
\begin{definition}
Let $T\in\closOP(V)$. The left $S$-resolvent operator is defined as
\begin{equation}\label{SresolvoperatorL}
S_L^{-1}(s,T):= \Qinv{s}{T}\overline{s} -T\Qinv{s}{T}
\end{equation}
and the right $S$-resolvent operator is defined as
\begin{equation}\label{SresolvoperatorR}
S_R^{-1}(s,T):=-(T-\id \overline{s})\Qinv{s}{T}.
\end{equation}
\end{definition}
\begin{remark}\label{RkResExtension} Observe that  one obtains the right $S$-resolvent operator by formally replacing the variable $x$ in the right slice hyperholomorphic Cauchy kernel by the operator $T$. The same procedure yields
\begin{equation}\label{LResShort}
S_L^{-1}(s,T)v = -\Qinv{s}{T}(T-\overline{s}\id)v,\quad\text{for }v\in\dom(T)
\end{equation}
for the left $S$-resolvent operator. This operator is not defined on the entire space $V$, but only on  the domain $\dom(T)$ of $T$. One can exploit the fact that $ \Qinv{s}{T}$ and $T$ commute on $\dom(T)$ in order to overcome this problem: commuting $T$ and $\Qinv{s}{T}$ in \eqref{LResShort} yields \eqref{SresolvoperatorL}. For arbitrary $s\in\H$, the operator $T^2 - 2\Re(s)T + |s|^2\id$ maps $\dom(T^2)$ to $V$. Hence, the pseudo-resolvent $\Qinv{s}{T}$ maps $V$ to $\dom(T^2)\subset \dom(T)$ if $s\in\rho_S(T)$. Since $T$ is closed and $\Qinv{s}{T}$ is bounded, equation \eqref{SresolvoperatorL} then defines a continuous and therefore bounded right linear operator on the entire space $V$. Hence, the left resolvent $S_L^{-1}(s,T)$ is the natural extension of the operator \eqref{LResShort} to~$V$. In particular, if $T$ is bounded, then $S_L^{-1}(s,T)$ can be defined directly by~\eqref{LResShort}.

If one considers left linear operators, then one must modify the definition
of the right $S$-resolvent operator for the same reasons.
\end{remark}
\begin{remark}
The $S$-resolvent operators reduce to the classical resolvent if $T$ and $s$ commute, that is
\[S_L^{-1}(s,T) = S_R^{-1}(s,T) = (s\id - T)^{-1}.\]
This is in particular the case if $s$ is real.
\end{remark}
As pointed out in the introduction, the $S$-spectrum is the proper generalization of the notion of right-eigenvalues \cite[Theorem2.5]{SigSSigR}.
\begin{theorem}\label{SvsR}
Let $T\in\closOP(V)$. Then $s\in\H$ is a right eigenvalue if and only if it is an $S$-eigenvalue.
\end{theorem}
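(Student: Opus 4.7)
For the forward direction I would verify directly: suppose $Tv = vs$ for some nonzero $v \in \dom(T)$. Since $\dom(T)$ is a right subspace, $vs = Tv \in \dom(T)$, so $v \in \dom(T^2)$, and $T^2 v = (Tv)s = vs^2$. This gives
\[ (T^2 - 2\Re(s)T + |s|^2 \id) v = v\bigl(s^2 - 2\Re(s)s + |s|^2\bigr) = 0, \]
using the universal quaternionic identity $s^2 - (s+\overline{s})s + \overline{s}s = 0$, which comes from $2\Re(s) = s+\overline{s}$ and $|s|^2 = \overline{s}s$. Hence $s$ is an $S$-eigenvalue of $T$.

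For the converse, my plan is to factor
\[ T^2 - 2\Re(s)T + |s|^2 \id = (T - R_s)(T - R_{\overline{s}}), \]
where $R_a \colon v \mapsto va$ denotes right multiplication by $a$. This identity will follow from the right-linearity of $T$, which gives $T R_a = R_a T$ for every $a \in \H$, together with $\overline{s} + s = 2\Re(s)$ and $\overline{s} s = |s|^2$. Given a nonzero $v$ with $\mathcal{Q}_s(T) v = 0$, I would then set $w := Tv - v\overline{s}$ and split into cases: if $w = 0$, then $\overline{s}$ is a right eigenvalue of $T$ with eigenvector $v$; if $w \neq 0$, then $(T - R_s)w = 0$, i.e.\ $Tw = ws$, so $s$ is a right eigenvalue with eigenvector $w$. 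To handle the first case, I would invoke the sphere structure of $\H$: since $s$ and $\overline{s}$ share real part and modulus, both lie on $[s]$, so there exists $q \in \H \setminus \{0\}$ with $\overline{s} q = q s$. Then $T(vq) = (Tv)q = v\overline{s} q = (vq)s$ with $vq \neq 0$, exhibiting $s$ itself as a right eigenvalue.

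The main obstacle I anticipate is precisely this last step: the factorisation produces directly an eigenvector only for $\overline{s}$, not for $s$, so bridging the two requires the non-obvious observation that right eigenvalues of a quaternionic operator come in spherical equivalence classes. Secondary issues to track are that the factorisation must use \emph{right} multiplications (since $T$ generally fails to commute with left multiplication by quaternions), and, in the unbounded setting, that the relevant vectors genuinely lie in $\dom(T^2)$ when one applies $T$ twice; both are automatic once set up correctly.
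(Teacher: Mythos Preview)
The paper does not actually supply a proof of this theorem; it simply records the statement and cites \cite[Theorem~2.5]{SigSSigR}. So there is no argument in the paper to compare against, and your proposal stands on its own.

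Your argument is correct and is essentially the standard one. The forward direction is immediate from the identity $s^2 - 2\Re(s)s + |s|^2 = 0$, and you handle the domain issue for unbounded $T$ properly by noting that $\dom(T)$ is a right $\H$-submodule, so $vs\in\dom(T)$ forces $v\in\dom(T^2)$. For the converse, your factorisation $\mathcal{Q}_s(T) = (T - R_s)(T - R_{\overline{s}})$ is exactly right (and relies, as you note, on using \emph{right} multiplications so that $T R_a = R_a T$); the vector $w = Tv - v\overline{s}$ lies in $\dom(T)$ because $v\in\dom(T^2)$. The only point worth commenting on is the bridge in Case~1, where you produce a right eigenvector for $\overline{s}$ and then transport it to one for $s$: this works because any two quaternions on the same sphere $[s]$ are conjugate in $\H$ (take $q = J$ for any $J\in\S$ orthogonal to $I_s$, so that $J^{-1}\overline{s}J = s$), which is precisely the ``spherical equivalence class'' phenomenon you anticipated. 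No gaps.
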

The following important result has implicitly been assumed to hold true in the literature. For the case of bounded operators a proof can be found in \cite{MasterThesis}, but by the best of the authors' knowledge, it has never been shown for unbounded operators. (The paper \cite{GR} contains a proof in a more general setting: it considers real alternative $*$-algebras instead of quaternions. However, the proof in this paper requires that there exits a real point in the $S$-resolvent set of the operator.) For the sake of completeness, we therefore give its proof in this paper. Since the arguments for unbounded operators are quite technical, we postpone them to  Section~\ref{HolSect}.

\begin{lemma}\label{ResHol2342}
Let $T\in\closOP(V)$. The map $s\mapsto S_L^{-1}(s,T)$ is a right slice hyperholomorphic function on $\rho_S(T)$ with values in the two-sided  quaternionic Banach space $\boundOP(V)$. The map $s\mapsto S_R^{-1}(s,T)$ is a left slice hyperholomorphic function on $\rho_S(T)$ with values in the two-sided quaternionic Banach space $\boundOP(V)$.
\end{lemma}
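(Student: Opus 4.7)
The plan is to verify, for each $I\in\S$, that these operator-valued maps satisfy the Cauchy--Riemann equations of Definition~\ref{OpValDef}. For the map $s\mapsto S_L^{-1}(s,T)$, writing $s = s_0 + I s_1$ with $s_0,s_1\in\R$, this is the right CR equation
\[
\frac{\partial}{\partial s_0}S_L^{-1}(s,T) + \frac{\partial}{\partial s_1}S_L^{-1}(s,T)\,I = 0
\]
on $\rho_S(T)\cap\C_I$, and analogously for $s\mapsto S_R^{-1}(s,T)$ it is the left CR equation. Formally this is expected, since in the scalar case $S_L^{-1}(s,x)$ and $S_R^{-1}(s,x)$ are right, respectively left, slice hyperholomorphic in $s$; the task is to make this rigorous when $T$ is an unbounded closed operator.

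The first step is to establish, using the new series expansion of the pseudo-resolvent developed in Section~\ref{HolSect}, that $\rho_S(T)$ is open and that $s\mapsto\Qinv{s}{T}$ is real differentiable on it with values in $\boundOP(V)$. Since this expansion converges in operator norm in a neighborhood of any point $p\in\rho_S(T)$ in the real variables $s_0,s_1$, termwise differentiation yields
\[
\frac{\partial}{\partial s_0}\Qinv{s}{T} = \Qinv{s}{T}(2T - 2s_0\id)\Qinv{s}{T}, \qquad \frac{\partial}{\partial s_1}\Qinv{s}{T} = -2s_1\Qinv{s}{T}^2.
\]
These make sense as bounded operators on $V$ because $\Qinv{s}{T}$ maps $V$ into $\dom(T^2)$ and $T$ commutes with $\Qinv{s}{T}$ on that range.

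Next I would differentiate $S_L^{-1}(s,T) = \Qinv{s}{T}\bar s - T\Qinv{s}{T}$ via the product rule, using $\partial_{s_0}\bar s = 1$ and $\partial_{s_1}\bar s = -I$, and assemble the CR expression applied to an arbitrary $v\in V$. The algebraic simplifications rely on the following facts: right multiplications by real scalars are central, right multiplications by arbitrary quaternions commute with $T$ by right linearity, the polynomial $T - s_0\id$ commutes with $\Qinv{s}{T}$, and the scalar identities $\bar s + s_1 I = s_0$ and $s_0\bar s + s_1\bar s I = \bar s\,s = |s|^2$ hold (both following from $I s_1 = s_1 I$). The first identity collapses all terms carrying a $T$ on the left down to $4s_0 T\Qinv{s}{T}^2(v) - 2T^2\Qinv{s}{T}^2(v)$, while the second reduces the terms without a $T$ on the left to $-2|s|^2\Qinv{s}{T}^2(v)$. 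Summing, the operator coefficient of $\Qinv{s}{T}^2(v)$ becomes exactly $-2\,\Q{s}{T}$, and since $\Q{s}{T}\Qinv{s}{T}^2 = \Qinv{s}{T}$ the whole expression collapses to $2\Qinv{s}{T}(v) - 2\Qinv{s}{T}(v) = 0$. The analogous computation for $s\mapsto S_R^{-1}(s,T) = -(T-\bar s\id)\Qinv{s}{T}$, verifying the left CR equation, is mirrored.

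The main obstacle is the first step: the classical Neumann-series argument fails to apply directly to $\Qinv{s}{T}$, because the perturbation of $\Q{s}{T}$ as $s$ varies involves the unbounded operator $T$, and therefore cannot naively be composed with $\Qinv{p}{T}$ to produce an operator of small norm. The pseudo-resolvent expansion of Section~\ref{HolSect} is tailored precisely to overcome this difficulty; once it is in hand, the verification of the CR equation reduces to the purely algebraic manipulation sketched above.
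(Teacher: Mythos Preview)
Your proposal is correct and follows essentially the same route as the paper: establish real differentiability of the pseudo-resolvent (and of $T\Qinv{s}{T}$), compute the partial derivatives with respect to $s_0$ and $s_1$, and then verify the appropriate Cauchy--Riemann equation for $S_L^{-1}(s,T)$ by direct algebraic cancellation, using $\Q{s}{T}\Qinv[2]{s}{T}=\Qinv{s}{T}$.

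The only minor differences are organizational. First, the paper obtains the derivative formulas not by termwise differentiation of the series expansion but directly from the identity $\Qinv{s}{T}-\Qinv{p}{T}=\Qinv{s}{T}(\Q{p}{T}-\Q{s}{T})\Qinv{p}{T}$ (Lemmas~\ref{Qdiff} and~\ref{TQDiff}); the series is used only to get continuity and openness of $\rho_S(T)$. Your termwise-differentiation route also works, but re-summing the differentiated series to the closed form $(2T-2s_0\id)\Qinv[2]{s}{T}$ is a small extra step. Second, you invoke the product rule on $T\Qinv{s}{T}$; since $T$ is unbounded this needs a word of justification (closedness of $T$ together with the fact that $\partial\Qinv{s}{T}$ maps into $\dom(T)$, or an analogous series argument), which is exactly why the paper isolates the differentiability of $s\mapsto T\Qinv{s}{T}$ as a separate lemma. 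With that point addressed, your argument is complete and matches the paper's.
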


The $S$-resolvent equation has been proved in \cite{acgs} for the case that $T$ is a bounded operator. For the sake of completeness we show the $S$-resolvent equation for the case of unbounded operators.

\begin{theorem}[$S$-resolvent equation]Let $T\in\closOP(V)$. If  $s,p \in  \rho_S(T)$ with $s\notin[p]$, then
\begin{equation}\label{resEQ}
\begin{split}
S_R^{-1}(s,T)S_L^{-1}(p,T)v=&\big[[S_R^{-1}(s,T)-S_L^{-1}(p,T)]p
\\
&
-\overline{s}[S_R^{-1}(s,T)-S_L^{-1}(p,T)]\big](p^2-2s_0p+|s|^2)^{-1}v, \ \ \ v\in V.
\end{split}
\end{equation}
\end{theorem}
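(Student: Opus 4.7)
The algebraic identity at the heart of \eqref{resEQ} is exactly the one established for bounded operators in \cite{acgs}; what needs attention in the unbounded setting is the well-posedness of the various operator compositions on the appropriate domains. My strategy is to verify the identity on the dense subspace $\dom(T)$ and then extend by continuity to all of $V$.

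\textbf{Well-posedness.} Since $s\notin[p]$, the scalar $p^2 - 2 s_0 p + |s|^2$ is nonzero in $\H$ (if it vanished, then $p$ would lie in the sphere $[s]$, contradicting $s\notin[p]$), so its inverse on the right of \eqref{resEQ} makes sense as right multiplication by a fixed quaternion. By Remark~\ref{RkResExtension} both $S_L^{-1}(p,T)$ and $S_R^{-1}(s,T)$ are bounded operators on all of $V$, so each side of \eqref{resEQ} depends boundedly on $v\in V$. It therefore suffices to establish the equation on the dense subspace $\dom(T)$.

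\textbf{Reduction to the algebraic identity.} For $v\in\dom(T)$, I would use the equivalent form $S_L^{-1}(p,T)v = -\Qinv{p}{T}(T-\overline{p}\,\id)v$ from \eqref{LResShort}, together with $S_R^{-1}(s,T) = -(T-\overline{s}\,\id)\Qinv{s}{T}$, to write
\[ S_R^{-1}(s,T)\,S_L^{-1}(p,T)v \;=\; (T-\overline{s}\,\id)\,\Qinv{s}{T}\,\Qinv{p}{T}\,(T-\overline{p}\,\id)v. \]
The intermediate results sit successively in $V$, $\dom(T^2)$, $\dom(T^4)$, and finally $V$, so every application is legitimate. The identity then follows by the same manipulation as in the bounded case of \cite{acgs}, using two ingredients: first, $\Qinv{s}{T}$ and $\Qinv{p}{T}$ commute on $V$ (their inverses $\Q{s}{T}$ and $\Q{p}{T}$ are polynomials in $T$ with real coefficients and commute on $\dom(T^4)$, whence the bounded inverses commute on all of $V$); second, $T$ commutes with $\Qinv{p}{T}$ and with $\Qinv{s}{T}$ on $\dom(T)$, established by applying the corresponding $\Q{\cdot}{T}$ to both sides on elements of $\dom(T^3)$, where the polynomial identity $T\Q{p}{T}=\Q{p}{T}T$ is trivial, and inverting. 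The scalar $p^2-2s_0 p + |s|^2$ then emerges as the natural denominator when products of the form $\Qinv{s}{T}\Qinv{p}{T}$ are rewritten in terms of the difference $\Qinv{s}{T}-\Qinv{p}{T}$, precisely as in \cite{acgs}.

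\textbf{Main obstacle.} The only genuine difficulty is the bookkeeping of domains: expressions such as $T\,\Qinv{s}{T}\,\Qinv{p}{T}\,Tv$ must be parsed step by step starting from $v\in\dom(T)$, with each intermediate element placed in $V$ or in a suitable $\dom(T^k)$, and every use of the commutation $T\,\Qinv{p}{T}w=\Qinv{p}{T}\,Tw$ must be justified on the set where $w$ then lives. Once this groundwork is in place, the bounded-operator algebra from \cite{acgs} transcribes verbatim, and density of $\dom(T)$ in $V$ combined with boundedness of both sides of \eqref{resEQ} promotes the identity from $\dom(T)$ to all of $v\in V$.
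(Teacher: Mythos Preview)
Your framework is sound but takes a detour the paper avoids, and the key algebraic step is not actually carried out.

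The paper's proof is more direct: it does not restrict to $\dom(T)$ and does not invoke density. Instead it works on all of $V$ from the start by using the two one-sided resolvent equations
\[
T S_L^{-1}(p,T)v = S_L^{-1}(p,T)pv - v \quad (v\in V), \qquad
S_R^{-1}(s,T)Tw = s S_R^{-1}(s,T)w - w \quad (w\in\dom(T)).
\]
The crucial domain observation is that $S_L^{-1}(p,T)$ already maps $V$ into $\dom(T)$ (since $\Qinv{p}{T}$ lands in $\dom(T^2)$), so the right resolvent equation can be applied to $w=S_L^{-1}(p,T)v$ for every $v\in V$. Two iterations of these identities, applied to $S_R^{-1}(s,T)S_L^{-1}(p,T)(p^2-2s_0p+|s|^2)v$, collapse the expression to the right-hand side of \eqref{resEQ} after using $s^2-2s_0s+|s|^2=0$ and $2s_0-s=\overline{s}$. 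No density argument is needed.

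Your plan instead writes the product in factored pseudo-resolvent form on $\dom(T)$ and then says the rest ``transcribes verbatim'' from \cite{acgs}. This is where the gap lies: the proof in \cite{acgs} (and the one reproduced here) is precisely the resolvent-equation argument above, not a manipulation of the factored form $(T-\overline{s})\Qinv{s}{T}\Qinv{p}{T}(T-\overline{p})$. Your suggestion that the scalar $(p^2-2s_0p+|s|^2)^{-1}$ emerges by rewriting $\Qinv{s}{T}\Qinv{p}{T}$ as a difference $\Qinv{s}{T}-\Qinv{p}{T}$ is not right as stated: the pseudo-resolvent identity produces the \emph{operator} $\Q{p}{T}-\Q{s}{T}=2(s_0-p_0)T+(|p|^2-|s|^2)\id$, not the quaternionic scalar $p^2-2s_0p+|s|^2$, and bridging that gap is exactly the nontrivial content of the $S$-resolvent equation. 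So either supply the actual algebra from the factored form (which is possible but is a separate computation you have not done), or adopt the paper's route via the one-sided resolvent equations, in which case your restriction to $\dom(T)$ and subsequent density extension become unnecessary.
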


\begin{proof}
We recall from  \cite{MR2752913} that the left $S$-resolvent operator satisfies the equation
\begin{equation}\label{LresEQ}
TS_L^{-1}(p,T)v=S_L^{-1}(p,T)pv- v,\ \ \ \ v\in V
\end{equation}
and that the right $S$-resolvent operator satisfies the equation
\begin{equation} \label{RresEQ}
S_R^{-1}(s,T)Tv = sS_R^{-1}(s,T)v - v,\ \ \ \ v\in\dom(T).
\end{equation}
As in the case of bounded operators, the $S$-resolvent equation is deduced from these two relations. However, we have to pay attention to being consistent with the domains of definition of every operator that appears in the proof.

We show that, for every $v\in V$, one has
\begin{multline}
\label{RESeqStart}
S_R^{-1}(s,T)S_L^{-1}(p,T)(p^2-2s_0p+|s|^2)v=\\
[S_R^{-1}(s,T)-S_L^{-1}(p,T)]pv-\overline{s}[S_R^{-1}(s,T)-S_L^{-1}(p,T)]v.
\end{multline}
We then obtain the original equation \eqref{resEQ} by replacing $v$ by $(p^2-2s_0p+|s|^2)^{-1}v$.

For $w\in V$, the left $S$-resolvent equation \eqref{LresEQ} implies
\[ S_R^{-1}(s,T)S_L^{-1}(p,T)pw = S_R^{-1}(s,T)TS_L^{-1}(p,T)w + S_R^{-1}(s,T)w. \]
Since $ \Qinv{s}{T} = (T^2 - 2s_0 T + |s|^2\id)^{-1}$ maps $V$ onto $\dom(T^2)$, the left $S$-resolvent $S_L^{-1}(s,T) =  \Qinv{s}{T}\overline{s} - T \Qinv{s}{T}$ maps $V$ to $\dom(T)$. Consequently, $S_L^{-1}(p,T)w\in\dom(T)$ and the right $S$-resolvent equation \eqref{RresEQ} yields
\begin{equation}
\label{SRSLsplit}
S_R^{-1}(s,T)S_L^{-1}(p,T)pw = sS_R^{-1}(s,T)S_L^{-1}(p,T)w - S_L^{-1}(p,T)w + S_R^{-1}(s,T)w.
\end{equation}
If we apply this identity with $w = pv$ we get
\begin{align*}
S_R^{-1}(s,T)&S_L^{-1}(p,T)(p^2-2s_0p+|s|^2)v\\
=&S_R^{-1}(s,T)S_L^{-1}(p,T)p^2v -2s_0 S_R^{-1}(s,T)S_L^{-1}(p,T)pv + |s|^2 S_R^{-1}(s,T)S_L^{-1}(p,T) v\\
=&sS_R^{-1}(s,T)S_L^{-1}(p,T)pv - S_L^{-1}(p,T)pv + S_R^{-1}(s,T)pv\\
 &-2s_0 S_R^{-1}(s,T)S_L^{-1}(p,T)pv + |s|^2 S_R^{-1}(s,T)S_L^{-1}(p,T) v.
\end{align*}
Applying identity \eqref{SRSLsplit} again with $w = v$ gives
\begin{align*}
S_R^{-1}(s,T)&S_L^{-1}(p,T)(p^2-2s_0p+|s|^2)v\\
=&s^2S_R^{-1}(s,T)S_L^{-1}(p,T)v - sS_L^{-1}(p,T)v + sS_R^{-1}(s,T)v - S_L^{-1}(p,T)pv + S_R^{-1}(s,T)pv\\
 &-2s_0 sS_R^{-1}(s,T)S_L^{-1}(p,T)v +2s_0 S_L^{-1}(p,T)v -2s_0S_R^{-1}(s,T)v + |s|^2 S_R^{-1}(s,T)S_L^{-1}(p,T) v\\
=&(s^2-2s_0s + |s|^2)S_R^{-1}(s,T)S_L^{-1}(p,T)v - (2s_0 -s)[S_R^{-1}(s,T)v-S_L^{-1}(p,T)v] \\
&+[S_R^{-1}(s,T)- S_L^{-1}(p,T)]pv.
\end{align*}
The identity $2s_0 = s + \overline{s}$ implies $s^2 - 2s_0s+ |s|^2 = 0$ and $2s_0 - s = \overline{s}$, and hence we obtain the desired equation \eqref{RESeqStart}.

\end{proof}
\begin{definition}
Let $T\in\closOP(V)$.
\begin{enumerate}[(i)]
\item An axially symmetric slice domain $U$ is called $T$-admissible if $\overline{\sigma_S(T)}\subset U$ and $\partial(U\cap\C_I)$ is the union of a finite number of Jordan curves for any $I\in\S$.
\item A function $f$ is said to be left (or right) slice hyperholomorphic on $\sigma_S(T)$ if it is left (or right) slice hyperholomorphic on an open set $O$ such that $\overline{U}\subset O$ for some $T$-admissible slice domain $U$. We will denote the class of  such functions by $\lhol(\sigma_S(T))$ (or $\rhol(\sigma_S(T))$).
\end{enumerate}
\end{definition}

Formally replacing the slice hyperholomorphic Cauchy-kernels  in the Cauchy-formula by the $S$-resolvent operators leads to the natural generalization of the Riesz-Dunford-functional calculus to quaternionic linear operators.
\begin{definition}[$S$-functional calculus for bounded operators]\label{SCalcBd}
Let $T\in\boundOP(V)$, choose $I\in\S$ and set $ds_I = -I\, ds$. For $f\in\lhol(\sigma_S(T))$, we define
\[f(T) := \frac{1}{2\pi}\int_{\partial(U\cap\C_I)} S_L^{-1}(s,T)\, ds_I\, f(s).\]
For $f\in\rhol(\sigma_S(T))$, we define
\[f(T) := \frac{1}{2\pi}\int_{\partial(U\cap\C_I)} f(s)\, ds_I\, S_R^{-1}(s,T).\]
These integrals are independent of the choice of the bounded slice domain $U$ and the imaginary unit $I\in\S$.
\end{definition}

A function $f$ is said to be left (or right) slice hyperholomorphic at $\infty$, if $f$ is left (or right) slice hyperholomorphic on $\H\setminus \overline{B(r,0)}$ for some ball $B(r,0)$ and the limit $f(\infty):=\lim_{x\to\infty}f(x)$ exists. By $\lhol(\sigma_S(T)\cup\{\infty\})$ we denote the set of functions $f\in\lhol(\sigma_S(T))$ that are left slice hyperholomorphic at $\infty$, and similarly we denote the corresponding sets of right slice hyperholomorphic and intrinsic functions.

\begin{lemma}\label{1to1}
Let $T\in\closOP(T)$ with $\rho_S(T)\cap\R \neq\emptyset$, let $\alpha\in\rho_S(T)\cap\R$ and set $A := (T-\alpha\id)^{-1}\linebreak[2] = -S_L^{-1}(\alpha,T)\in\boundOP(V)$. We define the function $\Phi_\alpha: \H\cup\{\infty\}\to\H\cup\{\infty\}$ by $\Phi_\alpha(s) = ( s-\alpha)^{-1}$ for $s\in\H\setminus\{\alpha\}$ and $\Phi_{\alpha}(\alpha) = \infty$ and $\Phi_{\alpha}(\infty) = 0$. Then $f\in \lhol(\sigma_S(T)\cup\{\infty\})$ if and only if $f\circ \Phi_{\alpha}^{-1}\in\lhol(\sigma_S(A)$) and $f\in \rhol(\sigma_S(T)\cup\{\infty\})$ if and only if $f\circ \Phi_{\alpha}^{-1}\in\rhol(\sigma_S(A)$).
\end{lemma}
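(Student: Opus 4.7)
The plan is to exploit the fact that $\Phi_\alpha$ is an intrinsic function (it has real coefficients because $\alpha \in \R$) and thus composition with $\Phi_\alpha^{\pm 1}$ preserves slice hyperholomorphy by Corollary~\ref{lkjsWW}. The argument then reduces to two ingredients: (a) a spectral mapping $\sigma_S(A) = \Phi_\alpha(\sigma_S(T) \cup \{\infty\})$ with the convention $\Phi_\alpha(\infty) := 0$, and (b) a removable-singularity argument that transfers ``slice hyperholomorphic at $\infty$'' on the $T$-side to ``slice hyperholomorphic at $0$'' on the $A$-side.

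For (a), set $R := T - \alpha \id$, $q := s - \alpha$ and $p := \Phi_\alpha(s) = q^{-1}$ for $s \neq \alpha$. A direct expansion shows $\Q{s}{T} = \Q{q}{R}$; combined with $\Re(p) = \Re(q)/|q|^2$, $|p|^2 = |q|^{-2}$ and the relations $R A = \id$ on $V$, $A R = \id$ on $\dom(R)$, one checks the key identity
\[
\Q{q}{R}\, A^2 \;=\; |q|^2\, \Q{p}{A} \qquad \text{on } V.
\]
If $q \in \rho_S(R)$, then $R\Qinv{q}{R}$ is bounded by the closed graph theorem, hence so is $R^2\Qinv{q}{R} = \id + 2\Re(q) R\Qinv{q}{R} - |q|^2 \Qinv{q}{R}$, and the bounded operator $|q|^2 R^2 \Qinv{q}{R}$ is readily verified to invert $\Q{p}{A}$ on both sides (using $A^2 R^2 = \id$ on $\dom(R^2)$ and $R^2 A^2 = \id$ on $V$); conversely $\tfrac{1}{|q|^2} A^2 \Qinv{p}{A}$ inverts $\Q{q}{R}$ when $p \in \rho_S(A)$. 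This gives the correspondence for finite $s \neq \alpha$ with $p \neq 0$. At the endpoints, $\Phi_\alpha(\alpha) = \infty \notin \sigma_S(A)$ is consistent with $\alpha \in \rho_S(T)$, while $0 \in \sigma_S(A)$ iff $A^2$ fails to be invertible, iff $A$ does, iff $\dom(R) \neq V$, iff $T$ is unbounded---precisely the case in which we attach $\infty$ to $\sigma_S(T)$.

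For the main statement, suppose $f \in \lhol(\sigma_S(T) \cup \{\infty\})$ and let $O$ be an open set, containing some $T$-admissible slice domain and a neighborhood of $\infty$, on which $f$ is left slice hyperholomorphic. Since $\Phi_\alpha^{-1}(p) = p^{-1} + \alpha$ is intrinsic on $\H \setminus \{0\}$, Corollary~\ref{lkjsWW} yields that $f \circ \Phi_\alpha^{-1}$ is left slice hyperholomorphic on $\Phi_\alpha(O \setminus \{\alpha\})$, which by step (a) is an open set containing $\sigma_S(A) \setminus \{0\}$. The image of the neighborhood of $\infty$ is a punctured neighborhood of $0$ on which $f \circ \Phi_\alpha^{-1}$ is bounded (since $f(\infty)$ exists); a slicewise application of Riemann's removable singularity theorem combined with the Representation Formula (Theorem~\ref{RepFoOP}) then produces a unique slice hyperholomorphic extension through $0$, giving $f \circ \Phi_\alpha^{-1} \in \lhol(\sigma_S(A))$. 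The converse is entirely analogous, using $\Phi_\alpha$ in place of $\Phi_\alpha^{-1}$ and recovering the behavior of $f = (f \circ \Phi_\alpha^{-1}) \circ \Phi_\alpha$ at $\infty$ from the value of $f \circ \Phi_\alpha^{-1}$ at $0$. The right slice hyperholomorphic case is formally identical.

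The main obstacle is the domain bookkeeping in step (a) when $T$ is unbounded: one must verify the operator identities on the correct domains (for instance that $A^2$ maps $V$ into $\dom(R^2)$ and that $R^2 \Qinv{q}{R}$ indeed extends to a bounded operator on $V$) and take care never to apply $R$ or $R^2$ to vectors outside their natural domains. A secondary point is ensuring that the $T$-admissibility of the slice domain transfers to $A$-admissibility of its image, which follows from $\Phi_\alpha\vert_{\C_I}$ being a M\"obius transformation of $\C_I$ that preserves the axially symmetric slice structure precisely because $\alpha$ is real.
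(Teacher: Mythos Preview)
The paper states this lemma without proof, referring instead to the monograph \cite{MR2752913} for the results in this subsection; there is thus no argument in the paper to compare against. Your approach is the standard one and is correct in outline: since $\alpha\in\R$, the map $\Phi_\alpha$ and its inverse are intrinsic, so Corollary~\ref{lkjsWW} handles the composition, while the spectral mapping $\sigma_S(A)=\Phi_\alpha(\sigma_S(T)\cup\{\infty\})$ together with a removable-singularity argument at $0\leftrightarrow\infty$ takes care of the domains. Your domain bookkeeping in step~(a) is sound; in particular the identity $\Q{q}{R}A^2=|q|^2\Q{p}{A}$ on $V$ and the observation that $R^2\Qinv{q}{R}=\id+2\Re(q)R\Qinv{q}{R}-|q|^2\Qinv{q}{R}$ is bounded are exactly what is needed. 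The one place worth a word of extra care is the removable singularity at the real point $0$: it suffices to note that the limit $\lim_{p\to 0}f\circ\Phi_\alpha^{-1}(p)=f(\infty)$ exists independently of the imaginary unit, so the slicewise holomorphic extensions agree at $0$ and the resulting function is genuinely slice hyperholomorphic on a full neighborhood of $0$ (e.g.\ via Theorem~\ref{RepFoOP}).
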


\begin{definition}\label{SCalcUB}
Let $T\in\closOP(T)$, let $\alpha\in\rho_S(T)\cap\R$ and let $A$ and $\Phi_{\alpha}$ be as in Lemma~\ref{1to1}. For $f\in\lhol(\sigma_S(T)\cup\{\infty\})$ or $f\in\rhol(\sigma_S(T)\cup\{\infty\})$, we define
\[ f(T) = f\circ\Phi_{\alpha}^{-1}(A)\]
in the sense of Definition~\ref{SCalcBd}.
\end{definition}

\begin{theorem}\label{SCalcInt}
Let $T\in\closOP(V)$. If $f\in\lhol(\sigma_S(T)\cup\{\infty\})$, then
\[ f(T) = f(\infty)\id +  \frac{1}{2\pi}\int_{\partial(U\cap\C_I)} S_L^{-1}(s,T)\,ds_I\, f(s)\]
and if $f\in\rhol(\sigma_S(T)\cup\{\infty\})$, then
\[ f(T) = f(\infty)\id + \frac{1}{2\pi}\int_{\partial(U\cap\C_I)}f(s)\,ds_I\, S_R^{-1}(s,T)\]
for any imaginary unit $I\in\S$ and  any $T$-admissible slice domain $U$ such that $f$ is left (resp. right) slice hyperholomorphic on $U$. In particular, Definition~\ref{SCalcUB} is independent of the choice of $\alpha$.
\end{theorem}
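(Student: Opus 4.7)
The plan is to apply Definition~\ref{SCalcUB}, which gives $f(T) = g(A)$ with $A = (T-\alpha\id)^{-1}$ and $g = f\circ\Phi_\alpha^{-1}$, compute $g(A)$ by the bounded-operator formula of Definition~\ref{SCalcBd}, and transform the resulting integral via the substitution $s = \Phi_\alpha(p) = (p-\alpha)^{-1}$ into a contour integral around $\sigma_S(T)$. The term $f(\infty)\id$ will emerge as a residue-at-infinity contribution stemming from the point $0 \in \sigma_S(A)$, which is the image of $\infty$ under $\Phi_\alpha$. I would fix a $T$-admissible slice domain $U$ on which $f$ is left slice hyperholomorphic up to $\infty$, chosen so that $\alpha \notin \overline{U}$ (always possible since $\alpha\in\rho_S(T)$); then $V := \Phi_\alpha(U) \cup \{0\}$ is a bounded axially symmetric slice domain containing $\sigma_S(A)$, with $\partial(V\cap\C_I) = \Phi_\alpha(\partial(U\cap\C_I))$ and matching orientations since $\Phi_\alpha|_{\C_I}$ is an orientation-preserving M\"obius transformation.

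The crux is the operator identity
\[ S_L^{-1}(s,A) = (p-\alpha)\id - (p-\alpha)^2\,S_L^{-1}(p,T), \qquad s = (p-\alpha)^{-1}. \]
For real $p$ this follows from the partial-fraction identity $(T-\alpha\id)^{-1}(p\id - T)^{-1} = (p-\alpha)^{-1}\bigl[(T-\alpha\id)^{-1} + (p\id - T)^{-1}\bigr]$, which reduces verifying $(s\id - A)\bigl[(p-\alpha)\id - (p-\alpha)^2 S_L^{-1}(p,T)\bigr] = \id$ on all of $V$ to routine algebra. Both sides of the displayed identity are operator-valued right slice hyperholomorphic functions of $p$ (by Lemma~\ref{ResHol2342} together with the fact that $p\mapsto(p-\alpha)^{-1}$ is intrinsic), hence agree on the full common domain of definition by the identity principle, Theorem~\ref{IDPOP}.

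Substituting $s = (p-\alpha)^{-1}$, $ds_I = -(p-\alpha)^{-2}dp_I$, and $g(s) = f(p)$ into $g(A) = \frac{1}{2\pi}\int_{\partial(V\cap\C_I)} S_L^{-1}(s,A)\,ds_I\,g(s)$ yields
\[ g(A) = \frac{1}{2\pi}\int_{\partial(U\cap\C_I)} S_L^{-1}(p,T)\,dp_I\,f(p) - \frac{1}{2\pi}\int_{\partial(U\cap\C_I)}(p-\alpha)^{-1}\,dp_I\,f(p)\cdot\id. \]
The scalar integral is handled by Cauchy's theorem (Theorem~\ref{CInt}) on the annular region between $\partial(U\cap\C_I)$ and a large circle $C_R\subset\C_I$ enclosing $U^c\cap\C_I$: boundary cancellation gives $\int_{\partial(U\cap\C_I)}(p-\alpha)^{-1}\,dp_I\,f(p) = -\int_{C_R}(p-\alpha)^{-1}\,dp_I\,f(p)$, and the latter tends to $2\pi\,f(\infty)$ as $R\to\infty$ using uniform convergence $f(p)\to f(\infty)$; this supplies the $+f(\infty)\id$ summand. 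The right slice hyperholomorphic case is analogous, and since the right-hand side of the final formula contains no reference to $\alpha$, one obtains the last assertion immediately.

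The main obstacle is the commutativity bookkeeping when proving the resolvent identity off the real axis: for non-real $p$ the symbol $(T-p\id)^{-1}$ is not meaningful in the quaternionic setting, so I would pass through real $p$ and invoke the identity principle rather than attempt a direct quaternionic manipulation. A secondary subtlety is the orientation of the unbounded contour $\partial(U\cap\C_I)$, whose positive sense (``$U$ on the left'') traverses the compact set $U^c$ clockwise; this must be carefully tracked through the M\"obius change of variables to produce the correct sign for the residue-at-infinity term.
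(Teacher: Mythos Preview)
The paper does not include its own proof of this theorem; it is stated as a preliminary result whose proof is to be found in the references (in particular \cite{MR2752913}). Your approach---transforming the bounded-operator integral for $g(A)$ via the M\"obius change of variables $s=\Phi_\alpha(p)$, using a resolvent identity linking $S_L^{-1}(s,A)$ and $S_L^{-1}(p,T)$, and recovering $f(\infty)\id$ as a residue-at-infinity term---is precisely the standard argument and is sound.

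One point to tidy up: in the noncommutative setting the placement of the scalar factors in your key identity matters for the identity-principle step. The form
\[
S_L^{-1}\bigl((p-\alpha)^{-1},A\bigr)\;=\;\id\,(p-\alpha)\;-\;S_L^{-1}(p,T)\,(p-\alpha)^{2},
\]
with the intrinsic factors $(p-\alpha)$ and $(p-\alpha)^2$ acting on the \emph{right} in the two-sided $\boundOP(V)$-module structure, is the one that is genuinely right slice hyperholomorphic in $p$ (this is the operator-valued analogue of the rule in Corollary~\ref{lkjsWW} that $fg\in\rhol$ when $f\in\rhol$ and $g\in\intrin$). With the scalars on the right the substitution $ds_I=-(p-\alpha)^{-2}\,dp_I$ cancels cleanly, and your computation of the scalar integral via Cauchy's theorem on the annular region $U\cap B_R(0)\cap\C_I$ (where the integrand is holomorphic because $\alpha\in U^c$) then yields $-2\pi f(\infty)$ with the correct sign. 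You have already flagged exactly these two bookkeeping issues as the main obstacles, so the proposal is complete in outline.
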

\begin{corollary}\label{AlgUb}
Let $T\in\closOP(V)$ and let $\overline{\sigma_S}(T)$ denote the extended $S$-spectrum of $T$, that is $\sigma_S(T) = \sigma_S(T)$ if $T$ is bounded and $\sigma_S(T) = \sigma_S(T)\cup\{\infty\}$ if $T$ is unbounded. The $S$-functional calculus has the following properties:
\begin{enumerate}[(i)]
\item If $f,g\in\lhol(\overline{\sigma_S}(T))$ and $a\in\H$, then $(fa+g)(T) = f(T)a+g(T)$.  If $f,g\in\rhol(\overline{\sigma_S}(T))$ and $a\in\H$, then $(af+g)(T) = af(T)+g(T)$.
\item If $f\in\intrin(\overline{\sigma_S}(T))$ and $g\in\lhol(\overline{\sigma_S}(T))$ or if $f\in\rhol(\overline{\sigma_S}(T))$ and $g\in\intrin(\overline{\sigma_S}(T))$, then $(fg)(T) = f(T)g(T)$.
\item If $g\in\intrin(\overline{\sigma_S}(T))$, then $\sigma_S(g(T)) = g(\overline{\sigma_S}(T))$ and $f(g(T)) = f\circ g(T)$ if $f\in \lhol(g(\overline{\sigma_S}(T)))$ or $f\in\rhol(g(\overline{\sigma_S}(T)))$.
\end{enumerate}
\end{corollary}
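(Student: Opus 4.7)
The plan is to reduce everything to the bounded operator case via the resolvent-type substitution $A := -S_L^{-1}(\alpha,T) = (T - \alpha \id)^{-1}$ built into Definition~\ref{SCalcUB}. Since the bounded versions of (i), (ii) and (iii) are already available from \cite{acgs,MR2752913}, the only work is to propagate them through pre-composition with $\Phi_\alpha^{-1}$. The key structural observation is that $\Phi_\alpha(s) = (s-\alpha)^{-1}$ has only real coefficients, so it lies in $\intrin$; the same is therefore true of its inverse $\Phi_\alpha^{-1}(s) = \alpha + s^{-1}$. By Corollary~\ref{lkjsWW}, pre-composition with $\Phi_\alpha^{-1}$ preserves each of the three classes $\lhol$, $\rhol$ and $\intrin$. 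Thus whenever $f$ lies in one of these classes on $\overline{\sigma_S}(T)$, the function $f\circ\Phi_\alpha^{-1}$ lies in the corresponding class on $\sigma_S(A)$, and $f(T) = (f\circ\Phi_\alpha^{-1})(A)$ is a legitimate object for the bounded calculus.

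For (i) it suffices to note that $(fa+g)\circ\Phi_\alpha^{-1} = (f\circ\Phi_\alpha^{-1})a + (g\circ\Phi_\alpha^{-1})$ pointwise and to invoke bounded linearity at $A$. For (ii), the hypothesis $f\in\intrin(\overline{\sigma_S}(T))$ yields $f\circ\Phi_\alpha^{-1}\in\intrin(\sigma_S(A))$, and since pointwise multiplication commutes with pre-composition, $(fg)\circ\Phi_\alpha^{-1} = (f\circ\Phi_\alpha^{-1})(g\circ\Phi_\alpha^{-1})$. The bounded product rule then delivers
\[
(fg)(T) = (f\circ\Phi_\alpha^{-1})(A)\cdot (g\circ\Phi_\alpha^{-1})(A) = f(T)\,g(T),
\]
and the case $f\in\rhol$, $g\in\intrin$ is completely symmetric.

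For (iii) the main preparatory step is the extended spectral mapping for $\Phi_\alpha$ itself,
\[
\sigma_S(A) = \Phi_\alpha\bigl(\overline{\sigma_S}(T)\bigr),
\]
with the convention $\Phi_\alpha(\infty) = 0$. This reduces to checking, for $s \neq 0$, that $\Q{s}{A}$ is invertible on $V$ if and only if $\Q{\Phi_\alpha^{-1}(s)}{T}$ is, and that $0\in\sigma_S(A)$ precisely when $T$ is unbounded, i.e.\ when $\infty\in\overline{\sigma_S}(T)$. Combining this with the bounded spectral mapping applied to $A$ and $g\circ\Phi_\alpha^{-1}\in\intrin(\sigma_S(A))$ gives
\[
\sigma_S(g(T)) = \sigma_S\bigl((g\circ\Phi_\alpha^{-1})(A)\bigr) = (g\circ\Phi_\alpha^{-1})\bigl(\sigma_S(A)\bigr) = g\bigl(\overline{\sigma_S}(T)\bigr),
\]
and the bounded composition rule then yields $f(g(T)) = f((g\circ\Phi_\alpha^{-1})(A)) = (f\circ g\circ\Phi_\alpha^{-1})(A) = (f\circ g)(T)$ directly from Definition~\ref{SCalcUB}. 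The main obstacle I expect is precisely this extended spectral mapping identity at the distinguished point $\infty$, together with the companion check that $f\circ g$ is automatically slice hyperholomorphic at infinity with value $f(g(\infty))$, a prerequisite for applying Definition~\ref{SCalcUB} to $f\circ g$; this follows from $g(\infty) = \lim_{x\to\infty}g(x)$ and the fact that $f$ is slice hyperholomorphic on a neighborhood of $g(\infty)\in g(\overline{\sigma_S}(T))$, but it must be carried out carefully to ensure well-posedness of every step in the reduction.
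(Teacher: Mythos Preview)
The paper does not actually prove this corollary: it is listed among the preliminary results of Section~\ref{PreRes} with the blanket remark that ``the proofs of the results stated in this subsection can be found in \cite{acgs,MR2752913}.'' Your reduction to the bounded calculus via the intrinsic homography $\Phi_\alpha$ is precisely the standard argument used in those references, and the steps you outline---preservation of the classes $\lhol$, $\rhol$, $\intrin$ under composition with $\Phi_\alpha^{-1}$, then invoking the bounded versions of linearity, product rule, spectral mapping and composition---are correct and complete in spirit. The two points you flag as potential obstacles (the extended spectral mapping $\sigma_S(A)=\Phi_\alpha(\overline{\sigma_S}(T))$ including the value at $\infty$, and the slice hyperholomorphicity of $f\circ g$ at infinity) are exactly the details that need checking, and your sketch of how to handle them is sound.
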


Although polynomials do not belong to the class of admissible functions  if the operator is unbounded, they are still compatible with the $S$-functional calculus as the following lemma shows \cite[Lemma~4.4]{DA}

\begin{lemma}\label{ProdPoly}
Let $T\in\closOP(V)$ with $\rho_S(T)\neq\emptyset$ and assume that  $f\in\intrin(\sigma_{S}(T)\cup\{\infty\})$ has a zero of order $n\in\N_0\cup\{+\infty\}$ at infinity.
\begin{enumerate}[(i)]
\item\label{LKA1} For any intrinsic polynomial $P$ of degree lower than or equal to $n$, we have $P(T)f(T) = (Pf)(T)$.
\item\label{LKA2}  If $v\in\dom(T^m)$ for some $m\in\N_0\cup\{\infty\}$, then $f(T)v\in\dom(T^{m+n})$.
\end{enumerate}
\end{lemma}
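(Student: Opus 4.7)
The overall strategy is to reduce both claims to the bounded $S$-functional calculus via the transformation $\Phi_\alpha$ from Lemma~\ref{1to1}. Fix $\alpha\in\rho_S(T)\cap\R$ and set $A:=(T-\alpha\id)^{-1}\in\boundOP(V)$ and $\tilde f:=f\circ\Phi_\alpha^{-1}$, so that $\tilde f\in\intrin(\sigma_S(A))$ and $f(T)=\tilde f(A)$ by Definition~\ref{SCalcUB}. Since $\Phi_\alpha(\infty)=0$ and $f$ has a zero of order $n$ at $\infty$, the function $\tilde f$ has a zero of order $n$ at $0$, so Theorem~\ref{TaylorOP} provides a factorization $\tilde f(s)=s^n\tilde g(s)$ with $\tilde g\in\intrin(\sigma_S(A))$ (the relevant Taylor coefficients of $\tilde f$ at $0$ are real because $\tilde f$ is intrinsic). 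Applying the product rule of Corollary~\ref{AlgUb}(ii) to $\tilde f=s^n\cdot\tilde g(s)$, I obtain
\[
f(T)=\tilde f(A)=A^n\tilde g(A)=\tilde g(A)A^n,
\]
the commutation coming from the fact that $s^n$ and $\tilde g$ are both intrinsic. (If $n=+\infty$, then $\tilde f$ vanishes on the connected component of its domain containing $\sigma_S(A)$ by the Identity Principle~\ref{IDPOP}, so $f(T)=0$ and both claims hold trivially; the rest of the plan assumes $n$ finite.)

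For part (ii), given $v\in\dom(T^m)=\dom((T-\alpha\id)^m)$, I set $w:=(T-\alpha\id)^m v\in V$; iterating $A(T-\alpha\id)v'=v'$ for $v'\in\dom(T)$ gives $v=A^m w$. Since $A^m$ is intrinsic, it commutes with $\tilde g(A)$, so
\[
f(T)v=\tilde g(A)A^n\cdot A^m w=A^{m+n}\tilde g(A)w.
\]
An easy induction from $(T-\alpha\id)Au=u$ for $u\in V$ shows that $A$ sends $\dom(T^k)$ into $\dom(T^{k+1})$, hence $A^{m+n}$ maps $V$ into $\dom(T^{m+n})$, and the claim follows.

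For part (i), first note that $Pf\in\intrin(\sigma_S(T)\cup\{\infty\})$ since the pole at $\infty$ of $P$ of order $k:=\deg P\le n$ is cancelled by the zero of $f$. Using $\Phi_\alpha^{-1}(s)=s^{-1}+\alpha$,
\[
\widetilde{Pf}(s)=P(s^{-1}+\alpha)\,\tilde f(s)=s^{n-k}R(s)\tilde g(s),\qquad R(s):=s^k P(s^{-1}+\alpha),
\]
where $R$ is an intrinsic polynomial of degree $k$, so by the product rule $(Pf)(T)=A^{n-k}R(A)\tilde g(A)$. The matching operator identity that has to be established is
\[
P(T)A^n=A^{n-k}R(A)\qquad\text{as operators on }V,
\]
which by linearity reduces to the monomial case $T^i A^j=A^{j-i}(\id+\alpha A)^i$ for $0\le i\le j$; this follows by induction from $TA^j=A^{j-1}(\id+\alpha A)$, itself an immediate consequence of $(T-\alpha\id)A=\id$ on $V$. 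Combining these,
\[
(Pf)(T)=A^{n-k}R(A)\tilde g(A)=P(T)A^n\tilde g(A)=P(T)f(T).
\]

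The main subtlety is that the equality $P(T)A^n=A^{n-k}R(A)$ must hold as operators on all of $V$, even though the natural domain of $P(T)$ is only $\dom(T^k)$; this works precisely because $A^n$ maps $V$ into $\dom(T^n)\subseteq\dom(T^k)$. Beyond this, the manipulations on the operator side mirror the symbol-level identity $P(t)=s^{-k}R(s)$ under the substitution $t=s^{-1}+\alpha$, which is what makes the reduction via $\Phi_\alpha$ succeed. The commutation relations between $\tilde g(A)$ and the polynomial operators $A^j$ must throughout be invoked via the intrinsic product rule of Corollary~\ref{AlgUb}(ii) rather than by naive computation.
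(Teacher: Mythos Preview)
The paper does not actually prove this lemma; it is cited without proof from \cite[Lemma~4.4]{DA}. There is therefore no in-paper argument to compare against.

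That said, your approach is sound and is precisely the natural one given the framework set up in Lemma~\ref{1to1} and Definition~\ref{SCalcUB}: transfer everything to the bounded operator $A=(T-\alpha\id)^{-1}$, exploit that the zero of $f$ at infinity becomes a zero of $\tilde f$ at $0$, and then use only the algebraic rules of Corollary~\ref{AlgUb} for the bounded calculus together with the elementary identity $TA=\id+\alpha A$ on $V$. The key operator identity $P(T)A^n=A^{n-k}R(A)$ is correctly derived from $T^iA^j=A^{j-i}(\id+\alpha A)^i$ for $i\le j$, and your observation that this is a genuine equality on all of $V$ (because $A^n$ lands in $\dom(T^n)\subset\dom(T^k)$) is exactly the point that needs care. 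The handling of the case $n=+\infty$ via the Identity Principle is also appropriate.

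One small caveat: you invoke $\alpha\in\rho_S(T)\cap\R$, whereas the lemma hypothesis only states $\rho_S(T)\neq\emptyset$. This is harmless here, since the very definition of $f(T)$ for unbounded $T$ (Definition~\ref{SCalcUB}) already presupposes a real point in $\rho_S(T)$; the hypothesis is implicitly that strong. A second cosmetic point: $R(s)=s^kP(s^{-1}+\alpha)$ has degree \emph{at most} $k$ (its leading coefficient is $P(\alpha)$), not necessarily exactly $k$; this does not affect the argument.
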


Finally, we determine the slice derivatives of the $S$-resolvent operators. For bounded $T$, this has been done in \cite{CGTAYLOR}, but since the calculations are slightly more delicate for unbounded operators, we give the proof again for the sake of completeness. Definition~\ref{CauchySlicePow} and considerations as in Remark~\ref{RkResExtension} motivate the following definition.
\begin{definition}
Let $T\in\closOP(V)$. For $s\in\rho_S(T)$, we set
\[S_L^{-n}(s,T) := \sum_{k=0}^n \binom{n}{k} (-T)^k(T^2-2\Re(s)T + |s|^2\id)^{-n}\overline{s}^{n-k}\]
and
\[S_R^{-n}(s,T) :=\sum_{k=0}^n\binom{n}{k}\overline{s}^{n-k}(-T)^{k}(T^2 - 2\Re(s)T + |s|^2)^{-n}.\]
\end{definition}
\begin{remark}
The operator $\Qinv[n]{s}{T} = (T^2-2\Re(s)T+|s|^2\id)^{-n}$ is bounded and maps $V$ to $\dom(T^{2n})$, cf. the arguments in Remark~\ref{RkResExtension}. Furthermore, by Corollary~4.5 in \cite{DA}, the operators $(-T)^k$.  Hence, the operators $S_L^{-n}(s,T)$ and $S_R^{-n}(s,T)$ are bounded as $\dom((-T)^k)=\dom(T^k)\supset \dom(T^{2n})$ for any $k\in\{0,\ldots,n\}$.\end{remark}
\begin{lemma}\label{ResSDeriv}
Let $T\in\closOP(V)$ with $\rho_S(T)\cap\R\neq\emptyset$. The $n$-th slice derivatives of the left and right $S$-resolvent of $T$ for $n\in\N_0$ are
\[ \sderiv^n S_L^{-1}(s,T) = (-1)^nn!\,S_L^{-(n+1)}(s,T) \quad\text{and}\quad \sderiv^n S_R^{-1}(s,T) = (-1)^nn!\,S_R^{-(n+1)}(s,T).\]
\end{lemma}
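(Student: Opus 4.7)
The plan is to establish both identities by induction on $n$, using Corollary~\ref{SDPropOP} to replace the slice derivative $\sderiv$ by the real partial derivative $\partial/\partial s_0$. The base case $n=0$ merely reconciles the definition $S_L^{-1}(s,T) = \mathcal{Q}_s(T)^{-1}\overline{s} - T\mathcal{Q}_s(T)^{-1}$ in \eqref{SresolvoperatorL} with the $m=1$ instance of the formula defining $S_L^{-m}(s,T)$; this reconciliation uses the commutation $T\mathcal{Q}_s(T)^{-1}v = \mathcal{Q}_s(T)^{-1}Tv$ for $v\in\dom(T)$ together with the extension discussed in Remark~\ref{RkResExtension}.

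For the inductive step, applying $\sderiv$ to the induction hypothesis reduces matters to proving
\[
\frac{\partial}{\partial s_0}S_L^{-m}(s,T) = -m\,S_L^{-(m+1)}(s,T) \qquad \text{for every } m\ge 1.
\]
To obtain this, I would differentiate the defining sum for $S_L^{-m}(s,T)$ term by term using the Leibniz rule together with the two identities
\[
\frac{\partial}{\partial s_0}\mathcal{Q}_s(T)^{-m} = 2m\,(T - s_0\id)\mathcal{Q}_s(T)^{-(m+1)}, \qquad \frac{\partial}{\partial s_0}\overline{s}^{\,k} = k\,\overline{s}^{\,k-1}.
\]
The first follows by induction on $m$ from the derivative-of-inverse formula for $\mathcal{Q}_s(T)^{-1}$, and the second from $\overline{s}=s_0-Is_1$. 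The two resulting finite sums can be combined, after a re-indexing of the binomial coefficients, by invoking the algebraic identities
\[
\overline{s}^{\,2} - 2s_0\overline{s} + |s|^2 = 0, \qquad \mathcal{Q}_s(T) = T^2 - 2s_0 T + |s|^2\id,
\]
which collapse the expression into $-m\,S_L^{-(m+1)}(s,T)$. The right $S$-resolvent case is handled by the mirror calculation with factors placed on the opposite side of $\mathcal{Q}_s(T)^{-m}$.

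The combinatorial side of this argument is routine; the main obstacle is the operator-theoretic justification, which requires care because $T$ is only closed and not bounded. Specifically one must verify (i) that $s\mapsto\mathcal{Q}_s(T)^{-m}$ is differentiable in $s_0$ in the operator-norm topology; (ii) that the Leibniz rule is legitimate in this unbounded setting; and (iii) that unbounded factors such as $T^k$ may be commuted with powers of the pseudo-resolvent on the appropriate domains. Points (i) and (ii) will follow from the series expansion of the pseudo-resolvent developed in Section~\ref{HolSect}, which shows that $s\mapsto\mathcal{Q}_s(T)^{-1}$ is norm-analytic on $\rho_S(T)$ and hence may be differentiated term by term. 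Point (iii) is settled by the fact that $\mathcal{Q}_s(T)^{-m}$ maps $V$ into $\dom(T^{2m})$, so every operator of the form $T^k\mathcal{Q}_s(T)^{-m}$ with $0\le k\le 2m$ is a bounded, everywhere-defined operator on $V$, rendering the above manipulations rigorous.
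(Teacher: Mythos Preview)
Your approach is correct but genuinely different from the paper's. The paper does not differentiate the defining sum directly; instead it represents $S_L^{-1}(s,T)$ as an $S$-functional calculus integral
\[
S_L^{-1}(s,T)=\frac{1}{2\pi}\int_{\partial(U_{p,\varepsilon}\cap\C_I)}S_L^{-1}(x,T)\,dx_I\,S_L^{-1}(s,x),
\]
differentiates the \emph{scalar} Cauchy kernel $S_L^{-1}(s,x)$ under the integral (where the combinatorics are trivial because $x$ and $\mathcal Q_s(x)$ commute), and then reads off the operator identity from Theorem~\ref{SCalcInt}, Corollary~\ref{AlgUb} and Lemma~\ref{ProdPoly}. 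This is precisely where the hypothesis $\rho_S(T)\cap\R\neq\emptyset$ enters: it is needed to make the $S$-functional calculus for the unbounded $T$ available.

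Your direct computation trades that machinery for more hands-on operator analysis. Two remarks. First, your proof does not actually use $\rho_S(T)\cap\R\neq\emptyset$, so it yields a slightly stronger statement. Second, the justification of point~(ii) needs a bit more than you indicate: to differentiate each term $(-T)^k\mathcal Q_s(T)^{-m}\overline{s}^{\,m-k}$ you must know that $s\mapsto (-T)^k\mathcal Q_s(T)^{-m}$ is norm-differentiable with derivative $(-T)^k\frac{\partial}{\partial s_0}\mathcal Q_s(T)^{-m}$, which requires either extending Lemmas~\ref{Qdiff}--\ref{TQDiff} to all products $T^j\mathcal Q_s(T)^{-m}$ with $j\le 2m$, or invoking the closedness of $T^k$ together with the norm convergence of the difference quotients. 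This is doable but is exactly the kind of bookkeeping the paper's integral-representation argument sidesteps.
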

\begin{proof}
We consider the case of the left $S$-resolvent operator. For $p\in\rho_S(T)$, let $\varepsilon$ be such that $B(\varepsilon,p)\subset\rho_S(T)$ and set $U_{p,\varepsilon}:= \H\setminus[B(\varepsilon,p)]$, where $[B(\varepsilon,p)]$ denotes the axially symmetric hull of $B(\varepsilon,p)$. For any $s\in [B(\varepsilon/2, p)]$, the map $x\mapsto S_L^{-1}(s,x)$ is then left slice hyperholomorphic on $U_{p,\varepsilon}$ and also at infinity with $S_L^{-1}(s,\infty) = 0$. We thus have by Theorem~\ref{SCalcInt}, Corollary~\ref{AlgUb} and Lemma~\ref{ProdPoly} that
\begin{align*}
&S_L^{-1}(s, T) =  \Qinv{s}{T}\overline{s} - T\Qinv{s}{T} \\
=&\frac{1}{2\pi} \int_{\partial (U_{p,\varepsilon}\cap\C_I)} S_L^{-1}(s,x)\,dx_I\, \Qinv{s}{x}\overline{s} - \frac{1}{2\pi} \int_{\partial (U_{p,\varepsilon}\cap\C_I)} S_L^{-1}(s,x)\,dx_I\, x \Qinv{s}{x} \\
=& \frac{1}{2\pi}\int_{\partial(U_{p,\varepsilon}\cap\C_I)}S_L^{-1}(x,T)\,dx_I\,S_L^{-1}(s,x),
\end{align*}
where $\Qinv{s}{x} = (x^2-2\Re(s)x-+|s|^2)^{-1}$.
By induction, one can easily see that
\[\sderiv^n S_L^{-1}(s,x) = (-1)^nn!\,S_L^{-(n+1)}(s,x) = (-1)^nn!\sum_{k=0}^{n+1} \binom{n+1}{k}(-x)^k\Qinv[(n+1)]{s}{x}\overline{s}^{n+1-k},\]
where the second equality holds because $\Qinv{s}{x}$ and $x$ commute. Since $(-x)^k$ is an intrinsic polynomial, since $\Qinv[(n+1)]{s}{x}$ is also intrinsic and since $ s\mapsto S_L^{-1}(s,T)$ can be represented by the above integral on a neighborhood of $p$, we deduce again from Theorem~\ref{SCalcInt}, Corollary~\ref{AlgUb} and Lemma~\ref{ProdPoly} that
\begin{align*}
\sderiv^{n}S_L^{-1}(s,T)& = \frac{1}{2\pi}\int_{\partial(U_{p,\varepsilon}\cap\C_I)}S_L^{-1}(x,T)\,dx_I\,\sderiv^{n}S_L^{-1}(s,x)\\
& = \frac{1}{2\pi}\int_{\partial(U_{p,\varepsilon}\cap\C_I)}S_L^{-1}(x,T)\,dx_I\,(-1)^nn!\sum_{k=0}^{n+1} \binom{n+1}{k}(-x)^k\Qinv[(n+1)]{s}{x}\overline{s}^{n+1-k}\\
&= (-1)^nn!\sum_{k=0}^{n+1} \binom{n+1}{k}(-T)^k\Qinv[(n+1)]{s}{T}\overline{s}^{n+1-k} = (-1)^nn!S_L^{-(n+1)}(s,T).
\end{align*}

\end{proof}

\subsection{Logarithm and fractional powers in the quaternions}

In order to state the main results we finally recall  the logarithmic function in the slice hyperholomorphic setting. The logarithmic function on $\hh$ is defined as
\begin{equation}\label{LOGDEF}
\log s:=\ln |s|+I_s \arccos(s_0/|s|)\qquad\text{for } s\in \mathbb{H}\setminus \{(-\infty, 0]\}.
\end{equation}
Note that for $s=s_0\in[0,\infty)$ we have  $\arccos(s_0/|s|) = 0$ and so $\log s = \ln s$. Therefore, $\log s$ is well defined also on the positive real axis and does not depend on the choice of the imaginary unit $I_s$.
It is
\[ e^{\log s} = s  \quad\text{for } s\in\hh\]
and
\[ \log e^s = s \quad\text{for }s\in\hh\ \ \text{ with }|\underline{s}|<\pi.\]
The logarithmic function is real differentiable on $\hh\setminus (-\infty,0]$. Moreover, for any $I\in\S$, the restriction of $\log s$ to the complex plane $\C_I$ coincides with a branch of the complex logarithm on $\C_I$ and is therefore holomorphic on $\C_I\setminus (-\infty, 0]$. Thus, $\log s$ is left and right slice hyperholomorphic on $\hh\setminus(-\infty,0]$.
\begin{remark}\label{LogRem}
Observe that there exist other definitions of the quaternionic logarithm in the literature. In \cite{LogBook}, the logarithm of a quaternion is for instance defined as
\[\log_{k,i} s := \begin{cases}\ln|s| + I_x\left(\arccos\frac{s_0}{|s|}+2k\pi\right), & |\underline{s}|\neq 0 \text{ or }|\underline{s}| = 0,s_0>0\\
\ln|s| + e_i\pi, &|\underline{s}|=0, s_0<0
\end{cases}
\]
where $k\in\Z$ and $e_i$ is one of the generating units of $\H$. This logarithm is however not continuous at the real line (and therefore in particular not slice hyperholomorphic at the real line) unless $k=0$. But in this case this definition of the logarithm coincides with the one given in \eqref{LOGDEF}. Indeed, the identity principle implies that \eqref{LOGDEF} defines the maximal slice hyperholomorphic extension of the natural logarithm on $(0,+\infty)$ to a subset of the quaternions.
\end{remark}

We define fractional powers of a quaternion for $\alpha\in \mathbb{R}$ as
\begin{equation}\label{fracDef}
s^{\alpha}:= e^{\alpha\log s}=e^{\alpha (\ln |s|+I_s \arccos(s_0/|s|))}, \ \ \  s\in \mathbb{H}\setminus (-\infty, 0].
\end{equation}
This function is obviously also left and right slice hyperholomorphic on $\hh\setminus(-\infty, 0]$.

\begin{definition}[Argument function]
Let $p\in\hh\setminus\{0\}$. We define $\arg(p)$ as the unique number $\theta \in [0,\pi]$ such that $p = |p| e^{\theta I_p}$.
\end{definition}
Again $\theta = \arg(s)$ does not depend on the choice of $I_s$ if $s\in\rr\setminus\{0\}$ since $p = |p|e^{0 I}$ for any $I\in\S$ if $p>0$ and $p = |p|e^{\pi I}$ for any $I\in\S$ if $p<0$.

\section{Remarks on the Slice-Hyperholomorphicity of the $S$-resolvents}\label{HolSect}
In this section we first give a precise proof for the slice hyperholomorphicity of the $S$-resolvents and then we consider the question whether the $S$-resolvents could have slice-hyperholomorphic continuations to sets larger than the $S$-resolvent set of the respective operator. We start with a new series expansion for the pseudo-resolvent $\Qinv{s}{T}$. An heuristic approach to find this expansion is to consider the immediate equation
\begin{equation}\label{QResEQ}
\Qinv{s}{T} - \Qinv{p}{T} = \Qinv{s}{T}(\Q{p}{T} -\Q{s}{T}) \Qinv{p}{T}
\end{equation}
and transform it to
\begin{equation*}
\Qinv{s}{T} = \Qinv{p}{T} + \Qinv{s}{T}(\Q{p}{T} - \Q{s}{T})\Qinv{p}{T}.
\end{equation*}
Recursive application of this equation then yields the series expansion proved in the following lemma.
\begin{lemma}\label{QExp}
Let $T\in\closOP(V)$ and $p\in\rho_S(T)$ and let $s\in\H$. If the series
\begin{equation}\label{EEQD}
\mathcal{J}(s) =  \sum_{n=0}^{+\infty} \left(\Q{p}{T} - \Q{s}{T}\right)^n\Qinv[(n+1)]{p}{T}
\end{equation}
converges absolutely in $\boundOP(V)$, then $s\in \rho_S(T)$ and its sum is the inverse of $\Q{s}{T}$.

The series converges in particular uniformly on any of the closed axially symmetric neighborhoods
\[C_{\varepsilon}(p) = \left\{ s\in\H: d_S(s,p)\leq\varepsilon\right\}\]
of $p$ with
\[d_S(s,p) =  \max\left\{2|s_0 - p_0|, \left||p|^2 - |s|^2\right|\right\}\]
and
\[\varepsilon <\frac{1}{\left\| T\Qinv{p}{T}\right\|+ \left\| \Qinv{p}{T}\right\|}.\]
\end{lemma}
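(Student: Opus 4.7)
The plan is to reduce everything to analyzing the bounded operator
\[A:=(\Q{p}{T}-\Q{s}{T})\Qinv{p}{T}\]
on $V$ and to exploit the algebraic relation $\id - A = \Q{s}{T}\Qinv{p}{T}$. First I would verify that $A\in\boundOP(V)$: since $\Q{p}{T}-\Q{s}{T}=2(s_0-p_0)T-(|s|^2-|p|^2)\id$ is first order in $T$, since $\Qinv{p}{T}$ maps $V$ into $\dom(T^2)\subset\dom(T)$, and since $T\Qinv{p}{T}$ is bounded (as noted in Remark~\ref{RkResExtension}), one immediately obtains
\[\|A\| \leq 2|s_0-p_0|\,\|T\Qinv{p}{T}\| + ||s|^2-|p|^2|\,\|\Qinv{p}{T}\|.\]
Next I would prove by induction the identity $(\Q{p}{T}-\Q{s}{T})^n\Qinv[(n+1)]{p}{T} = A^n\Qinv{p}{T}$, using the commutation $\Qinv{p}{T}T = T\Qinv{p}{T}$ on $\dom(T)$, which is inherited from $\Q{p}{T}T = T\Q{p}{T}$ on $\dom(T^3)$ together with the fact that $\Qinv{p}{T}$ raises the $T$-domain by two. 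A direct expansion using $\Q{p}{T}\Qinv{p}{T}=\id$ on $V$ then yields $\id - A = \Q{s}{T}\Qinv{p}{T}$.

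From here the argument is a telescoping sum. The partial sums $\mathcal{J}_N(s) = \sum_{n=0}^N A^n\Qinv{p}{T}$ satisfy
\[(\id - A)\mathcal{J}_N(s) = \mathcal{J}_N(s)(\id - A) = \Qinv{p}{T} - A^{N+1}\Qinv{p}{T},\]
and absolute convergence in $\boundOP(V)$ forces both $\mathcal{J}_N(s)\to\mathcal{J}(s)$ and $A^{N+1}\Qinv{p}{T}\to 0$ in operator norm, so passing to the limit gives the bounded-operator identities
\[(\id - A)\mathcal{J}(s) = \mathcal{J}(s)(\id - A) = \Qinv{p}{T}.\]
The second of these reads $\mathcal{J}(s)\Q{s}{T}\Qinv{p}{T}v = \Qinv{p}{T}v$ for all $v\in V$; substituting $v=\Q{p}{T}u$ with $u\in\dom(T^2)$ and using $\Qinv{p}{T}\Q{p}{T}u=u$ yields $\mathcal{J}(s)\Q{s}{T}u = u$ on $\dom(T^2)$, so $\mathcal{J}(s)$ is already a left inverse of $\Q{s}{T}$.

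The technical heart of the proof is showing that $\mathcal{J}(s)v\in\dom(T^2)$ and $\Q{s}{T}\mathcal{J}(s)v=v$ for \emph{every} $v\in V$; this is the main obstacle, since the hypothesis of absolute convergence is weaker than $\|A\|<1$. Here I plan to bootstrap: rewriting the first identity as $\mathcal{J}(s) = \Qinv{p}{T} + A\mathcal{J}(s)$ shows that $\mathcal{J}(s)v\in\dom(T)$ for all $v$ (both summands lie in $\dom(T)$). The commutation $\Qinv{p}{T}(\Q{p}{T}-\Q{s}{T}) = (\Q{p}{T}-\Q{s}{T})\Qinv{p}{T}$ on $\dom(T)$ then lets me recast the identity as
\[\mathcal{J}(s)v = \Qinv{p}{T}\bigl[v + (\Q{p}{T}-\Q{s}{T})\mathcal{J}(s)v\bigr],\]
and the bracket now lies in $V$, so $\mathcal{J}(s)v\in\dom(T^2)$. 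Applying $\Q{p}{T}$ gives $\Q{p}{T}\mathcal{J}(s)v = v + (\Q{p}{T}-\Q{s}{T})\mathcal{J}(s)v$, whence
\[\Q{s}{T}\mathcal{J}(s)v = \Q{p}{T}\mathcal{J}(s)v - (\Q{p}{T}-\Q{s}{T})\mathcal{J}(s)v = v.\]
Combined with the left-inverse property from the previous paragraph, this identifies $\mathcal{J}(s)$ as a bounded two-sided inverse of $\Q{s}{T}$, so $s\in\rho_S(T)$ and $\Qinv{s}{T}=\mathcal{J}(s)$.

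Finally, the uniform convergence statement on $C_\varepsilon(p)$ is a geometric-series estimate. By the definition of $d_S$ and the norm bound on $A$ established in the first paragraph,
\[\|A\|\leq d_S(s,p)\bigl(\|T\Qinv{p}{T}\|+\|\Qinv{p}{T}\|\bigr)\leq\varepsilon\bigl(\|T\Qinv{p}{T}\|+\|\Qinv{p}{T}\|\bigr)<1\]
uniformly on $C_\varepsilon(p)$, so the $n$-th term of the series is dominated by $\|A\|^n\|\Qinv{p}{T}\|$, and the geometric majorant delivers uniform absolute convergence. The subtlety threaded throughout the proof, as stressed above, is the careful bookkeeping of domains needed to justify each commutation between $T$, $\Qinv{p}{T}$, and their powers, since $T$ is allowed to be unbounded.
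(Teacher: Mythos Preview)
Your proof is correct and follows the same overall skeleton as the paper's: rewrite the general term as $A^n\Qinv{p}{T}$ with $A=(\Q{p}{T}-\Q{s}{T})\Qinv{p}{T}$, telescope, and then use the geometric estimate $\|A\|\le d_S(s,p)(\|T\Qinv{p}{T}\|+\|\Qinv{p}{T}\|)$ for uniform convergence on $C_\varepsilon(p)$. The one place where you genuinely diverge is in establishing that $\mathcal{J}(s)$ is a \emph{right} inverse of $\Q{s}{T}$ on all of $V$. The paper applies $\Q{s}{T}$ directly to the partial sums $v_N$, computes $\Q{s}{T}v_N = v - \Qdiff(p,s)^{N+1}\Qinv[(N+1)]{p}{T}v$, shows the remainder tends to zero (after a small domain rearrangement), and then invokes the closedness of $\Q{s}{T}$ to pass to the limit. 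You instead pass to the limit first in the bounded-operator identity $(\id-A)\mathcal{J}(s)=\Qinv{p}{T}$ and then run a two-step domain bootstrap ($\mathcal{J}(s)v\in\dom(T)$, then $\mathcal{J}(s)v\in\dom(T^2)$) using the commutation $\Qinv{p}{T}(\Q{p}{T}-\Q{s}{T})=(\Q{p}{T}-\Q{s}{T})\Qinv{p}{T}$ on $\dom(T)$. Your route avoids the explicit appeal to closedness and the ad hoc regrouping of the remainder term, at the cost of one extra commutation check; the paper's route is a bit more direct once one accepts the closedness step. Both are clean; neither is clearly superior. One minor omission: the lemma also asserts that $C_\varepsilon(p)$ is an axially symmetric \emph{neighborhood} of $p$, which the paper verifies explicitly (axial symmetry is immediate since $d_S$ depends only on $s_0$ and $|s|$, and the open set $\{d_S(\cdot,p)<\varepsilon\}$ contains $p$); you should add a line to cover this.
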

\begin{proof}
Let us first consider the question of convergence of the series. The sets $C_{\varepsilon}(p)$ are obviously axially symmetric: if $s_I$ belongs to the sphere $[s]$ associated to $s$, then $s_0 = \Re(s) = \Re(s_I)$ and $|s|^2 = |s_I|^2$. Thus $d_S(s_I, p) = d_S(s,p)$ and in turn $s\in C_{\varepsilon}(p)$ if and only if $s_I\in C_{\varepsilon}(p)$. Moreover, since the map $s\mapsto d_S(s,p)$ is continuous, the sets $U_{\varepsilon}(p) := \{s\in\H: d_S(s,p) < \varepsilon\}$ are open in $\H$. Since $U_{\varepsilon}(p)\subset C_{\varepsilon}(p)$, the sets $C_{\varepsilon}$ are actually neighborhoods of~$p$.

 In order to simplify the notation, we set
\[\Qdiff(p,s) := \Q{p}{T} - \Q{s}{T} = 2(s_0 - p_0)T + (|p|^2 - |s|^2)\id.\]
 Since $\Qinv{p}{T}$ maps $V$ to $\dom(T^2)$ and  $\Qdiff(p,s)$ commutes with $\Qinv{p}{T}$  on $\dom(T^2)$, we have for any $s\in C_{\varepsilon}(p)$
\begin{align*}
 & \sum_{n=0}^{+\infty}\left\| \Qdiff(p,s)^n\Qinv[(n+1)]{p}{T} \right\| \\
 =& \sum_{n=0}^{+\infty}\left\|\left( \Qdiff(p,s)\Qinv{p}{T}\right)^n \Qinv{p}{T} \right\| \\
  \leq & \sum_{n=0}^{+\infty}\left\|\Qdiff(p,s)\Qinv{p}{T}\right\|^n \left\| \Qinv{p}{T} \right\|.
\end{align*}
We further have
\begin{align*}
\left\|\Qdiff(p,s)\Qinv{p}{T}\right\| \leq& 2|s_0-p_0|\left\|T\Qinv{p}{T}\right\| +  \left||p|^2 - |s|^2\right| \left\|\Qinv{p}{T}\right\|\\
 \leq& d_S(s,p) \left(\left\|T\Qinv{p}{T}\right\| +  \left\|\Qinv{p}{T}\right\|\right)\\
  \leq& \varepsilon \left(\left\|T\Qinv{p}{T}\right\| +  \left\|\Qinv{p}{T}\right\|\right) =: q.
\end{align*}
If now $\varepsilon <1/\left(\left\| T\Qinv{p}{T}\right\|+ \left\| \Qinv{p}{T}\right\| \right)$, then $0<q<1$ and thus
\begin{align*}
  \sum_{n=0}^{+\infty}\left\| \Qdiff(p,s)^n\Qinv[(n+1)]{p}{T} \right\|   \leq   \left\| \Qinv{p}{T} \right\| \sum_{n=0}^{+\infty}q^n < + \infty
\end{align*}
and the series converges uniformly in $\boundOP(V)$ on $C_\varepsilon(p)$.

Now assume that the series \eqref{EEQD} converges and observe that $\Q{s}{T}$, $\Q{p}{T}$ and $\Qinv{p}{T}$ commute on $\dom(T^2)$ and hence we have  for $v\in\dom(T^2)$ that
\begin{align*}
\mathcal{J}(s) \Q{s}{T}v &=  \sum_{n=0}^{+\infty} \Qdiff(p,s)^n\Qinv[(n+1)]{p}{T} \Q{s}{T} v \\
=&  \sum_{n=0}^{+\infty} \Qdiff(p,s)^n\Qinv[(n+1)]{p}{T} \left[-\Qdiff(p,s) + \Q{p}{T}\right]v  \\
=& - \sum_{n=0}^{+\infty} \Qdiff(p,s)^{n+1}\Qinv[(n+1)]{p}{T} v  \\
&+  \sum_{n=0}^{+\infty} \Qdiff(p,s)^n\Qinv[n]{p}{T}v = v.  \\
\end{align*}
On the other hand $v_N := \sum_{n =0}^N  \Qdiff(p,s)^n\Qinv[(n+1)]{p}{T} v $ belongs to $\dom(T^2)$ for any $v\in V$ and we have
\begin{align*}
\Q{s}{T} v_N =& (-\Qdiff(p,s) + \Q{p}{T}) \sum_{n=0}^N \Qdiff(p,s)^n\Qinv[(n+1)]{p}{T}v \\
=&  -\sum_{n=0}^N \Qdiff(p,s)^{n+1}\Qinv[(n+1)]{p}{T}v + \sum_{n=0}^N \Qdiff(p,s)^n\Qinv[n]{p}{T}v\\
=&  - \Qdiff(p,s)^{N+1}\Qinv[(N+1)]{p}{T}v + v.
\end{align*}
Now observe that $\Qdiff(p,s) = 2(s_0 - p_0)T + (|p|^2 - |s|^2)\id$ is defined on $\dom(T)$ and maps $\dom(T^2)$ to $\dom(T)$. Hence $\Qdiff(p,s)^2 \Qinv{p}{T}$ belongs to $\boundOP(V)$ and for $N\geq 1$
\begin{align*}
& \left\| - \Qdiff(p,s)^{N+1}\Qinv[(N+1)]{p}{T}v \right\| \\
=  &\left\| - \Qdiff(p,s)^{N-1}\Qinv[N]{p}{T} \Qdiff(p,s)^2 \Qinv{p}{T}v \right\|\\
 \leq & \left\| - \Qdiff(p,s)^{N-1}\Qinv[N]{p}{T} \right\| \left\|\Qdiff(p,s)^2 \Qinv{p}{T}v\right\|\overset {N\to\infty}{\longrightarrow} 0
 \end{align*}
 because the series \eqref{EEQD} converges in $\boundOP(V)$ by assumption. Thus $ \Q{s}{T} v_N \to v$ and $v_N \to v_{\infty} := \mathcal{J}(s)v$ as $N\to\infty$. Since $\Q{s}{T}$ is closed, we obtain that $\mathcal{J}(s) v \in \dom(\Q{s}{T}) = \dom(T^2)$ and $\Q{s}{T}\mathcal{J}(s) v = v$. Hence, $\mathcal{J}(s) = \Qinv{s}{T}$ and in turn $s\in\rho_S(T)$.

 \end{proof}

\begin{lemma}\label{Q-TQ-cont}
Let $T\in\closOP(V)$. The functions $s\to \Qinv{s}{T}$ and $s\to T \Qinv{s}{T}$, which are defined on $\rho_S(T)$ and take values in $\boundOP(V)$, are continuous.
\end{lemma}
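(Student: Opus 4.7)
The plan is to leverage the series expansion from Lemma~\ref{QExp}. Fix $p\in\rho_S(T)$ and take a small enough neighborhood $C_\varepsilon(p)$ on which
\[
\Qinv{s}{T} \;=\; \sum_{n=0}^{+\infty} \Qdiff(p,s)^n\,\Qinv[(n+1)]{p}{T} \;=\; \sum_{n=0}^{+\infty}\bigl(\Qdiff(p,s)\Qinv{p}{T}\bigr)^n\Qinv{p}{T}
\]
converges uniformly in $\boundOP(V)$, where $\Qdiff(p,s) = 2(s_0-p_0)T + (|p|^2-|s|^2)\id$. Each partial sum is a polynomial in $s_0$ and $|s|^2$ with coefficients in $\boundOP(V)$, hence continuous in $s$; uniform convergence then delivers continuity of $s\mapsto\Qinv{s}{T}$ at $p$. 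Since $p\in\rho_S(T)$ was arbitrary, the first statement follows.

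For the second statement, the idea is to apply $T$ termwise to the above series. First observe that $T\Qinv{p}{T}$ is bounded: indeed, $\Qinv{p}{T}$ maps $V$ into $\dom(T^2)\subset\dom(T)$, so $T\Qinv{p}{T}$ is everywhere defined; it is also closed as the composition of a closed operator with a bounded one mapping into its domain, so boundedness follows from the closed graph theorem. Using that $T$ commutes with $\Qinv{p}{T}$ on $\dom(T^2)$ and that $\Qdiff(p,s)$ is a polynomial in $T$, each partial sum
\[
v_N \;=\; \sum_{n=0}^N \Qdiff(p,s)^n\Qinv[(n+1)]{p}{T} v
\]
lies in $\dom(T)$ and one rearranges
\[
T v_N \;=\; \sum_{n=0}^N\bigl(\Qdiff(p,s)\Qinv{p}{T}\bigr)^n\,T\Qinv{p}{T}\, v.
\]
The estimate $\|\Qdiff(p,s)\Qinv{p}{T}\|\le q<1$ on $C_\varepsilon(p)$, established in the proof of Lemma~\ref{QExp}, shows that the series $\sum_{n\ge 0}(\Qdiff(p,s)\Qinv{p}{T})^n\,T\Qinv{p}{T}$ converges in $\boundOP(V)$ uniformly on $C_\varepsilon(p)$. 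By closedness of $T$ applied to $v_N\to\Qinv{s}{T}v$ with $Tv_N$ convergent, the limit equals $T\Qinv{s}{T}v$; hence
\[
T\Qinv{s}{T} \;=\; \sum_{n=0}^{+\infty}\bigl(\Qdiff(p,s)\Qinv{p}{T}\bigr)^n\,T\Qinv{p}{T}
\]
in $\boundOP(V)$, uniformly on $C_\varepsilon(p)$. The same continuity-of-uniform-limits argument as before yields continuity of $s\mapsto T\Qinv{s}{T}$ at $p$, and $p$ was arbitrary in $\rho_S(T)$.

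The main obstacle is precisely the termwise application of the unbounded operator $T$: one must take care that $\Qdiff(p,s)$ itself contains a factor of $T$, so the algebraic commutations used to obtain the second series are only valid on $\dom(T^2)$, and the passage from the convergence of $v_N$ and $Tv_N$ to the identification of the limit with $T\Qinv{s}{T}v$ requires invoking closedness of $T$ rather than a naive swap of $T$ with the infinite sum. All other steps (continuity of polynomial-in-$s$ terms, uniform convergence via the geometric majorant) are routine once this is done.
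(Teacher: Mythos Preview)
Your argument is correct. For the continuity of $s\mapsto\Qinv{s}{T}$ you proceed exactly as the paper does: the uniformly convergent series from Lemma~\ref{QExp} has continuous partial sums, hence a continuous limit.

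For $s\mapsto T\Qinv{s}{T}$ your route differs from the paper's. You push $T$ through the series termwise, obtaining the uniformly convergent representation $T\Qinv{s}{T}=\sum_{n\ge 0}(\Qdiff(p,s)\Qinv{p}{T})^n\,T\Qinv{p}{T}$ and invoking closedness of $T$ to justify the interchange; continuity then follows by the same uniform-limit argument. The paper instead bypasses any termwise manipulation: once continuity of $\Qinv{s}{T}$ is known, it writes
\[
T\Qinv{s+h}{T}-T\Qinv{s}{T}=\Qinv{s+h}{T}\bigl(2h_0T^2+(|s|^2-|s+h|^2)T\bigr)\Qinv{s}{T}
\]
(using \eqref{QResEQ} and commuting $T$ past $\Qinv{s+h}{T}$ on the range of $\Qinv{s}{T}$), and lets $h\to 0$ directly. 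Your approach is more uniform in spirit---the same machinery handles both functions---at the cost of tracking domains through an infinite sum and appealing to the closed graph theorem for the boundedness of $T\Qinv{p}{T}$. The paper's approach is a touch more pedestrian but avoids those bookkeeping issues entirely once the first part is in hand.
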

\begin{proof}
Let $p\in\rho_S(T)$. Then $\Qinv{s}{T}$ can be represented by the series \eqref{EEQD}, which converges uniformly on a neighborhood of $p$. Hence, we have
\[ \lim_{s\to p} \Qinv{s}{T} = \sum_{n=0}^{+\infty} \lim_{s\to p}\left(2(s_0-p_0)T + \left(|p|^2-|s|^2\right)\id\right)^n\Qinv[(n+1)]{p}{T} = \Qinv{p}{T},\]
because each term in the sum is a polynomial in $s_0$ and $s_1$ with coefficients in $\boundOP(V)$ and thus continuous. Indeed
\begin{align*}
 &\left((s_0-p_0)T + \left(|p|^2-|s|^2\right)\id\right)^n\Qinv[(n+1)]{p}{T}  \\
 = &\sum_{k = 0}^n \binom{n}{k} (s_0-p_0)^k\left(|p|^2-|s|^2\right)^{n-k}T^k\Qinv[(n+1)]{p}{T}
 \end{align*}
and $T^k \Qinv[(n+1)]{p}{T}\in\boundOP(V)$ because $\Qinv[(n+1)]{p}{T}$ maps $V$ to $\dom(T^{2(n+1)})$ and $k <  2(n+1)$.
The function $s\mapsto T\Qinv{s}{T}$ is continuous because \eqref{QResEQ} implies
\begin{equation*}
\lim_{h\to 0}\left\| T\Qinv{s+h}{T} - T\Qinv{s}{T} \right\|= \lim_{h\to 0}\left\| T\Qinv{s+h}{T}(\Q{s}{T}  - \Q{s+h}{T})\Qinv{s}{T} \right\|.
\end{equation*}
 Now observe that $\Qinv{s}{T}$ maps $V$ to $\dom(T^2)$ such that
\[(\Q{s}{T}  - \Q{s+h}{T})\Qinv{s}{T} = (2h_0T + (|s|^2 - |s+h|^2)\id)\Qinv{s}{T}\]
  in turn  maps $V$to $\dom(T)$. Since $T$ and $\Qinv{s+h}{T}$ commute on $\dom(T)$ we thus have
\begin{align*}
&\lim_{h\to 0}\left\| T\Qinv{s+h}{T} - T\Qinv{s}{T} \right\|\\
=& \lim_{h\to 0}\left\| \Qinv{s+h}{T}\left(2h_0T^2 + \left(|s|^2 - |s+h|^2\right)T\right)\Qinv{s}{T} \right\|\\
\leq &  \lim_{h\to 0}\left\| \Qinv{s+h}{T}\right\| \lim_{h\to 0}\left(2h_0\left\|T^2\Qinv{s}{T}\right\| + \left(|s|^2 - |s+h|^2\right)\left\|T\Qinv{s}{T}\right\| \right)\\
 =& 0.
\end{align*}

\end{proof}

\begin{lemma}\label{Qdiff}
Let  $T\in\closOP(V)$ and $s\in\rho_S(T)$. The pseudo resolvent $\Qinv{s}{T}$ is continuously real differentiable with
\[\frac{\partial}{\partial s_0} \Qinv{s}{T} = (2T - 2s_0\id) \Qinv[2]{s}{T}\quad\text{and}\quad\frac{\partial}{\partial s_1} \Qinv{s}{T} =  -2s_1 \Qinv[2]{s}{T}.\]
\end{lemma}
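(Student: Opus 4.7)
The plan is to read off both partial derivatives directly from the first-order expansion furnished by the $\Q$-resolvent identity \eqref{QResEQ} and to deduce continuity of those derivatives from Lemma~\ref{Q-TQ-cont}. The preparatory step is to fix $s\in\rho_S(T)$, pick an imaginary unit $I\in\S$ with $s\in\C_I$, and write $s = s_0 + Is_1$, so that within the plane $\C_I$ the pseudo-resolvent becomes a function of two real variables.

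For the derivative in $s_0$ I would perturb by a real increment $h$. A one-line computation gives
\[
\Q{s}{T} - \Q{s+h}{T} = 2hT - (2s_0 h + h^2)\id\qquad\text{on }\dom(T^2),
\]
and inserting this into \eqref{QResEQ} (with the roles $s\to s+h$ and $p\to s$) produces
\[
\Qinv{s+h}{T} - \Qinv{s}{T} = h\bigl[\,2\,\Qinv{s+h}{T}T\Qinv{s}{T} - (2s_0+h)\,\Qinv{s+h}{T}\Qinv{s}{T}\,\bigr].
\]
Dividing by $h$ and letting $h\to 0$, the continuity of $s\mapsto\Qinv{s}{T}$ from Lemma~\ref{Q-TQ-cont}, together with the fact that $\Qinv{s}{T}$ maps $V$ into $\dom(T^2)$ and commutes with $T$ on $\dom(T)$ (see Remark~\ref{RkResExtension}), yields
\[
\frac{\partial}{\partial s_0}\Qinv{s}{T} = 2\,\Qinv{s}{T}T\Qinv{s}{T} - 2s_0\Qinv[2]{s}{T} = (2T - 2s_0\id)\Qinv[2]{s}{T}.
\]

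For the derivative in $s_1$ I would perturb within $\C_I$ by $hI$. This time $\Q{s}{T} - \Q{s+hI}{T} = -(2s_1 h + h^2)\id$, a scalar multiple of $\id$, so \eqref{QResEQ} collapses to
\[
\Qinv{s+hI}{T} - \Qinv{s}{T} = -(2s_1 h + h^2)\,\Qinv{s+hI}{T}\Qinv{s}{T}.
\]
Dividing by $h$ and passing to the limit immediately gives $\partial_{s_1}\Qinv{s}{T} = -2s_1\Qinv[2]{s}{T}$. Continuity of both derivative expressions as functions of $s$ then follows from Lemma~\ref{Q-TQ-cont}, since each is a finite product of factors taken from $\Qinv{\cdot}{T}$ and $T\Qinv{\cdot}{T}$.

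The only delicate point is domain bookkeeping: one must verify that every operator product above is defined on all of $V$ and justify the rearrangement $\Qinv{s}{T}T\Qinv{s}{T} = T\Qinv[2]{s}{T}$ when identifying the limit. This relies on the inclusion $\ran(\Qinv{s}{T})\subset\dom(T^2)$ and the commutativity of $T$ with $\Qinv{s}{T}$ on $\dom(T)$, both already recorded in Remark~\ref{RkResExtension} and tacitly used in the proof of Lemma~\ref{Q-TQ-cont}. Beyond this, no further technical ingredient is expected to be required.
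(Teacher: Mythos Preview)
Your approach is essentially the same as the paper's: both compute the difference quotient via the identity \eqref{QResEQ}, expand $\Q{s}{T}-\Q{s+h}{T}$ explicitly, and pass to the limit using the continuity established in Lemma~\ref{Q-TQ-cont}, invoking the same domain considerations from Remark~\ref{RkResExtension} to justify the commutation $T\Qinv{s}{T}=\Qinv{s}{T}T$ on $\ran(\Qinv{s}{T})$. The only item the paper makes explicit that you leave implicit is the passage from differentiability in the two ``slice'' variables $(s_0,s_1)$ to continuous real differentiability in the four real coordinates $\xi_0,\ldots,\xi_3$ of $s$; the paper handles this in a short closing paragraph via the chain rule through $s\mapsto(s_0,s_1)$, with a separate choice of $I_s=e_i$ at real points.
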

\begin{proof}

Let us first compute the partial derivative of $\Qinv{s}{T}$ with respect to the real part $s_0$. Applying equation \eqref{QResEQ}, we have
\begin{align*}
\frac{\partial}{\partial s_0} \Qinv{s}{T} =& \lim_{\R\ni h\to 0} \frac{1}{h}\left(\Qinv{s+h}{T} - \Qinv{s}{T}\right)\\
= &  \lim_{\R\ni h\to 0} \frac{1}{h}\Qinv{s+h}{T}\left(\Q{s}{T}  - \Q{s+h}{T}\right)\Qinv{s}{T}\\
= & \lim_{\R\ni h\to 0} \Qinv{s+h}{T}\left(2T -2s_0\id - h\id\right)\Qinv{s}{T},
\end{align*}
 where $ \lim_{\R\ni h \to 0}f(h)$ denotes the limit of a function $f$ as $h$ tends to $0$ in $\R$.
Since the composition and the multiplication with scalars are continuous operations on $\boundOP(V)$, we further have
\begin{align*}
\frac{\partial}{\partial s_0} \Qinv{s}{T} = & \lim_{\R\ni h\to 0} \Qinv{s+h}{T} \lim_{\R\ni h\to 0}\left((2T -2s_0\id)\Qinv{s}{T} - h \Qinv{s}{T}\right)\\
=& \Qinv{s}{T}(2T-2s_0\id)\Qinv{s}{T} = (2T - 2s_0\id)\Qinv[2]{s}{T},
\end{align*}
where the last equation holds true because $\Qinv{s}{T}$ maps $V$ to $\dom(T^2)\subset\dom(T)$ and $T$ and $\Qinv{s}{T}$ commute on $\dom(T)$. Observe that the partial derivative $\frac{\partial}{\partial s_0}\Qinv{s}{T}$ is even continuous because it is the sum and product of continuous functions by Lemma~\ref{Q-TQ-cont}.

If we write $s = s_0 + I_s s_1$, then we can argue in a similar way to show that the derivative of $\Qinv{s}{T}$ with respect to $s_1$ is
\begin{align*}
\frac{\partial}{\partial s_1} \Qinv{s}{T} =& \lim_{\R\ni h\to 0} \frac{1}{h}\left(\Qinv{s+hI_s}{T} - \Qinv{s}{T}\right)\\
= &  \lim_{\R\ni h\to 0} \frac{1}{h}\Qinv{s+h_Is}{T}\left(\Q{s}{T}  - \Q{s+hI_s}{T}\right)\Qinv{s}{T}\\
= & \lim_{\R\ni h\to 0} \Qinv{s+hI_s}{T}\left(-2s_1 - h\right)\Qinv{s}{T}\\
= & \lim_{\R\ni h\to 0} \Qinv{s+hI_s}{T} \lim_{\R\ni h\to 0}\left(-2s_1\Qinv{s}{T} - h\Qinv{s}{T}\right)\\
&= -2s_1 \Qinv[2]{s}{T}.
\end{align*}
Again this derivative is continuous as the product of two continuous functions by Lemma~\ref{Q-TQ-cont}.

Finally, we easily obtain that $\Qinv{s}{T}$ is continuously real differentiable from the fact that $\Qinv{s}{T}$ is continuously differentiable in the variables $s_0$ and $s_1$: if $s = \xi_0 + \sum_{i=1}^3 \xi_i e_i$, then the partial derivative with respect to $\xi_0$ corresponds to the partial derivative with respect to $s_0$ and thus exists and is continuous. The partial derivative with respect to $\xi_i$ for $1\leq i \leq 3$ on the other hand exists and is continuous for $s_1\neq 0$ because $\Qinv{s}{T}$ can be considered as the composition of the continuously differentiable functions $s \mapsto s_1$ and $s_1 \to \Qinv{s_0+ I_s s_1}{T}$. For $s_1 = 0$ (that is for $s\in \R$), we can simply choose $I_s  =e_i$ and then the partial derivative with respect to $\xi_i$ corresponds to the partial derivative with respect to~$s_1$.

\end{proof}

\begin{lemma}\label{TQDiff}
Let  $T\in\closOP(V)$ and $s\in\rho_S(T)$. The function $s\mapsto T\Qinv{s}{T}$ is continuously real differentiable with
\[
\frac{\partial}{\partial s_0} T\Qinv{s}{T} = (2T^2-2s_0T)\Qinv[2]{s}{T}\quad\text{and}\quad \frac{\partial}{\partial s_1} T\Qinv{s}{T} = -2s_1 T\Qinv[2]{s}{T}.
\]
\end{lemma}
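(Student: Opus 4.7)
The plan is to mimic the proof of Lemma~\ref{Qdiff} step by step, multiplying the fundamental identity \eqref{QResEQ} on the left by $T$ and then passing to the limit with the help of Lemma~\ref{Q-TQ-cont}. Concretely, starting from
\[\Qinv{s+h}{T}-\Qinv{s}{T} = \Qinv{s+h}{T}\bigl(\Q{s}{T}-\Q{s+h}{T}\bigr)\Qinv{s}{T},\]
multiplication on the left by $T$ is legitimate because both pseudo-resolvents map $V$ into $\dom(T^2)\subset\dom(T)$. This produces a difference quotient of the form $T\Qinv{s+h}{T}\cdot A(h)\cdot \Qinv{s}{T}$, where $A(h)$ is an affine function of the increment with a controllable norm limit.

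For the $s_0$-derivative, taking $h\in\R$ gives $\frac{1}{h}(\Q{s}{T}-\Q{s+h}{T}) = 2T-2s_0\id-h\id$. The operator $(2T-2s_0\id-h\id)\Qinv{s}{T}$ lies in $\boundOP(V)$ (because $\Qinv{s}{T}$ maps $V$ into $\dom(T^2)$) and converges in operator norm to $(2T-2s_0\id)\Qinv{s}{T}$; simultaneously $T\Qinv{s+h}{T}\to T\Qinv{s}{T}$ in operator norm by Lemma~\ref{Q-TQ-cont}. Joint norm-continuity of composition then yields $\frac{\partial}{\partial s_0}T\Qinv{s}{T} = T\Qinv{s}{T}(2T-2s_0\id)\Qinv{s}{T}$, which I rewrite as $(2T^2-2s_0 T)\Qinv[2]{s}{T}$ by commuting $T$ and $\Qinv{s}{T}$ on $\dom(T)$: for $v\in V$ the element $w := (2T-2s_0\id)\Qinv{s}{T}v$ lies in $\dom(T)$, so $T\Qinv{s}{T}w = \Qinv{s}{T}Tw$, and a further commutation gives $(2T^2-2s_0 T)\Qinv[2]{s}{T}v$. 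The $s_1$-derivative is obtained in the same way with the increment $hI_s$, for which $\frac{1}{h}(\Q{s}{T}-\Q{s+hI_s}{T}) = -(2s_1+h)\id$, yielding $-2s_1 T\Qinv[2]{s}{T}$.

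Continuity of both partial derivatives is then automatic once one writes $T\Qinv[2]{s}{T} = (T\Qinv{s}{T})\cdot\Qinv{s}{T}$ and $T^2\Qinv[2]{s}{T} = (T\Qinv{s}{T})^2$, since these are compositions of the $\boundOP(V)$-valued continuous maps supplied by Lemma~\ref{Q-TQ-cont}. Continuous real differentiability in all four real coordinates $\xi_0,\xi_1,\xi_2,\xi_3$ of $s$ follows from the chain rule exactly as in the final paragraph of the proof of Lemma~\ref{Qdiff}, with the real axis $s_1=0$ handled by choosing $I_s$ to equal whichever basis unit is being differentiated against. The main technical point to get right is the domain bookkeeping: one must recognise $T\Qinv{s+h}{T}$ as an already-bounded operator on $V$ before taking the norm limit, rather than attempting to push the unbounded $T$ through an operator-norm limit directly.
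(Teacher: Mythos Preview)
Your proposal is correct and follows essentially the same strategy as the paper: use \eqref{QResEQ} to express the increment, then pass to the limit with Lemma~\ref{Q-TQ-cont}, and handle the four real coordinates via the chain-rule argument from Lemma~\ref{Qdiff}. The only cosmetic difference is the order of operations: the paper first commutes $T$ past $\Qinv{s+h}{T}$ (so that the limit only requires continuity of $s\mapsto\Qinv{s}{T}$) and obtains $\Qinv{s+h}{T}(2T^2-2s_0T-hT)\Qinv{s}{T}$ before letting $h\to 0$, whereas you keep the factor $T\Qinv{s+h}{T}$ intact, invoke the continuity of $s\mapsto T\Qinv{s}{T}$ from Lemma~\ref{Q-TQ-cont} directly, and do the commutation afterwards. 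Both routes are equally valid, and your domain bookkeeping (in particular the factorisations $T^2\Qinv[2]{s}{T}=(T\Qinv{s}{T})^2$ and $T\Qinv[2]{s}{T}=(T\Qinv{s}{T})\Qinv{s}{T}$ for continuity) is sound.
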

\begin{proof}
If  $ \lim_{\R\ni h \to 0}f(h)$ denotes again the limit of a function $f$ as $h$ tends to $0$ in $\R$, then we obtain from \eqref{QResEQ} that
\begin{align*}
\frac{\partial}{\partial s_0} T\Qinv{s}{T} = & \lim_{\R\ni h \to 0} \frac{1}{h} \left(T \Qinv{s+h}{T} - T \Qinv{s}{T}\right) \\
= & \lim_{\R\ni h \to 0} \frac{1}{h} T\Qinv{s+h}{T}\left( \Q{s}{T} -  \Q{s+h}{T}\right)\Qinv{s}{T}\\
= & \lim_{\R\ni h \to 0} \frac{1}{h} T\Qinv{s+h}{T}\left(2hT - 2hs_0\id -h^2\id\right)\Qinv{s}{T}\\
= & \lim_{\R\ni h \to 0} \Qinv{s+h}{T}\left(2T^2 - 2s_0T -hT\right)\Qinv{s}{T},
\end{align*}
because $\left(2hT - 2hs_0\id -h^2\id\right)\Qinv{s}{T}$ maps $V$ to $\dom(T)$ and $T$ and $\Qinv{s+h}{T}$ commute on $\dom(T)$. Since the composition and the multiplication with scalars are continuous operations on the space $\boundOP(V)$ and since the pseudo-resolvent is continuous by Lemma~\ref{Q-TQ-cont}, we  get
 \begin{gather*}
\frac{\partial}{\partial s_0} T\Qinv{s}{T}  =   \lim_{\R\ni h \to 0} \Qinv{s+h}{T} \lim_{\R\ni h \to 0}\left(\left(2T^2 - 2s_0T\right)\Qinv{s}{T} -hT\Qinv{s}{T}\right)\\
=\Qinv{s}{T} (2T^2 - 2s_0T) \Qinv{s}{T}  = (2s_0 T -  2T^2)\Qinv[2]{s}{T}.
\end{gather*}
This function is continuous because we can write it as the product of functions that are continuous by Lemma~\ref{Q-TQ-cont}.

The derivative with respect to $s_1$ can be computed using similar arguments via
\begin{align*}
\frac{\partial}{\partial s_1} T\Qinv{s}{T} &=  \lim_{\R\ni h \to 0} \frac{1}{h} \left(T \Qinv{s+hI_s}{T} - T \Qinv{s}{T}\right)
\\
= & \lim_{\R\ni h \to 0} \frac{1}{h} T\Qinv{s+hI_s}{T}\left( \Q{s}{T} -  \Q{s+hI_s}{T}\right)\Qinv{s}{T}\\
= & \lim_{\R\ni h \to 0} \frac{1}{h} T\Qinv{s+hI_s}{T}\left( - 2hs_1 -h^2\right)\Qinv{s}{T}\\
= & \lim_{\R\ni h \to 0} \Qinv{s+hI_s}{T} \lim_{\R\ni h \to 0}\left(- 2s_1T\Qinv{s}{T} -hT\Qinv{s}{T}\right)\\
= &  -2s_1T\Qinv[2]{s}{T}.
\end{align*}
Also this derivative is continuous because it can be written in the form $\frac{\partial}{\partial s_1} T\Qinv{s}{T} = - 2s_1\left(T\Qinv{s}{T}\right)Q_{s}(T)^{-1}$ as the product of functions that are continuous by Lemma~\ref{Q-TQ-cont}.

Finally, we see as in the proof of Lemma~\ref{Qdiff} that $T\Qinv{s}{T}$ is continuously differentiable in the four real coordinates by considering it as the composition of the two continuously real differentiable functions $s \mapsto (s_0, s_1)$ and $(s_0,s_1) \mapsto T\Qinv{s_0+ Is_1}{T}$ resp. by choosing $I_s$ appropriately if $s\in\R$.

\end{proof}
\begin{corollary}
Let  $T\in\closOP(V)$ and $s\in\rho_S(T)$. The left and the right S-resolvent are real continuously differentiable.
\end{corollary}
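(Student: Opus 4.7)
The plan is to observe that both $S$-resolvent operators are explicit algebraic combinations of the two building blocks $\Qinv{s}{T}$ and $T\Qinv{s}{T}$ together with the scalar function $s\mapsto\overline{s}$, and then to invoke the two preceding lemmas combined with the product and sum rules in $\boundOP(V)$.

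More concretely, I would first rewrite the $S$-resolvents, using that the real scalar $1$ and more generally the quaternion $\overline{s}$ commute with the identity operator, as
\[ S_L^{-1}(s,T) = \Qinv{s}{T}\overline{s} - T\Qinv{s}{T}, \qquad S_R^{-1}(s,T) = \overline{s}\,\Qinv{s}{T} - T\Qinv{s}{T}. \]
By Lemma~\ref{Qdiff} the map $s\mapsto\Qinv{s}{T}$ is continuously real differentiable as a function from $\rho_S(T)$ into $\boundOP(V)$, and by Lemma~\ref{TQDiff} the same is true of $s\mapsto T\Qinv{s}{T}$. The scalar function $s\mapsto\overline{s}$ from $\H$ into $\H$ is $\R$-linear and therefore trivially continuously real differentiable.

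Since $\boundOP(V)$ is a two-sided quaternionic Banach space, left and right scalar multiplication are continuous bilinear operations $\H\times\boundOP(V)\to\boundOP(V)$ and $\boundOP(V)\times\H\to\boundOP(V)$. The standard product rule therefore implies that both $s\mapsto\Qinv{s}{T}\overline{s}$ and $s\mapsto\overline{s}\,\Qinv{s}{T}$ are continuously real differentiable. Subtracting the continuously real differentiable term $T\Qinv{s}{T}$ preserves this property, so $S_L^{-1}(\cdot,T)$ and $S_R^{-1}(\cdot,T)$ are continuously real differentiable on $\rho_S(T)$. I do not anticipate any serious obstacle here; the corollary is an immediate bookkeeping consequence of the two preceding lemmas, and if desired the explicit partial derivatives with respect to $s_0$ and $s_1$ can be written down from the formulas in Lemma~\ref{Qdiff} and Lemma~\ref{TQDiff} together with $\partial_{s_0}\overline{s}=1$ and $\partial_{s_1}\overline{s}=-I_s$.
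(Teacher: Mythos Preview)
Your proposal is correct and follows essentially the same approach as the paper: both argue that the $S$-resolvents are built from $\Qinv{s}{T}$ and $T\Qinv{s}{T}$ (continuously real differentiable by Lemmas~\ref{Qdiff} and~\ref{TQDiff}) and the scalar $\overline{s}$, so the sum and product rules give the result. Your version is in fact a bit more explicit than the paper's one-line proof, which simply states that the resolvents are sums of continuously real differentiable functions.
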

\begin{proof}
The $S$-resolvents are sums of functions that are continuously real differentiable by Lemma~\ref{Qdiff} and Lemma~\ref{TQDiff} and hence continuously real differentiable themselves.

\end{proof}

Let us now give the proof of Lemma~\ref{ResHol2342}: the left $S$-resolvent is right slice hyperholomorphic and the right $S$-resolvent is left slice hyperholomorphic in the scalar variable on $\rho_S(T)$.
\begin{proof}[Proof of Lemma~\ref{ResHol2342}]
We consider only the case of the left $S$-resolvent, the other one works with analogous arguments. Applying Lemma~\ref{Qdiff} and Lemma~\ref{TQDiff}, we have
\begin{align*}
 \frac{\partial}{\partial s_0} S_L^{-1}(s,T) = & \frac{\partial}{\partial s_0} \Qinv{s}{T}\overline{s}  - \frac{\partial}{\partial s_0} T\Qinv{s}{T}\\
= & (2T-2s_0\id)\Qinv[2]{s}{T}\overline{s} + \Qinv{s}{T}- \left(2T^2-2s_0T\right)\Qinv[2]{s}{T}\\
= & (2T-2s_0\id)\Qinv[2]{s}{T}\overline{s} +  \left(-T^2  + |s|^2\id\right)\Qinv[2]{s}{T}.
\end{align*}
Since $s_0$ and $|s|^2$ are real, we can commute them with $\Qinv[2]{s}{T}$ and by applying the identities $2s_0 = s + \overline{s}$ and $ |s|^2 = s\overline{s}$ we obtain
\begin{align*}
\frac{\partial}{\partial s_0} S_L^{-1}(s,T) = -T^2 \Qinv[2]{s}{T} + 2T\Qinv[2]{s}{T} \overline{s} - \Qinv[2]{s}{T}\overline{s}^2.
\end{align*}
For the partial derivative with respect to $s_1$, we obtain
\begin{align*}
&\frac{\partial}{\partial s_1} S_L^{-1}(s,T) \\
= & \frac{\partial}{\partial s_1} \Qinv{s}{T}\overline{s}  - \frac{\partial}{\partial s_1} T\Qinv{s}{T}\\
= & -2 s_1 \Qinv[2]{s}{T}\overline{s} - \Qinv{s}{T}I_s + 2s_1T\Qinv[2]{s}{T}\\
= & -2 s_1 \Qinv[2]{s}{T}\overline{s} -  (T^2 - 2s_0T + |s|^2\id)\Qinv[2]{s}{T}I_s+2s_1T\Qinv[2]{s}{T}.
\end{align*}
We can now commute $2s_0$, $2s_1$ and $|s|^2$ with $\Qinv{s}{T}$ because they are real.  By exploiting the identities $2s_0 = s + \overline{s}$, $-2s_1 = (s - \overline{s})I_s$ and $|s|^2 = s \overline{s}$, we obtain
\begin{align*}
&\frac{\partial}{\partial s_1} S_L^{-1}(s,T) = \left(-T^2 \Qinv[2]{s}{T} + 2T\Qinv[2]{s}{T}\overline{s} - \Qinv[2]{s}{T}(T)\overline{s}^2\right)I_s
\end{align*}
Hence, $s\mapsto S_L^{-1}(s,T)$ is right slice hyperholomorphic as
\[ \frac12\left(\frac{\partial}{\partial s_0} S_L^{-1}(s,T) + \frac{\partial}{\partial s_1} S_L^{-1}(s,T)I_s\right) = 0.  \]

\end{proof}

 In the sequel we will need the fact that the $S$-resolvent set is the maximal domain of slice hyperholomorphicity of the $S$-resolvents such that they do not have a slice hyperholomorphic continuation. In the complex case this is guaranteed by the well-known estimate
\begin{equation}\label{CResEst}
 \| R(z,A) \| \geq \frac{1}{\dist(z,\sigma(A))},
 \end{equation}
 where $R(z,A)$ denotes the resolvent and $\sigma(A)$ the spectrum of the complex linear operator~$A$. This estimate assures that $ \| R(z,A) \|\to+\infty$ as $z$ approaches $\sigma(A)$ and in turn that the resolvent does not have any holomorphic continuation to a larger domain.

   In the quaternionic setting, an estimate similar to \eqref{CResEst} cannot hold true: consider $\lambda = \lambda_0 + I_{\lambda} \lambda_1$ with $\lambda_1 > 0$ and $I\in\S$ and the operator $T = \lambda \id$ acting on some two-sided Banach space $V$. Its $S$-spectrum $\sigma_S(T)$ coincides with the sphere $[\lambda]$ associated to $\lambda$ and its left $S$-resolvent is
 \[ S_L^{-1}(s,T) =  (\lambda^2 - 2 s_0 \lambda + |s|^2)^{-1} (\overline{s}-\lambda)\id.\]
If  $s\in\C_{I_{\lambda}}$, then $\lambda$ and $s$ commute and the left $S$-resolvent reduces to  $S_L^{-1}(s,T) = (s-\lambda)^{-1}\id$ with $\|S_L^{-1}(s,T)\| = 1/|s-\lambda |$. Thus, if $s$ tends to  $\overline{\lambda}$ in $\C_{I{_\lambda}}$, then $\dist(s,\sigma_S(T))\to 0$ because $\overline{\lambda}\in\sigma_S(T)$ but at the same time $\|S_L^{-1}(s,T)\| \to 1/|\lambda - \overline{\lambda}| = 1/(2\lambda_1)< +\infty$.

Nevertheless, although \eqref{CResEst} does not have a counterpart in the quaternionic setting, we can show that, under suitable assumptions, the norms of the $S$-resolvents explode near the $S$-spectrum. As it happens often in quaternionic operator theory, this requires that we work with spectral spheres of associated quaternions instead of single spectral values.

\begin{lemma}
Let $T\in\closOP(V)$ and $s\in\rho_S(T)$. Then
\begin{equation}\label{QsEst}
 \|\Qinv{s}{T}\| + \|T\Qinv{s}{T}\| \geq \frac{1}{d_S(s,\sigma_S(T))},
 \end{equation}
where $ d_S(s,\sigma_S(T)) = \inf_{p\in\sigma_S(T)} d_S(s,p)$ and $d_S(s,p)$ is defined as in the Lemma~\ref{QExp}.
\end{lemma}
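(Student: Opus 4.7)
The proof I have in mind is a direct consequence of the series expansion for the pseudo-resolvent established in Lemma~\ref{QExp}. The strategy is essentially the quaternionic analogue of the classical argument that deduces $\|R(z,A)\|\geq 1/\dist(z,\sigma(A))$ from the Neumann series for the resolvent: the radius of convergence of the power series representation of $\Qinv{s}{T}$ around a point $s\in\rho_S(T)$ gives a lower bound on how far one can reach inside $\rho_S(T)$, and inverting this bound produces the desired estimate.

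Concretely, fix $s\in\rho_S(T)$ and set
\[ M(s) := \|\Qinv{s}{T}\| + \|T\Qinv{s}{T}\|.\]
Pick any $\varepsilon$ with $0<\varepsilon< 1/M(s)$. By Lemma~\ref{QExp} (applied with $p=s$), the series
\[\sum_{n=0}^{+\infty}\left(\Q{s}{T} - \Q{q}{T}\right)^n\Qinv[(n+1)]{s}{T}\]
converges absolutely in $\boundOP(V)$ for every $q\in C_{\varepsilon}(s)$, and its sum inverts $\Q{q}{T}$. Hence $C_{\varepsilon}(s)\subset\rho_S(T)$, which means that every $q\in\sigma_S(T)$ must satisfy $d_S(s,q)>\varepsilon$. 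Taking the infimum over $q\in\sigma_S(T)$ gives $d_S(s,\sigma_S(T))\geq \varepsilon$, and letting $\varepsilon$ approach $1/M(s)$ from below yields
\[ d_S(s,\sigma_S(T))\geq \frac{1}{M(s)} = \frac{1}{\|\Qinv{s}{T}\| + \|T\Qinv{s}{T}\|},\]
which upon rearrangement is exactly \eqref{QsEst}.

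There is essentially no obstacle beyond invoking Lemma~\ref{QExp} correctly; the only subtlety is that the lemma is stated with the strict inequality $\varepsilon < 1/(\|T\Qinv{p}{T}\|+\|\Qinv{p}{T}\|)$, so the bound $d_S(s,\sigma_S(T))\geq 1/M(s)$ is obtained as a limit/supremum over admissible $\varepsilon$ rather than by a single application. One also has to note that the case $\sigma_S(T)=\emptyset$ is trivial, since then $d_S(s,\sigma_S(T))=+\infty$ and \eqref{QsEst} holds vacuously, so we may assume $\sigma_S(T)\neq\emptyset$ throughout. The argument is otherwise immediate and does not rely on the specific form of $d_S$ beyond continuity and axial symmetry of the sets $C_\varepsilon(s)$, which were already verified in the proof of Lemma~\ref{QExp}.
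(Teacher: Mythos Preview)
Your proof is correct and follows essentially the same approach as the paper: both apply Lemma~\ref{QExp} with the center taken at $s\in\rho_S(T)$ to conclude that any point $p$ with $d_S(s,p)<1/M(s)$ lies in $\rho_S(T)$, and then pass to the contrapositive and take the infimum over $\sigma_S(T)$. Your handling of the strict inequality via a limit in $\varepsilon$ and your remark on the case $\sigma_S(T)=\emptyset$ are slightly more explicit than the paper's version, but the argument is the same.
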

\begin{proof}
Set $C_s:= \|\Qinv{s}{T}\| + \|T\Qinv{s}{T}\|$. If $d_S(s,p) < 1/C_s$, then $p\in\rho_S(T)$ by Lemma~\ref{QExp}. Thus, $ d_S(s,p) \geq 1/C_s$ for any $p\in\sigma_S(T)$. Taking the infimum over all $p\in\sigma_S(T)$, this inequality still holds true such that we obtain $d_S(s,\sigma_S(T))  \geq 1/C_s$, which is equivalent to~\eqref{QsEst}.

\end{proof}

\begin{lemma}\label{QSest}
Let  $T\in\closOP(V)$ and $s\in\rho_S(T)$. Then
\[ \sqrt{2\left\| \Qinv{s}{T} \right\|} \leq   \left\| S_L^{-1}(s,T) \right\| + \left\| S_L^{-1}(\overline{s},T)\right\|  \]
and in turn
\[ \sqrt{\left\| \Qinv{s}{T} \right\|} \leq   \sqrt{2} \sup_{s_I\in[s]}\left\| S_L^{-1}(s_I,T) \right\|.  \]
Analogous estimates hold for the right $S$-resolvent operator.

\end{lemma}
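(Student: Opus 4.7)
The plan is to derive two operator identities from the sum and difference of $S_L^{-1}(s,T)$ and $S_L^{-1}(\overline{s},T)$, and then combine them with the algebraic structure of $\Q{s}{T}$ to extract a bound on $\|\Qinv{s}{T}\|$. Using $\Qinv{\overline{s}}{T}=\Qinv{s}{T}$ (because $\Q{s}{T}$ depends only on $\Re(s)$ and $|s|$) together with the formula $S_L^{-1}(s,T)v=\Qinv{s}{T}(\overline{s}v)-T\Qinv{s}{T}v$, I would check that, for every $v\in V$,
\begin{align*}
\bigl(S_L^{-1}(s,T)+S_L^{-1}(\overline{s},T)\bigr)v &= -2(T-s_0\id)\Qinv{s}{T}v,\\
\bigl(S_L^{-1}(s,T)-S_L^{-1}(\overline{s},T)\bigr)v &= -2\Qinv{s}{T}(\underline{s}v),
\end{align*}
the first because $s+\overline{s}=2s_0\in\R$ commutes with $\Qinv{s}{T}$, and the second because $\overline{s}-s=-2\underline{s}$ together with the additivity of $\Qinv{s}{T}$.

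Set $Y:=\|S_L^{-1}(s,T)\|+\|S_L^{-1}(\overline{s},T)\|$ and $X:=\|\Qinv{s}{T}\|$. The second identity, together with the factorization $\underline{s}=s_1 I_s$, the commutation of $\Qinv{s}{T}$ with the real scalar $s_1$, and the fact that left multiplication by the unit quaternion $I_s$ is a surjective isometry of $V$, yields $2 s_1 X\le Y$, so in particular $s_1^2 X^2\le Y^2/4$. The first identity gives $\|(T-s_0\id)\Qinv{s}{T}\|\le Y/2$. Squaring this operator (its composition with itself is well defined on all of $V$ since $\Qinv{s}{T}$ maps $V$ into $\dom(T^2)\subset\dom(T)$), and using the commutation of $T$ and $\Qinv{s}{T}$ on $\dom(T)$ together with $(T-s_0\id)^2=\Q{s}{T}-s_1^2\id$ and $\Q{s}{T}\Qinv[2]{s}{T}=\Qinv{s}{T}$, one computes
\[ \bigl((T-s_0\id)\Qinv{s}{T}\bigr)^2 = \Qinv{s}{T}-s_1^2\Qinv[2]{s}{T}, \]
hence by submultiplicativity $\|\Qinv{s}{T}-s_1^2\Qinv[2]{s}{T}\|\le Y^2/4$.

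The triangle inequality together with $\|\Qinv[2]{s}{T}\|\le X^2$ then gives
\[ X \le \bigl\|\Qinv{s}{T}-s_1^2\Qinv[2]{s}{T}\bigr\| + s_1^2 X^2 \le \tfrac{Y^2}{4}+\tfrac{Y^2}{4}=\tfrac{Y^2}{2}, \]
which rearranges to the first stated bound $\sqrt{2\|\Qinv{s}{T}\|}\le Y$. The second bound follows immediately: since $s,\overline{s}\in[s]$, one has $Y\le 2\sup_{s_I\in[s]}\|S_L^{-1}(s_I,T)\|$, whence $\sqrt{\|\Qinv{s}{T}\|}\le\sqrt{2}\sup_{s_I\in[s]}\|S_L^{-1}(s_I,T)\|$. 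The analogous estimates for $S_R^{-1}$ are obtained by the symmetric computation, interchanging the roles of left and right scalar multiplication.

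The main point requiring care is the handling of the unbounded-operator domains: one must check that $(T-s_0\id)\Qinv{s}{T}$ and its square define bounded operators on the whole of $V$, for which the crucial facts are $\Qinv{s}{T}(V)\subset\dom(T^2)$ and the commutation of $T$ with $\Qinv{s}{T}$ on $\dom(T)$. Beyond these verifications, the argument is purely algebraic.
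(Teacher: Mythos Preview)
Your proof is correct and follows essentially the same approach as the paper's. Both arguments rest on the two identities
\[
S_L^{-1}(s,T)+S_L^{-1}(\overline{s},T)=2(s_0\id-T)\Qinv{s}{T},\qquad
S_L^{-1}(s,T)-S_L^{-1}(\overline{s},T)=2\Qinv{s}{T}s_1I_s,
\]
then square the first to obtain $4\bigl(\Qinv{s}{T}-s_1^2\Qinv[2]{s}{T}\bigr)$ and control the $s_1^2\Qinv[2]{s}{T}$ term via the second; your presentation is slightly more streamlined in that you bound $\|(\text{sum})^2\|\le\|\text{sum}\|^2\le Y^2$ directly, whereas the paper expands the square into the four cross products $S_L^{-1}(s,T)S_L^{-1}(s,T)+\cdots$ before estimating, but the content is the same.
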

\begin{proof}
Observe that $\Qinv{s}{T} = \Qinv{\overline{s}}{T}$ for $s\in\rho_S(T)$. Hence, because $2s_0 = s + \overline{s}$, we have
\begin{align*}
&S_L^{-1}(s,T)S_L^{-1}(s,T) + S_L^{-1}(s,T)S_L^{-1}(\overline{s},T) \\
=&\left(\Qinv{s}{T}\overline{s}-T\Qinv{s}{T}\right)\left(\Qinv{s}{T}\overline{s}-T\Qinv{s}{T}\right)\\
& + \left(\Qinv{s}{T}\overline{s}-T\Qinv{s}{T}\right)\left(\Qinv{s}{T}s-T\Qinv{s}{T}\right)\\
= &\left(\Qinv{s}{T}\overline{s}-T\Qinv{s}{T}\right)2\left(s_0\id-T\right)\Qinv{s}{T}
\end{align*}
and similarly
\begin{align*}
&S_L^{-1}(\overline{s},T)S_L^{-1}(s,T) + S_L^{-1}(\overline{s},T)S_L^{-1}(\overline{s},T) \\
= &\left(\Qinv{s}{T}s-T\Qinv{s}{T}\right)2\left(s_0\id-T\right)\Qinv{s}{T}.
\end{align*}
Therefore
\begin{align*}
&S_L^{-1}(s,T)S_L^{-1}(s,T) + S_L^{-1}(s,T)S_L^{-1}(\overline{s},T)\\
&+ S_L^{-1}(\overline{s},T)S_L^{-1}(s,T) + S_L^{-1}(\overline{s},T)S_L^{-1}(\overline{s},T) \displaybreak[3]\\
= &\left(\Qinv{s}{T}\overline{s}-T\Qinv{s}{T}\right)2\left(s_0\id-T\right)\Qinv{s}{T} \\
&+ \left(\Qinv{s}{T}s-T\Qinv{s}{T}\right)2\left(s_0\id-T\right)\Qinv{s}{T} \displaybreak[3]\\
= & 2\left(s_0\id-T\right)\Qinv{s}{T}2\left(s_0\id-T\right)\Qinv{s}{T}\\
= & 4(T^2-2s_0T + s_0^2\id)\Qinv[2]{s}{T} = 4\Qinv[1]{s}{T} - 4 s_1^2\Qinv[2]{s}{T},
\end{align*}
which can be rewritten as
\begin{align*}
4\Qinv{s}{T} = &S_L^{-1}(s,T)S_L^{-1}(s,T) + S_L^{-1}(s,T)S_L^{-1}(\overline{s},T)\\
&+ S_L^{-1}(\overline{s},T)S_L^{-1}(s,T) + S_L^{-1}(\overline{s},T)S_L^{-1}(\overline{s},T) \displaybreak[3] + 4 s_1^2\Qinv[2]{s}{T}.
 \end{align*}
Thus, we can estimate
\begin{align}
\notag4\left\| \Qinv{s}{T} \right\| = &
\left\|S_L^{-1}(s,T) \right\|\left\|S_L^{-1}(s,T) \right\| + \left\|S_L^{-1}(s,T) \right\|\left\|S_L^{-1}(\overline{s},T)\right\|\\
\notag&+ \left\|S_L^{-1}(\overline{s},T) \right\|\left\|S_L^{-1}(s,T) \right\|+ \left\|S_L^{-1}(\overline{s},T)\right\|\left\|S_L^{-1}(\overline{s},T) \right\|+
4 \left\|s_1^2\Qinv[2]{s}{T} \right\|\\
= & \left(\left\| S_L^{-1}(s,T) \right\| + \left\| S_L^{-1}(\overline{s},T)\right\|\right)^2 +  \left\|2s_1\Qinv{s}{T} \right\| \left\|2s_1\Qinv{s}{T} \right\| \label{BBFB}.
 \end{align}

Finally observe that
\begin{align*}
 2\Qinv{s}{T}s_1I_s =& T\Qinv{s}{T} - \Qinv{s}{T}(s_0 - I_ss_1) \\
 &- \left( T\Qinv{s}{T} - \Qinv{s}{T}(s_0 +I_ss_1)\right) = S_L^{-1}(s,T) - S_L^{-1}(\overline{s},T)
 \end{align*}
such that
\begin{align*}
\left\|2s_1\Qinv{s}{T} \right\| = \left\|2\Qinv{s}{T} s_1I_s \right\| \leq \left\| S_L^{-1}(s,T) \right\| + \left\| S_L^{-1}(\overline{s},T)\right\|.
\end{align*}
Combining this estimate with \eqref{BBFB}, we finally obtain
\[ 2\left\| \Qinv{s}{T} \right\| \leq   \left(\left\| S_L^{-1}(s,T) \right\| + \left\| S_L^{-1}(\overline{s},T)\right\|\right)^2  \]
and hence the statement for the left $S$-resolvent operator. The estimates for the right $S$-resolvent operator can be shown with similar computations.

\end{proof}

\begin{lemma}\label{NormExplode}
Let $T\in\closOP(V)$. If $(s_n)_{n\in\N}$ is a bounded sequence in $ \rho_S(T)$  with
\[\lim_{n\to\infty} \dist(s_n,\sigma_S(T) )= 0,\]
 then
\[\lim_{n\to\infty} \sup_{s\in[s_n]} \left\|S_L^{-1}(s,T)\right\| =+ \infty\quad \text{and} \quad\lim_{n\to\infty} \sup_{s\in[s_n]} \left\|S_R^{-1}(s,T)\right\| =+ \infty.\]
\end{lemma}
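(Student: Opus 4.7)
The plan is to argue by contradiction, chaining together the three estimates just proved. Suppose the conclusion for $S_L^{-1}$ fails; then, after passing to a subsequence, there exists $M<+\infty$ with $\sup_{s\in[s_n]}\|S_L^{-1}(s,T)\|\leq M$ for all $n$. The idea is to show that this forces $\|\Qinv{s_n}{T}\|+\|T\Qinv{s_n}{T}\|$ to remain bounded, which via inequality \eqref{QsEst} will keep $d_S(s_n,\sigma_S(T))$ bounded away from zero, contradicting (once we translate back to the Euclidean metric) the hypothesis $\dist(s_n,\sigma_S(T))\to 0$.

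First I would verify that the Euclidean hypothesis $\dist(s_n,\sigma_S(T))\to 0$ implies $d_S(s_n,\sigma_S(T))\to 0$. Since $\rho_S(T)$ is open by Lemma~\ref{QExp}, the set $\sigma_S(T)$ is closed, and one can choose $p_n\in\sigma_S(T)$ with $|s_n-p_n|\to 0$; boundedness of $(s_n)$ forces $(p_n)$ to be bounded as well. Then $|s_{n,0}-p_{n,0}|\leq|s_n-p_n|\to 0$ and $\bigl||s_n|^2-|p_n|^2\bigr|\leq(|s_n|+|p_n|)|s_n-p_n|\to 0$, giving $d_S(s_n,p_n)\to 0$ and hence $d_S(s_n,\sigma_S(T))\to 0$.

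Next, from the definition $S_L^{-1}(s,T)=\Qinv{s}{T}\overline{s}-T\Qinv{s}{T}$ one obtains the identity $T\Qinv{s_n}{T}=\Qinv{s_n}{T}\overline{s_n}-S_L^{-1}(s_n,T)$, so
\[
\|T\Qinv{s_n}{T}\|\leq K\|\Qinv{s_n}{T}\|+M,\qquad K:=\sup_{n}|s_n|<+\infty.
\]
Lemma~\ref{QSest} applied under our assumption delivers $\|\Qinv{s_n}{T}\|\leq 2M^{2}$, hence the sum $\|\Qinv{s_n}{T}\|+\|T\Qinv{s_n}{T}\|$ is uniformly bounded. But then \eqref{QsEst} yields $d_S(s_n,\sigma_S(T))\geq(\|\Qinv{s_n}{T}\|+\|T\Qinv{s_n}{T}\|)^{-1}$ bounded away from $0$, contradicting the first step. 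The proof for $S_R^{-1}$ is analogous, using the identity $T\Qinv{s_n}{T}=\overline{s_n}\Qinv{s_n}{T}-S_R^{-1}(s_n,T)$ together with the right-handed version of Lemma~\ref{QSest}.

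I expect the main technical obstacle to be the passage between the Euclidean distance and $d_S$: since $d_S$ mixes the \emph{quadratic} expression $\bigl||s|^2-|p|^2\bigr|$ with the linear $|s_0-p_0|$, the fact that the two metrics define the same notion of convergence near points of $\sigma_S(T)$ depends on the boundedness of $(s_n)$ and is not automatic. Everything else amounts to straightforward bookkeeping between the norms of $S_L^{-1}$, the pseudo-resolvent $\Qinv{s}{T}$, and $T\Qinv{s}{T}$.
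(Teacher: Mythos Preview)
Your proof is correct and uses the same three ingredients as the paper: the equivalence (for bounded sequences) between $\dist(s_n,\sigma_S(T))\to 0$ and $d_S(s_n,\sigma_S(T))\to 0$, the estimate \eqref{QsEst}, and Lemma~\ref{QSest} together with an identity expressing $T\Qinv{s}{T}$ in terms of $S_L^{-1}$ and $\Qinv{s}{T}$. The organization differs slightly: the paper proceeds directly, showing that every subsequence has a sub-subsequence along which the supremum tends to $+\infty$, and splits into two cases according to whether $\|\Qinv{s_{n_k}}{T}\|$ has an unbounded subsequence or stays bounded; your contradiction framing bypasses this case split because the assumed uniform bound $M$ immediately controls $\|\Qinv{s_n}{T}\|$ via Lemma~\ref{QSest} and then $\|T\Qinv{s_n}{T}\|$ via the resolvent identity, yielding a single clean estimate. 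Your route is marginally more economical; the paper's route makes slightly more explicit which of the two terms in \eqref{QsEst} is forcing the blow-up.
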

\begin{proof}
First of all observe that $\dist(s_n,\sigma_S(T)) \to 0$ if and only if $d_S(s_n,\sigma_S(T)) \to 0$ because $\sigma_S(T)$ is axially symmetric. Indeed, for any $n\in\N$ there exits $p_n\in\sigma_S(T)$ such that
\[|s_n - p_n| < \dist(s_n, \sigma_S(T)) + 1/n.\]
If $\dist(s_n,\sigma_S(T)) \to 0$, then $|s_n - p_n|\to 0$ and hence $|s_{n,0} - p_{n,0}| \to 0$. Since the sequence $s_n$ is bounded, the sequence $p_n$ is bounded too and we also have
\[
 \left||s_n|^2 - |p_n|^2\right| \leq |s_n| |\overline{s_n} - \overline{p_n}| + |s_n-p_n||\overline{p_n}| \to 0
 \]
and in turn
\begin{equation*}
0 < d_S(s_n,\sigma_S(T)) \leq d_S(s_n,p_n) = \max\left\{ |s_{n,0} - p_{n,0}|, \left| |s_n|^2 - |p_n|^2\right|\right\} \longrightarrow 0.
\end{equation*}
If on the other hand $d_S(s_n,\sigma_S(T))$ tends to zero, then there exists a sequence $p_n \in \sigma_S(T)$ such that
\[d_S(s_n,p_n) < d_S(s_n, \sigma_S(T)) + 1/n\]
 and in turn $d_S(s_n,p_n) \to 0$. Since $\sigma_S(T)$ is axially symmetric and $d(s_n,p_{n,I}) = d(s_n,p_n)$ for any $p_{n,I}\in [p_n]$, we can moreover assume that $I_{p_n} = I_{s_n}$. Then
 \[0 \leq |s_{n,0} - p_{n,0}| \leq d_S(s_n,p_n)\to 0.\]
  Since $s_n$ and in turn also $p_n$ are bounded, this implies $|s_{n,0}^2 - p_{n,0}^2| \to 0$, from which we deduce that also $|s_{n,1}^2 - p_{n,1}^2|\to 0$ because
\[ 0\leq \left| s_{n,0}^2 - p_{n,0}^2 + s_{n,1}^2 - p_{n,1}^2\right| = \left||s_n|^2 -|p_n|^2\right| \leq d_S(s_n,p_n) \to 0. \]
Since $s_{n,1}\geq 0$ and $p_{n,1}\geq 0$, we conclude $s_{n,1} - p_{n,1} \to 0$ and, since $I_s = I_p$, also
\[ 0 < \dist(s_n,\sigma_S(T)) \leq | s_n - p_n| = \sqrt{(s_{n,0} - p_{n,0})^2 + (s_{n,1} - p_{n,1})^2} \to 0.\]

 Now assume that $s_n\in\rho_S(T)$ with $\dist(s_n,\sigma_S(T))\to 0$. By the above considerations and \eqref{QsEst}, we have
 \begin{equation}\label{UUZZTT}
 \|\Qinv{s_n}{T}\| + \|T\Qinv{s_n}{T} \| \to +\infty.
 \end{equation}
 We show now that every subsequence $(s_{n_k})_{k\in\N}$ has a subsequence $(s_{n_{k_j}})_{j\in\N}$ such that
 \begin{equation}\label{YXCV}
\lim_{j\to+\infty} \sup_{s\in[s_{n_{k_j}}]}\|S_L^{-1}(s,T)\|= +\infty,
 \end{equation}
  which implies $\lim_{n\to+\infty}\sup_{s\in[s_n]}\|S_L^{-1}(s,T)\| = +\infty$. We thus consider an arbitrary subsequence $(s_{n_k})_{k\in\N}$ of $(s_n)_{n\in\N}$. If it has a subsequence $(s_{n_{k_j}})_{j\in\N}$ such that $\|Q_{s_{n_{k_j}}}(T)\| \to +\infty$, then Lemma~\ref{QSest} implies \eqref{YXCV}. Otherwise $\|\Qinv{s_{n_j}}{T}\|\leq C$ for some constant $C>0$ and we deduce from \eqref{UUZZTT} that $\|T\Qinv{s_{n_j}}{T} \| \to +\infty$. Observe that
  \begin{align*}
T\Qinv{s_{n_k}}{T} =  - \frac12 S_L^{-1}(s_{n_k},T) - \frac12 S_L^{-1}(\overline{s_{n_k}},T) + s_{n_k,0}\Qinv{s_{n_k}}{T},
\end{align*}
from which we obtain the estimate
\begin{align*}
\left\| T\Qinv{s_{n_k}}{T}\right\|&\leq \sup_{s\in [s_{n_k}]}\left\| S_L^{-1}(s_{n_k},T)\right\| + |s_{n_k,0}|\left\|\Qinv{s_{n_k}}{T}\right\| \\
&\leq \sup_{s\in [s_{n_k}]}\left\| S_L^{-1}(s_{n_k},T)\right\| + CM
\end{align*}
with $M = \sup_{n\in\N}|s_n| <+\infty$. Since the left-hand side tends to infinity as $k\to +\infty$, we obtain that also  $\sup_{s\in [s_{n_k}]}\left\| S_L^{-1}(s_{n_k},T)\right\|\to+\infty$ and thus the statement holds true. The case of the right $S$-resolvent can be shown with analogous arguments.

\end{proof}
\begin{definition}
Let $f$ be a left (or right) slice hyperholomorphic function defined on an axially symmetric open set $U$. A left (or right) slice hyperholomorphic function $g$ defined on an axially symmetric open set $U'$ with $U\subsetneq U'$ is called a slice hyperholomorphic continuation of $f$ if $f(s) = g(s)$ for all $s\in U$. It is called nontrivial if $V = U'\setminus U$ cannot be separated from $U$, i.e. if $U' \neq U \cup V$ for some open set $V$ with $\overline{V}\cap \overline{U} = \emptyset$.
\end{definition}
\begin{theorem}\label{SExt}
Let $T\in\closOP(V)$. There does not exist any nontrivial slice hyperholomorphic continuation of the left or of the right $S$-resolvent operator.
\end{theorem}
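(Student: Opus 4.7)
The plan is to argue by contradiction using Lemma~\ref{NormExplode}. I focus on the case of the left $S$-resolvent; the argument for $S_R^{-1}$ is completely analogous. Assume that $g$ is a nontrivial slice hyperholomorphic continuation of $s \mapsto S_L^{-1}(s,T)$ from $\rho_S(T)$ to a strictly larger axially symmetric open set $U'$.

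The first key step is to translate the nontriviality hypothesis into the existence of a point $p \in U' \cap \sigma_S(T)$ together with a sequence $(s_n)_{n\in\N} \subset \rho_S(T)$ with $s_n \to p$. If no such $p$ existed, every point of $U' \setminus \rho_S(T)$ would have an open neighborhood in $U'$ disjoint from $\rho_S(T)$; then $V := U' \setminus \rho_S(T)$ would itself be open, and the decomposition $U' = \rho_S(T) \cup V$ would realize the kind of separation forbidden by nontriviality. Equivalently, one may work with a connected component of $U'$ that meets both $\rho_S(T)$ and its complement and use connectedness to produce $p$ on the relative boundary.

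With $p$ in hand, I invoke the axial symmetry of $U'$ to pass from the single point $p$ to the whole sphere $[p] \subset U'$. Since $[p]$ is compact and $g$ is continuous on $U'$, there is an open set $W$ with $[p] \subset W \subset U'$ and a constant $M>0$ with $\|g(s)\| \leq M$ for every $s \in W$. A direct distance estimate then shows that $[s_n] \subset W$ for all sufficiently large $n$: writing an arbitrary $s \in [s_n]$ as $s = s_{n,0} + I s_{n,1}$ and pairing it with $q := p_0 + I p_1 \in [p]$ gives $|s-q|^2 = (s_{n,0}-p_0)^2 + (s_{n,1}-p_1)^2$, which tends to $0$ uniformly in $I \in \S$.

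Because the defining condition of $\sigma_S(T)$ depends only on $\Re(s)$ and $|s|$, the set $\rho_S(T)$ is axially symmetric, so $[s_n] \subset \rho_S(T)$ for every $n$, and $g$ agrees with $S_L^{-1}(\cdot,T)$ on each sphere $[s_n]$. The uniform bound on $W$ therefore yields $\sup_{s \in [s_n]} \|S_L^{-1}(s,T)\| \leq M$ for large $n$. On the other hand, the sequence $(s_n)$ is bounded and $\dist(s_n,\sigma_S(T)) \to 0$, so Lemma~\ref{NormExplode} forces this same supremum to diverge to $+\infty$, producing the required contradiction. The only delicate ingredient I anticipate is the first step, namely extracting the boundary point $p$ rigorously from the definition of nontriviality; once $p$ is available, axial symmetry of $U'$ and $\rho_S(T)$, continuity of $g$, and Lemma~\ref{NormExplode} combine in a straightforward way.
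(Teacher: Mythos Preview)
Your proof is correct and follows essentially the same strategy as the paper: produce a boundary point $p\in U'\cap\partial\rho_S(T)$, show that $\sup_{s\in[s_n]}\|S_L^{-1}(s,T)\|$ stays bounded along a sequence $s_n\to p$ in $\rho_S(T)$, and contradict Lemma~\ref{NormExplode}. The only real difference is in how the sphere bound is obtained. You use axial symmetry of $U'$ to place the whole compact sphere $[p]$ inside $U'$ and then invoke continuity of $g$ on a neighborhood of $[p]$; the paper instead bounds only $\|g(s_n)\|$ and $\|g(\overline{s_n})\|$ and then appeals to the Representation Formula (Theorem~\ref{RepFoOP}) to control the supremum over $[s_n]$. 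Your route is slightly more elementary since it avoids the Representation Formula, while the paper's route needs only the two points $s$ and $\overline{s}$ in $U'$ rather than the full sphere $[p]$; both work because $U'$ is axially symmetric by definition. Your explicit discussion of how nontriviality yields the boundary point $p$ is also more careful than the paper, which simply asserts its existence.
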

\begin{proof}
Assume that there exists a nontrivial extension $f$ of $S_L^{-1}(s,T)$ to an axially symmetric open set $U$ with $\rho_S(T)\subsetneq U$. Then there exists a point $s\in U \cap \partial\rho_S(T)$ and a sequence $s_n\in\rho_S(T)$ with $\lim_{n \to +\infty}s_n = s$ such that
\[\lim_{n\to+\infty}\left\|S_L^{-1}(s_n,T)\right\| = \lim_{n\to+\infty} \|f(s_n)\| = \|f(s)\| < +\infty. \]
Moreover, also $\overline{s_n}\to\overline{s}$ as $n\to+\infty$ and in turn
\[\lim_{n\to+\infty}\left\|S_L^{-1}(\overline{s_n},T)\right\| = \lim_{n\to+\infty} \|f(\overline{s_n})\| = \|f(\overline{s})\| < +\infty. \]
From the representation formula, Theorem~\ref{RepFoOP}, we then deduce
\[ \lim_{n\to+\infty}\sup_{s\in[s_n]}\left\|S_L^{-1}(s,T)\right\| \leq \lim_{n\to+\infty}\left\|S_L^{-1}(s_n,T)\right\|+\left\|S_L^{-1}(\overline{s_n},T)\right\|<+\infty.\]

On the other hand the sequence $s_n$ is bounded and $\dist(s_n,\sigma_S(T)) \leq |s_n - s| \to 0$. Lemma~\ref{NormExplode} therefore implies
$\lim_{n\to+\infty}\sup_{s\in [s_n]}\left\|S_L^{-1}(s,T)\right\| = +\infty$, which is a contradiction. Thus, the analytic continuation $(f,U)$ cannot exist.

For the right $S$-resolvent, we argue analogously.

\end{proof}

\begin{remark}
We suspected that it might be possible to improve the above results by finding an estimate of the form \eqref{CResEst} for the pseudo-resolvent $\Qinv{s}{T}$ instead of the $S$-resolvents. In this case Lemma~\ref{QSest} would yield an estimate of the form \eqref{CResEst} for the norm of the $S$-resolvents on an entire sphere instead of a single point. This is however not possible as the following example shows: consider the space $\ell^p(\N)$ of $\H$-valued $p$-summable sequences with $p\in[1,+\infty)$. Any sequence $(\lambda_n)_{n\in\N}$ with $\lambda_n\in\H$ does  obviously define a right linear, densely defined and closed operator on $\ell^p(\N)$ via $T(a) = (\lambda_na_n)_{n\in\N}$ for $a = (a_n)_{n\in\N}$. If $(\lambda_n)_{n\in\N}$ is unbounded, then $T$ is unbounded. Otherwise $\| T\| = \sup_{n\in\N}|\lambda_n| = \|(\lambda_n)_{n\in\N}\|_{\infty}$. Indeed,
\[\|T(a)\|_p = \sqrt[p]{\sum_{n\in\N} |\lambda_na_n|^p} \leq \|(\lambda_n)_{n\in\N}\|_{\infty} \sqrt[p]{\sum_{n\in\N} |a_n|^p} =  \|(\lambda_n)_{n\in\N}\|_{\infty} \|a\|_p \]
such that $\| T \| \leq \|(\lambda_n)_{n\in\N}\|_{\infty} $ and, with $e_m = (\delta_{m,n})_{n\in\N}$ where $\delta_{m,n}$ is the Kronecker delta, on the other hand
\[ \|\lambda_m\| =  \sqrt[p]{\sum_{n\in\N} |\lambda_n\delta_{m,n}\|^p} =  \|T(e_{m})\| \leq \|T\|\]
for any $m\in\N$ such that also $ \|(\lambda_n)_{n\in\N}\|_{\infty}\leq \|T\|$. The $S$-spectrum of $T$ is
\begin{equation}\label{Tspec} \sigma_S(T) = \overline{\bigcup_{n\in\N}[\lambda_n]}\end{equation}
as one can see easily: any $\lambda_n$ is a right eigenvalue since for instance $T(e_n) = e_n\lambda_n$ and hence the relation $\supset$ in \eqref{Tspec} holds true by Theorem~\ref{SvsR}, the axial symmetry and the closedness of the $S$-spectrum. If on the other hand  $s$ does not belong to the right hand side of \eqref{Tspec}, then  $\delta_s = \inf_{n\in\N}\dist(s,[\lambda_n]) =  \inf_{n\in\N} |s_{I_{\lambda_n}}-\lambda_n| >0$, where $s_{I_{\lambda_n}} = s_0 + I_{\lambda_n}s_1$. As
\[\Q{s}{T}(a) = \left((\lambda_n - s_{I_{\lambda_n}})(\lambda_n - \overline{s_{I_{\lambda_n}}})a_n\right)_{n\in\N}\]
and in turn
\[\Qinv{s}{T}(a) = \left((\lambda_n - s_{I_{\lambda_n}})^{-1}(\lambda_n - \overline{s_{I_{\lambda_n}}})^{-1}a_n\right)_{n\in\N},\]
we have $\|\Qinv{s}{T}\| \leq 1/\delta_s^2 < +\infty$ such that $s\in\rho_S(T)$. Thus, the relation $\subset$ in \eqref{Tspec} also holds true.

Now choose a sequence $(\lambda_n)_{n\in\N}$ such that $\lambda_{n,1}\to+\infty$ as $n\to+\infty$ and consider the respective operator $T$ on $\ell^p(\N)$. For simplicity, consider for instance $\lambda_n = In$ with $I\in\S$. By the above considerations, the sequence $s_N = I(N+1/N)$ with $N = 2,3,\ldots$ does then satisfy $\dist(s_N,\sigma_S(T))\to 0$ as $N\to +\infty$ and
\begin{equation}\label{CCTFA}
\| \Qinv{s_N}{T} \| = \sup_{n\in\N} \frac{1}{|\lambda_n-s_N||\lambda_n - \overline{s_N}|} = \frac{1}{|\lambda_N-s_N||\lambda_N - \overline{s_N}|} = \frac{1}{2 + \frac{1}{N^2}}.
\end{equation}
Indeed, if $n<N$, then some simple computations show that the inequality
\[ \frac{1}{|\lambda_n-s_N||\lambda_n - \overline{s_N}|} = \frac{1}{N + \frac{1}{N}-n}\frac{1}{n + N + \frac{1}{N}} < \frac{1}{2 + \frac{1}{N^2}} = \frac{1}{|\lambda_N-s_N||\lambda_N - \overline{s_N}|} \]
is equivalent to $0 < N^2 - n^2$, which is obviously true. Similarly, in the case $n>N$, the inequality
\[ \frac{1}{|\lambda_n-s_N||\lambda_n - \overline{s_N}|} = \frac{1}{n - N - \frac{1}{N}}\frac{1}{n + N + \frac{1}{N}} < \frac{1}{2 + \frac{1}{N^2}} = \frac{1}{|\lambda_N-s_N||\lambda_N - \overline{s_N}|} \]
is equivalent to $ 4 + 1/N^2 < n^2 - N^2$, which holds true since $2\leq N < n$.

From \eqref{CCTFA}, we see that $\| \Qinv{s_N}{T} \|\leq 2$ although $\dist(S_N,\sigma_S(T))\to 0$. Consequently, the pseudo-resolvent cannot satisfy an estimate that is analogue to \eqref{CResEst}.

Also controlling the norm of $T\Qinv{s}{T}$ by the norm of $\Qinv{s}{T}$ in order to improve \eqref{QsEst} is not possible: if we consider the operator $T\Qinv{s_N}{T}$ in the above example, then
\[T\Qinv{s_N}{T}(a) = \left(\frac{n}{n - N - \frac{1}{N}}\frac{1}{ I\left(n + N + \frac{1}{N}\right)}a_n\right)_{n\in \N}\]
and
\[ \|T\Qinv{s_N}{T}\|  \leq \|T\Qinv{s_N}{T}(e_N)\| = \frac{N^2}{2N+ \frac{1}{N}} \to +\infty\]
shows that $\|T\Qinv{s_N}{T}\|$ tends to infinity although $\|\Qinv{s_N}{T}\|$ stays bounded.
\end{remark}

\section{Fractional Powers of an Operator}\label{NagelSect}

In the following we assume that $T$ is a closed quaternionic right linear operator such that $(-\infty,0]\subset \rho_S(T)$ and such that there exists a positive constant $M>0$ such that
\begin{equation}\label{ResEst}
\|S_R^{-1}(s,T)\|\leq \frac{M}{1+|s|} \qquad\text{for }s\in(-\infty,0].
\end{equation}

\begin{definition}
For $a\in\R$ and $\theta\in(0,\pi)$ we denote by $\osector{\theta}{a}$ the open sector
\[\osector{\theta}{a} := \{s\in\hh: \arg(s-a)> \theta\}\]
and by $\csector{\theta}{a}$ the closed sector
\[\csector{\theta}{ a} := \{s\in\hh: \arg(s-a)\geq \theta\}.\]
\end{definition}
\begin{lemma}\label{StripCy}
There exist constants $a_0 > 0 $ and $\theta_0\in(0,\pi)$ and $M_n>0, n\in\N,$ such that the closed sector $\csector{\theta_0}{a_0}$
is contained in $\rho_S(T)$ and such that, for $n\in\N$,
\begin{equation}\label{StripEstimate}
\|S_R^{-n}(s,T)\|\leq \frac{M_n}{(1+|s|)^n}\quad\text{for }s\in \csector{\theta_0}{a_0}
\end{equation}
and
\begin{equation}\label{StripEstL}
\|S_L^{-n}(s,T)\|\leq \frac{M_n}{(1+|s|)^n}\quad\text{for }s\in \csector{\theta_0}{a_0}.
\end{equation}
\end{lemma}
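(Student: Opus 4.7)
The plan is to transfer the bound on $S_R^{-1}$ along $(-\infty,0]$ to a full sector by means of the series expansion of the pseudo-resolvent given in Lemma~\ref{QExp}. I would split the proof into three stages: first convert the hypothesis \eqref{ResEst} into decay estimates for $\Qinv{p}{T}$ and $T\Qinv{p}{T}$ along the negative real axis; second, use Lemma~\ref{QExp} to propagate these estimates to an axially symmetric neighborhood and verify that this neighborhood contains a sector $\csector{\theta_0}{a_0}$; third, assemble the bounds on $S_R^{-n}$ and $S_L^{-n}$ from the resulting control on $\Qinv{s}{T}$ and $T\Qinv{s}{T}$.

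For the first stage the key observation is that for real $p\leq 0$ one has the factorization $\Q{p}{T}=(T-p\id)^2$, so $\Qinv{p}{T}=(T-p\id)^{-2}$ and $S_R^{-1}(p,T)=(p\id-T)^{-1}$. The hypothesis directly yields $\|\Qinv{p}{T}\|\leq M^2/(1+|p|)^2$, and the algebraic identity $T(T-p\id)^{-2}=(T-p\id)^{-1}+p(T-p\id)^{-2}$ leads to $\|T\Qinv{p}{T}\|\leq (M+M^2)/(1+|p|)$. Consequently, the radius $\varepsilon_p$ on which the series in Lemma~\ref{QExp} converges with geometric ratio $q=1/2$ grows at least linearly in $1+|p|$.

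For the second stage I would associate to each $s$ in the candidate sector the center $p:=-|s|\in(-\infty,0]$. Since $|p|=|s|$, the axially symmetric metric reduces to $d_S(s,p)=2(s_0+|s|)=4|s|\cos^2(\arg(s)/2)$, which is small precisely when $\arg(s)$ is close to $\pi$. Choosing $\theta_0$ close enough to $\pi$ (as a function of $M$) forces $d_S(s,p)\leq \varepsilon_p/2$ for every $s\in\csector{\theta_0}{a_0}$ with $|s|$ large; a compactness argument based on continuity of the resolvents (Lemma~\ref{Q-TQ-cont}) covers the bounded portion of the sector. Lemma~\ref{QExp} then places the whole sector in $\rho_S(T)$, and summing the geometric series yields $\|\Qinv{s}{T}\|\leq 2\|\Qinv{p}{T}\|$ and, after pulling $T$ through term by term using commutativity of $T$ and $\Qinv{s}{T}$ on $\dom(T)$, $\|T\Qinv{s}{T}\|\leq 2\|T\Qinv{p}{T}\|$. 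Hence both bounds $\|\Qinv{s}{T}\|\leq C/(1+|s|)^2$ and $\|T\Qinv{s}{T}\|\leq C/(1+|s|)$ hold on the sector.

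For the third stage I would exploit the same commutativity, together with $\Qinv[n]{s}{T}V\subset\dom(T^{2n})$, to establish the identity $T^k\Qinv[n]{s}{T}=(T\Qinv{s}{T})^k\Qinv[(n-k)]{s}{T}$ for $0\leq k\leq n$. This immediately gives
\[\|T^k\Qinv[n]{s}{T}\|\leq\|T\Qinv{s}{T}\|^k\|\Qinv{s}{T}\|^{n-k}\leq \frac{C^n}{(1+|s|)^{2n-k}}.\]
Substituting into the defining expansion $S_R^{-n}(s,T)=\sum_{k=0}^n\binom{n}{k}\overline{s}^{n-k}(-T)^k\Qinv[n]{s}{T}$ and using $|s|^{n-k}/(1+|s|)^{n-k}\leq 1$ yields the required estimate $\|S_R^{-n}(s,T)\|\leq 2^nC^n/(1+|s|)^n$; the same calculation handles $S_L^{-n}$ verbatim. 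The step I expect to be the main obstacle is the geometric covering in the second stage: since the metric $d_S$ scales like $|s|^2$ while the convergence radius $\varepsilon_p$ only scales like $1+|p|$, the admissible aperture $\theta_0$ must be chosen very carefully as a function of $M$, and one must ensure the covering persists uniformly in $|s|$—which is what makes the particular form of the resolvent bound along the real axis indispensable.
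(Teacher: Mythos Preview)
Your proposal is correct and takes a genuinely different route from the paper. The paper does not go through the pseudo-resolvent at all: it expands $S_R^{-n}(s,T)$ directly as a Taylor series at a real center $\alpha\in(-\infty,0]$, using the slice-derivative formula $\sderiv^k S_R^{-1}(s,T)=(-1)^k k!\,S_R^{-(k+1)}(s,T)$ from Lemma~\ref{ResSDeriv} together with the fact that for real $\alpha$ one has $S_R^{-(n+k)}(\alpha,T)=(S_R^{-1}(\alpha,T))^{n+k}$, whence $\|S_R^{-(n+k)}(\alpha,T)\|\leq M^{n+k}/(1+|\alpha|)^{n+k}$. The resulting series converges on the ball of radius $(1+|\alpha|)/M$, and Theorem~\ref{SExt} then forces these balls into $\rho_S(T)$. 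Concretely, for $s$ in a narrow sector the paper takes the center to be $\alpha=s_0$ (the real part), with $s_1\leq|s_0|/(2M)$; a separate ball at the origin covers the tip. Your approach instead works with $\Qinv{s}{T}$ and $T\Qinv{s}{T}$ via Lemma~\ref{QExp}, which is more algebraic and bypasses both the slice-hyperholomorphic Taylor machinery and Theorem~\ref{SExt}. Your choice of center $p=-|s|$ (killing the $\big||p|^2-|s|^2\big|$ term in $d_S$) is exactly what makes the scaling work; your stated worry that $d_S$ scales like $|s|^2$ is already resolved by that choice, since then $d_S(s,p)=4|s|\cos^2(\arg(s)/2)$ is linear in $|s|$. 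The paper's approach yields the constants $M_n$ more transparently, while yours has the pleasant feature of treating $S_L^{-n}$ and $S_R^{-n}$ uniformly in the final stage without rerunning the argument.
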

\begin{proof}
By Lemma~\ref{ResSDeriv}, we have
\begin{equation}\label{wqerw}
 \sderiv ^k S_R^{-1}(s,T) = (-1)^kk!\,S_R^{-(k+1)}(s,T)\ \ \ {\rm for}\ \  k\in\N.
 \end{equation}
Hence, by Lemma~\ref{ResHol2342} and Corollary~\ref{SDPropOP}, the map $s\mapsto S_R^{-n}(s,T)$ is a left slice hyperholomorphic function on $\rho_S(T)$ with values in $\boundOP(V)$ for any $n\in\N$. From the identity \eqref{wqerw} we deduce
\[ \sderiv^k S_R^{-n}(s,T) = \sderiv^{k+n-1} \frac{(-1)^{n-1}}{(n-1)!} S_R^{-1}(s,T) = \frac{(-1)^{k}(k+n-1)!}{(n-1)!}\,S_R^{-(k+n)}(s,T).\]
When we apply Theorem~\ref{TaylorOP} in order to expand $S_R^{-n}(s,T)$ into a Taylor series at a real point $\alpha\in\rho_S(T)$, we therefore get
\begin{equation}\label{BinSum}
\begin{split}
 S_R^{-n}(s,T) &= \sum_{k=0}^{+\infty}\frac{1}{k!} (s-\alpha)^k \sderiv ^{k}S_R^{-n}(\alpha,T)
\\
&= \sum_{k=0}^{+\infty}\frac{1}{k!} (s-\alpha)^k  \frac{(-1)^{k}(k+n-1)!}{(n-1)!}\,S_R^{-(k+n)}(\alpha,T)
\\
&
=\sum_{k=0}^{+\infty}(-1)^k\binom{n+k-1}{k}(s-\alpha)^kS_R^{-(n+k)}(\alpha,T)
\end{split}
\end{equation}
on any ball $B(r,\alpha)$ contained in $\rho_S(T)$. Since $\alpha$ is real,  $S_R^{-n}(\alpha,T) = \left(S_R^{-1}(\alpha,T)\right)^n$ and thus $\|S_R^{-n}(\alpha,T) \|\leq\|S_R^{-1}(\alpha,T)\|^n$. The ratio test and the estimate \eqref{ResEst} therefore imply that this series converges on the ball with radius $(1+|\alpha|)/M$ centered at $\alpha$ for $\alpha\in(-\infty,0]$. In particular, considering the case $n=1$, we deduce from Theorem~\ref{SExt}  that any such ball is contained in $\rho_S(T)$. Otherwise the above series would give a nontrivial slice hyperholomorphic continuation of $S_R^{-1}(s,T)$.

 Set $a_0 = \min\left\{\frac{1}{4M}, 1\right\}$.  Then the closed ball $\overline{B(a_0,0)}$ is contained in $\rho_S(T)$ and for any $s\in \overline{B(a_0,0)}$, we have the estimate
\[
\begin{split}
\left\| S_R^{-n}(s,T)\right\| &\leq \sum_{k=0}^{\infty}\binom{n+k-1}{k}|s|^k\|S_R^{-(n+k)}(0,T)\|
\\
&
\leq \sum_{k=0}^{\infty}\binom{n+k-1}{k}\frac{1}{(4M)^k}M^{n+k}\frac{(1+|s|)^{n+k}}{(1+|s|)^{n+k}}
\\
&
= \frac{2^nM^n}{(1+|s|)^n}\sum_{k=0}^{\infty}\binom{n+k-1}{k}\frac{1}{2^k}
\\
&
= \frac{4^nM^n}{(1+|s|)^n},
\end{split}
\]
where the last equation follows from the Taylor series expansion $(1-z)^{-n} = \sum_{k=0}^{\infty}\binom{n+k-1}{k}z^k$ for $|z|<1$.

Now set $\varphi = \pi- \arctan(\frac{1}{2M})$ and consider the sector $\csector{\varphi}{ 0} = \{s\in\hh:\arg(s) \geq \varphi\}$. For any $s=s_0+I_ss_1\in\csector{\varphi}{0}$, we have $0\leq s_1\leq|s_0|/(2M)$ and from the power series expansion \eqref{BinSum} of $S_R^{-n}(s,T)$ at $s_0$, we obtain
\begin{align*}
\left\|S_R^{-n}(s,T)\right\| &\leq \sum_{k=0}^{\infty} \binom{n+k-1}{k}|s_1|^k\left\|S_R^{-1}(s_0,T)\right\|^{n+k} \\
& \leq \sum_{k=0}^{\infty} \binom{n+k-1}{k}\left(\frac{|s_0|}{2M}\right)^k\left(\frac{M}{1+|s_0|}\right)^{n+k}\\
& \leq  \left(\frac{M}{1+|s_0|}\right)^{n} \sum_{k=0}^{\infty} \binom{n+k-1}{k}\frac{1}{2^k} = \frac{2^nM^n}{(1+|s_0|)^n}.
\end{align*}
Since $|s| \leq |s_0|+|s_1| \leq (1+ \frac{1}{2M})|s_0|$, we get
\[\|S_R^{-n}(s,T)\| \leq  \frac{2^nM^n}{\left(1 + \left(1+ \frac{1}{2M}\right)^{-1}|s|\right)^n} \leq \frac{\left(1+ \frac{1}{2M}\right)^n2^nM^n}{\left(1 + |s|\right)^n}.\]
Hence, the estimate
$$
\left\|S_R^{-n}(s,T)\right\|\leq \frac{M_n}{(1+|s|)^n}
$$
 with
 $$
 M_n := \left(1+ \frac{1}{2M}\right)^n4^nM^n
 $$
  holds true on the entire set $V = \overline{\osector{\varphi}{0}\cup B(a_0,0)}$. Now observe that the sector $\csector{\theta_0}{a_0}$ with
\[\theta_0 := \arctan\left(\frac{a_0\sin\varphi}{a_0(-1+\cos\varphi)}\right)\]
is entirely contained in $V$ and we obtain the statement for $S_R^{-1}(s,T)$.

Since $S_L^{-1}(s,T) = S_R^{-1}(s,T)$ for $s\in(-\infty,0]$, the estimate \eqref{ResEst} applies also to the left $S$-resolvent. Thus we can use analogous arguments to prove \eqref{StripEstL}.

\end{proof}

\begin{definition}\label{FracPow}
Let $I\in\S$ and let $\csector{\theta_0}{a_0}$ be the sector obtained from Lemma~\ref{StripCy}. Let $\theta\in (\theta_0, \pi)$ and choose a piecewise smooth path $\Gamma$ in $(\csector{\theta_0}{ a_0}\cap\C_I)\setminus(-\infty,0]$ that goes from $\infty e^{I\theta}$ to $\infty e^{-I\theta}$. For $\alpha > 0$, we define
\begin{equation}\label{tialpfa}
T^{-\alpha}:= \frac{1}{2\pi}\int_{\Gamma} s^{-\alpha}\, ds_I\, S_R^{-1}(s,T).
\end{equation}
\end{definition}
\begin{theorem}\label{theoindep}
For any $\alpha>0$, the operator $T^{-\alpha}$ is bounded and independent of the choice of $I\in\S$, of $\theta\in(\theta_0,\pi)$ and of  the concrete path $\Gamma$ in $\C_I$ and therefore well-defined.
\end{theorem}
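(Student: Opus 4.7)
The plan breaks into three steps: boundedness of $T^{-\alpha}$, independence of the contour $\Gamma$ and of $\theta$ within a fixed plane $\mathbb{C}_I$, and the main step, independence of the imaginary unit $I\in\mathbb{S}$.

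\textbf{Boundedness.} By Lemma~\ref{StripCy} with $n=1$, $\|S_R^{-1}(s,T)\| \leq M_1/(1+|s|)$ on the sector $\csector{\theta_0}{a_0}$. Since $\Gamma$ lies in this sector minus $(-\infty,0]$ and is piecewise smooth (in particular at positive distance from $0$), the integrand satisfies
\[\left\|s^{-\alpha}\, ds_I\, S_R^{-1}(s,T)\right\| \leq \frac{M_1\, |s|^{-\alpha}}{1+|s|}\,|ds|,\]
which is integrable along $\Gamma$ since $\alpha > 0$ forces the tail at infinity to decay like $|s|^{-\alpha-1}$. Hence the integral converges absolutely in the operator norm and $T^{-\alpha}\in\boundOP(V)$.

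\textbf{Path and angle independence within $\mathbb{C}_I$.} Given two admissible paths $\Gamma_1,\Gamma_2\subset (\csector{\theta_0}{a_0}\cap\mathbb{C}_I)\setminus(-\infty,0]$, possibly with different values of $\theta\in(\theta_0,\pi)$, I truncate each at $|s|=R$ and close the resulting open curves by short arcs on the circle $|s|=R$ that stay in the sector. The arc contributions are $O(R^{-\alpha})$ by the estimate above and vanish as $R\to\infty$. The closed bounded contour in $\mathbb{C}_I$ so obtained encloses a region lying in $\rho_S(T)\cap\mathbb{C}_I\setminus(-\infty,0]$, on which $s\mapsto s^{-\alpha}$ is intrinsic (in particular right slice hyperholomorphic) and $s\mapsto S_R^{-1}(s,T)$ is left slice hyperholomorphic by Lemma~\ref{ResHol2342}. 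The operator-valued Cauchy integral theorem, Theorem~\ref{COP}, then forces the closed integral to vanish, whence $\int_{\Gamma_1}=\int_{\Gamma_2}$ after letting $R\to\infty$.

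\textbf{Independence of $I$ (main obstacle).} Now $\Gamma_I\subset\mathbb{C}_I$ and $\Gamma_J\subset\mathbb{C}_J$ lie in different complex planes and cannot be compared by a Cauchy argument inside a single plane. My approach is to reduce to a scalar identity using the Hahn-Banach consequence in Corollary~\ref{HBCor}: it suffices to show, for every $v\in V$ and every continuous left linear functional $\varLambda:V\to\mathbb{H}$, that
\[\int_{\Gamma_I} s^{-\alpha}\, ds_I\, g(s) = \int_{\Gamma_J} s^{-\alpha}\, ds_J\, g(s),\]
where $g(s):=\varLambda(S_R^{-1}(s,T)v)$ is scalar-valued and left slice hyperholomorphic on $\rho_S(T)$ by the left linearity of $\varLambda$ combined with Lemma~\ref{ResHol2342}. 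By the previous step, I may choose $\Gamma_I$ and $\Gamma_J$ axially congruent, so that after truncation at $|s|=R$ and closure with arcs $C_R^I,C_R^J$ of radius $R$ in the respective planes, the resulting bounded closed contours are exactly $\partial(U_R\cap\mathbb{C}_I)$ and $\partial(U_R\cap\mathbb{C}_J)$ for one and the same bounded axially symmetric slice domain $U_R$. The scalar plane-independence of $\tfrac{1}{2\pi}\int_{\partial(U_R\cap\mathbb{C}_K)} s^{-\alpha}\, ds_K\, g(s)$ in $K\in\mathbb{S}$ is the standard identity for Cauchy-type integrals of an intrinsic function multiplied by a left slice hyperholomorphic one: it is verified by inserting the representation formula (Theorem~\ref{RepFo}) for $g$ and exploiting that the intrinsic factor $s^{-\alpha}$ commutes with $I_s$, exactly as in the plane-independence proof of the $S$-functional calculus in~\cite{MR2752913}. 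The arc contributions again vanish like $R^{-\alpha}$, and combining the resulting scalar identity with Corollary~\ref{HBCor} yields the plane independence of $T^{-\alpha}$.
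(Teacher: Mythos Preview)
Your first two steps (boundedness and contour/angle independence within a fixed $\C_I$) match the paper's argument. For the independence of $I$, your route is genuinely different: you reduce to scalar integrals via Hahn--Banach and then invoke a representation-formula symmetry argument, whereas the paper represents $s^{-\alpha}$ itself by a Cauchy integral over $\partial(U_p\cap\C_J)$, applies Fubini (with the technical absolute-integrability estimates deferred to Appendix~\ref{Fubini1}), and recognizes the resulting inner integral $\frac{1}{2\pi}\int_{\partial(U_s\cap\C_I)} S_R^{-1}(p,s)\,ds_I\,S_R^{-1}(s,T)$ as $S_R^{-1}(p,T)$ via Theorem~\ref{SCalcInt}. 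The paper's method is heavier on estimates but entirely self-contained; your method is conceptually cleaner and avoids Fubini, but as written it has a gap.

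The gap is in the closure step. To form the closed contour $\partial(U_R\cap\C_I)$ you must join the endpoints $Re^{\pm I\theta}$ of the truncated $\Gamma_I$ by an arc at radius $R$. Going the ``long'' way (through $\arg s\in(-\theta,\theta)$) gives an arc that, for unbounded $T$, will in general cross $\sigma_S(T)$, so $g(s)=\varLambda(S_R^{-1}(s,T)v)$ is undefined there; going the ``short'' way forces the arc through the negative real axis, where $s^{-\alpha}$ is not defined. Either way the integrand is not defined on all of $\partial(U_R\cap\C_I)$, so the ``standard identity'' you cite cannot be applied to this contour. The fix is to drop the closure altogether: choose $\Gamma_I$ conjugation-symmetric, write $s^{-\alpha}=a(u,v)+Ib(u,v)$ (intrinsic, so $a,b$ real with $a$ even and $b$ odd in $v$) and $g(u+Iv)=\alpha(u,v)+I\beta(u,v)$ via the representation formula (so $\alpha$ even and $\beta$ odd in $v$), and observe by a direct parity computation that the $I$-dependent part of $\int_{\Gamma_I} s^{-\alpha}\,ds_I\,g(s)$ is the integral of an odd function along a symmetric path and hence vanishes. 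This works on the open unbounded $\Gamma_I$ directly, needs no arc and no Fubini, and---since Theorem~\ref{RepFoOP} gives an operator-valued representation formula---it can even be carried out at the operator level, making the Hahn--Banach detour unnecessary.
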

\begin{proof}
The estimate \eqref{StripEstimate} assures that the integral \eqref{tialpfa} exists and defines a bounded right-linear operator. Since $s\mapsto s^{-\alpha}$ is right slice hyperholomorphic and $s\mapsto S_R^{-1}(s,T)$ is left slice hyperholomorphic, the independence of the choice of $\theta$ and the independence of the choice of the path $\Gamma$ in the complex plane $\C_I$ follow from Cauchy's integral theorem for operator-valued slice hyperholomorphic functions, Theorem~\ref{COP}.

In order to show that $T^{-\alpha}$ is independent of the choice of the imaginary unit $I\in\S$, consider an arbitrary imaginary unit $J\in\S$ with $J\neq I$. Let $\theta_0 < \theta_s < \theta_p <\pi$ and set $U_s := \H\setminus\overline{\osector{\theta_s}{ 0}\cup B(a_0/2,0)}$ and $U_p:= \H\setminus\overline{\osector{\theta_p}{ 0}\cup B(a_0/3,0)}$. (The indices $s$ and $p$ are chosen in order to indicate the variable of integration over the boundary of the respective set in the following calculation.) Then $U_p$ and $U_s$ are slice domains that contain $\sigma_S(T)$ and $\partial(U_s\cap\C_I)$ and $\partial(U_p\cap\C_J)$ are paths that are admissible in Definition~\ref{FracPow}.

Observe that $s\mapsto s^{-\alpha}$ is right slice hyperholomorphic on $\overline{U_p}$ and that, by our choices of $U_p$ and $U_s$, we have $s\in U_p$ for any $s\in\partial(U_s\cap\C_I)$. If we choose $r>0$ large enough, then $s\in U_p\cap B(r,0)$ and we obtain from Theorem~\ref{Cauchy} that
\begin{align*}
s^{\alpha} &= \lim_{r\to\infty}\frac{1}{2\pi}\int_{\partial(U_p\cap B(r,0)\cap\C_I)} p^{-\alpha}\, dp_J\, S_R^{-1}(p,s) \\
&= \frac{1}{2\pi}\int_{\partial(U_p\cap\C_J)}p^{-\alpha}\, dp_J\, S_R^{-1}(p,s),
\end{align*}
where the second equations holds since $p^{-\alpha}\to 0$ uniformly as $p\to\infty$ in $U_p$. For $T^{-\alpha}$, we thus obtain
\begin{align}
\notag T^{-\alpha} &= \frac{1}{2\pi}\int_{\partial(U_s\cap\C_I)}s^{-\alpha}\, ds_I\, S_R^{-1}(s,T) \\
&= \frac{1}{(2\pi)^2}\int_{\partial(U_s\cap\C_I)}\left(\int_{\partial(U_p\cap\C_J)}p^{-\alpha}\, dp_J\, S_R^{-1}(p,s)\right)\, ds_I\, S_R^{-1}(s,T). \label{BeforeFubini}
\end{align}
We now apply Fubini's theorem. The estimate that justifies this is very technical and is therefore moved to Appendix~\ref{Fubini1} at the end of the paper. By exchanging the order of integration, we get
\begin{align*}
\notag T^{-\alpha}&= \frac{1}{2\pi}\int_{\partial(U_p\cap\C_J)}p^{-\alpha}\, dp_J \left(\frac{1}{2\pi}\int_{\partial(U_s\cap\C_I)}\, S_R^{-1}(p,s)\, ds_I\, S_R^{-1}(s,T)\right)\\
&=  \frac{1}{2\pi}\int_{\partial(U_p\cap\C_J)}p^{-\alpha}\, dp_J\, S_R^{-1}(p,T),
\end{align*}
where the last equation follows as an application of the $S$-functional calculus and Theorem~\ref{SCalcInt} since $S_R^{-1}(p,\infty) = \lim_{s\to\infty} S_R^{-1}(p,s) = 0$. Hence, the operator $T^{-\alpha}$ is also independent of the choice of the imaginary unit $I\in\S$.

\nopagebreak
\end{proof}

If $\alpha\in\N$, then $s^{-\alpha}$ is right slice hyperholomorphic at infinity. The following Corollary then immediately follows as an application of the $S$-functional calculus and Theorem~\ref{SCalcInt}.
\begin{corollary}\label{NaturalPowers}
If $\alpha\in\N$, then the operator $T^{-\alpha}$ defined in \eqref{tialpfa} coincides with the $\alpha$-th inverse power of $T$.
\end{corollary}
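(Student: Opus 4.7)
The plan is to identify the operator $T^{-\alpha}$ defined by \eqref{tialpfa} with $s^{-\alpha}(T)$, where the right-hand side is obtained via the $S$-functional calculus for unbounded operators, and then to recognize $s^{-\alpha}(T)$ as $(T^{-1})^{\alpha}$. Since $(-\infty,0]\subset\rho_S(T)$ we have $0\in\rho_S(T)$, so $T^{-1}\in\boundOP(V)$. For $\alpha\in\N$, the function $s\mapsto s^{-\alpha}$ is right slice hyperholomorphic on $\H\setminus\{0\}$ and satisfies $\lim_{s\to\infty}s^{-\alpha}=0$, hence it belongs to $\rhol(\sigma_S(T)\cup\{\infty\})$, and Definition~\ref{SCalcUB} assigns to it a bounded right-linear operator $s^{-\alpha}(T)$.

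For the identification $s^{-\alpha}(T)=(T^{-1})^\alpha$, I apply Definition~\ref{SCalcUB} with the real base point $0\in\rho_S(T)$: then $A=-S_L^{-1}(0,T)=T^{-1}$ and $\Phi_0(s)=s^{-1}$ is its own inverse, so $(s^{-\alpha})\circ\Phi_0^{-1}(s)=(s^{-1})^{-\alpha}=s^\alpha$. Hence Definition~\ref{SCalcUB} together with the polynomial case of the $S$-functional calculus yields $s^{-\alpha}(T)=(s^\alpha)(A)=A^\alpha=(T^{-1})^\alpha$, which is by definition the $\alpha$-th inverse power of $T$. On the other hand, by Theorem~\ref{SCalcInt}, since $s^{-\alpha}$ vanishes at infinity,
\[s^{-\alpha}(T)=\frac{1}{2\pi}\int_{\partial(U\cap\C_I)}s^{-\alpha}\,ds_I\,S_R^{-1}(s,T)\]
for any $T$-admissible slice domain $U$ on which $s^{-\alpha}$ is right slice hyperholomorphic.

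It remains to match this integral with the integral \eqref{tialpfa}. Using the sector from Lemma~\ref{StripCy}, one can choose $U$ to be an axially symmetric slice domain of the form $\H\setminus\csector{\theta'}{b}$ with $\theta'$ slightly bigger than $\theta_0$ and $b\in(0,a_0)$: this $U$ contains $\sigma_S(T)$, avoids the singularity of $s^{-\alpha}$ at $0$ as well as the branch cut $(-\infty,0]$, and its boundary $\partial(U\cap\C_I)$ is an admissible path in the sense of Definition~\ref{FracPow}. By Theorem~\ref{theoindep}, the integral \eqref{tialpfa} is independent of the particular admissible path, so it coincides with the $S$-functional calculus integral above and we conclude $T^{-\alpha}=s^{-\alpha}(T)=(T^{-1})^\alpha$. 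The main obstacle in executing this plan is the bookkeeping required to verify the compatibility of the various admissibility conditions---axial symmetry, slice-domain property, containment of $\sigma_S(T)$, avoidance of the singularity of $s^{-\alpha}$ at $0$, and path shape---but thanks to the sector estimates of Lemma~\ref{StripCy} this reduces to routine geometric verification rather than a conceptual difficulty.
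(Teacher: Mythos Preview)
Your proposal is correct and follows essentially the same approach as the paper, which simply notes that for $\alpha\in\N$ the function $s^{-\alpha}$ is right slice hyperholomorphic at infinity, so that the integral in \eqref{tialpfa} is the $S$-functional calculus integral from Theorem~\ref{SCalcInt}. You go further than the paper in spelling out explicitly why the $S$-functional calculus applied to $s^{-\alpha}$ yields the $\alpha$-th inverse power of $T$: your use of Definition~\ref{SCalcUB} with base point $0$, giving $A=T^{-1}$ and $(s^{-\alpha})\circ\Phi_0^{-1}=s^{\alpha}$, is a clean way to do this; an equally valid route would be to invoke Corollary~\ref{AlgUb}(ii) to factor $s^{-\alpha}$ as a product of $\alpha$ copies of $s^{-1}$.
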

If we follow the arguments of the proof of Theorem~5.27 in \cite[Chapter~II]{EngelNagel}, we obtain an integral representation of  $T^{-\alpha}$ that is almost identical to the one derived in  \cite{EngelNagel} for the complex case: the only difference is the different constant in front of the integral. This is due to the different choice of the branch of the logarithm that is used in \cite{EngelNagel} in order to define the fractional powers. As pointed out in Remark~\ref{LogRem} it is not possible to define different branches of the logarithm in a quaternionic slice hyperholomorphic  setting.

In Corollary~\ref{novesei} we however obtain an integral representation that is clearly different from any integral representation known from the classical complex setting.
\begin{theorem}\label{noveseiALLN}
 Let $n\in\N$. For $\alpha\in (0,n+1)$ with $\alpha\notin\N$, the operator $T^{\alpha}$ defined in \eqref{tialpfa} has the representation
\begin{equation}
\label{talphaALLN}
\begin{split}
T^{-\alpha}=(-1)^{n+1}\frac{\sin(\alpha\pi)}{\pi}\frac{n!}{(n-\alpha)\cdots(1-\alpha)}\int_{0}^{+\infty}t^{n-\alpha}S_R^{-(n+1)}(-t,T)\,dt.
\end{split}
\end{equation}

\end{theorem}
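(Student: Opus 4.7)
The plan is to perform $n$ successive integrations by parts along the contour $\Gamma$ in the defining integral \eqref{tialpfa}, and then collapse the resulting contour onto the two edges of the negative real axis. Since $\Gamma\subset\C_I$, both $s\mapsto s^{-\alpha}$ (intrinsic, hence $\C_I$-valued) and $s\mapsto S_R^{-1}(s,T)$ (left slice hyperholomorphic, $\boundOP(V)$-valued) restrict on $\C_I$ to classically holomorphic functions; the scalar factors $s^{k-\alpha}\in\C_I$ commute with $ds_I\in\C_I$, so once $\Gamma$ is parametrized by a real variable, the usual Leibniz and integration-by-parts identities apply. A single step uses the primitive $s^{k-\alpha}/(k-\alpha)$ of $s^{k-1-\alpha}$ together with the identity $\sderiv S_R^{-k}(s,T)=-k\,S_R^{-(k+1)}(s,T)$, which follows immediately from Lemma~\ref{ResSDeriv} via $S_R^{-k}=\tfrac{(-1)^{k-1}}{(k-1)!}\sderiv^{k-1}S_R^{-1}$, and produces a cumulative factor $k/(k-\alpha)$. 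After $n$ iterations:
\[
T^{-\alpha}=\frac{1}{2\pi}\cdot\frac{n!}{(1-\alpha)(2-\alpha)\cdots(n-\alpha)}\int_\Gamma s^{n-\alpha}\,ds_I\,S_R^{-(n+1)}(s,T),
\]
all denominators being nonzero since $\alpha\notin\N$. The intermediate boundary terms $[\,s^{k-\alpha}(-I)S_R^{-k}(s,T)\,]_\Gamma$, evaluated at the two ends of $\Gamma$ which lie at infinity, vanish because $\|S_R^{-k}(s,T)\|=O(|s|^{-k})$ on the sector by Lemma~\ref{StripCy}, yielding $O(|s|^{-\alpha})\to 0$ for $\alpha>0$.

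Next, one deforms $\Gamma$ onto the edges of the branch cut. Take $\theta\to\pi$ and let the contour shrink towards the negative real axis, consisting in the limit of the upper edge $s=-t+I\,0^+$ (traversed from $-\infty$ to $0$), a small loop around $0$, and the lower edge $s=-t-I\,0^+$ (traversed from $0$ to $-\infty$). This deformation is justified by Cauchy's integral theorem for operator-valued slice hyperholomorphic integrands (Theorem~\ref{COP}). The arc at infinity vanishes as the integrand decays like $|s|^{-1-\alpha}$, and the small arc of radius $\delta$ around $0$ contributes $O(\delta^{n-\alpha+1})\to 0$, using that $S_R^{-(n+1)}(s,T)$ is continuous at $s=0$ (since $0\in\rho_S(T)$) and that $n-\alpha>-1$ because $\alpha<n+1$. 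On the upper edge, $\log s=\ln t+I\pi$ gives $s^{n-\alpha}=(-1)^n t^{n-\alpha}e^{-I\alpha\pi}$; symmetrically the lower edge gives $s^{n-\alpha}=(-1)^n t^{n-\alpha}e^{+I\alpha\pi}$. Combining the two pieces with $ds_I=-I\,ds$, $ds=-dt$, and the identity $(e^{I\alpha\pi}-e^{-I\alpha\pi})I=2I^2\sin(\alpha\pi)=-2\sin(\alpha\pi)$, one obtains
\[
\int_\Gamma s^{n-\alpha}\,ds_I\,S_R^{-(n+1)}(s,T)=2(-1)^{n+1}\sin(\alpha\pi)\int_0^{+\infty}t^{n-\alpha}\,S_R^{-(n+1)}(-t,T)\,dt,
\]
which, inserted into the integration-by-parts identity above, yields the stated formula.

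The main technical point will be handling the integration by parts cleanly in the quaternionic operator-valued setting---in particular, verifying that the Leibniz and chain rules along $\Gamma$ apply to products of $\C_I$-valued scalar functions and $\boundOP(V)$-valued slice hyperholomorphic functions. This is, however, routine, because on $\C_I$ a slice hyperholomorphic function is just a classically holomorphic function with values in $\H$ or in $\boundOP(V)$, and the scalar factors $s^{k-\alpha}$ commute with the differential $ds_I$ and with $I$. Once this bookkeeping is in place, the rest of the argument is controlled entirely by the uniform resolvent decay from Lemma~\ref{StripCy} together with the elementary branch-cut computation sketched above.
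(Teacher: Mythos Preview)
Your proposal is correct and follows essentially the same route as the paper: integrate by parts $n$ times along $\Gamma$ using $\sderiv S_R^{-k}(s,T)=-k\,S_R^{-(k+1)}(s,T)$ (boundary terms vanishing by the decay from Lemma~\ref{StripCy}), then let the contour collapse onto the negative real axis and compute the branch-cut contribution. The paper carries this out by parametrizing $\partial(U\cap\C_I)$ explicitly, first sending the vertex $a\to 0$ via dominated convergence and then $\theta\to\pi$, which is exactly the analytic justification of the deformation you describe.
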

\begin{proof}
Let $a_0$ and $\theta_0$ be the constants obtained from Corollary~\ref{StripCy}. For $a\in(0,a_0)$ and $\theta \in (\theta_0,\pi)$, we can choose $U = \H\setminus\csector{\theta}{a}$ and integrate  over the boundary $\partial(U\cap\C_I)$ of $U$ in $\C_I$ for some $I\in\S$ in the integral representation of $T^{-\alpha}$. The boundary consists of the path
\[\gamma(t) = \begin{cases} a-te^{I\theta}, &t\in(-\infty, 0]\\ a+te^{-I\theta}, & t\in(0,\infty)\end{cases}, \]
and hence it is
\begin{align*}
T^{-\alpha} =& \frac{1}{2\pi}\int_{-\infty}^0 (a - te^{I\theta})^{-\alpha}(-I)(-e^{I\theta}) S_R^{-1}(a - te^{I\theta},T)\, dt \\
&+ \frac{1}{2\pi}\int_0^{+\infty} (a + te^{-I\theta})^{-\alpha}(-I)e^{-I\theta} S_R^{-1}(a + te^{-I\theta},T)\,dt\\
=& \frac{I}{2\pi}\int_0^{+\infty} (a +te^{I\theta})^{-\alpha}e^{I\theta} S_R^{-1}(a + te^{I\theta},T)\,dt \\
&- \frac{I}{2\pi}\int_0^{+\infty} (a + te^{-I\theta})^{-\alpha}e^{-I\theta} S_R^{-1}(a + te^{-I\theta},T)\,dt.
\end{align*}
Integrating $n$ times by parts yields
\begin{align*}T^{-\alpha} =& \frac{n!}{(n-\alpha)\cdots(1-\alpha)}\frac{I}{2\pi}\int_0^{+\infty} (a +te^{I\theta})^{n-\alpha}e^{I\theta} S_R^{-(n+1)}(a + te^{I\theta},T)\,dt \\
&- \frac{n!}{(n-\alpha)\cdots(1-\alpha)}\frac{I}{2\pi}\int_0^{+\infty} (a + te^{-I\theta})^{n-\alpha}e^{-I\theta} S_R^{-(n+1)}(a + te^{-I\theta},T)\,dt.
\end{align*}
Because of the estimate \eqref{StripEstimate}, we can apply Lebesgue's dominated convergence theorem with dominating function
\[f(t)=\begin{cases} C(1+t^{n-\alpha})&\text{if }t\leq 1\\ Ct^{-\alpha-1}&\text{if }t>1,\end{cases}\]
where $C>0$ is a sufficiently large constant. Taking the limit $a\to 0$, we obtain
\begin{equation}\begin{split}\label{AnyAngle}
T^{-\alpha} =
& \frac{n!}{(n-\alpha)\cdots(1-\alpha)}\frac{I}{2\pi}\int_0^{+\infty} t^{n-\alpha}e^{I\theta(n-\alpha)}e^{I\theta} S_R^{-(n+1)}(te^{I\theta},T)\,dt \\
&- \frac{n!}{(n-\alpha)\cdots(1-\alpha)}\frac{I}{2\pi}\int_0^{+\infty} t^{n-\alpha}e^{-I\theta(n-\alpha)}e^{-I\theta} S_R^{-(n+1)}(te^{-I\theta},T)\,dt
\end{split}
\end{equation}
and then, taking the limit $\theta\to\pi$, we get
\begin{equation*}
\begin{split}
T^{-\alpha} =& -\frac{n!}{(n-\alpha)\cdots(1-\alpha)}\frac{I}{2\pi}\int_0^{+\infty} t^{n-\alpha}e^{I\pi(n-\alpha)} S_R^{-(n+1)}(-t,T)\,dt \\
&+ \frac{n!}{(n-\alpha)\cdots(1-\alpha)}\frac{I}{2\pi}\int_0^{+\infty} t^{n-\alpha}e^{-I\pi(n-\alpha)}S_R^{-(n+1)}(-t,T)\,dt\\
=& (-1)^{n+1}\frac{\sin(\alpha\pi)}{\pi}\frac{n!}{(n-\alpha)\cdots(1-\alpha)}\int_{0}^{+\infty}t^{n-\alpha}S_R^{-(n+1)}(-t,T)\,dt,
\end{split}
\end{equation*}
where the last equation follows from the identity $-Ie^{I\pi(n-\alpha)}+Ie^{-I\pi(n-\alpha)}= \sin((n-\alpha))\pi=(-1)^{n+1}\sin(\alpha\pi)$.

\nopagebreak
\end{proof}

\begin{corollary}\label{Ialpha}
For the identity operator $\id$, it is $\id^{-\alpha}=\id$ for $\alpha \geq 0$.
\end{corollary}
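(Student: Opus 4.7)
The strategy is to reduce $\id^{-\alpha}$ to a scalar multiple of $\id$ by pulling the identity operator out of the integral \eqref{tialpfa}, and then to evaluate the resulting scalar integral by combining a Beta-function identity with Euler's reflection formula.

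First I would verify that $\id$ fits into the framework of Section~\ref{NagelSect}. Since $\id^2-2\Re(s)\id+|s|^2\id=((1-s_0)^2+s_1^2)\id$, one has $\sigma_S(\id)=\{1\}$, so in particular $(-\infty,0]\subset\rho_S(\id)$. Formula \eqref{SresolvoperatorR} gives
\[ S_R^{-1}(s,\id)=\frac{\overline{s}-1}{(1-s_0)^2+s_1^2}\,\id, \]
which for $s\in(-\infty,0]$ reduces to $(s-1)^{-1}\id$, of norm $1/(1+|s|)$. Hence the hypothesis \eqref{ResEst} holds with $M=1$ and Definition~\ref{FracPow} applies.

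Next, for non-integer $\alpha\in(n,n+1)$ with $n\in\mathbb{N}_0$, I would substitute $T=\id$ into the representation \eqref{talphaALLN} of Theorem~\ref{noveseiALLN}. Since $\id$ commutes with everything,
\[ S_R^{-(n+1)}(-t,\id)=\bigl(S_R^{-1}(-t,\id)\bigr)^{n+1}=\frac{(-1)^{n+1}}{(1+t)^{n+1}}\,\id, \]
so the sign factor $(-1)^{n+1}$ in \eqref{talphaALLN} is absorbed, leaving
\[ \id^{-\alpha}=\frac{\sin(\alpha\pi)}{\pi}\,\frac{n!}{(n-\alpha)\cdots(1-\alpha)}\left(\int_0^{+\infty}\frac{t^{n-\alpha}}{(1+t)^{n+1}}\,dt\right)\id. \]
The scalar integral is the classical Beta value $B(n+1-\alpha,\alpha)=\Gamma(n+1-\alpha)\Gamma(\alpha)/n!$. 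Expanding $\Gamma(n+1-\alpha)=(n-\alpha)(n-1-\alpha)\cdots(1-\alpha)\,\Gamma(1-\alpha)$ and applying Euler's reflection formula $\Gamma(\alpha)\Gamma(1-\alpha)=\pi/\sin(\alpha\pi)$, every factor cancels and one obtains $\id^{-\alpha}=\id$.

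It remains to dispose of the integer and zero cases. For $\alpha\in\N$, Corollary~\ref{NaturalPowers} identifies $\id^{-\alpha}$ with the ordinary $\alpha$th inverse power, which is trivially $\id$; and $\id^{0}=\id$ by convention. I do not expect a genuine obstacle here: the only delicate point is tracking the signs $(-1)^{n+1}$ and the falling factorial $(n-\alpha)\cdots(1-\alpha)$ precisely so that they cancel exactly against the Beta/Gamma expression.
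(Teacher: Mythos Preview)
Your proposal is correct and follows essentially the same route as the paper: substitute $T=\id$ into the representation \eqref{talphaALLN}, reduce to the scalar Beta integral $\int_0^{+\infty}t^{n-\alpha}(1+t)^{-(n+1)}\,dt=B(n+1-\alpha,\alpha)$, and observe that the resulting constant equals $1$; the integer case is handled via Corollary~\ref{NaturalPowers}. The only cosmetic difference is that the paper quotes the value of the Beta integral from a table, whereas you unwind it through the Gamma recursion and Euler's reflection formula.
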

\begin{proof}
If $\alpha\in\N$, this is follows immediately from Corollary~\ref{NaturalPowers}. For $\alpha\notin\N$, consider $n\in\N$ with $\alpha\in(0,n+1)$. Then, since $S_R^{-1}(s,T) = (s-1)^{-1}\id$, it is
\begin{align*}
\id^{-\alpha} &= (-1)^{n+1}\frac{\sin(\alpha\pi)}{\pi}\frac{n!}{(n-\alpha)\cdots(1-\alpha)}\int_{0}^{+\infty}\frac{t^{n-\alpha}}{(-t-1)^{n+1}}\,dt\,\id\\
&=\frac{\sin(\alpha\pi)}{\pi}\frac{n!}{(n-\alpha)\cdots(1-\alpha)}\int_{0}^{+\infty}\frac{t^{n-\alpha}}{(t+1)^{n+1}}\,dt\,\id.
\end{align*}
By \cite[3.194]{GTABLEOFINT}, we have
\begin{equation}
\label{ScalInt}
\int_{0}^{+\infty}\frac{t^{n-\alpha}}{(t+1)^{n+1}}\,dt =B(n-\alpha+1,\alpha) = \frac{(n-\alpha)\cdots(1-\alpha)}{n!}\frac{\pi}{\sin(\pi\alpha)},
\end{equation}
where $B(x,y)$ denotes the Beta function, and hence $\id^{-\alpha} = \id$.

\end{proof}

\begin{corollary}\label{novesei}
 Let $\alpha\in (0,1)$. Then
\begin{equation}
\label{talpha}
\begin{split}
T^{-\alpha}=-\frac{\sin(\alpha\pi)}{\pi}\int_0^{\infty} t^{-\alpha}S_R^{-1}(-t,T)\,dt.
\end{split}
\end{equation}
\end{corollary}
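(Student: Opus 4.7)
The statement is the special case $n=0$ of Theorem \ref{noveseiALLN}, so the plan is simply to invoke that theorem and check that the formula collapses correctly. Applying \eqref{talphaALLN} with $n=0$ gives
\[
T^{-\alpha} = (-1)^{1} \frac{\sin(\alpha\pi)}{\pi} \cdot \frac{0!}{\text{empty product}} \int_{0}^{+\infty} t^{-\alpha} S_R^{-1}(-t,T)\,dt,
\]
where the product $(n-\alpha)\cdots(1-\alpha)$ is interpreted as the empty product $1$ when $n=0$. This yields exactly \eqref{talpha}. The only subtlety is making sure that the step which required integration by parts in the proof of Theorem \ref{noveseiALLN} is vacuous here (zero integrations), and that the decay estimate \eqref{StripEstimate} with $n=1$ is already enough to make the integral $\int_0^\infty t^{-\alpha}\|S_R^{-1}(-t,T)\|\,dt$ converge for $\alpha\in(0,1)$, so Lebesgue's dominated convergence theorem can be used directly when passing $a\to 0$ and $\theta\to\pi$.

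If a self-contained argument is preferred over citing the theorem, I would unfold the short version. Starting from Definition \ref{FracPow} with $U = \hh\setminus\csector{\theta}{a}$ for $a\in(0,a_0)$ and $\theta\in(\theta_0,\pi)$, parametrize $\partial(U\cap\C_I)$ by the two rays $t\mapsto a+te^{\pm I\theta}$ to obtain
\[
T^{-\alpha} = \frac{I}{2\pi}\int_0^{+\infty}(a+te^{I\theta})^{-\alpha} e^{I\theta}\, S_R^{-1}(a+te^{I\theta},T)\,dt - \frac{I}{2\pi}\int_0^{+\infty}(a+te^{-I\theta})^{-\alpha} e^{-I\theta}\, S_R^{-1}(a+te^{-I\theta},T)\,dt.
\]
Invoking \eqref{StripEstimate} with $n=1$, the dominating function $C(1+t^{-\alpha})\mathbf{1}_{t\leq 1} + Ct^{-\alpha-1}\mathbf{1}_{t>1}$ is integrable on $(0,+\infty)$ for $\alpha\in(0,1)$, so Lebesgue's dominated convergence theorem applies as $a\to 0$ and then as $\theta\to\pi$.

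After passing to the limit, $(te^{\pm I\theta})^{-\alpha} \to t^{-\alpha} e^{\mp I\pi\alpha}$, $e^{\pm I\theta}\to -1$, and $S_R^{-1}(a+te^{\pm I\theta},T)\to S_R^{-1}(-t,T)$, giving
\[
T^{-\alpha} = \frac{-Ie^{-I\pi\alpha}+Ie^{I\pi\alpha}}{2\pi}\int_0^{+\infty} t^{-\alpha} S_R^{-1}(-t,T)\,dt.
\]
Since $I(e^{I\pi\alpha}-e^{-I\pi\alpha}) = -2\sin(\alpha\pi)$, this reduces to \eqref{talpha}.

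The main potential obstacle is just the bookkeeping around the empty product for $n=0$ and the justification that the dominated convergence argument goes through with no integration by parts; both are straightforward once \eqref{StripEstimate} is in hand.
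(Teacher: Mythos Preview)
Your proposal is correct and matches the paper's intent: the corollary is stated without proof as the $n=0$ specialization of Theorem~\ref{noveseiALLN}, and your self-contained computation is exactly that proof with the integration-by-parts step omitted. The only caveat is that Theorem~\ref{noveseiALLN} is formally stated for $n\in\N$ (and the paper's convention, visible e.g.\ in Definition~\ref{CauchySlicePow} where $\N_0$ is used separately, suggests $0\notin\N$), so a bare citation with $n=0$ is not quite licit; your second, unfolded argument is therefore the right thing to record, and it is precisely the paper's own derivation in the $n=0$ case.
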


\begin{corollary}\label{CyBound}
For $\alpha\in(0,n+1)$, the operators $T^{-\alpha}$ are uniformly bounded by the constant $M_{n+1}$ obtained from Lemma~\ref{StripCy}.
\end{corollary}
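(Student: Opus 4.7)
The plan is to bound $T^{-\alpha}$ directly by substituting the integral representation from Theorem~\ref{noveseiALLN} together with the resolvent estimate \eqref{StripEstimate} of Lemma~\ref{StripCy}, and then to recognize that the scalar factor produced collapses to exactly $M_{n+1}$ by virtue of the Beta-function identity \eqref{ScalInt} already exploited in the proof of Corollary~\ref{Ialpha}.

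First I would handle the generic case $\alpha\in(0,n+1)\setminus\mathbb{N}$. Applying Theorem~\ref{noveseiALLN} and estimating the integrand using $\|S_R^{-(n+1)}(-t,T)\|\leq M_{n+1}/(1+t)^{n+1}$ gives
\[
\|T^{-\alpha}\|\leq \frac{|\sin(\alpha\pi)|}{\pi}\,\frac{n!}{|(n-\alpha)\cdots(1-\alpha)|}\,M_{n+1}\int_{0}^{+\infty}\frac{t^{n-\alpha}}{(1+t)^{n+1}}\,dt.
\]
The main (and in fact only) observation is that the remaining scalar quantity evaluates to $1$: by \eqref{ScalInt},
\[
\int_{0}^{+\infty}\frac{t^{n-\alpha}}{(1+t)^{n+1}}\,dt=\frac{(n-\alpha)\cdots(1-\alpha)}{n!}\,\frac{\pi}{\sin(\pi\alpha)},
\]
and a quick sign check (if $\alpha\in(k,k+1)$ with $0\leq k\leq n$, then both $(1-\alpha)\cdots(n-\alpha)$ and $\sin(\pi\alpha)$ carry sign $(-1)^{k}$) shows that the prefactors combine in absolute value to yield exactly $1$. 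Hence $\|T^{-\alpha}\|\leq M_{n+1}$.

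It remains to cover the integer values $\alpha\in\{1,\dots,n\}$, which are excluded from the representation \eqref{talphaALLN}. Here I would argue by continuity: the formula \eqref{tialpfa} defining $T^{-\alpha}$ depends on $\alpha$ continuously in the operator norm (indeed, for any fixed admissible path $\Gamma$ the integrand is bounded by an $\alpha$-uniformly integrable majorant on compact $\alpha$-intervals thanks to \eqref{StripEstimate}). Since the bound $M_{n+1}$ does not depend on $\alpha$, letting $\alpha$ approach an integer from the complement of $\mathbb{N}$ preserves the estimate and yields $\|T^{-\alpha}\|\leq M_{n+1}$ for all $\alpha\in(0,n+1)$.

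I do not expect any real obstacle here; the only delicate point is the sign bookkeeping in the Beta-function step, which is precisely the same observation that made $\id^{-\alpha}=\id$ in Corollary~\ref{Ialpha}. This is no coincidence: the extremal equality case of the bound is attained (up to the factor $M_{n+1}$) by the identity operator, which is why the constant $M_{n+1}$ is sharp within this argument.
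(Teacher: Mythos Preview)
Your argument is correct and follows essentially the same route as the paper: estimate the integrand in \eqref{talphaALLN} by \eqref{StripEstimate} and collapse the scalar factor using the Beta-function identity \eqref{ScalInt}. Your version is in fact more careful than the paper's, which writes the bound without absolute values and does not separately address the integer values $\alpha\in\{1,\dots,n\}$; your sign check and continuity argument fill those small gaps.
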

\begin{proof}
From \eqref{talphaALLN}, Lemma~\ref{StripCy} and \eqref{ScalInt}, we obtain the estimate
\[\|T^{-\alpha}\|\leq \frac{\sin(\alpha\pi)}{\pi}\frac{n!}{(n-\alpha)\cdots(1-\alpha)}\int_{0}^{+\infty}t^{n-\alpha}\frac{M_{n+1}}{(1+t)^{n+1}}\,dt
= M_{n+1}.
\]
\end{proof}

\begin{corollary}
Assume that $\sigma_S(T)\subset\{s \in\hh: \Re(s)>0\}$ and that $\theta_0$ in Lemma~\ref{StripCy} can be chosen lower or equal to $\pi/2$. For $\alpha\in(0,1)$, then
\[T^{-\alpha} = \frac{1}{\pi}\int_{0}^\infty \tau^{-\alpha} \left(\cos\left(\frac{\alpha\pi}{2}\right)T +\sin\left(\frac{\alpha\pi}{2}\right)\tau\id\right)(T^2 + \tau^2)^{-1}\, d\tau.\]
\end{corollary}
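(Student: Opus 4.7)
The plan is to specialize the contour $\Gamma$ in Definition~\ref{FracPow} to the imaginary axis of $\C_I$, i.e.\ to take $\theta=\pi/2$. The hypothesis $\sigma_S(T)\subset\{\Re(s)>0\}$ together with $\theta_0\leq\pi/2$ is exactly what is needed for this contour to be admissible: if $\theta_0<\pi/2$ one sets $\theta=\pi/2$ directly, and if $\theta_0=\pi/2$ one approaches from above $\theta\downarrow\pi/2$ and passes the limit inside the integral via dominated convergence, with the estimate \eqref{StripEstimate} of Lemma~\ref{StripCy} providing an integrable majorant. By Theorem~\ref{theoindep} the resulting operator is the same $T^{-\alpha}$.

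The calculation then repeats the first steps of the proof of Theorem~\ref{noveseiALLN} with $n=0$ and $a\to 0$, but without letting $\theta\to\pi$. This yields
\[
T^{-\alpha}=\frac{I}{2\pi}\int_0^{+\infty}u^{-\alpha}e^{-I\alpha\theta}e^{I\theta}S_R^{-1}(ue^{I\theta},T)\,du-\frac{I}{2\pi}\int_0^{+\infty}t^{-\alpha}e^{I\alpha\theta}e^{-I\theta}S_R^{-1}(te^{-I\theta},T)\,dt,
\]
into which I would substitute $\theta=\pi/2$. Because $\Re(\pm tI)=0$ and $|\pm tI|^2=t^2$, one has $\Q{\pm tI}{T}=T^2+t^2\id$, so that $S_R^{-1}(\pm tI,T)$ is expressed explicitly in terms of $T$, $tI$, and $(T^2+t^2\id)^{-1}$.

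Setting $\beta:=\alpha\pi/2$ and expanding $e^{\pm I\beta}=\cos\beta\pm I\sin\beta$, the outer factors of the form $I\cdot e^{\pm I\beta}\cdot(\pm I)$ simplify via $I^2=-1$, and the two integrands may then be combined. After grouping, the contributions that carry a leftover factor of $I$ cancel between the upper and lower halves of the contour; this cancellation is forced by the $I$-independence of $T^{-\alpha}$ guaranteed by Theorem~\ref{theoindep}. What survives is precisely the real-coefficient combination
\[
\frac{1}{\pi}\int_0^{+\infty}\tau^{-\alpha}\bigl(\cos(\alpha\pi/2)\,T+\sin(\alpha\pi/2)\,\tau\,\id\bigr)(T^2+\tau^2\id)^{-1}\,d\tau,
\]
which is the stated formula, the range $\alpha\in(0,1)$ ensuring integrability of each piece at $0$ and at $\infty$.

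The main technical difficulty is neither convergence (handled by Lemma~\ref{StripCy}) nor the contour deformation (handled by Theorem~\ref{theoindep}), but rather the careful bookkeeping of non-commuting quaternionic products: since $T$ need not commute with multiplication by $I$, one must be precise about whether factors of $I$ sit on the left or on the right of $T$ and of $(T^2+\tau^2\id)^{-1}$. Verifying that the odd-in-$I$ parts of the combined integrand do indeed cancel, so that only the real combination $\cos(\alpha\pi/2)\,T+\sin(\alpha\pi/2)\,\tau\,\id$ remains in front of $(T^2+\tau^2\id)^{-1}$, is the decisive algebraic step.
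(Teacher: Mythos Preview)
Your proposal is correct and follows essentially the same route as the paper: specialize the intermediate formula \eqref{AnyAngle} from the proof of Theorem~\ref{noveseiALLN} to $n=0$ and $\theta=\pi/2$, insert the explicit form $S_R^{-1}(\pm tI,T)=-(T\pm tI\id)(T^2+t^2\id)^{-1}$, and simplify. The paper carries out the algebraic simplification directly rather than appealing to $I$-independence, but the computation is exactly the one you describe, and your caution about the placement of the factor $I$ relative to $T$ is well taken---here all scalar factors involving $I$ end up on the left of $T$ and combine to a real coefficient, so no genuine non-commutativity obstruction arises.
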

\begin{proof}
By our assumptions, we can choose $n=0$ and $\theta = \pi/2$ in $\eqref{AnyAngle}$. Since $e^{I\frac{\pi}{2}} = I$ and $e^{-I\frac{\pi}{2}} = -I$, we then have
\[T^{-\alpha} = \frac{I}{2\pi}\int_0^{\infty} t^{-\alpha}e^{-I\frac{\alpha-1}{2}\pi} S_R^{-1}(It,T)\,dt - \frac{I}{2\pi}\int_0^{\infty}  t^{-\alpha}e^{I\frac{\alpha-1}{2}\pi} S_R^{-1}( -It,T)\,dt.\]
Observe that
$$
S_R^{-1}(\pm t I,T) = -(T\pm t I\id)(T^2 + t^2)^{-1}.
$$
 Thus,
\begin{align*}
T^{-\alpha} = \frac{I}{2\pi}\int_{0}^\infty t ^{-\alpha} \left(-e^{-I\frac{\alpha-1}{2}\pi} (T + t  I\id)+ e^{I\frac{\alpha-1}{2}\pi} (T - t  I\id)\right)(T^2 + t ^2)^{-1}\, dt .
\end{align*}
Some easy simplifications show
\[-e^{-I\frac{\alpha-1}{2}\pi} (T + t  I\id)+ e^{I\frac{\alpha-1}{2}\pi} (T - t  I\id)= -2I\left[\cos\left(\frac{\alpha\pi}{2}\right)T +2\sin\left(\frac{\alpha\pi}{2}\right)t \id\right],\]
and in turn
\[T^{-\alpha} = \frac{1}{\pi}\int_{0}^\infty t ^{-\alpha} \left(\cos\left(\frac{\alpha\pi}{2}\right)T +\sin\left(\frac{\alpha\pi}{2}\right)t \id\right)(T^2 + t ^2)^{-1}\, dt .\]

\end{proof}

Observe that $s\mapsto s^{-\alpha}$ is both right and left slice hyperholomorphic. Hence, we could also use the left $S$-resolvent operator to define fractional powers of $T$. Indeed, this yields exactly the same operator.

\begin{Pn}\label{LeFT}
Let $\alpha>0$ and let $\Gamma$ be an admissible path as in Definition~\ref{FracPow}. The operator $T^{-\alpha}$ satisfies
\begin{equation}\label{TAlphaSL}
 T^{-\alpha} = \frac{1}{2\pi} \int_{\Gamma} S_L^{-1}(s,T)\,ds_I\,s^{-\alpha}.
\end{equation}
\end{Pn}
\begin{proof}
Computations analogue to those in the proof of Theorem~\ref{noveseiALLN} show that, for $n\in\N$ and $\alpha\in(0,n+1)$ with $\alpha\notin\N$, one has
\begin{align*}
&\frac{1}{2\pi} \int_{\Gamma} S_L^{-1}(s,T)\,ds_I\,s^{-\alpha}\\
=&(-1)^{n+1}\frac{\sin(\alpha\pi)}{\pi}\frac{n!}{(n-\alpha)\cdots(1-\alpha)}\int_{0}^{+\infty}S_L^{-(n+1)}(-t,T)t^{n-\alpha}\,dt.\end{align*}
But for real $t$ one has $S_L^{-1}(-t,T) = (-t-T)^{-1} = S_R^{-1}(-t,T)$, and in turn this integral equals
\[(-1)^{n+1}\frac{\sin(\alpha\pi)}{\pi}\frac{n!}{(n-\alpha)\cdots(1-\alpha)}\int_{0}^{+\infty}t^{n-\alpha}S_R^{-(n+1)}(-t,T)\,dt = T^{-\alpha},\]
where the last equation follows from Theorem~\ref{noveseiALLN}.

If $\alpha\in\N$, then this follows immediately from the $S$-functional calculus and Theorem~\ref{SCalcInt} because $s^{-\alpha}$ is left and right slice hyperholomorphic at infinity.

\end{proof}

We recall the following lemma from \cite[Lemma~3.23]{acgs}.
\begin{lemma}\label{Lemma321} Let $B\in \mathcal{B}(V)$. Let $G$ be a bounded axially symmetric s-domain and assume that
$f\in \mathcal{N}(G)$.
Then, for $p\in G$, we have
$$
\frac{1}{2\pi}\int_{\partial(G\cap\mathbb{C}_I)}f(s)ds_I
(\overline{s}B-Bp)(p^2-2s_0p+|s|^2)^{-1}=Bf(p).
$$
\end{lemma}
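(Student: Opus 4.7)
The plan is to establish the identity first at real points $p_0 \in G \cap \R$ by a direct calculation, and then extend to all $p \in G$ via the identity principle for operator-valued slice hyperholomorphic functions. Such a $p_0$ exists since $G$ is a slice domain. Because $p_0$ is real it commutes with $B$, so $\bar s B - B p_0 = (\bar s - p_0) B$ as operators; since $Q_s(p_0) = p_0^2 - 2 s_0 p_0 + |s|^2$ is real it commutes with every bounded operator, and hence
\[
(\bar s B - B p_0)(p_0^2 - 2 s_0 p_0 + |s|^2)^{-1} = S_R^{-1}(s, p_0)\, B,
\]
where the scalar $S_R^{-1}(s, p_0) = (\bar s - p_0) Q_s(p_0)^{-1} \in \C_I$ acts as a left-multiplier of $B$. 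Pulling this $\C_I$-valued scalar integral past $B$ and invoking Cauchy's formula (Theorem~\ref{Cauchy}) for the intrinsic function $f$ gives that the left-hand side equals $f(p_0)\, B$. On the other hand, intrinsicity of $f$ together with reality of $p_0$ force $f(p_0) \in \R$, so $B f(p_0) = f(p_0) B$, and the identity holds at real points.

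Next I would verify that both sides are right slice hyperholomorphic $\boundOP(V)$-valued functions of $p$ on $G$. The right-hand side $p \mapsto B f(p)$ inherits right slice hyperholomorphy from $f$. For the left-hand side, fix $s \in \partial(G \cap \C_I)$; the map $p \mapsto Bp$ is right slice hyperholomorphic with values in $\boundOP(V)$, while $p \mapsto Q_s(p)^{-1}$ is intrinsic slice hyperholomorphic on $\H \setminus [s] \supset G$ (as the reciprocal of a real-coefficient quadratic polynomial in $p$). Consequently $(\bar s B - B p) Q_s(p)^{-1}$ is right slice hyperholomorphic in $p$ by the operator-valued analogue of Corollary~\ref{lkjsWW}, and left-multiplication by the constant $\C_I$-scalar $f(s)\, ds_I$ preserves this property. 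Standard arguments (differentiation under the integral, or uniform approximation by Riemann sums) then show that the integral over the compact contour $\partial(G \cap \C_I)$ defines a right slice hyperholomorphic $\boundOP(V)$-valued function of $p$ on $G$.

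Finally, I would apply the identity principle for operator-valued slice hyperholomorphic functions (Theorem~\ref{IDPOP}) to the difference of the two sides. This difference is right slice hyperholomorphic on $G$ and vanishes on $G \cap \R$, a set having an accumulation point in $G \cap \C_I$ for every $I \in \S$; hence it vanishes identically on $G$, proving the lemma. The main technical point is to establish the slice hyperholomorphy of the left-hand side in $p$; the rest reduces to the direct computation at real $p_0$ and Cauchy's formula.
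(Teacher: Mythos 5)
The paper does not prove this lemma itself; it simply cites it from \cite[Lemma~3.23]{acgs}, so there is no internal proof here to compare against. Your argument is correct and self-contained. At real $p_0\in G\cap\R$, the operator $B$ commutes with the real scalars $p_0$ and $Q_{s}(p_0)^{-1}$, so the integrand collapses to $f(s)\,ds_I\,S_R^{-1}(s,p_0)B$; Cauchy's integral formula together with the reality of $f(p_0)$ (which follows from intrinsicity) then gives $Bf(p_0)$. For the extension step, you correctly identify the key technical obligation: that the integral defines a right slice hyperholomorphic $\boundOP(V)$-valued function of $p$. This follows because, for each fixed $s$ on the contour, $p\mapsto\overline{s}B-Bp$ is right slice hyperholomorphic, $p\mapsto Q_s(p)^{-1}$ is intrinsic, their product is right slice hyperholomorphic, and left multiplication by the constant quaternion $f(s)\,ds_I$ preserves right slice hyperholomorphy; the identity principle (Theorem~\ref{IDPOP}) then finishes the proof. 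The one point worth making explicit is that the operator-valued analogue of Corollary~\ref{lkjsWW} invoked in the middle step is not stated in the paper, though it follows by the identical computation as in the scalar case because $Q_s(p)^{-1}$ takes values in $\C_I$ on $\C_I$ and so commutes with the imaginary unit in the Cauchy--Riemann operator.
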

\begin{theorem}\label{SemiGroup}
The family $\{T^{-\alpha}\}_{\alpha > 0 }$ has the semigroup property $T^{-\alpha}T^{-\beta}=T^{-\alpha-\beta}$.
\end{theorem}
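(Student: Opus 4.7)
The plan is to represent $T^{-\alpha}$ via the right $S$-resolvent and $T^{-\beta}$ via the left $S$-resolvent (Proposition~\ref{LeFT}) and then to apply the $S$-resolvent equation \eqref{resEQ} to the product $S_R^{-1}(s,T)S_L^{-1}(p,T)$ that appears in the resulting double integral. Since both representations are independent of the path and the imaginary unit (Theorem~\ref{theoindep}), I fix a single $I\in\S$ and pick two admissible contours $\Gamma_s,\Gamma_p\subset\csector{\theta_0}{a_0}\cap\C_I$ as in Definition~\ref{FracPow}, with angles $\theta_0<\theta_p<\theta_s<\pi$ (and a common base point), so that $\Gamma_s$ lies strictly inside the sector $\csector{\theta_p}{a_0}$ bounded by $\Gamma_p$. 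With this arrangement $s$ and $\overline{s}$ are always on the branch-cut side of $\Gamma_p$, while $p$ is always on the side of $\Gamma_s$ on which $s\mapsto s^{-\alpha}$ is slice hyperholomorphic.

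Inserting the $S$-resolvent equation into
\begin{equation*}
T^{-\alpha}T^{-\beta}=\frac{1}{(2\pi)^2}\int_{\Gamma_s}\int_{\Gamma_p}s^{-\alpha}\,ds_I\,S_R^{-1}(s,T)S_L^{-1}(p,T)\,dp_I\,p^{-\beta}
\end{equation*}
and collecting the terms in which only $s$, resp.\ only $p$, appears in the operator factor, I obtain $T^{-\alpha}T^{-\beta}=I_A+I_B$ with
\begin{equation*}
I_A=\frac{1}{(2\pi)^2}\int_{\Gamma_s}s^{-\alpha}\,ds_I\int_{\Gamma_p}\bigl[S_R^{-1}(s,T)p-\overline{s}\,S_R^{-1}(s,T)\bigr](p^2-2s_0p+|s|^2)^{-1}\,dp_I\,p^{-\beta}
\end{equation*}
and
\begin{equation*}
I_B=\frac{1}{(2\pi)^2}\int_{\Gamma_s}s^{-\alpha}\,ds_I\int_{\Gamma_p}\bigl[\overline{s}\,S_L^{-1}(p,T)-S_L^{-1}(p,T)p\bigr](p^2-2s_0p+|s|^2)^{-1}\,dp_I\,p^{-\beta}.
\end{equation*}

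For $I_A$, I factor the $s$-depending operator out of the inner $p$-integral, reducing it to the two scalar integrals $\int_{\Gamma_p}p^{1-\beta}((p-s)(p-\overline{s}))^{-1}\,dp_I$ and $\int_{\Gamma_p}p^{-\beta}((p-s)(p-\overline{s}))^{-1}\,dp_I$, valid in $\C_I$ via the factorization $p^2-2s_0p+|s|^2=(p-s)(p-\overline{s})$. Every singularity in $p$ of each integrand (namely $0$, the cut $(-\infty,0]$, and the points $s,\overline{s}\in\Gamma_s$) lies in $\csector{\theta_p}{a_0}$, so the integrands are slice hyperholomorphic on the opposite side of $\Gamma_p$. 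Closing $\Gamma_p$ by a large arc on that opposite side, whose contribution $O(R^{-\beta})$ vanishes as $R\to\infty$, Theorem~\ref{COP} (Cauchy's integral theorem) forces both scalar integrals, and therefore $I_A$, to vanish.

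For $I_B$, I first invoke Fubini to exchange the order of integration; the required absolute integrability on $\Gamma_s\times\Gamma_p$ follows from the positive separation of the two contours and the decay bounds of Lemma~\ref{StripCy} in exactly the same manner as in Appendix~\ref{Fubini1}. The inner integral in $s$ then matches the integrand of Lemma~\ref{Lemma321} with $B=S_L^{-1}(p,T)$ and the intrinsic function $f(s)=s^{-\alpha}$. To accommodate the unbounded contour $\Gamma_s$, I apply that lemma to the bounded axially symmetric s-domain $G_R=(\H\setminus\csector{\theta_s}{a_0})\cap B(R,0)$, whose boundary in $\C_I$ is the truncated $\Gamma_s$ together with a large arc of radius $R$; since $p\in G_R$ for $R$ large enough and the arc contribution is $O(R^{-\alpha})$, letting $R\to\infty$ yields $S_L^{-1}(p,T)\,p^{-\alpha}$ for the inner integral. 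Inserting this back into $I_B$ and using that $p^{-\alpha}$, $dp_I$ and $p^{-\beta}$ all lie in $\C_I$ and therefore commute,
\begin{equation*}
I_B=\frac{1}{2\pi}\int_{\Gamma_p}S_L^{-1}(p,T)\,dp_I\,p^{-(\alpha+\beta)}=T^{-(\alpha+\beta)}
\end{equation*}
by Proposition~\ref{LeFT}, and combined with $I_A=0$ this proves the semigroup property. The main technical obstacle will be justifying Fubini on the product of the two unbounded hook contours together with the limiting argument that extends Lemma~\ref{Lemma321} from its bounded setting to $\Gamma_s$; both rely on the same type of tail estimate built from Lemma~\ref{StripCy} and the decay of the scalar weights $s^{-\alpha}$ and $p^{-\beta}$.
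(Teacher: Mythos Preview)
Your proposal is correct and follows essentially the same strategy as the paper: represent $T^{-\alpha}$ with $S_R^{-1}$ and $T^{-\beta}$ with $S_L^{-1}$, apply the $S$-resolvent equation, kill the two $S_R^{-1}(s,T)$-terms via Cauchy's theorem in the $p$-variable, swap the order of integration in the remaining two terms and apply Lemma~\ref{Lemma321}. A few small points where the paper is more specific: it takes nested contours $\partial(G_s\cap\C_I)$ and $\partial(G_p\cap\C_I)$ with two different inner radii $a_s<a_p$ (not a common base point) and requires $\theta_p>\pi/2$, which is used in the dedicated Fubini estimate of Appendix~\ref{Fubini2} (not Appendix~\ref{Fubini1}); your truncation argument extending Lemma~\ref{Lemma321} to the unbounded contour is in fact more explicit than the paper's, which invokes the lemma directly.
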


\begin{proof}
Choose $\theta_p$ and $\theta_s$ such that $\max\{\theta_0,\pi/2\} < \theta_p <\theta_s < \pi$ and $a_p$ and $a_s$ such that $0 < a_s < a_p< a_0$, where $a_0$ and $\theta_0$ are the constants obtained from Lemma~\ref{StripCy} and $a_p$ is sufficiently small such that $\overline{B_{a_p}(0)}\subset\osector{\theta_0}{a_0}$. Then the sets
\[G_p = \H\setminus\left(\csector{\theta_p}{0}\cup \overline{B_{a_p}(0)}\right)\quad\text{ and }\quad G_s=\H\setminus\left(\csector{\theta_s}{0}\cup\overline{ B_{a_s}(0)}\right)\]
 satisfy $\sigma_S(T)\subset G_p$ and $\overline{G_p}\subset\ G_s$ and for $I\in \mathbb{S}$ their boundaries $\partial(G_p\cap\C_I)$ and $\partial(G_s\cap\C_I)$ are admissible paths as in Definition~\ref{FracPow}. The subscripts $p$ and $s$ refer again to the respective variables of integration in the following calculation.

The $S$-resolvent equation \eqref{resEQ} and Proposition~\ref{LeFT} imply
\begin{align*}
 T^{-\alpha} T^{-\beta}&=
 {{1}\over{(2\pi)^2 }} \int_{\partial (G_s\cap \mathbb{C}_I)} \  s^{-\alpha}\ ds_I \ S_R^{-1} (s,T)
  \int_{\partial (G_p\cap \mathbb{C}_I)} \ S_L^{-1} (p,T)\ dp_I \  p^{-\beta}
 \displaybreak[2]\\
&=\frac{1}{(2\pi)^2 }\int_{ \partial ( G_s \cap \mathbb{C}_I) } s^{-\alpha}\ ds_I \int_{ \partial ( G_p \cap \mathbb{C}_I) }
S_R^{-1}(s,T)p(p^2-2s_0p+|s|^2)^{-1}dp_I\  p^{-\beta}
 \\
&-\frac{1}{(2\pi)^2 }\int_{ \partial ( G_s \cap \mathbb{C}_I) } s^{-\alpha}\ ds_I \int_{ \partial ( G_p \cap \mathbb{C}_I) }
S_L^{-1}(p,T)p(p^2-2s_0p+|s|^2)^{-1}dp_I\  p^{-\beta}
\\
&
-\frac{1}{(2\pi)^2 }\int_{ \partial ( G_s \cap \mathbb{C}_I) } s^{-\alpha}\ ds_I
\int_{ \partial ( G_p \cap \mathbb{C}_I) }\overline{s}S_R^{-1}(s,T)(p^2-2s_0p+|s|^2)^{-1}
 dp_I\ p^{-\beta}
 \\
&
+\frac{1}{(2\pi)^2 }\int_{ \partial ( G_s \cap \mathbb{C}_I) } s^{-\alpha}\ ds_I
\int_{ \partial ( G_p \cap \mathbb{C}_I) }\overline{s}S_L^{-1}(p,T)(p^2-2s_0p+|s|^2)^{-1}
 dp_I\  p^{-\beta}.
 \end{align*}
But since the functions $p\mapsto p(p^2-2s_0p+|s|^2)^{-1}p^{-\beta}$ and $p\mapsto (p^2-2s_0p+|s|^2)^{-1}p^{-\beta}$ are holomorphic on an open set that contains $\overline{G_{p}\cap\C_I}$ and since they tend uniformly to zeros as $p\to\infty$ in $G_p$, Cauchy's integral theorem implies
$$
\frac{1}{(2\pi)^2 }\int_{ \partial ( G_s \cap \mathbb{C}_I) } s^{-\alpha}\ ds_I \int_{ \partial ( G_p \cap \mathbb{C}_I) }
S_R^{-1}(s,T)p(p^2-2s_0p+|s|^2)^{-1}dp_I\  p^{-\beta}=0
$$
and
$$
-\frac{1}{(2\pi)^2 }\int_{ \partial ( G_s \cap \mathbb{C}_I) } s^{-\alpha}\ ds_I
\int_{ \partial ( G_p \cap \mathbb{C}_I) }\overline{s}S_R^{-1}(s,T)(p^2-2s_0p+|s|^2)^{-1}
 dp_I\  p^{-\beta}=0.
$$
It follows that
\begin{equation}\label{anEquation}
\begin{split}
 &T^{-\alpha}\  T^{-\beta}\\
 =&-\frac{1}{(2\pi)^2 }\int_{ \partial ( G_s \cap \mathbb{C}_I) } s^{-\alpha}\ ds_I \int_{ \partial ( G_p \cap \mathbb{C}_I) }
S_L^{-1}(p,T)p(p^2-2s_0p+|s|^2)^{-1}dp_I\  p^{-\beta}
\\
&
+\frac{1}{(2\pi)^2 }\int_{ \partial ( G_s \cap \mathbb{C}_I) } s^{-\alpha}\ ds_I
\int_{ \partial ( G_p \cap \mathbb{C}_I) }\overline{s}S_L^{-1}(p,T)(p^2-2s_0p+|s|^2)^{-1}
 dp_I\  p^{-\beta}.
 \end{split}
\end{equation}
Quite technical estimates, which can be found in Appendix~\ref{Fubini2}, justify the application of Fubini's theorem in these integrals such that we can exchange the order of integration and obtain
\begin{multline*}
 T^{-\alpha}\  T^{-\beta}=
 \frac{1}{(2\pi)^2 }\int_{ \partial ( G_s \cap \mathbb{C}_I) } s^{-\alpha}\ ds_I \\
 \cdot \int_{ \partial ( G_p \cap \mathbb{C}_I) }
[\overline{s}S_L^{-1}(p,T)-S_L^{-1}(p,T)p](p^2-2s_0p+|s|^2)^{-1}dp_I\  p^{-\beta}.
 \end{multline*}

Using Lemma \ref{Lemma321} with $B = S_L^{-1}(p,T)$, we finally get
\begin{align*}
 T^{-\alpha}\  T^{-\beta}&=\frac{1}{2\pi } \int_{ \partial ( G_p \cap \mathbb{C}_I) }S_L^{-1}(p,T)dp_I \ p^{-\alpha}\  p^{-\beta}\\
 &=\frac{1}{2\pi } \int_{ \partial ( G_p \cap \mathbb{C}_I) }S_L^{-1}(p,T)dp_I \ p^{-\alpha-\beta}=T^{-\alpha-\beta}.
 \end{align*}

\end{proof}

\begin{lemma}
The semigroup $(T^{-\alpha})_{\alpha\geq 0}$ is strongly continuous.
\end{lemma}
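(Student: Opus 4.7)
The plan is to first establish strong right-continuity at $\alpha = 0$, i.e.\ $\lim_{\alpha\to 0^+} T^{-\alpha} v = v$ for every $v \in V$, and then to deduce continuity at every $\beta > 0$ via the semigroup property from Theorem~\ref{SemiGroup}. The key tools are the integral representation of Corollary~\ref{novesei}, the uniform bound from Corollary~\ref{CyBound}, and the dense range of $\dom(T)$.

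For the base case at $\alpha = 0^+$, I first work on the dense subspace $\dom(T)$. Using Corollary~\ref{novesei} together with the identity $S_R^{-1}(-t,T) = -(T + t\mathcal{I})^{-1}$ for $t>0$ (which is used in the proof of Proposition~\ref{LeFT}), I rewrite
\[
T^{-\alpha} = \frac{\sin(\alpha\pi)}{\pi}\int_0^{+\infty} t^{-\alpha}(T+t\mathcal{I})^{-1}\,dt, \qquad \alpha\in(0,1).
\]
Combined with the scalar identity $\frac{\sin(\alpha\pi)}{\pi}\int_0^{+\infty} t^{-\alpha}(1+t)^{-1}\,dt = 1$ (which underlies Corollary~\ref{Ialpha}), I obtain
\[
T^{-\alpha}v - v = \frac{\sin(\alpha\pi)}{\pi}\int_0^{+\infty} t^{-\alpha}\left[(T+t\mathcal{I})^{-1} - (1+t)^{-1}\mathcal{I}\right]v\,dt.
\]
For $v\in\dom(T)$, the resolvent-type identity $(T+t\mathcal{I})^{-1}v - (1+t)^{-1}v = (1+t)^{-1}(T+t\mathcal{I})^{-1}(\mathcal{I}-T)v$ together with the bound $\|(T+t\mathcal{I})^{-1}\| = \|S_R^{-1}(-t,T)\| \leq M/(1+t)$ from \eqref{ResEst} yields
\[
\|T^{-\alpha}v - v\|\leq \frac{\sin(\alpha\pi)}{\pi}\,M\,\|(\mathcal{I}-T)v\|\int_0^{+\infty}\frac{dt}{t^{\alpha}(1+t)^2}.
\]
The remaining integral equals $B(1-\alpha,1+\alpha) = \alpha\pi/\sin(\alpha\pi)$, so the whole right-hand side collapses to $\alpha M\|(\mathcal{I}-T)v\|$, which tends to $0$ as $\alpha\to 0^+$.

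To extend this convergence to all $v\in V$, I use that $\|T^{-\alpha}\|\leq M_1$ uniformly for $\alpha\in(0,1)$ by Corollary~\ref{CyBound} (applied with $n=0$) and that $\dom(T)$ is dense in $V$ by our standing assumption $T\in\closOP(V)$. A routine $3\varepsilon$-argument then gives $\lim_{\alpha\to 0^+} T^{-\alpha}v = v$ for every $v\in V$. For continuity at arbitrary $\beta>0$, consider a sequence $\alpha_n\to\beta$, which I may assume to lie in $[\beta/2, 3\beta/2]$ so that $\|T^{-\alpha_n}\|$ is uniformly bounded (again by Corollary~\ref{CyBound}). If $\alpha_n\geq\beta$, the semigroup property gives $T^{-\alpha_n}v = T^{-(\alpha_n-\beta)}T^{-\beta}v$ and right-continuity at $0$ applied to the vector $T^{-\beta}v$ yields $T^{-\alpha_n}v\to T^{-\beta}v$. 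If $\alpha_n<\beta$, the decomposition $T^{-\alpha_n}v - T^{-\beta}v = T^{-\alpha_n}\bigl(v - T^{-(\beta-\alpha_n)}v\bigr)$ combined with the uniform bound on $\|T^{-\alpha_n}\|$ and right-continuity at $0$ applied to $v$ gives the same conclusion.

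The main obstacle lies in the base case: finding a representation of $T^{-\alpha}v - v$ in which the factor $\sin(\alpha\pi)/\pi$ explicitly captures the smallness. The trick is to rewrite $v$ itself as the scalar integral $\frac{\sin(\alpha\pi)}{\pi}\int_0^{+\infty} t^{-\alpha}(1+t)^{-1}v\,dt$ (essentially the scalar case $\mathcal{I}^{-\alpha}=\mathcal{I}$ from Corollary~\ref{Ialpha}), which enables cancellation against the operator integral and produces an integrand that is dominated by an $\alpha$-uniformly integrable function on $(0,+\infty)$. Once this reduction is in place, the rest of the proof is a standard density-and-semigroup argument.
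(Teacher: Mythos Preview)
Your proof is correct and follows essentially the same route as the paper: both subtract the scalar identity $\id^{-\alpha}=\id$ from $T^{-\alpha}$, use the resolvent-type factorization $(T+t\id)^{-1}v-(1+t)^{-1}v=(1+t)^{-1}(T+t\id)^{-1}(\id-T)v$ on $\dom(T)$, and pass to general $v$ via the uniform bound of Corollary~\ref{CyBound} and density. The only cosmetic difference is that you evaluate the remaining integral exactly as $B(1-\alpha,1+\alpha)=\alpha\pi/\sin(\alpha\pi)$ to obtain the clean bound $\alpha M\|(\id-T)v\|$, whereas the paper merely notes that the integral stays uniformly bounded while $\sin(\alpha\pi)\to 0$; and you spell out the passage from continuity at $0$ to continuity at every $\beta>0$, which the paper simply invokes as a standard equivalence.
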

\begin{proof}
We first consider $v\in\dom(T)$. For $\alpha\in(0,1)$, we have
\[S_R^{-1}(t,T)v - S_R^{-1}(t,\id)v = S_R^{-1}(t,T)S_R^{-1}(t,\id)(Tv - \id v)\quad\text{if $t\in\R$.}\]
 Hence, we deduce from  Corollary~\ref{novesei} that
\begin{align*}T^{-\alpha}v - \id^{-\alpha}v &= -\frac{\sin(\alpha\pi)}{\pi}\int_0^\infty t^{-\alpha}S_R^{-1}(-t,T)v\,dt +\frac{\sin(\alpha\pi)}{\pi}\int_0^\infty t^{-\alpha}S_R^{-1}(-t,\id)v\,dt\\
& =- \frac{\sin(\alpha\pi)}{\pi}\int_0^\infty t^{-\alpha}S_R^{-1}(t,T)S_R^{-1}(t,\id)(Tv - \id v)\,dt \overset{\alpha\to 0}{\longrightarrow} 0
\end{align*}
because $\sin(\alpha\pi)\to 0$ as $\alpha\to 0$ and the integral is uniformly bounded for ${\alpha\in[0,1/2]}$ due to~\eqref{ResEst}. Since $\id^{-\alpha} = \id$ by Corollary~\ref{Ialpha}, we get $T^{-\alpha}v \to v$ as $\alpha\to 0$ for any $v\in\dom(T)$.

For arbitrary $v\in V$ and $\varepsilon >0$, there exists $v_\varepsilon\in\dom(T)$ with $\|v-v_\varepsilon\|<\varepsilon$ because $\dom(T)$ is dense in~$V$. Corollary~\ref{CyBound} therefore implies
\[
\begin{split}
\lim_{\alpha\to 0}\|Tv - v\| &\leq  \lim_{\alpha\to 0}\|Tv - T^{-\alpha}v_\varepsilon\| + \|T^{-\alpha} v_\varepsilon - v_\varepsilon\| + \| v_\varepsilon - v \|
\\
&
\leq (M_1+1)\|v- v_\varepsilon\|
\\
&
\leq (M_1+1)\varepsilon.
\end{split}
\]
Since $\varepsilon>0$ was arbitrary, we deduce that $T^{-\alpha}v \to v$ as $\alpha\to 0$ even for arbitrary $v\in V$. This is equivalent to the strong continuity of the semigroup $(T^{-\alpha})_{\alpha\geq 0}$.

\end{proof}

\begin{Pn}
The operator $T^{-\alpha}$ is injective for any $\alpha > 0$.
\end{Pn}
\begin{proof}
For $\alpha>0$ choose $\beta>0$ with $n=\alpha + \beta\in\N$. Then $T^{-\beta}T^{-\alpha}=T^{-n}$ and in turn $T^nT^{-\beta}T^{-\alpha} = \id$, which implies the injectivity of $T^{-\alpha}$.

\end{proof}
The previous proposition allows us to define powers of $T$ also for $\alpha>0$.

\begin{definition}\label{TPos}
For $\alpha>0$ we define the operator $T^{\alpha}$ as the inverse of the operator $T^{-\alpha}$, which is defined on $\dom(T^{\alpha})=\ran(T^{-\alpha})$.
\end{definition}

\begin{corollary}
Let $\alpha,\beta\in\rr$. Then the operators $T^{\alpha}T^{\beta}$ and $T^{\alpha+\beta}$ agree on $\dom(T^\gamma)$ with $\gamma = \max\{\alpha,\beta,\alpha+\beta\}$.
\end{corollary}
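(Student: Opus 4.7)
The plan is to distinguish cases according to the signs of $\alpha$ and $\beta$ and to reduce each case to an identity among bounded operators via Theorem~\ref{SemiGroup} and Definition~\ref{TPos}.

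If $\alpha,\beta\leq 0$, then $\gamma\leq 0$, so $\dom(T^\gamma)=V$ and the claim follows at once from Theorem~\ref{SemiGroup} applied to the bounded operators $T^\alpha$ and $T^\beta$; the degenerate exponent $0$ is handled by Corollary~\ref{Ialpha}. If $\alpha,\beta>0$, then $\gamma=\alpha+\beta$ and any $v\in\dom(T^{\alpha+\beta})=\ran(T^{-\alpha-\beta})$ may be written as $v=T^{-\alpha-\beta}w$ for some $w\in V$. I would then rewrite $T^{-\alpha-\beta}$ as $T^{-\alpha}T^{-\beta}$ and as $T^{-\beta}T^{-\alpha}$ via Theorem~\ref{SemiGroup} to locate $v$ in $\ran(T^{-\beta})=\dom(T^\beta)$ with $T^\beta v=T^{-\alpha}w\in\ran(T^{-\alpha})=\dom(T^\alpha)$; Definition~\ref{TPos} then yields $T^\alpha T^\beta v=w=T^{\alpha+\beta}v$.

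The real work is the mixed-sign case. By symmetry assume $\alpha>0>\beta$, so $\gamma=\alpha$. For $v\in\dom(T^\alpha)=\ran(T^{-\alpha})$ write $v=T^{-\alpha}w$. Since $T^\beta$ and $T^{-\alpha}$ are bounded, Theorem~\ref{SemiGroup} gives $T^\beta v=T^{\beta-\alpha}w=T^{-\alpha}(T^\beta w)\in\dom(T^\alpha)$, whence $T^\alpha T^\beta v=T^\beta w$. To match this with $T^{\alpha+\beta}v=T^{\alpha+\beta}T^{-\alpha}w$ one must split further: if $\alpha+\beta\leq 0$ the identity follows immediately from Theorem~\ref{SemiGroup} and Corollary~\ref{Ialpha}; if $\alpha+\beta>0$ one writes $T^{-\alpha}w=T^{-\alpha-\beta}(T^\beta w)$, which puts $T^{-\alpha}w$ into $\ran(T^{-\alpha-\beta})=\dom(T^{\alpha+\beta})$, and applies $T^{\alpha+\beta}=(T^{-\alpha-\beta})^{-1}$ to conclude $T^{\alpha+\beta}v=T^\beta w$.

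The main obstacle is not algebraic but bookkeeping: in each subcase one must check that every vector being acted on lies in the appropriate domain and that each rewriting of a semigroup identity in terms of inverses is justified. The injectivity of the bounded powers $T^{-\alpha}$, on which Definition~\ref{TPos} rests, ensures that no multi-valued ambiguity can arise in these manipulations.
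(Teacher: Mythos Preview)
Your proof is correct and follows the same strategy as the paper: reduce everything to the semigroup identity $T^{-\alpha}T^{-\beta}=T^{-(\alpha+\beta)}$ and the definition $T^{\alpha}=(T^{-\alpha})^{-1}$. The paper's own proof only spells out the case $\alpha,\beta\geq 0$ (via the one-line insertion $v=T^{-\beta}T^{-\alpha}T^{\alpha+\beta}v$ and cancellation) and dismisses the remaining cases with ``the other cases follow in a similar way''; your case analysis actually carries those out, so in that sense you have done more than the paper. One small remark: your ``by symmetry'' for the mixed-sign situation is not a literal symmetry of the statement, since $T^{\alpha}T^{\beta}$ is not a priori symmetric in $\alpha,\beta$; the case $\beta>0>\alpha$ needs its own (admittedly parallel and even easier) verification rather than a direct swap of roles.
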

\begin{proof}
If $\alpha,\beta\geq 0$ and $v\in \dom(T^{\alpha,\beta})$ then, since $T^{-(\alpha+\beta)} = T^{-\beta}T^{-\alpha}$ by Theorem~\ref{SemiGroup}, we have
\[T^{\alpha}T^{\beta} v = T^{\alpha}T^{\beta}(T^{-\beta}T^{-\alpha}T^{\alpha+\beta})v= (T^{\alpha}T^{\beta}T^{-\beta}T^{-\alpha})T^{\alpha+\beta}v = T^{\alpha+\beta}v.\]
The other cases follow in a similar way.

\end{proof}

With these definitions it is possible to establish a theory of interpolation spaces for strongly continuous quaternionic semigroups analogue to the one for complex operator semigroups. Since the proofs follow the lines of this classical case, we only state the main result and refer to
\cite[Chapter II]{EngelNagel} for an overview on the theory.

\begin{definition}
Let $(\U(t))_{t\geq 0}$ be a strongly continuous semigroup with growth bound $\omega_0<0$. For each $\alpha\in(0,1]$ we define the Favard space
\[F_\alpha:=\left\{v\in V: \sup_{t>0}\left\| \frac{1}{t^{\alpha}}(\U(t)v-v) \right\|<\infty \right\}\]
with the norm
\[ \|v\|_{F_{\alpha}} := \sup_{t>0}\left\| \frac{1}{t^{\alpha}}(\U(t)v-v) \right\|.\]
The subspace
\[X_\alpha:= \|v\|_{F_{\alpha}} := \sup_{t>0}\left\| \frac{1}{t^{\alpha}}(\U(t)v-v) \right\|\]  of $F_\alpha$ is called the abstract H\"{o}lder space of order $\alpha$.
\end{definition}

\begin{Pn}
Let $A$ be the generator of a strongly continuous semigroup $(\U(t))_{t\geq 0}$ with growth bound $w_0 <0$ and let $\alpha,\beta\in(0,1)$ such that $\alpha>\beta$. Then
\[X_{\alpha}\hookrightarrow\dom((-A)^{-\alpha})\hookrightarrow X_\beta,\]
where $\hookrightarrow$ denotes a continuous embedding.
\end{Pn}

We point out that in contrast to the classical case discussed in \cite{EngelNagel} the interpolation spaces must be defined using the powers of $-A$ instead of $A$. This is due to the following fact: in the complex setting one may choose a branch of the function $z\to z^{-\alpha}$ that is defined and holomorphic everywhere except for the positive real axis.  Since the spectrum of the operator is then contained in the domain of holomorphicity of $z^{-\alpha}$ by assumption, one can use this branch to define fractional powers of the operator and in turn the interpolation spaces. In the quaternionic setting this is however not possible because the logarithm and in turn the fractional powers $s\mapsto s^{-\alpha}$ are single-valued. They are defined and slice hyperholomorphic everywhere except for the negative real axis, which does not necessarily lie in $\rho_S(A)$ but in $\rho_S(-A)$ since $\Re(s) \leq w_0 < 0$ for all $s\in\sigma_S(T)$.

\section{Kato's formula and the generation of analytic semigroups}\label{KatoSect}
Kato showed in \cite{Kato} that certain fractional powers of generators of analytic semigroups are again generators of analytic semigroups. Analogue results can be shown for quaternionic linear operators, but therefore we need a modified definition of fractional powers of an operator.
\begin{definition}\label{DefiType}
A densely defined closed operator $T$ is of type $(M,\omega)$ with $M>0$ and $\omega\in(0,\pi)$ if
\begin{enumerate}[(i)]
\item the open sector $\osector{\omega}{0}$ is contained in the $S$-resolvent set of $T$ and $\|s S_R^{-1}(s,T)\|$ is uniformly bounded on any smaller sector $\osector{\theta}{0}$ with $\theta\in(\omega,\pi)$ and
\item $M$ is the uniform bound of $\|t S_R^{-1}(-t,T)\|$ on the negative real axis, that is
\begin{equation}\label{ResEst2}
\|tS_R^{-1}(t,T)\| \leq M, \qquad\text{for }t\in (-\infty,0).
\end{equation}
\end{enumerate}
\end{definition}
Note that this definition is different from the notion of sectorial operators used in \cite{EngelNagel}. Moreover, note that if \eqref{ResEst2} holds true then, as in Lemma~\ref{StripCy}, the power series expansion of the right $S$-resolvent implies the existence of a sector $\osector{\theta_0}{0}$ and a constant $M_1>0$ such that
$$
\|S_R^{-1}(s,T)\| \leq M_1/|s|\ \ \ \rm  for\ \ \ s\in\osector{\theta_0}{0}.
$$
 Hence, \eqref{ResEst2} is sufficient for $T$ to be an operator of type $(M,\omega)$ for some $\omega\in(0,\pi)$. In particular any operator that satisfies \eqref{ResEst} is of type $(M,\omega)$ with $\omega\leq\theta_0$, cf. Lemma~\ref{StripCy}.
\begin{Pn}\label{Kato}
Let $T$ be of type $(M,\omega)$ with $M>0$ and $\omega\in(0,\pi)$. Let $0<\alpha <1$ and let $\pi > \phi_0 > \max(\alpha\pi, \omega)$. The parameter integral
\begin{equation}\label{KatoEQ}
F_{\alpha}(p,T) = \frac{\sin(\alpha\pi)}{\pi}\int_0^{+\infty}t^{\alpha}(p^2-2pt^{\alpha}\cos(\alpha\pi)+t^{2\alpha})^{-1} S_R^{-1}(-t,T)\,dt.\end{equation}
defines a $\boundOP(V)$-valued function on $\osector{\phi_0}{0}$ in $p$ that is left slice hyperholomorphic.
\end{Pn}
\begin{proof}
For any compact subset $K$ of $\osector{\phi_0}{0}$, we have $\min_{p\in K}\arg(p)>\alpha\pi$ and thus there exists some $\delta_K > 0$ such that
\begin{equation}\label{DeltaK}
\left| p^2 - 2pt^{\alpha}\cos(\alpha\pi) + t^{2\alpha} \right| = \left| p-t^{\alpha}e^{I_p\alpha\pi}\right|\left| p-t^{\alpha}e^{-I_p\alpha\pi}\right| \geq \delta_K
\end{equation}
for $p\in K$ and $t\geq 0$. For the same reason, we can find a constant $C_K>0$ such that
\[
 \sup_{t\in[0,+\infty)\atop p\in K} \left| p^2 - 2pt^{\alpha}\cos(\alpha\pi) + t^{2\alpha} \right| ^{-1} t^{2\alpha}=  \sup_{t\in[0,+\infty)\atop p\in K}\frac{1}{\left| \frac{p}{t^{\alpha}}-e^{I_p\alpha\pi}\right|} \frac{1}{\left| \frac{p}{t^{\alpha}}-e^{-I_p\alpha\pi}
\right|} < C_K
\]
and hence
\begin{equation}\label{CK}
\left| p^2 - 2pt^{\alpha}\cos(\alpha\pi) + t^{2\alpha} \right|^{-1} \leq  C_K t^{-2\alpha} \quad t\in[1,\infty),\, p\in K.
\end{equation}

Now consider  $p\in\osector{\phi_0}{0}$ and let $K$ be a compact neighborhood of $p$. The integral in \eqref{KatoEQ} converges absolutely and hence defines a bounded operator: because of \eqref{ResEst2} and the above estimates, we have for $s\in K$, and thus in particular for $p$ itself, that
\begin{align*}
\left\| F_{\alpha}(s,T) \right\|\leq& \frac{\sin(\alpha\pi)}{\pi}\int_0^{+\infty}t^{\alpha}\left|s^2-2st^{\alpha}\cos(\alpha\pi)+t^{2\alpha}\right|^{-1} \frac{M}{t}\,dt \\
\leq& \frac{M\sin(\alpha\pi)}{\delta_K\pi}\int_0^{1}t^{\alpha-1}\,dt + \frac{M\sin(\alpha\pi)C_K}{\pi}\int_1^{+\infty}t^{-\alpha-1} \,dt < +\infty.
\end{align*}
Using \eqref{DeltaK} and \eqref{CK}, one can derive analogous estimates for the partial derivatives of the integrand $p\mapsto t^{\alpha}(p^2-2st^{\alpha}\cos(\alpha\pi)+t^{2\alpha})^{-1} S_R^{-1}(-t,T)$ with respect to $p_0$ and $p_1$.

Since these estimates are uniform on the neighborhood $K$ of $p$, we can exchange differentiation and integration in order to compute the partial derivatives $\frac{\partial}{\partial p_0}F_{\alpha}(p,T)$ and $\frac{\partial}{\partial p_1}F_{\alpha}(p,T)$ of $F_{\alpha}(\cdot,T)$ at $p$. The integrand is however left slice hyperholomorphic and therefore also $F_{\alpha}(p,T)$ is left slice hyperholomorphic.

\end{proof}

\begin{lemma}\label{SomeLemma213}
Let $T$ be of type $(M,\omega)$ with $M>0$, let $0<\alpha <1$ and $\omega\in(0,\pi)$ and assume that $0\in\rho_S(T)$. Moreover, let $\phi_0$ and $F_{\alpha}(p,T)$ be defined as in Proposition~\ref{Kato}. If $\Gamma$ is a piecewise smooth path that goes from $\infty e^{I\theta}$ to $\infty e^{-I\theta}$ in $(\osector{\phi_0}{0}\cap\C_I)\setminus(-\infty,0]$ for some $I\in\S$ and some $\theta\in(\phi_0,\pi]$, then
\begin{equation}\label{ShortInt7}
F_{\alpha}(p,T) = \frac{1}{2\pi}\int_{\Gamma} S_R^{-1}(p,s^{\alpha})\,ds_{I}\,S_R^{-1}(s,T).
\end{equation}
\end{lemma}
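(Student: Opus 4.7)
My approach is to deform the contour $\Gamma$ in \eqref{ShortInt7} to a Hankel-type contour collapsing onto the two sides of the cut $(-\infty,0]$; the boundary values of $s^\alpha$ on the two edges then produce the integrand of \eqref{KatoEQ}. To begin with, the integrand of \eqref{ShortInt7} is right-times-left slice hyperholomorphic in $s$ on $(\C_I\setminus(-\infty,0])\cap\rho_S(T)$: $S_R^{-1}(s,T)$ is left slice hyperholomorphic on $\rho_S(T)$ by Lemma~\ref{ResHol2342}, while $s\mapsto S_R^{-1}(p,s^\alpha)$ is right slice hyperholomorphic in $s$ as the composition of the right slice hyperholomorphic Cauchy kernel $x\mapsto S_R^{-1}(p,x)$ with the intrinsic function $s\mapsto s^\alpha$ (Corollary~\ref{lkjsWW}); the condition $\arg(p)>\phi_0>\alpha\pi$ guarantees that $[p]$ is disjoint from $\{s^\alpha : s\in\C_I\setminus(-\infty,0]\}$, so no extra singularity appears. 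By Cauchy's integral theorem (Theorem~\ref{COP}) the integral is therefore unchanged under free deformation of $\Gamma$ within this domain.

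Next, for parameters $a\in(0,a_0)$ and $\theta'\in(\phi_0,\pi)$, I would work with the concrete V-shape path $\Gamma_{a,\theta'}$ consisting of the two rays $s_\pm(r) = a + re^{\pm I\theta'}$, $r\in[0,\infty)$, oriented from $\infty e^{I\theta'}$ through the vertex $a$ to $\infty e^{-I\theta'}$. A direct parametrization gives
\[\int_{\Gamma_{a,\theta'}} S_R^{-1}(p,s^\alpha)\,ds_I\,S_R^{-1}(s,T) = \int_0^\infty \bigl[S_R^{-1}(p,s_+^\alpha)\,Ie^{I\theta'}\,S_R^{-1}(s_+,T) - S_R^{-1}(p,s_-^\alpha)\,Ie^{-I\theta'}\,S_R^{-1}(s_-,T)\bigr]dr.\]
Dominated convergence, with $a\to 0^+$ and $\theta'\to\pi^-$, then yields $s_\pm\to -r$, $s_\pm^\alpha\to x_\pm := r^\alpha e^{\pm I\alpha\pi}$ and $e^{\pm I\theta'}\to-1$, whence
\[\frac{1}{2\pi}\int_\Gamma S_R^{-1}(p,s^\alpha)\,ds_I\,S_R^{-1}(s,T) = \frac{1}{2\pi}\int_0^\infty\bigl[S_R^{-1}(p,x_-) - S_R^{-1}(p,x_+)\bigr]\,I\,S_R^{-1}(-r,T)\,dr.\]

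The proof concludes with the scalar identity
\[\bigl[S_R^{-1}(p,x_-) - S_R^{-1}(p,x_+)\bigr]\,I = 2r^\alpha\sin(\alpha\pi)\bigl(p^2 - 2pr^\alpha\cos(\alpha\pi) + r^{2\alpha}\bigr)^{-1},\]
which, inserted above, reproduces exactly the defining formula for $F_\alpha(p,T)$ in \eqref{KatoEQ}. Both sides are right slice hyperholomorphic functions of $p$ on the axially symmetric slice domain $\osector{\phi_0}{0}$ (the left side by slice hyperholomorphicity of the $S$-resolvent kernel in its first variable, the right side because the denominator is an intrinsic polynomial in $p$), so by the Identity Principle (Theorem~\ref{IDP}) it suffices to verify the identity for $p\in\C_I$. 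For such $p$, $S_R^{-1}(p,x)$ reduces to the classical Cauchy kernel $(p-x)^{-1}$, and the factorization $(p-x_-)(p-x_+) = p^2-2pr^\alpha\cos(\alpha\pi)+r^{2\alpha}$ in $\C_I$, combined with $x_- - x_+ = -2Ir^\alpha\sin(\alpha\pi)$ and $I^2=-1$, gives the identity by direct computation.

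The main obstacle is the dominated convergence step: one needs a uniform integrable majorant on $\Gamma_{a,\theta'}$ as $a\to 0^+$ and $\theta'\to\pi^-$. Control at infinity is provided by the sectorial estimate $\|S_R^{-1}(s,T)\|\leq M/|s|$ from Lemma~\ref{StripCy}, together with $|s_\pm^\alpha|\sim r^\alpha$, which makes the integrand $O(r^{-1-\alpha})$ for large $r$; control near $r=0$ rests on the standing assumption $0\in\rho_S(T)$ and the continuity of the pseudo-resolvent (Lemma~\ref{Q-TQ-cont}), which together keep $\|S_R^{-1}(s,T)\|$ uniformly bounded on a neighborhood of the origin, while $S_R^{-1}(p,s^\alpha)$ stays bounded as $s^\alpha\to 0$ since $\bar{p}/|p|^2$ is the finite limit.
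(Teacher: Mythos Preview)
Your proof is correct and follows essentially the same contour-collapse argument as the paper's: deform $\Gamma$ onto the two sides of the cut $(-\infty,0]$ and identify the boundary contribution with Kato's integrand. One minor slip: in your Identity Principle step, $p\mapsto S_R^{-1}(p,x)$ is \emph{left} slice hyperholomorphic in $p$ (since $S_R^{-1}(p,x)=-S_L^{-1}(x,p)$), not right; this is actually what makes right-multiplication by $I$ licit and the argument go through. The paper bypasses this detour entirely by writing $S_R^{-1}(p,x_\pm)=(p^2-2r^\alpha\cos(\alpha\pi)p+r^{2\alpha})^{-1}(p-\overline{x_\pm})$ directly, which yields your scalar identity for all $p$ in one line.
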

\begin{proof}
First of all observe that the function $s\mapsto S_R^{-1}(p,s^{\alpha})$ is the composition of the intrinsic function $s\mapsto s^{\alpha}$ defined on $\H\setminus(-\infty,0]$ and the right slice hyperholomorphic function $s\mapsto S_R^{-1}(p,s)$ defined on $\H\setminus[p]$. This composition is in particular well defined on all of $\H\setminus(-\infty,0]$, because $s^{\alpha}$ maps $\H\setminus(-\infty,0]$ to the set $\{s\in\H: \arg(s) < \alpha\pi\}$, which is contained in the domain of definition of $S_R^{-1}(p,s^{\alpha})$ because $\arg(p) > \phi_0 >\alpha \pi$ by assumption. By Corollary~\ref{lkjsWW}, the function  $s\mapsto S_R^{-1}(p,s^{\alpha})$ is therefore right slice hyperholomorphic on $\H\setminus(-\infty,0]$.

 An estimate similar to the one in the proof of Proposition~\ref{Kato} moreover assures that the integral in \eqref{ShortInt7} converges absolutely. It thus follows from Theorem~\ref{COP} that the value of the integral in \eqref{ShortInt7} is the same for any choice of $\Gamma$ in $(\osector{\phi_0}{0}\cap\C_I)\setminus(-\infty,0]$ and any choice of~$\theta$. Let us denote the value of this integral by $\mathfrak{I}_{\alpha}(p,T)$.

Since $0\in\rho_S(T)$, the open ball $B(\varepsilon,0)$ is contained in $\rho_S(T)$ if $\varepsilon>0$ is small enough. For ${\theta\in(\phi_0,\pi)}$, we set $U(\varepsilon,\theta)=\H\setminus(\csector{\theta}{0}\cup \overline{B(\varepsilon,0)})$. Then
\begin{equation*}
\mathfrak{I}_{\alpha}(p,T) = \frac{1}{2\pi}\int_{\partial(U(\varepsilon,\theta)\cap\C_ {I})} S_R^{-1}(p,s^{\alpha})\,ds_{I}\,S_R^{-1}(s,T).
\end{equation*}
We assumed that $0\in\rho_S(T)$, and hence the  right $S$-resolvent is bounded near $0$, which allows us to  take the limit $\varepsilon\to 0$. We obtain
 \begin{align*}\label{FSecInt}
 \mathfrak{J}_{\alpha}(p,T) =& \frac{1}{2\pi}\int_{-\partial(\osector{\theta}{0}\cap\C_ {I})} S_R^{-1}(p,s^{\alpha})\,ds_{I}\,S_R^{-1}(s,T)\\
 =&-\frac{1}{2\pi} \int_{0}^{+\infty} S_{R}^{-1}\left(p,t^\alpha e^{I\alpha\theta}\right)e^{I\theta}(-I)S_R^{-1}\left(te^{I\theta},T\right)\,dt \\
 &+ \frac{1}{2\pi}\int_{0}^{+\infty}S_R^{-1}\left(p,t^{\alpha}e^{-I\alpha\theta}\right)e^{-I\theta}(-I)S_R^{-1}\left(te^{I\theta},T\right)\,dt \displaybreak[1]\\
 =&-\frac{1}{2\pi} \int_{0}^{+\infty} \left(p^2 - 2t^{\alpha}\cos(\alpha\theta) + t^{2\alpha}\right)^{-1}\left(p-t^{\alpha}e^{-I\alpha\theta}\right) e^{I\theta}(-I)S_R^{-1}\left(te^{I\theta},T\right)\,dt \\
 &+ \frac{1}{2\pi}\int_{0}^{+\infty} \left(p^2 - 2t^{\alpha}\cos(\alpha\theta) + t^{2\alpha}\right)^{-1}\left(p-t^{\alpha}e^{I\alpha\theta}\right)  e^{-I\theta}(-I)S_R^{-1}\left(te^{-I\theta},T\right)\,dt.
 \end{align*}
Again an estimate analogue to the one in the proof of Proposition~\ref{Kato} allows us to take the limit as $\theta$ tends to $\pi$ and we obtain
\begin{align*}
\mathfrak{J}_{\alpha}(p,T)  =&-\frac{1}{2\pi} \int_{0}^{+\infty} \left(p^2 - 2t^{\alpha}\cos(\alpha\pi) + t^{2\alpha}\right)^{-1}\left(p-t^{\alpha}e^{-I\alpha\pi}\right) e^{I\pi}(-I)S_R^{-1}\left(te^{I\pi},T\right)\,dt \\
 &+ \frac{1}{2\pi}\int_{0}^{+\infty} \left(p^2 - 2t^{\alpha}\cos(\alpha\pi) + t^{2\alpha}\right)^{-1}\left(p-t^{\alpha}e^{I\alpha\pi}\right)  e^{-I\pi}(-I)S_R^{-1}\left(te^{-I\pi},T\right)\,dt\\
 =&  \frac{\sin(\alpha\pi)}{\pi}\int_0^{+\infty}t^{\alpha}(p^2-2pt^{\alpha}\cos(\alpha\pi)+t^{2\alpha})^{-1} S_R^{-1}(-t,T)\,dt = F_{\alpha}(p,T).
\end{align*}

\end{proof}

\begin{lemma}
Let $T$ be of type $(M,\omega)$ with $M>0$  and $\omega\in(0,\pi)$. Let $0<\alpha <1$ and let $\phi_0$ and  $F_{\alpha}(p,T)$ be defined as in Proposition~\ref{Kato}. We have
\begin{equation}\label{RealREQ}
F_{\alpha}(\mu, T) - F_{\alpha}(\lambda,T) = (\lambda-\mu)F_{\alpha}(\mu, T)F_{\alpha}(\lambda,T) \quad\text{for }\lambda,\mu\in(-\infty,0].
\end{equation}
\end{lemma}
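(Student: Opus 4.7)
The plan is to mirror the strategy used for the semigroup property in Theorem~\ref{SemiGroup}: represent $F_\alpha(\cdot,T)$ as a Cauchy-type $S$-functional integral and then exploit the $S$-resolvent equation~\eqref{resEQ} together with Cauchy's theorem. I would work throughout under the assumption $0\in\rho_S(T)$, which activates Lemma~\ref{SomeLemma213}; the general case should then follow by shifting $T\mapsto T+\varepsilon\id$, verifying that the shifted operator is still of type $(M',\omega')$ with uniform constants, and passing to the limit $\varepsilon\to 0$ using the continuity of $F_\alpha(p,\cdot)$.

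The key observation unlocking the argument is that for real $\lambda\in(-\infty,0]$ the kernel $s\mapsto S_R^{-1}(\lambda,s^\alpha)=(\lambda-s^\alpha)^{-1}$ is intrinsic on $\H\setminus(-\infty,0]$ (since $\lambda$ is real and $s\mapsto s^\alpha$ is intrinsic) and decays at infinity. Hence, by an argument parallel to Proposition~\ref{LeFT}, one obtains the left-resolvent representation
\[
F_\alpha(\lambda,T) \;=\; \frac{1}{2\pi}\int_{\Gamma}(\lambda-s^\alpha)^{-1}\,ds_I\,S_R^{-1}(s,T) \;=\; \frac{1}{2\pi}\int_{\Gamma} S_L^{-1}(s,T)\,ds_I\,(\lambda-s^\alpha)^{-1}.
\]

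With both representations in hand I would write $F_\alpha(\mu,T)F_\alpha(\lambda,T)$ as a double integral on nested contours $\partial(G_p\cap\C_I)\subset\overline{G_p}\subset G_s$ exactly as in the proof of Theorem~\ref{SemiGroup}, using Lemma~\ref{SomeLemma213} for the first factor and the left-resolvent formula above for the second, and substitute the $S$-resolvent equation~\eqref{resEQ} for $S_R^{-1}(s,T)S_L^{-1}(p,T)$. The two resulting terms containing $S_R^{-1}(s,T)$ vanish by Cauchy's theorem~\ref{COP}: after pulling the $p$-independent operator $S_R^{-1}(s,T)$ out of the inner integral, the remaining $\C_I$-scalar integrands $p(p^2-2s_0p+|s|^2)^{-1}(\lambda-p^\alpha)^{-1}$ and $(p^2-2s_0p+|s|^2)^{-1}(\lambda-p^\alpha)^{-1}$ are holomorphic on $G_p$ (since $s,\bar s\notin G_p$ and $p^\alpha\notin(-\infty,0]$ for $\alpha<1$) and decay like $|p|^{-1-\alpha}$ and $|p|^{-2-\alpha}$ at infinity. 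After justifying Fubini for the remaining terms by estimates in the spirit of Appendix~\ref{Fubini2}, the inner $s$-integral has exactly the form $[\bar s B-Bp](p^2-2s_0p+|s|^2)^{-1}$ required by Lemma~\ref{Lemma321} with $B=S_L^{-1}(p,T)$ and intrinsic $f(s)=(\mu-s^\alpha)^{-1}$, and its unbounded-contour analogue collapses it to $S_L^{-1}(p,T)(\mu-p^\alpha)^{-1}$, giving
\[
F_\alpha(\mu,T)F_\alpha(\lambda,T) \;=\; \frac{1}{2\pi}\int_{\partial(G_p\cap\C_I)} S_L^{-1}(p,T)\,dp_I\,(\mu-p^\alpha)^{-1}(\lambda-p^\alpha)^{-1}.
\]
Since the scalar factors lie in $\C_I$ and therefore commute, the partial-fraction identity $(\mu-p^\alpha)^{-1}(\lambda-p^\alpha)^{-1}=(\lambda-\mu)^{-1}[(\mu-p^\alpha)^{-1}-(\lambda-p^\alpha)^{-1}]$ combined with the left-resolvent representation of $F_\alpha$ yields $(\lambda-\mu)^{-1}[F_\alpha(\mu,T)-F_\alpha(\lambda,T)]$, which is exactly \eqref{RealREQ} for $\lambda\neq\mu$; the case $\lambda=\mu$ holds trivially on both sides.

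The difficulties I expect are technical rather than conceptual: (a) the unbounded-domain extension of Lemma~\ref{Lemma321}, which requires controlling the arc-at-infinity contribution using the decay $(\mu-s^\alpha)^{-1}=O(|s|^{-\alpha})$ against the $O(|s|^{-2})$ decay of the kernel $(p^2-2s_0p+|s|^2)^{-1}$; (b) the Fubini justification on the unbounded nested contours, of the same flavor as the estimates in Appendix~\ref{Fubini2}; and (c) the reduction from the general case to $0\in\rho_S(T)$, where one must verify that the shift $T\mapsto T+\varepsilon\id$ preserves the $(M,\omega)$-type with uniform constants and that $F_\alpha(\lambda,T+\varepsilon\id)\to F_\alpha(\lambda,T)$ as $\varepsilon\to 0$ for $\lambda\in(-\infty,0]$.
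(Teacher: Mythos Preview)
Your proposal is correct and follows essentially the same route as the paper's proof: the intrinsicness of $s\mapsto(\lambda-s^{\alpha})^{-1}$ for real $\lambda$ to obtain the left-resolvent representation, the double Cauchy integral with the $S$-resolvent equation~\eqref{resEQ}, the vanishing of the $S_R^{-1}(s,T)$-terms by Cauchy's theorem, the unbounded-domain version of Lemma~\ref{Lemma321} via truncation, the partial-fraction identity, and finally the shift $T\mapsto T+\varepsilon\id$ together with dominated convergence to remove the hypothesis $0\in\rho_S(T)$. The technical points you flag in (a)--(c) are precisely the ones the paper handles, and your outline of how to treat them is accurate.
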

\begin{proof}
Assume first that $0\in\rho_S(T)$. Any real $\lambda$ commutes with $S_R^{-1}(-t,T)$ and thus we have
\[F_{\alpha}(\lambda,T) = \frac{\sin(\alpha\pi)}{\pi}\int_{0}^{+\infty} S_L^{-1}(-t,T)(\lambda^2-2\lambda t^{\alpha}\cos(\alpha\pi) + t^{2\alpha})^{-1}t^{\alpha}\,dt\]
because $S_R^{-1}(-t,T) = (-t\id-T)^{-1} = S_L^{-1}(-t,T)$ as $t$ is also real. Computations analogue to those in the proof of Lemma~\ref{SomeLemma213} show that $F_\alpha(\lambda,T)$ can thus be represented as
\begin{equation}\label{PPUL}
 F_{\alpha}(\lambda,T) = \frac{1}{2\pi}\int_{\Gamma} S_L^{-1}(s,T)\,ds_I\,S_L^{-1}(\lambda,s^{\alpha}),
\end{equation}
where $\Gamma$ is any path as in Lemma~\ref{SomeLemma213}.

Now let $\varepsilon>0$ such that $\overline{B(\varepsilon,0)}\subset\rho_S(T)$, choose $I\in\S$ and set
\[U_s := \H \setminus  \overline{\osector{\theta_s}{0} \cup B(\varepsilon_s,0)} \quad\text{and}\quad U_p := \H \setminus  \overline{\osector{\theta_p}{0} \cup B(\varepsilon_p,0)}\]
 with $0<\varepsilon_s<\varepsilon_p<\varepsilon$ and $\phi_0 < \theta_p < \theta_s <\pi$. Then $\overline{U_p}\subset U_s$ and $\Gamma_s = \partial( U_s\cap\C_I)$ and $\Gamma_p = \partial(U_p\cap\C_I)$ are paths as in Lemma~\ref{SomeLemma213}. Moreover, since $T$ is of type $(M,\omega)$  with $0\in\rho_S(T)$, we can finde a constant $C$ such that $\| S_R^{-1}(s,T)\| < C/(1+|s|)$ for $s\in (-\infty,0]$. By Lemma~\ref{StripCy} we may choose $\varepsilon_p$, $\varepsilon_s$, $\theta_p$ and $\theta_s$ such that
\begin{equation}\label{MMJMJ}
\|S_R^{-1}(s,T) \|\leq\frac{M_1}{1+|s|}, s\in\Gamma_s\quad\text{and}\quad\| S_L^{-1}(p,T)\|\leq \frac{M_1}{1+|p|}, p\in\Gamma_p
\end{equation}
for some constant $M_1>0$. Lemma~\ref{SomeLemma213} and \eqref{PPUL} then imply
\[
 F_{\alpha}(\mu,T)F_{\alpha}(\lambda,T) = \frac{ 1}{(2\pi)^2} \int_{\Gamma_s}\int_{\Gamma_p}S_R^{-1}(\mu,s^{\alpha})\,ds_I\,S_R^{-1}(s,T)S_L^{-1}(p,T)\,dp_I\,S_L^{-1}(\lambda,p^{\alpha}).
\]
Applying the $S$-resolvent equation \eqref{resEQ} yields
\begin{align*}
& F_{\alpha}(\mu,T)F_{\alpha}(\lambda,T) \\=& \frac{1}{(2\pi)^2} \int_{\Gamma_s}\int_{\Gamma_p}S_R^{-1}(\mu,s^{\alpha})\,ds_I\,S_R^{-1}(s,T)p(p^2-2s_0p+|s|^2)^{-1}\,dp_I\,S_L^{-1}(\lambda,p^{\alpha})\\
 &- \frac{ 1}{(2\pi)^2} \int_{\Gamma_s}\int_{\Gamma_p}S_R^{-1}(\mu,s^{\alpha})\,ds_I\,S_L^{-1}(p,T)p(p^2-2s_0p+|s|^2)^{-1}\,dp_I\,S_L^{-1}(\lambda,p^{\alpha})\\
 &- \frac{ 1}{(2\pi)^2} \int_{\Gamma_s}\int_{\Gamma_p}S_R^{-1}(\mu,s^{\alpha})\,ds_I\,\overline{s}S_R^{-1}(s,T)(p^2-2s_0p+|s|^2)^{-1}\,dp_I\,S_L^{-1}(\lambda,p^{\alpha})\\
&+ \frac{ 1}{(2\pi)^2} \int_{\Gamma_s}\int_{\Gamma_p}S_R^{-1}(\mu,s^{\alpha})\,ds_I\,\overline{s}S_L^{-1}(p,T)(p^2-2s_0p+|s|^2)^{-1}\,dp_I\,S_L^{-1}(\lambda,p^{\alpha}).
\end{align*}
The functions $p\mapsto (p^2-2s_0p+|s|^2)^{-1}S_L^{-1}(\lambda,p^{\alpha})$ and $p\mapsto p(p^2-2s_0p+|s|^2)^{-1}S_L^{-1}(\lambda,p^{\alpha})$ are holomorphic on $\overline{U_p\cap\C_I}$ and tend uniformly to zero as $p$ tends to infinity in $U_p$. We therefore deduce from Cauchy's integral theorem that the first and the third of the above integrals equal zero. The estimate \eqref{MMJMJ} allows us to apply Fubini's theorem in order to exchange the order of integration such that we are left with
\begin{multline}\label{FUFUFJ}
 F_{\alpha}(\mu,T)F_{\alpha}(\lambda,T)= \frac{ 1}{2\pi} \int_{\Gamma_p}\bigg[ \frac{ 1}{2\pi}\int_{\Gamma_s}S_R^{-1}(\mu,s^{\alpha})\,ds_I\\
 \cdot\left(\overline{s}S_L^{-1}(p,T)-S_L^{-1}(p,T)p\right)(p^2-2s_0p+|s|^2)^{-1}\bigg]\,dp_I\,S_L^{-1}(\lambda,p^{\alpha}).
\end{multline}
We want to apply Lemma~\ref{Lemma321} and thus define the set $U_{s,r} := U_{s}\cap B(r,0)$ for $r>0$, which is a bounded axially symmetric slice domain. Its boundary $\partial(U_{s,r}\cap\C_I)$ in $\C_I$ consists of $\Gamma_{s,r}:= \Gamma_s\cap B(r,0)$ and the set $C_{r} := \{ re^{I\varphi}: -\theta_s\leq \varphi \leq \theta_s\}$. If $p\in\Gamma_p$, then $p\in\ U_{s,r}$ for sufficiently large $r$ because  $\overline{U_p}\subset U_s$. Since the function $s\mapsto S_R^{-1}(\mu, s^{\alpha}) = (\mu-s^{\alpha})^{-1}$ is intrinsic because $\mu$ is real, we can therefore apply Lemma~\ref{Lemma321} and obtain for any such $r$
\begin{align*}
&S_L^{-1}(p,T)S_R^{-1}(\mu,p^{\alpha})\\
 =& \frac{1}{2\pi}\int_{\partial (U_{s,r}\cap\C_I)} S_R^{-1}(\mu,s^{\alpha})\,ds_I\left(\overline{s}S_L^{-1}(p,T)-S_L^{-1}(p,T)p\right)(p^2-2s_0p+|s|^2)^{-1}\displaybreak[2]\\
  =& \frac{1}{2\pi}\int_{\Gamma_{s,r}} S_R^{-1}(\mu,s^{\alpha})\,ds_I\left(\overline{s}S_L^{-1}(p,T)-S_L^{-1}(p,T)p\right)(p^2-2s_0p+|s|^2)^{-1}\\
   &+ \frac{1}{2\pi}\int_{C_r} S_R^{-1}(\mu,s^{\alpha})\,ds_I\left(\overline{s}S_L^{-1}(p,T)-S_L^{-1}(p,T)p\right)(p^2-2s_0p+|s|^2)^{-1}.
\end{align*}
As $r$ tends to infinity the integral over $C_r$ vanishes and hence
\begin{align*}
&S_L^{-1}(p,T)S_R^{-1}(\mu,p^{\alpha})\\
   =& \lim_{r\to+\infty}\frac{1}{2\pi}\int_{\Gamma_{s,r}} S_R^{-1}(\mu,s^{\alpha})\,ds_I\left(\overline{s}S_L^{-1}(p,T)-S_L^{-1}(p,T)p\right)(p^2-2s_0p+|s|^2)^{-1}\\
   =&\frac{1}{2\pi}\int_{\Gamma_{s}} S_R^{-1}(\mu,s^{\alpha})\,ds_I\left(\overline{s}S_L^{-1}(p,T)-S_L^{-1}(p,T)p\right)(p^2-2s_0p+|s|^2)^{-1}.
\end{align*}
Applying this identity in \eqref{FUFUFJ}, we obtain
\[  F_{\alpha}(\mu,T)F_{\alpha}(\lambda,T)= \frac{ 1}{2\pi} \int_{\Gamma_p}S_L^{-1}(p,T)\,dp_I\,S_R^{-1}(\mu,p^{\alpha})S_L^{-1}(\lambda,p^{\alpha})\]
because $S_R^{-1}(\mu,p^{\alpha})$ and $dp_I$ commute as $\mu$ is real.
Since also  $\lambda$ is real, we have
\begin{gather*}
S_R^{-1}(\mu,p^{\alpha})S_L^{-1}(\lambda,p^{\alpha}) = \frac1{\mu-p^{\alpha}}\frac{1}{\lambda-p^{\alpha}} \\
= \frac{1}{\lambda-\mu}\left(\frac{1}{\mu-p^{\alpha}}-\frac{1}{\lambda-p^{\alpha}}\right) = (\lambda-\mu)^{-1}\left(S_L^{-1}(\mu,p^{\alpha})-S_L^{-1}(\lambda,p^{\alpha})\right)
\end{gather*}
and thus, recalling \eqref{PPUL}, we obtain
\begin{align*}
  &F_{\alpha}(\mu,T)F_{\alpha}(\lambda,T)\\
  =& (\lambda-\mu)^{-1}\left(\frac{ 1}{2\pi} \int_{\Gamma_p}S_L^{-1}(p,T)\,dp_I\,S_L^{-1}(\mu,p^{\alpha}) - \frac{ 1}{2\pi} \int_{\Gamma_p}S_L^{-1}(p,T)\,dp_I\,S_L^{-1}(\lambda,p^{\alpha})\right)\\
    =&(\lambda-\mu)^{-1}\left(F_{\alpha}(\mu,T) - \F_{\alpha}(\lambda,T)\right).
  \end{align*}

  If $0\notin \rho_S(T)$, then we consider the operator $T+\varepsilon\id$ for small $\varepsilon > 0$. This operator satisfies $0\in\rho_S(T+\varepsilon\id) = \rho_S(T)+\varepsilon$ and hence $\eqref{RealREQ}$ applies.
 Moreover, for real $t$, we have
\[
 S_R^{-1}(-t,T+\varepsilon\id) = S_R^{-1}(-(t+\varepsilon),T).
\]
The estimate
\[
  \|S_R^{-1}(-t,T+\varepsilon\id)\| \leq \frac{M}{t+\varepsilon} \leq \frac{M}{t}
 \]
  therefore allows us to apply Lebesgue's dominated convergence theorem to see that
   \begin{gather*}
   F_\alpha(p,T+\varepsilon\id) = \frac{\sin(\alpha\pi)}{\pi}\int_0^{+\infty}t^{\alpha}(p^2-2pt^{\alpha}\cos(\alpha\pi)+t^{2\alpha})^{-1} S_R^{-1}(-t,T+\varepsilon\id)\,dt\\
   \overset{\varepsilon\to 0}{\longrightarrow} \frac{\sin(\alpha\pi)}{\pi}\int_0^{+\infty}t^{\alpha}(p^2-2pt^{\alpha}\cos(\alpha\pi)+t^{2\alpha})^{-1} S_R^{-1}(-t,T)\,dt = F_{\alpha}(p,T).
    \end{gather*}
Consequently, we have
\begin{multline*}
F_{\alpha}(\mu, T) - F_{\alpha}(\lambda,T) = \lim_{\varepsilon\to 0} F_{\alpha}(\mu, T+\varepsilon\id) - F_{\alpha}(\lambda,T+\varepsilon\id)\\
 = \lim_{\varepsilon\to 0} (\lambda-\mu)F_{\alpha}(\mu, T+\varepsilon\id)F_{\alpha}(\lambda,T+\varepsilon\id)
= (\lambda-\mu)F_{\alpha}(\mu, T)F_{\alpha}(\lambda,T)
\end{multline*}
for $\lambda,\mu\in(-\infty,0]$ also in this case.

  \end{proof}

\begin{theorem}\label{BalphaExists}Let $T$ be of type $(M,\omega)$, let $\alpha\in(0,1)$ and let $\phi_0 > \max(\alpha\pi, \omega)$. There exists a densely defined closed operator $B_{\alpha}$ such that
\[S_R^{-1}(p,B_\alpha) = F_{\alpha}(p,T)\qquad\text{for } \ \ \ p\in\osector{\phi_0}{0},\]
where $F_{\alpha}(p,T)$ is the operator-valued function defined by the integral \eqref{ShortInt7}. Moreover, $B_{\alpha}$ is of type $(M,\alpha\omega)$.
\end{theorem}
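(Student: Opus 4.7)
The strategy is to first construct $B_\alpha$ on the negative real axis, where $F_\alpha(\lambda,T)$ plays the role of a classical resolvent, and then to promote the identity $S_R^{-1}(p,B_\alpha)=F_\alpha(p,T)$ to the full sector by slice hyperholomorphicity. The backbone is a sharp norm bound: substituting $u=t^\alpha/|\lambda|$ in \eqref{KatoEQ} and applying \eqref{ResEst2} gives
\[
\|F_\alpha(\lambda,T)\| \leq \frac{M\sin(\alpha\pi)}{\alpha\pi|\lambda|}\int_0^{+\infty}\frac{du}{1+2u\cos(\alpha\pi)+u^2} = \frac{M}{|\lambda|}, \qquad \lambda\in(-\infty,0),
\]
the scalar integral evaluating to $\alpha\pi/\sin(\alpha\pi)$. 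For $v\in\dom(T)$ the same substitution, combined with $\tau S_R^{-1}(-\tau,T)v=-(\id+T/\tau)^{-1}v\to -v$ as $\tau\to+\infty$ and dominated convergence, yields $\lambda F_\alpha(\lambda,T)v\to v$ as $\lambda\to-\infty$; the uniform bound $\|\lambda F_\alpha(\lambda,T)\|\leq M$ together with the density of $\dom(T)$ extend this limit to every $v\in V$.

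Two consequences are immediate. First, $F_\alpha(\lambda,T)$ is injective for every $\lambda<0$: if $F_\alpha(\lambda,T)w=0$, the real resolvent equation from the preceding lemma forces $F_\alpha(\mu,T)w=0$ for every $\mu<0$, so $w=\lim_{\mu\to-\infty}\mu F_\alpha(\mu,T)w=0$. Second, $\ran F_\alpha(\lambda,T)$ is dense, since $v=\lim_{\lambda\to-\infty}\lambda F_\alpha(\lambda,T)v\in\overline{\ran F_\alpha(\lambda,T)}$. I then fix $\lambda_0<0$ and define
\[
\dom(B_\alpha) := \ran F_\alpha(\lambda_0,T), \qquad B_\alpha v := \lambda_0 v - F_\alpha(\lambda_0,T)^{-1}v.
\]
Multiplying the real resolvent equation $F_\alpha(\mu,T)-F_\alpha(\lambda,T)=(\lambda-\mu)F_\alpha(\mu,T)F_\alpha(\lambda,T)$ by $F_\alpha(\mu,T)^{-1}$ on the left and $F_\alpha(\lambda,T)^{-1}$ on the right yields $F_\alpha(\lambda,T)^{-1}-F_\alpha(\mu,T)^{-1}=(\lambda-\mu)\id$ on $\dom(B_\alpha)$, so both the domain and the action of $B_\alpha$ are independent of $\lambda_0$. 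The operator $B_\alpha$ is densely defined and closed as $\lambda_0\id$ minus the inverse of a bounded injective operator, and $S_R^{-1}(\lambda,B_\alpha)=(\lambda\id-B_\alpha)^{-1}=F_\alpha(\lambda,T)$ for every $\lambda\in(-\infty,0)$.

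To extend this identity to $\osector{\phi_0}{0}$, observe that $p\mapsto S_R^{-1}(p,B_\alpha)$ (Lemma~\ref{ResHol2342}) and $p\mapsto F_\alpha(p,T)$ (Proposition~\ref{Kato}) are both left slice hyperholomorphic on their respective domains and agree on $(-\infty,0)$. By Theorem~\ref{IDPOP} applied to the axially symmetric slice domain generated by $(-\infty,0)$ in the overlap, the two functions coincide wherever both are defined in a neighborhood of $(-\infty,0)$. If some $p_0\in\osector{\phi_0}{0}$ were to lie in $\sigma_S(B_\alpha)$, the gluing of $S_R^{-1}(\cdot,B_\alpha)$ on $\rho_S(B_\alpha)$ with $F_\alpha(\cdot,T)$ on $\osector{\phi_0}{0}$ would furnish a nontrivial axially symmetric slice hyperholomorphic continuation of $S_R^{-1}(\cdot,B_\alpha)$ past $p_0$, contradicting Theorem~\ref{SExt}. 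Hence $\osector{\phi_0}{0}\subset\rho_S(B_\alpha)$ and the identity holds throughout.

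For the type $(M,\alpha\omega)$ claim, condition (ii) of Definition~\ref{DefiType} is exactly $\|tF_\alpha(-t,T)\|\leq M$ from the first paragraph. For (i), I would use the contour representation of Lemma~\ref{SomeLemma213} with a path $\Gamma$ in $\osector{\omega'}{0}\cap\C_I$ for $\omega'>\omega$ arbitrarily close to $\omega$: the image $\{s^\alpha:s\in\Gamma\}$ has argument at most $\alpha\omega'$, so the integral defines a left slice hyperholomorphic extension of $F_\alpha(p,T)$ to $\osector{\alpha\omega'}{0}$; letting $\omega'\searrow\omega$ and reapplying the Theorem~\ref{SExt} argument of the previous paragraph yields $\osector{\alpha\omega}{0}\subset\rho_S(B_\alpha)$. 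The uniform bound $\|pS_R^{-1}(p,B_\alpha)\|\leq\text{const.}$ on strict subsectors $\osector{\theta}{0}$, $\theta>\alpha\omega$, follows from the same scaling substitution as in the first paragraph, now applied to the contour integral. The main obstacle is this last step: carefully verifying that the Lemma~\ref{SomeLemma213} representation extends $F_\alpha$ precisely to $\osector{\alpha\omega}{0}$ and that the uniform sectorial bound survives the scaling; the hypothesis $0\in\rho_S(T)$ in Lemma~\ref{SomeLemma213} is removed, if necessary, via the $T+\varepsilon\id$ regularization of the preceding lemma and the limit $\varepsilon\to 0$ by dominated convergence.
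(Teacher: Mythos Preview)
Your construction of $B_\alpha$ from the pseudo-resolvent family $\{F_\alpha(\lambda,T)\}_{\lambda<0}$---the bound $\|\lambda F_\alpha(\lambda,T)\|\leq M$, the strong limit $\lambda F_\alpha(\lambda,T)v\to v$, injectivity and dense range, the definition $B_\alpha=\lambda_0\id-F_\alpha(\lambda_0,T)^{-1}$, and the identification $S_R^{-1}(\lambda,B_\alpha)=F_\alpha(\lambda,T)$ on $(-\infty,0)$---matches the paper's argument essentially line for line. Your appeal to Theorem~\ref{SExt} to conclude $\osector{\phi_0}{0}\subset\rho_S(B_\alpha)$ is in fact slightly more explicit than the paper, which simply invokes the identity principle and leaves this inclusion implicit.

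The genuine divergence is in the type $(M,\alpha\omega)$ argument. The paper does \emph{not} route through Lemma~\ref{SomeLemma213}. Instead it restricts $S_R^{-1}(\cdot,B_\alpha)$ to a single plane $\C_I$ and rewrites Kato's formula \eqref{KatoEQ} as a complex contour integral using a branch of $z^\alpha$ with cut on $[0,+\infty)$ and $\arg_{\C_I}\in(0,2\pi)$---explicitly \emph{not} the restriction of the quaternionic $s^\alpha$. Rotating this complex contour by $\kappa\in(-(\pi-\omega),\pi-\omega)$ yields a holomorphic extension on $\C_I$ to the symmetric complex sector $\{\alpha\omega<\arg_{\C_I}(p)<2\pi-\alpha\omega\}$, which Corollary~\ref{extLemOP} then lifts to a slice hyperholomorphic extension on $\osector{\alpha\omega}{0}$; the sectorial bound follows from a direct estimate of the rotated integral together with the Representation Formula. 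This route never needs $0\in\rho_S(T)$. Your alternative---rotating the quaternionic contour of Lemma~\ref{SomeLemma213} to rays at angle $\theta\searrow\omega$ so that $\arg(s^\alpha)=\alpha\theta$---is viable and more native to the slice setting, but it carries two extra obligations: you must run the $T+\varepsilon\id$ regularization and verify that the sectorial bounds are uniform in $\varepsilon$ before letting $\varepsilon\to 0$, and your phrase ``the image $\{s^\alpha:s\in\Gamma\}$ has argument at most $\alpha\omega'$'' only holds for $\Gamma$ consisting of rays at angle $\omega'$, not for a general path in $\osector{\omega'}{0}$, so the contour must be chosen with this in mind.
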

 \begin{proof}
From identity \eqref{RealREQ} it follows immediately that $F_\alpha(\mu,T)$ and $F_\alpha(\lambda,T)$ commute and have the same kernel. Rewriting this equation in the form
\begin{equation}\label{RealREQ2}
F_{\alpha}(\mu, T) =  F_{\alpha}(\lambda,T)\left(\id +(\lambda-\mu)F_{\alpha}(\mu, T)\right)
\end{equation}
shows that $\ran(F_\alpha(\mu,T)) \subset \ran(F_\alpha(\lambda,T))$ and exchanging the roles of $\mu$ and $\lambda$ yields \linebreak[4]$\ran(F_\alpha(\mu,T)) = \ran(F_\alpha(\lambda,T))$. Hence, $\ran(F_\alpha(\mu,T))$ does not depend on $\mu$ and so we denote it by $\ran(F_\alpha(\bcdot,T))$.

We show now that
\begin{equation}\label{LimID}\lim_{\rr\ni\mu\to-\infty} \mu F_\alpha(\mu,T) v = v\quad\text{for all }v\in V,\end{equation}
 where $\lim_{\rr\ni\mu\to-\infty} \mu F_\alpha(\mu,T) v$ denotes the limit as $\mu$ tends to $-\infty$ in $\rr$. From \eqref{LimID}, we easily deduce that $\ran(F_\alpha(\bcdot,T))$ is dense in $V$ because
\[ V = \overline{\bigcup_{\mu\in(-\infty,0]}\ran(F_\alpha(\mu,T))} = \overline{\ran(F_\alpha(\bcdot,T))}.\]
We consider first $v\in\dom(T)$. Since
\begin{equation}\label{FraInt}
\int_0^{+\infty} \frac{t^{\alpha-1}}{\mu^2 - 2\mu t^{\alpha}\cos(\alpha\pi) + t^{2\alpha}}\, dt = -\frac{\pi}{\mu\sin(\alpha\pi)}\quad  \text{for }\mu\leq 0,
\end{equation}
it is
\begin{align*}
\mu F_{\alpha}(\mu,T)v - v& =- \frac{\sin(\alpha\pi)}{\pi}\int_0^{+\infty} \frac{-\mu t^{\alpha-1}}{\mu^2-2\mu t^{\alpha}\cos(\alpha\pi) + t^{2\alpha}}\left(tS_R^{-1}(-t,T)v + v\right)\,dt.
\end{align*}
For $-\mu \geq 1$ and $t\in(0,\infty)$, we can estimate
\[\frac{-\mu t^{\alpha-1}}{\mu^2 - 2\mu t^{\alpha}\cos(\alpha\pi) + t^{2\alpha}} = \frac{-\mu t^{\alpha-1}}{\mu^2\sin(\alpha\pi)^2 + (\mu\cos(\alpha\pi) - t^{\alpha})^2} \leq \frac{-\mu t^{\alpha-1}}{\mu^2\sin(\alpha\pi)^2} \leq \frac{t^{\alpha-1}}{\sin(\alpha\pi)^2}\]
and due to \eqref{ResEst2} we have $\|tS_R^{-1}(-t,T)v + v\| \leq (M+1)\|v\|$. On the other hand, since $v\in\dom(T)$, it is
\begin{equation}\label{Res2ID}
tS_R^{-1}(-t,T)v +v = -S_R^{-1}(-t,T)Tv
\end{equation}
and in turn, again due to \eqref{ResEst2}, we can also estimate $ \|tS_R^{-1}(-t,T)v + v\| \leq \|Tv\|/t$ such that we can apply Lebesgue's dominated convergence theorem with dominating function
\[f(t) = \begin{dcases}\frac{K}{\sin(\alpha\pi)^2}t^{\alpha-1},&\text{for }t\in(0,1)\\\frac{K}{\sin(\alpha\pi)^2}t^{-\alpha-1},&\text{for }t\in[1,+\infty)\end{dcases},\]
with $K>0$ large enough, in order to exchange the integral with the limit for $\mu \to -\infty$ in $\rr$. In view of \eqref{Res2ID}, we obtain
\begin{multline*}\lim_{\rr\ni\mu\to-\infty}\mu F_{\alpha}(\mu, T)v - v  \\
=-\frac{\sin(\alpha\pi)}{\pi}\int_0^{+\infty} \lim_{\rr\ni\mu\to-\infty} \frac{\mu t^{\alpha-1}}{\mu^2-2\mu t^{\alpha}\cos(\alpha\pi) + t^{2\alpha}}S_R^{-1}(-t,T)Tv \,dt = 0. \end{multline*}
For arbitrary $v\in V$ and $\varepsilon>0$ consider a vector $v_{\varepsilon}\in\dom(T)$ with $\|v-v_\varepsilon\| < \varepsilon$. Because of \eqref{ResEst2} and \eqref{FraInt}, we have the uniform estimate
\begin{align}\label{KAR}
\left\|\mu F_\alpha(\mu,T)\right\| &\leq \frac{-\mu\sin(\alpha\pi)}{\pi}\int_{0}^{+\infty}\frac{ t^{\alpha}}{\mu^2-2\mu t^\alpha\cos(\alpha\pi) + t^{2\alpha}} \frac{M}{t} dt= M.
\end{align}
Therefore
\begin{align*}
 \lim_{\rr\ni\mu\to-\infty}\|\mu F_{\alpha}(\mu,T)v - v\| &\leq \lim_{\rr\ni\mu\to-\infty} \|\mu F_{\alpha}(\mu,T)\|\|v - v_{\varepsilon}\| + \|F_{\alpha}(\mu,T)v_{\varepsilon} - v_{\varepsilon}\|+\|v_{\varepsilon}-v\|\\
 &\leq(M+1)\varepsilon .
\end{align*}
Since $\varepsilon>0$ was arbitrary, we deduce that \eqref{LimID} also holds true for arbitrary $v\in V$.

Overall, we obtain that  $\ran(F_{\alpha}(\bcdot, T))$ is dense in $V$. The identity \eqref{LimID} moreover also implies $\ker(F_{\alpha}(\bcdot,T)) = \{0\}$ because  $ v = \lim_{\rr\ni\mu\to-\infty} F_{\alpha}(\mu,T)v = 0$ for $v\in\ker(F_{\alpha}(\bcdot,T))$.

We consider now an arbitrary point $\mu_0\in(-\infty,0)$. By the above arguments, the mapping $F_{\alpha}(\mu_0,T): V\to\ran(F_{\alpha}(\bcdot,T))$ is invertible. Hence, we can define the operator $B_\alpha := {\mu_0\id- F_{\alpha}(\mu_0,T)^{-1}}$ that maps $\dom(B_\alpha) = \ran(F_\alpha(\mu_0,T))$  to $V$. Apparently, $B_{\alpha}$ has dense domain and $S_{R}^{-1}(\mu_0,B_\alpha) = F_\alpha(\mu_0,B_\alpha)$. For $\mu\in(-\infty,0]$, we can apply \eqref{RealREQ2} and \eqref{RealREQ} to obtain
\begin{align*}
(\mu\id-B_\alpha)F_\alpha(\mu,T) &= ((\mu-\mu_0)\id + (\mu_0\id-B_\alpha))F_\alpha(\mu_0,T)(\id + (\mu_0-\mu) F_\alpha(\mu,T))\\
&= \id + (\mu-\mu_0)\big(F_\alpha(\mu_0,T) + (\mu_0-\mu)F_\alpha(\mu_0,T)F_\alpha(\mu,T)-F_\alpha(\mu,T)\big)=\id.
\end{align*}
 A similar calculation shows that ${F_\alpha(\mu,T)(\mu\id-B_\alpha)v = v}$ for all $v\in\dom(B_\alpha)$. We conclude that   $S_R^{-1}(\mu,B_\alpha) = F_\alpha(\mu,T)$ for any $\mu\in(-\infty,0)$. Since $p\mapsto F_\alpha(p,T)$ and $p\mapsto S_R^{-1}(p,B_\alpha)$ are left slice hyperholomorphic and agree on $(-\infty,0)$,   Theorem~\ref{IDPOP} implies $S_R^{-1}(p,B_\alpha) = F_\alpha(p,T)$  for any $p\in\osector{0}{\phi_0}$.

Finally, in order to show that $B_{\alpha}$ is of type $(M,\alpha\omega)$, we choose an arbitrary imaginary unit $I\in\S$ and consider the restriction of $S_{R}^{-1}(\bcdot,B_{\alpha})$ to the plane $\C_I$. This restriction is  a holomorphic function with values in the left-vector space $\boundOP(V)$ over $\C_I$. We show now that this restriction has a holomorphic continuation to the sector $\osector{\alpha\omega}{0} \cap\C_I$. Since this sector is symmetric with respect to the real axis, we can apply Corollary~\ref{extLemOP} and obtain a left slice hyperholomorphic continuation of $S_{R}^{-1}(p,B_{\alpha})$ to the sector $\osector{\alpha\omega}{0}$. By Theorem~\ref{SExt}, this implies in particular that $\osector{\alpha\omega}{0}\subset\rho_S(T)$.

 The above considerations showed that we can represent $S_{R}^{-1}(p,B_{\alpha})$ for $p\in\osector{\omega}{0}\cap\C_I$ using Kato's formula \eqref{KatoEQ}. Rewriting this formula as a path integral over the path $\gamma_{0}(t) = te^{I\pi}, t\in[0,+\infty)$, we obtain
\[ S_{R}^{-1}(p,B_{\alpha})= -\frac{\sin(\alpha\pi)}{\pi}\int_{\gamma_0} \frac{z^\alpha e^{-I\pi\alpha}}{(z^{\alpha}e^{-I2\alpha\pi }-p)(z^{\alpha}-p)} S_{R}^{-1}(z,T) \ dz,\]
where $z$ denotes a complex variable in $\C_I$ and  $z\mapsto z^{\alpha}$ is a branch of a complex $\alpha$-th power of $z$ that is holomorphic on $\C_I\setminus [0,\infty)$. To be more precise, let us choose  $\left(re^{I\theta}\right)^{\alpha} = r^\alpha e^{I\theta\alpha}$ with $\theta\in (0,2\pi)$. (This is however not the restriction of the quaternionic function $s\mapsto s^{\alpha}$ defined in \eqref{fracDef} to the plane $\C_I$, cf. Remark~\ref{LogRem}!)

Observe that for fixed $p$ the integrand is holomorphic on $D_0:=\osector{\omega}{0}\cap\C_I$. Hence, by applying Cauchy's Integral Theorem, we can exchange the path of integration $\gamma_0$ by a suitable path $\gamma_{\kappa}(t) = te^{I(\pi-\kappa)}, t\in[0,\infty)$ and obtain
\begin{equation}\label{KappaRes} S_{R}^{-1}(p,B_{\alpha})= -\frac{\sin(\alpha\pi)}{\pi}\int_{\gamma_\kappa} \frac{z^\alpha e^{-I\pi\alpha}}{(z^{\alpha}e^{-I2\alpha\pi }-p)(z^{\alpha}-p)} S_{R}^{-1}(z,T) \ dz.\end{equation}
On the other hand, for any $\kappa\in (-\omega, \omega)$, such integral defines a holomorphic function on the sector $D_\kappa := \{p\in\C_I: \alpha(\pi-\kappa) < \arg p < 2\pi - \alpha(\pi+\kappa)\}$, where the convergence of the integral is guaranteed because the operator $T$ is of type $(M,\omega)$. The above argument showed that this function coincides with $S_R^{-1}(p,B_{\alpha})$ on the common domain $D_0\cap D_{\kappa}$, and hence $p\mapsto S_{R}^{-1}(p,B_{\alpha})$ has a holomorphic continuation $F_I$ to
\[D = \bigcup_{\kappa\in(-\omega,\omega)} D_{\kappa} = \{p\in\C_I : \alpha(\pi-\kappa)< \arg_{\C_I}(p) < 2\pi - \alpha(\pi-\kappa)\}.\]
This set is symmetric with respect to the real axis and, as mentioned above,  we deduce from Corollary~\ref{extLemOP} that there exists a left slice hyperholomorphic continuation $F$ of $F_I$ to the axially symmetric hull $[D] = \osector{\alpha\omega}{ 0}$ of $D$. Consequently, $ \osector{\alpha\omega}{ 0}\subset \rho_S(B_{\alpha})$ and $F$ coincides with $S_{R}^{-1}(\bcdot, B_{\alpha})$ on $\osector{\alpha\omega}{0}$.

In order to show that $\|p S_{R}(p,B_{\alpha})\|$ is bounded on every sector $\osector{\theta}{0}$ with $\theta\in(\omega\alpha,0)$, we consider first a set
\[D_{\kappa,\delta} := \{p\in\C_I: \delta + \alpha(\pi-\kappa)  < \arg p < 2\pi - \alpha(\pi+\kappa) - \delta\}\]
with $\kappa\in(-\omega,\omega)$ and small $\delta>0$. For $p\in D_{\kappa,\delta}$ with $\phi = \arg_{\C_I}(p)\in (0,2\pi)$, we may represent $pS_{R}^{-1}(p,B_{\alpha})$ by means of \eqref{KappaRes} and estimate
\begin{gather*}
\|pS_{R}^{-1}(p,B_{\alpha})\| \leq  \frac{|p|\sin(\alpha\pi)}{\pi}\int_{0}^{+\infty} \frac{r^\alpha}{|(r^{\alpha}e^{-I(\pi+\kappa)\alpha}-p)(r^{\alpha}e^{I(\pi-\kappa)\alpha}-p)|} \|S_{R}^{-1}(re^{I(\pi-\kappa)},T)\| \ dr\\
= \frac{|p|\sin(\alpha\pi)}{\pi}\int_{0}^{+\infty} \frac{r^\alpha}{|(r^{\alpha}-|p|e^{I(\phi+(\pi+\kappa)\alpha)})(r^{\alpha}-|p|e^{I(\phi-(\pi-\kappa)\alpha)})|} \|S_{R}^{-1}(re^{I(\pi-\kappa)},T)\| \ dr.
\end{gather*}
The operator $T$ is of type $(\omega,M)$ and hence exists a constant $M_{\kappa}>0$ such that $\|S_{R}^{-1}(re^{I(\pi-\kappa)},T)\|\leq M/r$. Substituting $\tau = r^{\alpha}/|p|$ yields
\begin{align*}
\|pS_{R}^{-1}(p,B_{\alpha})\| \leq  \frac{\sin(\alpha\pi)}{\pi}\int_{0}^{+\infty} \frac{M_\kappa}{|(\tau-e^{I(\phi+(\pi+\kappa)\alpha)})(\tau-e^{I(\phi-(\pi-\kappa)\alpha)})|} \ d\tau.
\end{align*}
This integral is uniformly bounded for
\[\phi \in {(\delta + \alpha(\pi-\kappa) , 2\pi - \alpha(\pi+\kappa) - \delta)}\]
such that there exists a constant that depends only on $\kappa$ and $\delta$ such that
\[ \|pS(p,B_\alpha)\|\leq C(\kappa,\delta)\quad \text{for }p\in D_{\kappa,\delta}.\]
Now consider a sector $\osector{\theta}{0}$ with $\theta\in(\omega\alpha,\pi)$. Then there exist $(\kappa_i, \delta_i), i=1,\ldots,n$ such that $\osector{\theta}{0}\cap\C_I\subset \bigcup_{i=1}^{n}D_{\kappa_i,\delta_i}$ and hence
\[ \|pS_{R}^{-1}(p,B_{\alpha})\| \leq C := \max_{1\leq i\leq n} C(\kappa_i,\delta_i)\quad \text{for } p\in \osector{\theta}{0}\cap\C_I.\]
For arbitrary $p = p_0 + I_p p_1\in \osector{\theta}{0}\cap\C_I$, set $p_I = p_0 + I p_1$. Then the Representation Formula, Theorem~\ref{RepFoOP}, implies
\[ \|pS_R^{-1}(p,T)\|  \leq \frac12 \|(1-I_pI)p_IS_R^{-1}(p_I,B_{\alpha}) \|+ \frac12 \|(1+I_pI)\overline{p_I}S_R^{-1}(\overline{p_I},B_{\alpha})\| \leq 2 C.\]
Finally, the estimate $\|tS_R^{-1}(-t,B_{\alpha})\| \leq M/t$ follows immediately from \eqref{KAR}.

\end{proof}

\begin{definition}\label{TalphaKato}
Let $T\in\closOP(V)$ be of type $(\omega, M)$. For $\alpha\in(0,1)$ we define $T^{\alpha} := B_{\alpha}$.
\end{definition}

\begin{corollary}
Definition~\ref{TalphaKato} is consistent with Definition~\ref{TPos}.
\end{corollary}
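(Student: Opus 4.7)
The strategy is to show that, under the joint hypotheses of the two definitions, the operator $B_{\alpha}$ from Definition~\ref{TalphaKato} coincides with the inverse of the operator $T^{-\alpha}$ constructed in Corollary~\ref{novesei}; by Definition~\ref{TPos} this will mean $T^{\alpha}=B_{\alpha}$. I first note that both definitions apply simultaneously: the running hypothesis of Section~\ref{NagelSect} together with Lemma~\ref{StripCy} produces a sector contained in $\rho_S(T)$ and the bound $\|tS_R^{-1}(-t,T)\|\le M$ for $t<0$, so $T$ is of type $(M',\omega)$ for suitable $M',\omega$, and the operator $B_{\alpha}$ of Theorem~\ref{BalphaExists} is available.

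The key computational observation will be that the integral in Kato's formula \eqref{KatoEQ} continues to make sense at $p=0$ under the stronger decay \eqref{ResEst}. Indeed, the elementary identity
\[\mu^{2}-2\mu t^{\alpha}\cos(\alpha\pi)+t^{2\alpha}=(\mu-t^{\alpha}\cos(\alpha\pi))^{2}+t^{2\alpha}\sin^{2}(\alpha\pi)\ge t^{2\alpha}\sin^{2}(\alpha\pi),\]
valid for real $\mu\le 0$, bounds the denominator in the integrand uniformly in $\mu$, so the entire integrand is dominated by a fixed $L^{1}$-function of $t$. Setting $p=0$ then yields
\[F_{\alpha}(0,T)=\frac{\sin(\alpha\pi)}{\pi}\int_{0}^{+\infty}t^{-\alpha}S_R^{-1}(-t,T)\,dt=-T^{-\alpha},\]
the last equality being exactly Corollary~\ref{novesei}. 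The same dominated convergence argument will also give $\|F_{\alpha}(\mu,T)-F_{\alpha}(0,T)\|\to 0$ as $\R\ni\mu\to 0^{-}$.

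Next, the identity $S_{R}^{-1}(\mu,B_{\alpha})=F_{\alpha}(\mu,T)=(\mu\id-B_{\alpha})^{-1}$, valid for $\mu<0$, produces
\[B_{\alpha}F_{\alpha}(\mu,T)v=\mu F_{\alpha}(\mu,T)v-v\quad\text{for all }v\in V\]
and
\[F_{\alpha}(\mu,T)B_{\alpha}v=\mu F_{\alpha}(\mu,T)v-v\quad\text{for all }v\in\dom(B_{\alpha}).\]
Passing to the limit $\mu\to 0^{-}$ and using that $B_{\alpha}$ is closed, I will conclude that $T^{-\alpha}V\subseteq\dom(B_{\alpha})$ with $B_{\alpha}T^{-\alpha}v=v$ for every $v\in V$ and $T^{-\alpha}B_{\alpha}v=v$ for every $v\in\dom(B_{\alpha})$. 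These two identities together force $\ran(T^{-\alpha})=\dom(B_{\alpha})$ and identify $B_{\alpha}$ as the two-sided inverse of $T^{-\alpha}$, giving $T^{\alpha}=(T^{-\alpha})^{-1}=B_{\alpha}$.

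The delicate step will be the passage to the limit $\mu\to 0^{-}$: the uniform bound $\|\mu F_{\alpha}(\mu,T)\|\le M$ from \eqref{KAR} is too weak on its own, and one genuinely needs the stronger resolvent decay \eqref{ResEst} to secure operator-norm convergence $F_{\alpha}(\mu,T)\to F_{\alpha}(0,T)$. Once this convergence is in place, the remaining closed-operator manipulations will be entirely routine.
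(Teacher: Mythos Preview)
Your proposal is correct and follows essentially the same route as the paper: pass to the limit $\mu\to 0^{-}$ in Kato's formula \eqref{KatoEQ} by dominated convergence (which requires precisely the stronger decay \eqref{ResEst}), and identify the limit with $-T^{-\alpha}$ via Corollary~\ref{novesei}. The paper's version is terser—it simply writes $(T^{\alpha})^{-1}=-S_R^{-1}(0,T^{\alpha})$ and equates this with the limiting integral—whereas you spell out, via closedness of $B_{\alpha}$, why the norm convergence $F_{\alpha}(\mu,T)\to F_{\alpha}(0,T)$ actually forces $0\in\rho_S(B_{\alpha})$ with $-B_{\alpha}^{-1}=F_{\alpha}(0,T)$; this extra care is justified and fills a small gap the paper leaves implicit.
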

\begin{proof}
Let $T\in\closOP(V)$, let $\alpha\in(0,1)$ and let $T^{\alpha}$ be the operator obtained from Definition~\ref{TalphaKato}. If $\|S_{R}^{-1}(s,T)\|\leq K/(1+|s|)$ for $s\in(-\infty,0]$, then we can apply Lebesgue's dominated convergence theorem in order to pass to the limit as $p$ tends to $0$ in Kato's formula \eqref{KatoEQ} for the right $S$-resolvent of $T^{\alpha}$. We obtain
\[ (T^{\alpha})^{-1} =- S_R^{-1}(0,T^{\alpha}) = -\frac{\sin(\alpha\pi)}{\pi}\int_{0}^{+\infty}t^{-\alpha}S_R^{-1}(-t,T),\,dt = T^{-\alpha}\]
where the last equality follows from Corollary~\ref{novesei}.

\end{proof}

As an immediate consequence of \cite[Theorem 5.6]{GR} and Theorem~\ref{BalphaExists}, we obtain the following Corollary.
\begin{corollary}
Let $T\in\closOP(V)$ be of type $(\omega,M)$. If $\alpha\in(0,1)$ with $\alpha\omega<\pi/2$, then $-T^{\alpha}$ is the infinitesimal generator of a strongly continuous semigroup that is analytic in time.
\end{corollary}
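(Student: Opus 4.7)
The proof is essentially an immediate combination of the two results cited in the hypothesis, so my plan is simply to expose that combination clearly. By Theorem~\ref{BalphaExists}, the operator $T^{\alpha} = B_{\alpha}$ (in the sense of Definition~\ref{TalphaKato}) is of type $(M,\alpha\omega)$. Under our assumption $\alpha\omega < \pi/2$, the opening angle of the sector in which $\|pS_R^{-1}(p,T^{\alpha})\|$ is uniformly bounded is strictly less than $\pi/2$. This is precisely the sectoriality condition which, in the classical complex-linear setting, guarantees that the negative of the operator generates an analytic semigroup; the quaternionic analogue is given by \cite[Theorem~5.6]{GR}.

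Concretely, I would proceed as follows. First, invoke Theorem~\ref{BalphaExists} to obtain that $\osector{\alpha\omega}{0}\subset \rho_S(T^{\alpha})$ and that $\|pS_R^{-1}(p,T^{\alpha})\|$ is uniformly bounded on every proper subsector $\osector{\theta}{0}$ with $\theta\in(\alpha\omega,\pi)$, together with the bound $\|tS_R^{-1}(-t,T^{\alpha})\|\leq M$ for $t\in(0,+\infty)$. Second, observe that the shifted operator $-T^{\alpha}$ has its $S$-spectrum contained in $-\sigma_S(T^{\alpha})\subset\csector{\pi-\alpha\omega}{0}$, so that $\rho_S(-T^{\alpha})$ contains the open sector $\osector{\pi-\alpha\omega}{0}$ around the positive real axis, and the sectoriality bound on $S_R^{-1}$ for $-T^{\alpha}$ holds with opening angle $\pi-\alpha\omega > \pi/2$. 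Finally, apply \cite[Theorem~5.6]{GR} to conclude that $-T^{\alpha}$ generates a strongly continuous semigroup that extends analytically to a sector around the positive real axis of time.

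There is essentially no obstacle here beyond bookkeeping: the real work has already been done in proving Theorem~\ref{BalphaExists}, which establishes the sectoriality of $T^{\alpha}$ with the sharp angle $\alpha\omega$. The only point that requires a moment of care is the translation between the sectoriality condition for $T^{\alpha}$ (where the $S$-spectrum sits in a sector around the negative real axis of width $2\alpha\omega$) and the condition needed to generate an analytic semigroup (where one requires the $S$-spectrum of $-T^{\alpha}$ to sit in a sector around the negative real axis of width strictly less than $\pi$, and indeed contained in the left half plane $\{\Re s\leq 0\}$ with room to spare so that the semigroup extends analytically in time). This translation is however immediate from the identity $\sigma_S(-T^{\alpha}) = -\sigma_S(T^{\alpha})$ and from the relation between $S_R^{-1}(p,-T^{\alpha})$ and $S_R^{-1}(-p,T^{\alpha})$, so no additional estimates are required and the statement follows.
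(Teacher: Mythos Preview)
Your proposal is correct and takes essentially the same approach as the paper: the corollary is stated there as an immediate consequence of Theorem~\ref{BalphaExists} and \cite[Theorem~5.6]{GR}, which is exactly the combination you spell out. The additional bookkeeping you describe (translating the sectoriality of $T^{\alpha}$ into the hypothesis of \cite[Theorem~5.6]{GR} via $\sigma_S(-T^{\alpha}) = -\sigma_S(T^{\alpha})$) is a reasonable elaboration of what the paper leaves implicit.
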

\appendix
\section{Estimate for applying  Fubini's Theorem in \eqref{BeforeFubini}}\label{Fubini1}
We want to show that we can apply Fubini's theorem to exchange the order of integration in \eqref{BeforeFubini}, i.e. in
\begin{align}\label{APXBeforeFubini}
 T^{-\alpha} = \frac{1}{(2\pi)^2}\int_{\partial(U_s\cap\C_I)}\left(\int_{\partial(U_p\cap\C_J)}p^{-\alpha}\, dp_J\, S_R^{-1}(p,s)\right)\, ds_I\, S_R^{-1}(s,T).
\end{align}
In order to show that the integrand is absolutely integrable, we consider the parameterizations $\Gamma_s$ and $\Gamma_p$ of $\partial(U_s\cap\C_I)$ and $\partial(U_p\cap\C_J)$ that are given by
\begin{equation*}
\Gamma_s(r) = \begin{cases} \Gamma_s^+(r):= re^{-\theta_s I},& r\in[a_0/2,+\infty)\\
\Gamma_s^0(r):=\frac{a_0}{2}e^{-\frac{2\theta_s}{a_0} Ir} & r\in(-a_0/2,a_0/2)\\
\Gamma_s^-(r):=re^{\theta_s I},& r\in(-\infty, -a_0/2]\\
 \end{cases}
\end{equation*}
and
\begin{equation*}
\Gamma_p(t) = \begin{cases} \Gamma_p^{+}(t):= te^{-\theta_p J},& t\in[a_0/3,+\infty)\\
\Gamma_p^0(t):=\frac{a_0}{3}e^{-\frac{3\theta_p}{a_0}J t} & t\in(-a_0/3,a_0/3)\\
\Gamma_p^{-}(t):=te^{\theta_p J},& t\in(-\infty, -a_0/3]
 \end{cases}.
\end{equation*}
Then
\begin{gather}
\notag
\frac{1}{(2\pi)^2}\int_{\Gamma_s}\int_{\Gamma_p}\left\|p^{-\alpha}\, dp_J\, S_R^{-1}(p,s)\, ds_I\, S_R^{-1}(s,T)\right\|\\
\label{APXGammaSplit}= \sum_{\tau,\nu\in\{-,0,+\}} \int_{\Gamma_s^\tau}\int_{\Gamma_p^\nu}\left\|p^{-\alpha}\, dp_J\, S_R^{-1}(p,s)\, ds_I\, S_R^{-1}(s,T)\right\|
\end{gather}
and it is sufficient to estimate each of the terms in the sum separately. Applying Theorem~\ref{RepFo} allows us to estimate
\begin{equation}
\label{APXRSEst}
|S_R^{-1}(p,s)| \leq \frac12 |1-IJ|\frac{1}{|p_I -s|}+ \frac12|1+IJ|\frac{1}{|\overline{p_I} -s|} \leq  \frac{2}{|p_I-s_I|},
\end{equation}
where $p_I = p_0 + I p_1 $ for $p = p_0  I_pp_1$. Hence, for $\tau,\nu\in\{+,-\}$, by applying~\eqref{StripEstimate}, we have
\begin{align*}
&\int_{\Gamma_s^\tau}\int_{\Gamma_p^\nu}\left\|p^{-\alpha}\, dp_J\, S_R^{-1}(p,s)\, ds_I\, S_R^{-1}(s,T)\right\|\\
=&\int_{\frac{a_0}{2}}^{+\infty}\int_{\frac{a_0}{3}}^{+\infty} t^{-\alpha} \frac{2}{\left|te^{\theta_pI}-re^{\theta_sI}\right|}\frac{M_1}{1+r}\,dt\,dr=\int_{\frac{a_0}{2}}^{+\infty}\int_{\frac{a_0}{3}}^{+\infty}  \frac{t^{-\alpha}}{\left|t-re^{(\theta_s-\theta_p)I}\right|}\frac{2M_1}{1+r}\,dt\,dr\\
=&\int_{\frac{a_0}{2}}^{+\infty}\int_{\frac{a_0}{3}}^{+\infty}  \frac{\left(\frac{t}{r}\right)^{-\alpha}}{\left|\frac{t}{r}-e^{(\theta_s-\theta_p)I}\right|}\,dt\,\frac{1}{r^{1+\alpha}}\frac{2M_1}{1+r}\,dr = \int_{\frac{a_0}{2}}^{+\infty}\int_{\frac{a_0}{3r}}^{+\infty}  \frac{\mu^{-\alpha}}{\left|\mu-e^{(\theta_s-\theta_p)I}\right|}\,d\mu\,\frac{1}{r^{\alpha}}\frac{2M_1}{1+r}\,dr.
\end{align*}
The modulus of $\mu-e^{(\theta_s-\theta_p)I}$ can be estimated from below by the absolute value of its real part or by the absolute value of its imaginary part, and therefore,
\begin{align*}
&\int_{\Gamma_s^\tau}\int_{\Gamma_p^\nu}\left\|p^{-\alpha}\, dp_J\, S_R^{-1}(p,s)\, ds_I\, S_R^{-1}(s,T)\right\|  \\
\leq& \int_{\frac{a_0}{2}}^{+\infty}\int_{2}^{+\infty}  \frac{\mu^{-\alpha}}{\mu-\cos(\theta_s-\theta_p)}\,d\mu\,\frac{1}{r^{\alpha}}\frac{2M_1}{1+r}\,dr  +
 \int_{\frac{a_0}{2}}^{+\infty}\int_{\frac{a_0}{3r}}^{2}  \frac{\mu^{-\alpha}}{\sin(\theta_p-\theta_s)}\,d\mu\,\frac{1}{r^{\alpha}}\frac{2M_1}{1+r}\,dr\\
=& \underbrace{\int_{2}^{+\infty} \frac{\mu^{-\alpha}}{\mu-\cos(\theta_s-\theta_p)}\,d\mu}_{=:C_1 < +\infty}
\underbrace{\int_{\frac{a_0}{2}}^{+\infty} \frac{1}{r^{\alpha}}\frac{2M_1}{1+r}\,dr}_{=:C_2<+\infty} \\
&\qquad\qquad\qquad\quad\quad
 +\int_{\frac{a_0}{2}}^{+\infty} \frac{1}{\sin(\theta_p-\theta_s)}\left(\frac{2^{1-\alpha}}{1-\alpha} -
\frac{a_0^{1-\alpha}}{(1-\alpha)3^{1-\alpha}}\frac{1}{r^{1-\alpha}}\right)
\frac{1}{r^{\alpha}}\frac{2M_1}{1+r}\,dr\\
&= C_1C_2  +
\frac{2M_1}{\sin(\theta_p-\theta_s)}\frac{2^{1-\alpha}}{1-\alpha}
\int_{\frac{a_0}{2}}^{+\infty}
\frac{r^{-\alpha}}{1+r}\,dr
\\
&\qquad\qquad\qquad\qquad+
\frac{2M_1}{\sin(\theta_p-\theta_s)}\frac{a_0^{1-\alpha}}{(1-\alpha)3^{1-\alpha}}
\int_{\frac{a_0}{2}}^{+\infty}\frac{1}{r(1+r)}\, d\mu\,dr
\end{align*}
where each of these integrals is finite.

 For $\tau = 0$ and $\nu = +$, we can again use \eqref{APXRSEst} to estimate
\begin{align*}
&\int_{\Gamma_s^0}\int_{\Gamma_p^+}\left\|p^{-\alpha}\, dp_J\, S_R^{-1}(p,s)\, ds_I\, S_R^{-1}(s,T)\right\|\\
\leq&\int_{-\frac{a_0}{2}}^{\frac{a_0}{2}}\int_{\frac{a_0}{3}}^{+\infty} t^{-\alpha} \left|S_R^{-1}\left(te^{J\theta_p}, \frac{a_0}{2}e^{-\frac{2\theta_s}{a_0}rI}\right)\right|\frac{\theta_s M_1}{1+\frac{a_0}{2}}\,dt\,dr\\
\leq&\int_{-\frac{a_0}{2}}^{\frac{a_0}{2}}\int_{\frac{a_0}{3}}^{+\infty} t^{-\alpha} \frac{2}{\left|te^{I\theta_p} - \frac{a_0}{2}e^{\frac{2\theta_s}{a_0}rI}\right|}\frac{\theta_s M_1}{1+\frac{a_0}{2}}\,dt\,dr \\
=&\frac{2\theta_s M_1}{1+\frac{a_0}{2}}
\int_{-\frac{a_0}{2}}^{\frac{a_0}{2}}\int_{\frac{a_0}{3}}^{+\infty} \frac{t^{-\alpha}}{\left|t - \frac{a_0}{2}e^{\left(\frac{2\theta_s}{a_0}r-\theta_p\right)I}\right|}\,dt\,dr.
\end{align*}
Since $0 < \theta_s < \theta_p < \pi$, the distance $\delta$ of the set
\[\left\{\frac{a_0}{2}e^{\left(\frac{2\theta_s}{a_0}r-\theta_p\right)I}: -a_0/2 < r < a_0/2\right\}\]
 to the positive real axis is greater than zero, and hence,
\begin{align*}
&\int_{\Gamma_s^0}\int_{\Gamma_p^+}\left\|p^{-\alpha}\, dp_J\, S_R^{-1}(p,s)\, ds_I\, S_R^{-1}(s,T)\right\|\\
\leq&\frac{2\theta_s M_1}{1+\frac{a_0}{2}}
\int_{-\frac{a_0}{2}}^{\frac{a_0}{2}}\int_{\frac{a_0}{3}}^{a_0}  \frac{t^{-\alpha}}{\delta}\,dt\,dr+\frac{2\theta_s M_1}{1+\frac{a_0}{2}}
\int_{-\frac{a_0}{2}}^{\frac{a_0}{2}}\int_{a_0}^{+\infty} t^{-\alpha} \frac{1}{t - \frac{a_0}{2}}\,dt\,dr\\
=&\frac{2\theta_s a_0 M_1}{\delta\left(1+\frac{a_0}{2}\right)}\int_{\frac{a_0}{3}}^{a_0} t^{-\alpha}\,dt
+\frac{2\theta_s a_0M_1}{1+\frac{a_0}{2}}\int_{a_0}^{+\infty}  \frac{t^{-\alpha}}{t - \frac{a_0}{2}}\,dt,
\end{align*}
where again these integrals are finite. A similar computation can be done for the case $\tau  =0$ and $\nu = -$.

For $\tau = +$ and $\nu = 0$, we apply once more \eqref{APXRSEst} and obtain
\begin{align*}
&\int_{\Gamma_s^+}\int_{\Gamma_p^0}\left\|p^{-\alpha}\, dp_J\, S_R^{-1}(p,s)\, ds_I\, S_R^{-1}(s,T)\right\|\\
\leq&\int_{\frac{a_0}{2}}^{+\infty}\int_{-\frac{a_0}{3}}^{\frac{a_0}{3}}\left(\frac{a_0}{3}\right)^{-\alpha}\theta_p \left|S_R^{-1}\left(\frac{a_0}{3}e^{\frac{-3\theta_p}{a_0}Jt}, re^{-\theta_sI}\right)\right|\frac{M_1}{1+r}\,dt\,dr\\
\leq& 2\left(\frac{a_0}{3}\right)^{-\alpha}\theta_pM_1\int_{\frac{a_0}{2}}^{+\infty}\int_{-\frac{a_0}{3}}^{\frac{a_0}{3}} \frac{1}{\left|\frac{a_0}{3}e^{\frac{3\theta_p}{a_0}It}- re^{\theta_sI}\right|}\frac{1}{1+r}\,dt\,dr\\
=& 2\left(\frac{a_0}{3}\right)^{-\alpha}\theta_pM_1\int_{\frac{a_0}{2}}^{+\infty}\int_{-\frac{a_0}{3}}^{\frac{a_0}{3}} \frac{1}{\left|r- \frac{a_0}{3}e^{\left(\frac{3\theta_p}{a_0}t-\theta_s\right)I}\right|}\frac{1}{1+r}\,dt\,dr.
\end{align*}
Estimating the modulus of the denominator from below with the modulus of its real part, we obtain
\begin{align*}
&\int_{\Gamma_s^+}\int_{\Gamma_p^0}\left\|p^{-\alpha}\, dp_J\, S_R^{-1}(p,s)\, ds_I\, S_R^{-1}(s,T)\right\|\\
\leq& 2\left(\frac{a_0}{3}\right)^{-\alpha}\theta_pM_1\int_{\frac{a_0}{2}}^{+\infty}\int_{-\frac{a_0}{3}}^{\frac{a_0}{3}} \frac{1}{\left|r- \frac{a_0}{3}\cos\left(\frac{3\theta_p}{a_0}t-\theta_s\right)\right|}\frac{1}{1+r}\,dt\,dr\displaybreak[2]\\
\leq& 2\left(\frac{a_0}{3}\right)^{-\alpha}\theta_pM_1\int_{\frac{a_0}{2}}^{+\infty}\int_{-\frac{a_0}{3}}^{\frac{a_0}{3}} \frac{1}{r- \frac{a_0}{3}}\frac{1}{1+r}\,dt\,dr\\
 =& \frac{4a_0}{3}\left(\frac{a_0}{3}\right)^{-\alpha}\theta_pM_1\int_{\frac{a_0}{2}}^{+\infty} \frac{1}{r- \frac{a_0}{3}}\frac{1}{1+r}\,dr,
\end{align*}
and this last integral is finite. The estimate of the case $\tau =-$ and $\nu = 0$ can be done in a similar way.

Finally, the summand for $\tau = 0$ and $\nu=0$ consists of the integral of a continuous function over a bounded domain and is therefore finite.

Putting these pieces together, we obtain that the integrand in \eqref{APXBeforeFubini} is absolutely integrable, which allows us to apply Fubini's theorem in order to exchange the order of integration.

\section{Estimate for applying  Fubini's theorem in \eqref{anEquation}}\label{Fubini2}
We want to show that we can apply Fubini's theorem to
\begin{equation}\label{anEquationA}
\begin{split}
 &T^{-\alpha}\  T^{-\beta}\\
 =&-\frac{1}{(2\pi)^2 }\int_{ \partial ( G_s \cap \mathbb{C}_I) } s^{-\alpha}\ ds_I \int_{ \partial ( G_p \cap \mathbb{C}_I) }
S_L^{-1}(p,T)p(p^2-2s_0p+|s|^2)^{-1}dp_I\  p^{-\beta}
\\
&
+\frac{1}{(2\pi)^2 }\int_{ \partial ( G_s \cap \mathbb{C}_I) } s^{-\alpha}\ ds_I
\int_{ \partial ( G_p \cap \mathbb{C}_I) }\overline{s}S_L^{-1}(p,T)(p^2-2s_0p+|s|^2)^{-1}
 dp_I\  p^{-\beta}.
 \end{split}
\end{equation}
For $\tau\in\{s,p\}$ we therefore decompose $\partial (G_{\tau}\cap\C_I) = \Gamma_{\tau,-} \cup\, \Gamma{\tau,\circ} \cup\, \Gamma_{\tau,+}$ with
\begin{align*}
\Gamma_{\tau}^- &= \left\{-re^{I\theta_{\tau}}, r\in(-\infty,-a_{\tau}]\right\}\\
\Gamma_{\tau}^0 &= \left\{ a_\tau e^{-I\theta}, \theta\in(-\theta_\tau,\theta_\tau)\right\}\\
\Gamma_{\tau}^+ &= \left\{re^{-I\theta_{\tau}}, r\in[a_{\tau},+\infty)\right\}
\end{align*}
such that
\begin{equation}\label{JJOF}
\begin{split}& T^{-\alpha}\  T^{-\beta}\\
=&\sum_{u,v\in\{+,o,-\}}-\frac{1}{(2\pi)^2 }\int_{ \Gamma_{s}^{u} } s^{-\alpha}\ ds_I \int_{ \Gamma_{p}^{v} }
S_L^{-1}(p,T)p(p^2-2s_0p+|s|^2)^{-1}dp_I\  p^{-\beta}
\\
&
+\sum_{u,v\in\{+,o,-\}}\frac{1}{(2\pi)^2 }\int_{\Gamma_{s}^{u} } s^{-\alpha}\ ds_I
\int_{ \Gamma_{p}^{v}}\overline{s}S_L^{-1}(p,T)(p^2-2s_0p+|s|^2)^{-1}
 dp_I\  p^{-\beta}.
 \end{split}
\end{equation}
Since $p$ and $s$ commute, we have $(p^2 - 2s_0 p + |s|^2)^{-1} = (p-s)^{-1}(p-\overline{s})^{-1}$ and thus for $u=+$ and $v=+$
\begin{align*}
&\int_{ \Gamma_{s}^+ } s^{-\alpha}\ ds_I \int_{ \Gamma_{p}^+ }S_L^{-1}(p,T)p(p^2-2s_0p+|s|^2)^{-1}dp_I\  p^{-\beta}\\
=& \int_{a_s}^{+\infty} r^{-\alpha}e^{I\alpha\theta_s}e^{-I\theta_s}(-I) \int_{a_p}^{+\infty}S_L^{-1}(te^{-I\theta_p},T)\cdot\\
&\qquad\quad\cdot te^{-I\theta_p}\left(te^{-I\theta_p}-re^{-I\theta_s}\right)^{-1}\left(te^{-I\theta_p}-re^{I\theta_s}\right)^{-1} e^{-I\theta_p}(-I)t^{-\beta}e^{I\beta \theta_p}\,dt\,dr.
\end{align*}
Using the estimate $\| S_L^{-1}(s,T) \| \leq M_1/(1+ |s|)$ obtained from Lemma~\ref{StripCy} and setting $C = \sup_{t\in[0,+\infty)}M_1t/(1+t) < + \infty$, we find that the integral of the norm of the integrand is lower or equal to
\begin{align}
\notag&\int_{a_s}^{+\infty}\int_{a_p}^{+\infty} r^{-\alpha} \frac{M_1t}{1+t}\frac{1}{\left| t - re^{I(\theta_p-\theta_s)}\right|}\frac{1}{\left| t - r e^{I(\theta_p+\theta_s)}\right|} t^{-\beta}\,dt\,dr\\
\notag\leq & C \int_{a_s}^{+\infty}\frac{r^{-(\alpha+\beta)}}{r^2}\int_{a_p}^{+\infty}\frac{1}{\left|\frac{t}{r} - e^{I(\theta_p-\theta_s)}\right|}\frac{1}{\left| \frac{t}{r} -  e^{I(\theta_p+\theta_s)}\right|}\left(\frac{t}{r}\right)^{-\beta}\,dt\,dr\displaybreak[3]\\
\label{POTZ}\leq & C \int_{a_s}^{+\infty}\frac{r^{-(\alpha+\beta)}}{r}\int_{0}^{+\infty}\frac{1}{\left|\xi - e^{I(\theta_p-\theta_s)}\right|}\frac{1}{\left| \xi -  e^{I(\theta_p+\theta_s)}\right|}\xi^{-\beta}\,d\xi\,dr\\
\notag\leq & \frac{C}{(\alpha+\beta)a_s^{\alpha+\beta}} \int_{0}^{+\infty}\frac{1}{\left|\xi - e^{I(\theta_p-\theta_s)}\right|}\frac{1}{\left| \xi -  e^{I(\theta_p+\theta_s)}\right|}\xi^{-\beta}\,d\xi.
\end{align}
We set $\mu_0 = \max \cos(\theta_p\pm\theta_s) $  and $\mu_1 = \min\left|\sin(\theta_p \mp \theta_s)\right|$ and observe that $\mu_1 >0$ since we chose $\pi/2  < \theta_p <\theta_s < \pi$. Estimating $\left|\xi - e^{I(\theta_p\pm \theta_s)}\right|$ from below by the modulus of its real and imaginary part, we finally obtain that the above integral is bounded by
\begin{align*}
 \frac{C}{(\alpha+\beta)a_s^{\alpha+\beta}\mu_1} \int_{0}^{2}\xi^{-\beta}\,d\xi +
  \frac{C}{(\alpha+\beta)a_s^{\alpha+\beta}} \int_{2}^{+\infty}\frac{\xi^{-\beta}}{(\xi - \mu_0)^2}\,d\xi < \infty
\end{align*}
because $0<\beta<1$.

The second integral in \eqref{JJOF} with $u = +$ and $v=+$ is
\begin{align*}
&\int_{\Gamma_{s}^+}s^{-\alpha}\,ds_I  \int_{\Gamma_{p}^{+}}\overline{s}S_L^{-1}(p,T)(p^2-2s_0p + |s|^2)^{-1}\,dp_I\,p^{-\beta}\\
=& \int_{a_s}^{+\infty} r^{-\alpha}e^{I\alpha\theta_s}e^{-I\theta_s}(-I)\int_{a_p}^{+\infty}re^{I\theta_s}S_L^{-1}(te^{I\theta_p},T)\cdot\\
&\qquad\quad\cdot \left(te^{-I\theta_p} - re^{-I\theta_s}\right)^{-1} \left(te^{-I\theta_p} - re^{I\theta_s}\right)^{-1}e^{-I\theta_p}(-I)t^{-\beta}e^{I\beta\theta_p}\,dt\,dr.
\end{align*}
Using again the estimate $\| S_L^{-1}(s,T) \| \leq M_1/(1+ |s|)$ obtained from Lemma~\ref{StripCy} and setting $C = \sup_{t\in[0,+\infty)}M_1t/(1+t) < + \infty$, we can estimate the integral of the norm of the integrand by
\begin{align*}
& \int_{a_s}^{+\infty}\int_{a_p}^{+\infty}r^{1-\alpha}\frac{M_1}{1+t}\frac{1}{\left|t e^{-I(\theta_p-\theta_s)} - r \right|}\frac{1}{\left| te^{-I(\theta_p+\theta_s)} - r \right|}t^{-\beta}\,dt\,dr\\
\leq & \int_{a_p}^{+\infty}\frac{M_1t}{1+t}\frac{t^{-(\alpha+\beta)}}{t^2} \int_{a_s}^{+\infty}\left(\frac{r}{t}\right)^{1-\alpha}\frac{1}{\left|\frac{r}{t} -  e^{-I(\theta_p-\theta_s)} \right|}\frac{1}{\left|\frac{r}{t} - e^{-I(\theta_p+\theta_s)} \right|}\,dr\,dt\\
\leq & C \int_{a_p}^{+\infty}\frac{t^{-(\alpha+\beta)}}{t} \int_{0}^{+\infty}\xi^{1-\alpha}\frac{1}{\left|\xi -  e^{-I(\theta_p-\theta_s)} \right|}\frac{1}{\left|\xi - e^{-I(\theta_p+\theta_s)} \right|}\,d\xi\,dt.
\end{align*}
An integral of this form appeared in \eqref{POTZ} and we have already seen that it is finite. Similar estimates also hold for all terms in \eqref{JJOF} with $u,v\in\{+,-\}$.

For $u=+$ and $v = 0$, we have
\begin{align*}
&\int_{\Gamma_{s}^{+}}s^{-\alpha}\,ds_I\int_{\Gamma_{p}^{0}}S_L^{-1}(p,T)p(p^2-2s_0p+|s|^2)^{-1}\,dp_I\,p^{-\beta}\\
=& \int_{a_s}^{+\infty}r^{-\alpha}e^{I\alpha\theta_s} e^{-I\theta_s}(-I)\int_{-\theta_p}^{\theta_p}S_L^{-1}(a_pe^{-I\theta},T)a_pe^{-I\theta} \cdot\\
&\qquad\qquad\quad\cdot\left(a_pe^{-I\theta}-re^{-I\theta_s}\right)^{-1}\left(a_pe^{-I\theta}-re^{I\theta_s}\right)^{-1} a_pe^{-I\theta}(-I)^2a_p^{-\beta}e^{I\beta\theta}\,d\theta\,dr
\end{align*}
and, again using the estimate $\| S_L^{-1}(s,T) \| \leq M_1/(1+ |s|)$ obtained from Lemma~\ref{StripCy}, we find that the integral of the absolut value of the integrand is lower or equal to
\begin{align*}
&\int_{a_s}^{+\infty} r^{-\alpha}\int_{-\theta_p}^{\theta_p} \frac{M_1a_p^{2-\beta}}{1+a_{p}}\frac{1}{\left|r - a_pe^{-I(\theta-\theta_s)}\right|}\frac{1}{\left|r - a_pe^{-I(\theta+\theta_s)}\right|}\,d\theta\,dr.
\end{align*}
Since $\pi/2 < \theta_p<\theta_s<\pi$, the distance $\delta$ between the set
\begin{equation}\label{SomeSet}
\left\{a_pe^{-I(\theta+\theta_s)}, \theta\in[-\theta_p, \theta_p]\right\} \cup \left\{a_pe^{-I(\theta-\theta_s)}, \theta\in[-\theta_p, \theta_p]\right\}
\end{equation}
and the positive real axis is greater than zero and hence the above integral can be estimated by
\begin{align*}
\int_{a_s}^{a_s + 2a_p} r^{-\alpha}\int_{-\theta_p}^{\theta_p} \frac{M_1a_p^{2-\beta}}{1+a_{p}}\frac{1}{\delta^2}\,dr\,d\theta + \int_{a_s+2a_p}^{+\infty} \frac{r^{-\alpha}}{(r - a_p)^2}\int_{-\theta_p}^{\theta_p} \frac{M_1a_p^{2-\beta}}{1+a_{p}}\,d\theta\,dr <+\infty.
\end{align*}

For the second integral in \eqref{JJOF} with $u=+$ and $v=0$, we have
\begin{align*}
&\int_{\Gamma_s^+}s^{-\alpha}\,ds_I\int_{\Gamma_p^{0}}\overline{s}S_L^{-1}(p,T)(p^2-2s_0p+|s|^2)^{-1}\,dp_I\,p^{-\beta}\\
=& \int_{a_s}^{+\infty}r^{-\alpha}e^{I\alpha\theta_s} e^{-I\theta_s}(-I)\int_{-\theta_p}^{\theta_p}re^{I\theta_s}S_L^{-1}(a_pe^{-I\theta},T) \cdot\\
&\qquad\qquad\quad\cdot\left(a_pe^{-I\theta}-re^{-I\theta_s}\right)^{-1}\left(a_pe^{-I\theta}-re^{I\theta_s}\right)^{-1} a_pe^{-I\theta}(-I)^2a_p^{-\beta}e^{I\beta\theta}\,d\theta\,dr
\end{align*}
Using the estimate $\| S_L^{-1}(s,T) \| \leq M_1/(1+ |s|)$ obtained from Lemma~\ref{StripCy}, we can estimate the integral of the absolute value of the integrand by
\begin{align*}
&\int_{a_s}^{+\infty}r^{1-\alpha}\int_{-\theta_p}^{\theta_p}\frac{a_p^{1-\beta}M_1}{1+a_p} \frac{1}{\left| a_pe^{-I(\theta-\theta_s)}-r\right|}\frac{1}{\left|a_pe^{-I(\theta+\theta_s)}-r\right|}\,d\theta\,dr\\
\leq& \int_{a_s}^{a_s+2a_p}r^{1-\alpha}\int_{-\theta_p}^{\theta_p}\frac{a_p^{1-\beta}M_1}{1+a_p}\frac{1}{\delta^2}\,d\theta\,dr +
\int_{a_s+2a_p}^{+\infty}\frac{r^{1-\alpha}}{(r-a_p)^2}\int_{-\theta_p}^{\theta_p}\frac{a_p^{1-\beta}M_1}{1+a_p} \,d\theta\,dr < +\infty,
\end{align*}
where $\delta >0$ is again the distance between the set in \eqref{SomeSet} and the positive real axis. Similar estimates hold true if $u = -$ and $v = 0$.

If $u = 0$ and $v=+$, then
\begin{align*}
&\int_{ \Gamma_{s}^{0} } s^{-\alpha}\ ds_I \int_{ \Gamma_{p}^{+} }
S_L^{-1}(p,T)p(p^2-2s_0p+|s|^2)^{-1}dp_I\  p^{-\beta}\\
=&\int_{-\theta_s}^{\theta_s}a_{s}^{-\alpha}e^{I\alpha\theta}  a_{s}e^{-I\theta}(-I)^2\int_{a_p}^{+\infty}S_L^{-1}(te^{-I\theta_p},T)te^{-I\theta_s}\cdot\\
&\qquad\qquad \cdot\left(te^{-I\theta_p}-a_{s}e^{-I\theta}\right)^{-1}  \left(te^{-I\theta_p}-a_{s}e^{I\theta}\right)^{-1}e^{-I\theta_s}(-I)t^{-\beta}e^{I\beta\theta_p}\,dt\,d\theta.
\end{align*}
Once more the estimate $\| S_L^{-1}(s,T) \| \leq M_1/(1+ |s|)$ obtained from Lemma~\ref{StripCy} allows us to estimate the integral of the absolute value of the integrand by
\begin{align*}
&\int_{-\theta_s}^{\theta_s}a_{s}^{1-\alpha}\int_{a_p}^{+\infty}\frac{M_1t}{1+t}\frac{1}{\left|t-a_{s}e^{I(\theta_p-\theta)}\right|}  \frac{1}{\left|t-a_{s}e^{I(\theta_p+\theta)}\right|}t^{-\beta}\,dt\,d\theta\\
\leq & C\int_{-\theta_s}^{\theta_s}a_{s}^{1-\alpha}\int_{a_p}^{+\infty}  \frac{t^{-\beta}}{\left(t-a_{s}\right)^2}\,dt\,d\theta < +\infty,
\end{align*}
where again $C = \sup_ {t\in [0,+\infty)}M_1t/(1+t)<+\infty$ and the second inequality follows because $a_s < a_p$. For the second integral in \eqref{JJOF} with $u=0$ and $v=+$, we similarly have
\begin{align*}
&\int_{ \Gamma_{s}^{0} } s^{-\alpha}\ ds_I \int_{ \Gamma_{p}^{+} }\overline{s}S_L^{-1}(p,T)(p^2-2s_0p+|s|^2)^{-1}dp_I\  p^{-\beta}\\
=&\int_{-\theta_s}^{\theta_s}a_{s}^{-\alpha}e^{I\alpha\theta}  a_{s}e^{-I\theta}(-I)^2\int_{a_p}^{+\infty}a_{s}e^{I\theta_s}S_L^{-1}(te^{-I\theta_p},T)\cdot\\
&\qquad\qquad\quad \cdot\left(te^{-I\theta_p}-a_{s}e^{-I\theta}\right)^{-1}  \left(te^{-I\theta_p}-a_{s}e^{I\theta}\right)^{-1}e^{-I\theta_s}(-I)t^{-\beta}e^{I\beta\theta_p}\,dt\,d\theta.
\end{align*}
As above, we can estimate the integral of the absolute value of the integrand by
\begin{align*}
&\int_{-\theta_s}^{\theta_s}a_s^{2-\alpha}\int_{a_p}^{+\infty} \frac{M_1}{1+t}\frac{1}{\left|t-a_{s}e^{I(\theta_p-\theta)}\right|}  \frac{1}{\left|t-a_{s}e^{I(\theta_p+\theta)}\right|}t^{-\beta}\,dt\,d\theta\\
\leq& C\int_{-\theta_s}^{\theta_s}a_s^{2-\alpha}\int_{a_p}^{+\infty} \frac{t^{-(1+\beta)}}{\left(t-a_{s}\right)^2} \,dt\,d\theta < +\infty,
\end{align*}
where the last inequality follows again because $a_s < a_p$. Similar estimates hold for the case $u=0$ and $v=-$.

Finally, the integrals in \eqref{JJOF} with $u=0$ and $v=0$ are absolutely convergent since, in this case, we integrate a continuous and hence bounded function over a bounded domain.

Putting these pieces together, we obtain that we can actually apply Fubini's theorem in \eqref{anEquation} resp. \eqref{anEquationA} to exchange the order of integration.

\section*{Acknowledgments}
The authors are grateful for the careful reading and the detailed report of the anonymous referee that led to the development of the result in Section~\ref{HolSect} and increased the significance of the present paper.

\vskip 1cm


\end{document}